\DeclareMathOperator{\N}{\mathbb{N}}
\DeclareMathOperator{\Q}{\mathbb{Q}}
\DeclareMathOperator{\C}{\mathbb{C}}
\DeclareMathOperator{\R}{\mathbb{R}}
\DeclareMathOperator{\Z}{\mathbb{Z}}
\DeclareMathOperator{\E}{\mathcal{E}}
\DeclareMathOperator{\cE}{\mathcal{E}}
\DeclareMathOperator{\cW}{\mathcal{W}}
\DeclareMathOperator{\GL}{GL}
\DeclareMathOperator{\eps}{\varepsilon}
\DeclareMathOperator{\ind}{ind}
\DeclareMathOperator{\Sp}{Sp}
\DeclareMathOperator{\SO}{SO}
\DeclareMathOperator{\mult}{mult}
\DeclareMathOperator{\sgn}{sgn}
\DeclareMathOperator{\supp}{supp}
\DeclareMathOperator{\IC}{IC}
\DeclareMathOperator{\img}{Im}
\DeclareMathOperator{\jordbp}{\Delta}
\DeclareMathOperator{\PPsymp}{{\bm{\mathcal{P}^{\mathrm{symp}}}}}
\DeclareMathOperator{\Port}{\mathcal{P}^{\mathrm{ort}}}
\DeclareMathOperator{\PPort}{ \bm{\mathcal{P}^{ \mathrm{ort} } }  }
\DeclareMathOperator{\PPPort}{ \bm{\mathcal{P}}^{\mathrm{ort},2}}
\DeclareMathOperator{\epsmax}{\eps^{\mathrm{max}}}
\DeclareMathOperator{\lambdamax}{\lambda^{\mathrm{max}}}
\DeclareMathOperator{\lambdamin}{\lambda^{\mathrm{min}}}
\DeclareMathOperator{\epsmin}{\eps^{\mathrm{min}}}
\DeclareMathOperator{\slambdamin}{\prescript{s}{}\lambda^{\mathrm{min}}}
\DeclareMathOperator{\sepsmin}{\prescript{s}{}\eps^{\mathrm{min}}}
\DeclareMathOperator{\slambdamax}{\prescript{s}{}\lambda^{\mathrm{max}}}
\DeclareMathOperator{\seps}{\prescript{s}{}\eps}
\DeclareMathOperator{\slambda}{\prescript{s}{}\lambda}
\DeclareMathOperator{\sepsmax}{\prescript{s}{}\eps^{\mathrm{max}}}
\DeclareMathOperator{\Cmax}{\mathit{C}^{\mathrm{max}}}
\DeclareMathOperator{\Emax}{\mathcal E^{\mathrm{max}}}
\DeclareMathOperator{\lambdaodd}{\lambda^{\mathrm{odd}}}
\DeclareMathOperator{\lambdaeven}{\lambda^{\mathrm{even}}}
\DeclareMathOperator{\uodd}{\mathit{u}^{\mathrm{odd}}}
\DeclareMathOperator{\alphamax}{\alpha^{\mathrm{max}}}
\DeclareMathOperator{\epsodd}{\eps^{\mathrm{odd}}}
\DeclareMathOperator{\betamax}{\beta^{\mathrm{max}}}
\DeclareMathOperator{\triv}{\mathrm{triv}}
\DeclareMathOperator{\GSpr}{GSpr}
\DeclareMathOperator{\fS}{\mathfrak S}
\DeclareMathOperator{\cP}{\mathcal P}
\DeclareMathOperator{\cR}{\mathcal R}
\DeclareMathOperator{\cU}{\mathcal U}
\DeclareMathOperator{\cN}{\mathcal N}
\DeclareMathOperator{\NNG}{\mathcal{N}_G}
\newcommand{\transp}[1]{{}^t{#1}}
\newcommand{\transps}[1]{{}^s	{#1}}
\theoremstyle{definition}
\newtheorem{definition}{Definition}[section]
\newtheorem{example}[definition]{Example}
\newtheorem{remark}[definition]{Remark}
\newtheorem*{claim*}{Claim}
\theoremstyle{theorem}
\newtheorem{theorem}[definition]{Theorem}
\newtheorem*{theorem*}{Theorem}
\newtheorem{lemma}[definition]{Lemma}
\newtheorem{proposition}[definition]{Proposition}
\newtheorem{corollary}[definition]{Corollary}
\numberwithin{equation}{section}
\newenvironment{theoremnr}[1]
{\oldtheoremnr}
{\endoldtheoremnr}
\title{Maximality properties of generalised Springer representations of $\SO(N,\C)$}
\author{Ruben La \\ \href{mailto:ruben.la@maths.ox.ac.uk}{\texttt{}{ruben.la@maths.ox.ac.uk}}}
\begin{document}

\maketitle
\begin{abstract}
The generalised Springer correspondence for $G = \mathrm{SO}(N,\mathbb{C})$ attaches to a pair $(C,\mathcal{E})$, where $C$ is a unipotent class of $G$ and $\mathcal{E}$ is an irreducible $G$-equivariant local system on $C$, an irreducible representation $\rho(C,\mathcal{E})$ of a relative Weyl group of $G$. We call $C$ the Springer support of $\rho(C,\mathcal{E})$. For each such $(C,\mathcal{E})$, $\rho(C,\mathcal{E})$ appears with multiplicity 1 in the top cohomology of some variety. Let $\bar\rho(C,\mathcal{E})$ be the representation obtained by summing over all cohomology groups of this variety. It is well-known that $\rho(C,\mathcal{E})$ appears in $\bar\rho(C,\mathcal{E})$ with multiplicity $1$ and that it is a `minimal subrepresentation' in the sense that its Springer support $C$ is strictly minimal in the closure ordering among the Springer supports of the irreducbile subrepresentations of $\bar\rho(C,\mathcal{E})$. Suppose $C$ is parametrised by an orthogonal partition consisting of only odd parts. We prove that there exists a unique `maximal subrepresentation' $\rho(C^{\mathrm{max}},\mathcal{E}^{\mathrm{max}})$ of multiplicity $1$ of $\bar\rho(C,\mathcal{E})$. Let $\mathrm{sgn}$ be the sign representation of the relevant relative Weyl group. We also show that $\mathrm{sgn} \otimes \rho(C^{\mathrm{max}},\mathcal{E}^{\mathrm{max}})$ is the minimal subrepresentation of $\mathrm{sgn} \otimes \bar\rho(C,\mathcal{E})$.These results are direct analogues of similar maximality and minimality results for $\mathrm{Sp}(2n,\mathbb{C})$ by Waldspurger.
\end{abstract}

\tableofcontents

\section*{Introduction}
\addcontentsline{toc}{section}{Introduction}

Let $G$ be a complex connected reductive algebraic group. Let $\cU$ be the set of unipotent classes of $G$ and let $\NNG$ be the set of pairs $(C,\E)$, where $C \in \cU$ and $\cE$ is an irreducible $G$-equivariant local system on $C$. Let $L$ be a Levi subgroup of $G$ containing a cuspidal pair
(see for instance \cite[2.4 Definition]{lusztig2}).
Lusztig showed that $W_L := N_G(L)/L$ is a Coxeter group. We call $W_L$ a \emph{relative Weyl group} of $G$.
For any group $\Gamma$, denote by $\Gamma^\wedge$ the set of equivalence classes of irreducible representations of $\Gamma$.
The generalised Springer correspondence \cite{lusztig2} is a certain bijection
\begin{align}
\GSpr \colon \NNG \to \bigsqcup_{L \text{ Levi with cuspidal pair}} W_L^\wedge.
\end{align}
We will state some results from \cite{lusztig1986character}.
Let $(C,\E),(C',\mathcal E') \in \mathcal N_G$ and let $\bar C'$ be the closure of $C'$.
Let $\IC(\bar C',\E')$ be the intersection cohomology complex on $\bar C'$ associated to $(C',\E')$.
The restriction of $\bigoplus_{m\in\Z}H^{2m}(\IC(\bar C',\mathcal E'))$ to $C$ is a direct sum of irreducible $G$-equivariant local systems on $C$.
Let $\mult(C,\mathcal E;C',\mathcal E')$ be the multiplicity of $\mathcal E$ in $\bigoplus_{m\in\Z}H^{2m}(\IC(\bar C',\mathcal E'))|_C$.
The representation
\begin{align}\label{eq:multgsc}
\bar\rho = \bar\rho(C,\cE) = \bigoplus_{(C',\cE') \in \NNG} \mult(C,\cE;C',\cE')\GSpr(C',\cE').
\end{align}
is equal to the sum of the cohomology groups of some perverse sheaf on a `generalised flag \nobreak{manifold}' (see for instance \cite[\S11]{lusztig1995cuspidal}). 
It is well-known that $\mult(C,\cE;C,\cE) = 1$ and that if $\mult(C,\cE;C',\cE') \neq 0$, then $C$ is strictly smaller than $C'$ in the closure order on $\mathcal U$ or $(C,\mathcal E) = (C',\mathcal E')$. In this sense, $(C,\cE)$ is minimal, or in other words, $\GSpr(C,\cE)$ is a `minimal subrepresentation' of $\bar\rho$.
A natural question to ask is whether there exists an analogous `maximal subrepresentation' $\GSpr(\Cmax,\Emax)$ of $\bar\rho$.

Let $n,N \in \N$. For $G = \GL(n)$, the only irreducible $G$-equivariant local system on $C$ is the trivial one and $\Cmax$ is the unipotent class parametrised by the partition $(n)$.
For $\Sp(2n)$, it was proved in\cite{waldspurger} that $(\Cmax,\Emax)$ uniquely exists when $C$ is parametrised by a symplectic partition of $2n$ -- that is, a partition whose odd parts all appear with even multiplicity -- that consists of only even parts. The goal of this paper is to prove the existence and uniqueness of $(\Cmax,\Emax)$ for $\SO(N)$ when $C$ is parametrised by an orthogonal partition of $N$ -- that is, a partition whose even parts all appear with even multiplicity -- that consists of only odd parts. 

Let $G = \SO(N)$.
We will first describe $\NNG$ and the relative Weyl groups of $G$.
Denote by $\mathcal P(n)$ the set of partitions of $n$ and denote by $\Port(N)$ the set of partitions $\lambda \in \mathcal P(N)$ that are orthogonal.
A unipotent element of $G$ can be characterised by its Jordan cells, and this gives a surjective map $\mathcal U \to \Port(N)$, such that the preimage of each $\lambda \in \Port(N)$ is a singleton, except when $\lambda$ is degenerate (i.e. $\lambda$ consists of only even parts), then the preimage of $\lambda$ consists of two unipotent classes, which we also call degenerate.
For each $\lambda \in \Port(N)$, let $\Delta(\lambda)$ be the set of distinct odd parts of $\lambda$.
Then $F_2[\Delta(\lambda)] = \{\pm1\}^{\Delta(\lambda)}$ is an $F_2$-vector space where $F_2$ is the field with two elements.
Let $F_2[\Delta(\lambda)]'$  be the quotient of $F_2[\Delta(\lambda)]$ by the line spanned by the sum of the unit vectors of $F_2[\Delta(\lambda)]$. 
If $C \in \cU$ is parametrised by $\lambda \in \Port(N)$, then $F_2[\Delta(\lambda)]'$  is in $1$--$1$ correspondence with the set of irreducible $G$-equivariant local systems on $C$.
Let $\PPort(N)$ be the set of pairs $(\lambda,[\eps])$, where $\lambda \in \Port(N)$, $\eps \in F_2[\Delta(\lambda)]$, and $[\eps] \in F_2[\Delta(\lambda)]'$ is the image of $\eps$ under the quotient map.
We obtain a surjective map $\mathcal N_G \to \PPort(N)$ such that if $\lambda$ is non-degenerate, then
the preimage of $(\lambda,[\eps])$ contains one element, which we denote by $(C_\lambda,\E_{[\eps]}) = (C_\lambda^+,\E_{[\eps]}^+) = (C_\lambda^-,\E_{[\eps]}^-)$, and if $\lambda$ is degenerate, then the the preimage of $(\lambda,[\eps])$ contains two elements, which we denote by $(C_\lambda^+,\E_{[\eps]}^+) \neq (C_\lambda^-,\E_{[\eps]}^-)$. 

For $n \in \N$, let $W(B_n) = W(C_n)$ be the Weyl group of type $B_n$ or $C_n$ and let $W(D_n)$ be the Weyl group of type $D_n$. 
Let $\cP_2(n)$ be the set of bipartitions of $n$.
We can parametrise $W(B_n)^\wedge = \{\rho_{(\alpha,\beta)} \colon (\alpha,\beta) \in \mathcal P_2(n)\}$. 
Define $\theta\colon \mathcal P_2(n) \to \mathcal P_2(n)$ by $\theta(\alpha,\beta) = (\beta,\alpha)$ for $(\alpha,\beta) \in \mathcal P_2(n)$. Let $\mathcal P_2(n) / \theta$ be the set of $\theta$-orbits and let $c_{(\alpha,\beta)}$ be the size of the $\theta$-orbit of $(\alpha,\beta)$. Then $W(D_n)^\wedge$ is parametrised as
$W(D_n)^\vee = \{\rho_{(\alpha,\beta),i} \colon (\alpha,\beta) \in \mathcal P_2 / \theta, i \in \{1,2/c_{(\alpha,\beta)	}\}\}$.

Let $W$ be the Weyl group of $G$. If $N$ is odd, then $W = W(B_{(N-1)/2})$, and if $N$ is even, then $W = W(D_{N/2})$.
Let 
\begin{align}
\mathcal W = W^\wedge \sqcup \bigsqcup_{k \in \Z_{\geq2},\,k \equiv N \mod 2} W(B_{(N-k^2)/2})^\wedge.
\end{align} 
The Weyl groups $W$ and $W(B_{(N-k^2)/2})$ are precisely the relative Weyl groups of $G$.
Recall that the generalised Springer correspondence is a bijective map $\GSpr \colon \NNG \to \cW$.
If $N$ is even, any two degenerate unipotent classes parametrised by the same element of $\Port(N)$ are mapped to $\rho_{(\alpha,\alpha),1}$ and $\rho_{(\alpha,\alpha),2}$ by $\GSpr$, for some $(\alpha,\alpha) \in \cP_2(N/2)/\theta$. 
As such, we obtain a bijection 
\begin{align}
\Phi_N \colon \PPort(N) \to (\mathcal P_2(N/2) / \theta) \sqcup \bigsqcup_{k \in \N \text{ even }} \mathcal P_2((N-k^2)/2).
\end{align}

\begin{theoremnr}{\ref{thm:max}}
Suppose $(\lambda,[\eps]) \in \PPort(N)$ such that $\lambda$ only has odd parts. Then there exists a unique $(\lambda^{\text{max}},[\epsmax]) \in \PPort(N)$ such that $\lambdamax$ is non-degenerate and 
\begin{enumerate}[(1)]
\item\label{thm:max1} $\mult(C_\lambda,\mathcal E_{[\eps]}; C_{\lambdamax},\mathcal E_{[\epsmax]}) = 1$,
\item\label{thm:max2} For all $(\lambda',[\eps']) \in \PPort(N)$ with $\mult(C_\lambda,\E_{[\eps]};C_{\lambda'}^+,\mathcal E_{[\eps']}^+)= \mult(C_\lambda,\E_{[\eps]};C_{\lambda'}^-,\mathcal E_{[\eps']}^-) \neq 0$, we have $\lambda' < \lambda^{\text{max}}$ or $(\lambda',[\eps']) = (\lambda^{\text{max}},[\epsmax])$.
\end{enumerate}
\end{theoremnr}

We also prove a related minimality theorem. Given $(\lambda,[\eps]) \in \PPort(N)$, let $(\slambda,[\seps])$ be the element of $\PPort(N)$ for which $\GSpr(C_{\slambda},\cE_{[\seps]}) = \GSpr(C_\lambda,\cE_{[\eps]}) \otimes \sgn$, where $\sgn$ is the sign representation of the relative Weyl group that $\GSpr(C_\lambda,\cE_{[\eps]})$ is a representation of.

\begin{theoremnr}{\ref{thm:min}}
Let $(\lambda,[\eps]) \in \PPort(N)$ with $\lambda$ only consisting of odd parts. Then there exists a unique $(\lambdamin,\epsmin) \in \PPort(N)$ such that $\slambdamin$ is non-degenerate and
\begin{enumerate}
\item $\mult(C_\lambda,\E_{[\eps]};C_{\slambdamin},\E_{[\sepsmin]}) = 1$,
\item For all $(\lambda',[\eps']) \in \PPort(N)$ with $\mult(C_\lambda,\mathcal E_{[\eps]};C_{\slambda'}^+,\E_{[\seps']}^+) = \mult(C_\lambda,\mathcal E_{[\eps]};C_{\slambda'}^-,\E_{[\seps']}^-) \neq 0$, we have $\lambdamin < \lambda'$ or $(\lambda',[\eps']) = (\lambdamin,[\epsmin])$.
\end{enumerate}
Furthermore, we have $(\lambda^{\text{max}},[\epsmax]) = (\slambdamin,[\sepsmin])$.
\end{theoremnr}

The proofs of \Cref{thm:max} and \Cref{thm:min} are based on Waldspurger's proofs of the maximality and minimality results for $\Sp(2n)$ \cite[Thm. 4.5, Thm. 4.7]{waldspurger}. We will briefly discuss some of the main ideas of the proofs, which are completely combinatorial.
Note that $\cN_{\Sp(2n)}$ is parametrised by pairs $(\lambda,\eps)$, where $\lambda$ is a symplectic partition and $\eps$ is a map $\{\lambda_i \colon i \in \N, \lambda_i \text{ is even}, \lambda_i \neq 0\} \to \{\pm 1\}$. Let $\PPsymp(2n)$ be the set of such pairs $(\lambda,\eps)$.
The relative Weyl groups of $\Sp(2n)$ are of type $C_{n-k(k+1)/2}$ for some $k \in \Z_{\geq0}$ and the generalised Springer correspondence for $\Sp(2n)$ gives a bijection $\PPsymp(2n) \to \bigsqcup_{k \in \Z_{\geq 0},k(k+1)\leq2n} \mathcal P_2(n-k(k+1)/2)$.

For any complex reductive group $G$ and $(C,\E), (C',\E') \in \NNG$, it was shown in \cite{lusztig1986character} that $\mult(C,\E;C',\E')$ is fully described by Green functions $p_{\rho',\rho}$, where $\rho = \GSpr(C,\cE)$ and $\rho' = \GSpr(C',\cE')$. 
The combinatorics for Green functions of the Weyl groups of type $B/C$ (i.e. when $\rho,\rho'$ are irreducible representations of a type $B/C$ Weyl group) were studied in \cite{shoji2001green}, using the theory of symmetric functions. A result on \cite[p. 685]{shoji2001green} can be used to obtain a method to compute such Green functions $p_{\rho',\rho}$ under certain conditions on $(C,\cE)$.
Extending this result, \cite[Proposition 4.2]{waldspurger} states this method of computation for $\Sp(2n)$ is valid under weaker conditions, namely when $C$ is parametrised by a symplectic partition consisting of only even parts.
This result is one of the main ingredients for the proof of \cite[Thm. 4.5]{waldspurger}. 
Afterwards, \cite[Thm. 4.7]{waldspurger} was proved using \cite[Thm. 4.5]{waldspurger} and some of Waldspurger's other combinatorial results.
Finally, a simple algorithm for the computation of $(\lambdamax,[\epsmax])$ in \Cref{thm:max} was described. 

The overall strategy for proving \Cref{thm:max} and \Cref{thm:min} are completely analogous. We will use certain combinatorial results in \cite{waldspurger}, and we will state and prove certain $\SO(N)$ analogues of the combinatorial results in \cite{waldspurger} that are specific to $\Sp(2n)$.
Most of the differences will arise from the difference in the symbol combinatorics for the generalised Springer correspondence for $\SO(N)$ and $\Sp(2n)$.
One of the key parts of the proof will be \Cref{prop:multshoji}, which is the $\SO(N)$ analogue of \cite[Proposition 4.2]{waldspurger} mentioned above and states that Shoji's method for computing Green functions discussed in the paragraph above holds for $G = \SO(N)$ when $C$ is parametrised by an orthogonal partition consisting of only odd parts. 
For all Green functions of $\SO(2n+1)$, as well as the Green functions of $\SO(2n)$ that are of type $B$, the proof of \Cref{prop:multshoji} will be almost completely analogous to \cite[Proposition 4.2]{waldspurger}, using results from \cite{shoji2001green} for type $B$ Weyl groups and symbols.
For Green functions of the Weyl group $W(D_n)$ of $\SO(2n)$, the proof will additionally involve the use of \cite[Proposition 4.9]{SHOJI2002563}, in which Green functions and symbols of type $D$ are described via a reduction to the type $B$ case.

The paper is structured as follows. In \S\ref{sec:comb}, we introduce the combinatorial objects involved in the study of the Green functions as in \cite[\S1,\S2]{waldspurger}.
In particular, for $(\alpha,\beta) \in \cP_2(n)$, we introduce and discuss the subset $P_{A,B;s}(\alpha,\beta,<)$ of $\cP_2(n)$  and the order $<_{\alpha,\beta,k}$ on the set of indices of $\alpha$ and $\beta$. 
This set is particularly important, as it will turn out later that under some conditions, this set will be a singleton that contains the unique bipartition that parametrises the maximal representation associated to $(\alpha,\beta)$.
Furthermore, the order $<_{\alpha,\beta,k}$ is important for the application of Shoji's results regarding the Green functions later.
The main purpose of \S\ref{sec:gsc} is to set up the notation for the combinatorics involved in the generalised Springer correspondence for $\SO(N)$. The notation we use for the symbols will be similar to the notation for the symbols in \cite{waldspurger}, which differs slightly from the usual notation used by Lusztig. We then state the generalised Springer correspondence for $\SO(N)$ as in \cite[\S13]{lusztig2}.
In \S\ref{sec:green}, we define the Green functions related to the Weyl groups of type $B$ and $D$. We discuss the connections between these Green functions and the theory of `type $B/D$' symmetric functions, particularly the Schur functions and the Hall-Littlewood functions, which were described in \cite{shoji2001green} and \cite{SHOJI2002563}. The main result in this section is \Cref{prop:multshoji}. 
Next, in \S\ref{sec:max} we state and prove \Cref{thm:max} and \Cref{thm:min}. They are the analogues of \cite[Thm. 4.5, Thm. 4.7]{waldspurger}. 
At the end of the section, we discuss the situation when $\lambda$ is any orthogonal partition. We use an induction theorem \cite{lusztig2004induction} and certain type $B/C/D$ versions of the Littlewood-Richardson rule to extend the maximality and minimality theorems to the case where $\lambda$ is arbitrary, but with the condition that $(\lambda,[\eps])$ appears in the (non-generalised) Springer correspondence. 
Afterwards, we state and prove an algorithm to compute $(\lambdamax,[\epsmax])$ in \S\ref{sec:alg} (cf. \cite[\S5]{waldspurger}).
Finally, in \S\ref{sec:extra}, we show that when $\lambda$ only has odd parts, then $\lambdamax$ parametrises a quasi-distinguished unipotent class.

The main motivation of the author for studying the maximality and minimality theorems is to determine an explicit algorithm for the Iwahori-Matsumoto dual of particular irreducible representations of certain type $B/C/D$ Hecke algebras with equal or unequal parameters. 
The author futhermore aims to use this to obtain an algorithm for the Aubert-Zelevinsky dual \cite{aubert1995dualite} for particular (but not all) irreducible representations of $p$-adic symplectic and $p$-adic special orthogonal groups. This will be from a different point of view than the work in \cite{atobe2020explicit}, in which different techniques were used to obtain a more general algorithm for the Aubert-Zelevinsky dual for all irreducible representations of $p$-adic symplectic and $p$-adic odd (but not even) special orthogonal groups. 
Furthermore, as an application of such an algorithm, the author aims to use \cite[Theorem 1.3.1]{ciubotaru2021some} to compute certain wavefront sets.
The author intends to pursue all of this in a subsequent paper.

\subsection*{Acknowledgements}
The author would like to thank Dan Ciubotaru for providing invaluable guidance and for many helpful discussions and conversations. The author would also like to thank Jonas Antor and Emile Okada for many useful discussions about the generalised Springer correspondence and perverse sheaves. Furthermore, the author wishes to thank Anne-Marie Aubert for useful suggestions regarding the structure of the introduction. The author was supported by EPSRC.

\section{Partition and symbol combinatorics}\label{sec:comb}

\subsection{Partitions}\label{subsec:partitions}
Let $\cR$ be the set of decreasing sequences $\lambda = (\lambda_1, \lambda_2, \dots)$ of real numbers such that for each $r \in \R$, there exist only finitely many $i \in \N$ such that $\lambda_i \geq r$.
For each $r \in \R$ and $\lambda \in \cR$, we define $\mult_\lambda(r) = \#\{i \in \N \colon \lambda_i = r\}$.
For each $c \in \N$, we define $S_c(\lambda) = \lambda_1 + \dots + \lambda_c$. 
We define an ordering $\leq$ on $\cR$ as follows: for $\lambda,\lambda' \in \cR$, we denote $\lambda \leq \lambda'$ if $S_c(\lambda) \leq S_c(\lambda')$ for all $c \in \N$.
For $\lambda, \lambda' \in \cR$, we define $\lambda + \lambda' = (\lambda_1 + \lambda_1', \lambda_2+\lambda_2',\dots)$, and we define $\lambda \sqcup \lambda'$ to be the unique element of $\cR$ such that for each $n \in \N$, we have $\mult_{\lambda \sqcup \lambda'}(n) = \mult_{\lambda}(n) + \mult_{\lambda'}(n)$.

Let $\cR_f$ be the set of finite decreasing sequences $\lambda = (\lambda_1, \dots, \lambda_n)$ of non-negative 
real numbers. 
For $m,n \in \N$ with $m < n$ and two elements $\lambda = (\lambda_1,\dots,\lambda_m)$ and $\nu = (\nu_1,\dots,\nu_n)$ of $\cR_f$, we identify $\lambda$ and $\nu$ if $\lambda_i = \nu_i$ for $i=1,\dots,m$ and $\nu_j = 0$ for $j = m+1,\dots,n$. Similarly as above, we can define $\mult_\lambda$ and $S_c$ for $c \in \N$, and we similarly define an ordering $\leq$ on $\cR_f$.
For $\lambda \in \cR$ and $\lambda' \in \cR_f$, we define $\lambda + \lambda' \in \cR$ and $\lambda \sqcup \lambda' \in \cR$ similarly as above as well.
For $\lambda \in \cR_f$, let $t(\lambda)$ be the largest integer $i$ such that $\lambda_i \neq 0$ and let $S(\lambda) = S_{t(\alpha)}(\lambda)$

A sequence $\lambda \in \cR_f$ is called a \emph{partition} if it is a sequence of non-negative integers and let $\mathcal{P} \subseteq \cR_f$ be the set of all partitions.
For $N \in \N$, we say that $\lambda$ is a \emph{partition of $N$} if $N = S(\lambda)$.

Let $\alpha, \beta \in \mathcal P$ and consider integers $m_0 \geq t(\alpha)$ and $n_1 \geq t(\beta)$. Let $I := I_{m_0,m_1} := \{ (i,0), (j,1) \colon i \in \{1,\dots,m_0\}, j \in \{1,\dots,m_1\}\}$, which we call the \emph{set of indices of $(\alpha,\beta)$}.
Consider a total order $<$ on $I$ such that for all $e \in \{0,1\}$ and $i,j \in \{1,\dots,m_e\}$, we have $(i,e) < (j,e)$ if and only if $i < j$. 
We call the triple $(m_0,m_1,<)$ an \emph{order on the set of indices of $(\alpha,\beta)$}. 
Any other order $(m_0',m_1',<')$ is said to be equivalent to $(m_0,m_1,<)$ if 
for each $i \in \N$ such that $\alpha_i >0$, we have 
$\{j \in \{1,\dots,m_1\} \colon (j,1) < (i,0)\} = \{j \in \{1,\dots,m_1'\} \colon (j,1) <' (i,0)\}$
and 
for each $j \in \N$ such that $\beta_j >0$, we have 
$\{i \in \{1,\dots,m_0\} \colon (i,0) < (j,1)\} = \{i \in \{1,\dots,m_0'\} \colon (i,0) <' (j,1)\}$.
Often, we simply write $<$ for the triple $(m_0,m_1,<)$, since the $m_0$ and $m_1$ were required to define $I = I_{m_0,m_1}$. We also often write the equivalence class of $(m_0,m_1,<)$ by just $<$, especially in situations where $m_0$ and $m_1$ are not important.
Any equivalence class of orders has a `minimal' order $(m_0^<,m_1^<,<)$ in the sense that $m_0^< \leq m_0$ and $m_1^< \leq m_1$ for all $(m_0,m_1,<)$ in the equivalence class.

Let $X \subseteq \N$ be finite. 
Let $\phi_X \colon \N \to \N \setminus X$ be the unique increasing bijection. Define $\alpha' \in \cP$ by setting $\alpha_i' = \alpha_{\phi_X(i)}$ for all $i \in \N$. For a finite set $Y \subseteq N$, we define $\phi_Y$ and $\beta' \in \cP$ similarly. 
Given an order $(m_0,m_1,<)$ on the set of indices of $(\alpha,\beta)$, we uniquely obtain an order $(m_0',m_1',<')$ on the set of indices of $(\alpha',\beta')$ such that $(i,0) <' (j,1)$ if and only if $(\phi_X(i),0) < (\phi_Y(j),1)$. Let $(\tilde m_0, \tilde m_1, \tilde <)$ be another order on the set of indices of $(\alpha,\beta)$ equivalent to $<$. It is shown in \cite[\S1]{waldspurger} that $(\tilde m_0', \tilde m_1', \tilde <')$ is equivalent to $<'$.

Fix an equivalence class $<$ of orders on the set of indices of $(\alpha,\beta)$.
We shall describe two procedures (a), resp. (b). 
For procedure (a), fix a representative $(m_0,m_1,<)$ such that $m_0 \geq 1$. 
Let $a_1 = 1$. Suppose $a_i$ is defined for some $i \in \N$. If $B_i := \{j \in \{1,\dots,m_1\} \colon (a_i,0) < (j,1) \}$ is non-empty, let $b_i = \min B_i$.
For $i \in \N_{\geq2}$, if $A_i := \{j \in \{1,\dots,m_0\} \colon (b_{i-1},1) < (j,0) \}$ is non-empty, let $a_i = \min A_i$. 
This defines integers $a_1,\dots,a_p,b_1,\dots,b_q \in \N$ for some $p,q \in \N$.
Let $\mu_1 = \alpha_{a_1} + \beta_{b_1} + \alpha_{a_2} + \beta_{b_2} + ...$.
Let $(\alpha',\beta') \in \mathcal P \times \mathcal P$ as above for $X = \{a_1,\dots,a_p\}$ and $Y = \{b_1,\dots,b_q\}$. 
For procedure (b), fix a representative $(m_0,m_1,<)$ such that $m \geq 1$. We define $b_1 = 1$, and we recursively define $a_1, b_2, a_2,b_3,\dots$ similarly as in procedure (a) to obtain integers $a_1,\dots,a_p, b_1,\dots,b_q \in \N$ for some $p,q \in \N$, and a pair of partitions $(\alpha',\beta')$. We also define $\nu_1 = \beta_{b_1} + \alpha_{a_1} + \dots$.

It is shown in \cite[\S1]{waldspurger} that each of the objects defined in both procedures are independent of the choice of representative $(m_0,m_1,<)$. 

\subsection{The sets $P(\alpha,\beta,<)$ and $P_{A,B;s}(\alpha,\beta,<)$}
Let $(\alpha,\beta) \in \cP \times \cP$ and consider an order $<$ on the set of its indices. 
We will define a set $P(\alpha,\beta,<)$ by induction on $m_0^< + m_1^<$. First, define $P(\varnothing, \varnothing,<) = \{(\varnothing,\varnothing)\}$. Suppose that $m_0^< + m_1^< > 0$. If $m_0^< > 0$ (resp. $m_1^< > 0$) we obtain $\mu_1, (\alpha',\beta'),<'$ (resp. $\nu_1,(\alpha'',\beta''),<''$) from procedure (a) (resp. (b)). By induction, $P(\alpha',\beta',<')$ and $P(\alpha'',\beta'',<'')$ have been defined. We define $P(\alpha,\beta,<) = P^a(\alpha,\beta,<) \cup P^b(\alpha,\beta,<)$, where
\begin{align}
P^a(\alpha,\beta,<) &= 
\begin{cases}
\varnothing &\text{if } m_0^< = 0,
\\
\{((\mu_1) \sqcup \mu', \nu') \colon (\mu',\nu') \in P(\alpha',\beta',<')\} &\text{if } m_0^< \neq 0,
\end{cases}
\\
P^b(\alpha,\beta,<)
&= 
\begin{cases}
\varnothing &\text{if } m_1^< = 0,
\\
\{(\mu'', (\nu_1) \sqcup \nu'') \colon (\mu'',\nu'') \in P(\alpha'',\beta'',<'')\} &\text{if }m_1 \neq 0.
\end{cases}
\end{align}

Let $A, B \in \R$, $s \in \R_{>0} $. We define a subset $P_{A,B;s}(\alpha,\beta,<)$ of $P(\alpha,\beta,<)$ by recursion on $m_0^< + m_0^<$. 
Let $P_{A,B;s}(\varnothing, \varnothing,<) = \{(\varnothing,\varnothing)\}$.
Suppose $m_0^< + m_1^< > 0$. If the conditions
\begin{enumerate}
\item \label{a:1} $m_0^< > 0$;
\item \label{a:2} $m_1^< = 0$, or $(1,0) < (1,1)$ and $\alpha_1 + A \geq B$, or $(1,1) < (1,0)$ and $\beta_1 + B \leq A$.
\end{enumerate}
are satisfied, consider $\mu_1, (\alpha',\beta'),<'$ from procedure (a) and let 
\begin{align}
P_{A,B;s}^a(\alpha,\beta,<) &= \{((\mu_1) \sqcup \mu', \nu') \colon (\mu',\nu') \in P_{A-s,B;s}(\alpha',\beta',<')\}.	
\end{align}
If neither conditions hold, we set $P_{A,B;s}^a(\alpha,\beta,<) = \varnothing$. 
Similarly, if the conditions
\begin{enumerate}
\item \label{b:1} $m_1^< > 0$;
\item \label{b:2} $m_0^< = 0$, or $(1,0) < (1,1)$ and $\alpha_1 + A \leq B$, or $(1,1) < (1,0)$ and $\beta_1 + B \geq A$.
\end{enumerate}
are satisfied, consider $\nu_1, (\alpha'',\beta''),<''$ from procedure (b) and let 
\begin{align}
P_{A,B;s}^b(\alpha,\beta,<)  &= \{(\mu'', (\nu_1) \sqcup \nu'') \colon (\mu'',\nu'') \in P_{A,B-s;s}(\alpha'',\beta'',<'')\}.
\end{align}
If neither conditions hold, we set $P_{A,B;s}^b(\alpha,\beta,<) = \varnothing$. 
Finally, let $P_{A,B;s}(\alpha,\beta,<) =  P_{A,B;s}^a(\alpha,\beta,<) \cup P_{A,B;s}^b(\alpha,\beta,<)$.

\begin{proposition}\label{prop:PABdiff}
For any $C \in \R$, we have $P_{A+C,B+C;s}(\alpha,\beta,<) = P_{A,B;s}(\alpha,\beta,<)$.
\end{proposition}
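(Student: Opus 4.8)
The plan is to prove this by induction on $m_0^< + m_1^<$, mirroring the recursive definition of $P_{A,B;s}(\alpha,\beta,<)$ itself. The base case $m_0^< + m_1^< = 0$ is immediate, since $P_{A+C,B+C;s}(\varnothing,\varnothing,<) = \{(\varnothing,\varnothing)\} = P_{A,B;s}(\varnothing,\varnothing,<)$ regardless of the values of the parameters. For the inductive step, assume the statement holds for all pairs of partitions whose minimal order has strictly smaller total size; in particular it holds for $(\alpha',\beta',<')$ obtained from procedure (a) and for $(\alpha'',\beta'',<'')$ obtained from procedure (b), since in each procedure the total size $m_0 + m_1$ strictly decreases (at least one index is consumed).

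The key observation is that the conditions \ref{a:1}--\ref{a:2} and \ref{b:1}--\ref{b:2} governing whether $P^a$ and $P^b$ are nonempty depend on $A$ and $B$ only through the comparisons ``$\alpha_1 + A \geq B$'', ``$\beta_1 + B \leq A$'', ``$\alpha_1 + A \leq B$'', and ``$\beta_1 + B \geq A$'' — that is, only through the sign of $A - B$ shifted by $\pm\alpha_1$ or $\pm\beta_1$. Replacing $(A,B)$ by $(A+C, B+C)$ leaves every such difference $A - B$, $\alpha_1 + A - B$, $\beta_1 + B - A$ unchanged, so the conditions \ref{a:1}--\ref{a:2} hold for $(A+C,B+C)$ if and only if they hold for $(A,B)$, and likewise for \ref{b:1}--\ref{b:2}. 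Hence $P^a_{A+C,B+C;s}$ is nonempty exactly when $P^a_{A,B;s}$ is, and similarly for the $b$-part.

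When the $a$-conditions do hold, we have $\mu_1, (\alpha',\beta'),<'$ independent of $A,B$ (these come purely from procedure (a) applied to the order), and
\begin{align}
P^a_{A+C,B+C;s}(\alpha,\beta,<) &= \{((\mu_1)\sqcup\mu',\nu') \colon (\mu',\nu') \in P_{A+C-s,\,B+C;s}(\alpha',\beta',<')\}.
\end{align}
Now $P_{(A-s)+C,\,B+C;s}(\alpha',\beta',<') = P_{A-s,B;s}(\alpha',\beta',<')$ by the inductive hypothesis (the shift is still by the common constant $C$), so $P^a_{A+C,B+C;s}(\alpha,\beta,<) = P^a_{A,B;s}(\alpha,\beta,<)$. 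The same argument with the roles of the two coordinates interchanged — the inductive hypothesis applied to $P_{A+C,\,(B-s)+C;s}(\alpha'',\beta'',<'')$ — gives $P^b_{A+C,B+C;s}(\alpha,\beta,<) = P^b_{A,B;s}(\alpha,\beta,<)$. Taking the union yields the claim. The only point requiring any care is bookkeeping: one must check that the recursion in $P_{A,B;s}$ genuinely terminates and that $m_0^< + m_1^<$ is the right quantity to induct on (it is, because both procedures strictly shrink the index set and passing to the minimal order only decreases it further); there is no real obstacle here, just the verification that the parameter shift interacts correctly with the recursive calls, which is exactly what the common-constant bookkeeping above records.
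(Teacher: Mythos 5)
Your proof is correct, and since the paper states this proposition without giving a proof, your induction on $m_0^< + m_1^<$ is exactly the evident intended argument: the conditions governing the $a$- and $b$-branches depend on $A,B$ only through differences such as $\alpha_1 + A - B$ and $\beta_1 + B - A$, which are unchanged by the common shift $C$, and the recursive calls replace $(A,B)$ by $(A-s,B)$ or $(A,B-s)$, so the inductive hypothesis applies with the same constant $C$. No gaps.
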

For $C \in \R$ and $\mu, \nu \in \mathcal P$, define 
\begin{align}
[C,-\infty[_s &= (C, C-s, C-2s, \dots) \in \cR,
\\
\Lambda_{A,B;s}(\mu,\nu) &= (\mu + [A,-\infty[_s) \sqcup (\nu + [B,-\infty[_s) \in \cR.
\end{align}

\begin{lemma}
\label{lem:max}
Let $(\mu,\nu) \in P(\alpha,\beta,<)$ and $(\boldsymbol{\mu}, \boldsymbol\nu) \in P_{A,B;s}(\alpha,\beta,<)$. Then
\begin{enumerate}[(a)]
\item $\Lambda_{A,B;s}(\mu,\nu) \leq \Lambda_{A,B;s}(\boldsymbol\mu,\boldsymbol\nu)$,
\item $\Lambda_{A,B;s}(\mu,\nu) = \Lambda_{A,B;s}(\boldsymbol\mu,\boldsymbol\nu)$ if and only if $(\mu,\nu) \in P_{A,B;s}(\alpha,\beta,<)$.
\end{enumerate}
\end{lemma}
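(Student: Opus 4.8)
The plan is to induct on $n:=m_0^<+m_1^<$, establishing (a) and (b) simultaneously. If $n=0$ then $\alpha=\beta=\varnothing$ and $P(\alpha,\beta,<)=P_{A,B;s}(\alpha,\beta,<)=\{(\varnothing,\varnothing)\}$, so both statements are trivial; assume $n>0$. The engine of the induction is a \emph{peeling identity}: if $(\mu,\nu)\in P^a(\alpha,\beta,<)$ is written as $((\mu_1)\sqcup\mu',\nu')$ with $(\mu',\nu')\in P(\alpha',\beta',<')$ from procedure~(a), then — using that the procedure-(a) value $\mu_1$ dominates every part of $\mu'$, a structural feature of procedure~(a) recorded in \cite[\S1]{waldspurger} — one has
\[
\Lambda_{A,B;s}(\mu,\nu)=\{\mu_1+A\}\sqcup\Lambda_{A-s,B;s}(\mu',\nu'),
\]
and dually $\Lambda_{A,B;s}(\mu,\nu)=\{\nu_1+B\}\sqcup\Lambda_{A,B-s;s}(\mu'',\nu'')$ for $(\mu,\nu)\in P^b(\alpha,\beta,<)$. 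I will also use freely that for fixed $t\in\R$ the map $\pi\mapsto\{t\}\sqcup\pi$ on $\cR$ is order-preserving for $\le$ and injective, so prepending a common entry preserves both $\le$ and equality; and that when conditions \ref{a:1}--\ref{a:2} (resp. \ref{b:1}--\ref{b:2}) hold, $\mu_1+A$ (resp. $\nu_1+B$) is the \emph{largest} entry of the corresponding right-hand side, by \ref{a:2} (resp. \ref{b:2}) together with the domination properties of the procedures.

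\emph{Aligned case.} Suppose $(\mu,\nu)$ and $(\boldsymbol\mu,\boldsymbol\nu)$ both arise from procedure~(a) (the (b)-case being symmetric): $(\mu,\nu)=((\mu_1)\sqcup\mu',\nu')$, $(\boldsymbol\mu,\boldsymbol\nu)=((\mu_1)\sqcup\boldsymbol\mu',\boldsymbol\nu')$ with the \emph{same} value $\mu_1$, where $(\mu',\nu')\in P(\alpha',\beta',<')$ and $(\boldsymbol\mu',\boldsymbol\nu')\in P_{A-s,B;s}(\alpha',\beta',<')$; in particular \ref{a:1}--\ref{a:2} hold. The minimal order of $(\alpha',\beta',<')$ is strictly smaller, so the inductive hypothesis (with offsets $A-s,B$) gives $\Lambda_{A-s,B;s}(\mu',\nu')\le\Lambda_{A-s,B;s}(\boldsymbol\mu',\boldsymbol\nu')$, with equality iff $(\mu',\nu')\in P_{A-s,B;s}(\alpha',\beta',<')$. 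Applying the peeling identity to both sides and prepending $\mu_1+A$ yields (a); and equality holds iff $(\mu',\nu')\in P_{A-s,B;s}(\alpha',\beta',<')$, i.e. iff $(\mu,\nu)\in P^a_{A,B;s}(\alpha,\beta,<)\subseteq P_{A,B;s}(\alpha,\beta,<)$, which is (b). This settles both parts whenever $(\mu,\nu)$ admits a representation of the same type as $(\boldsymbol\mu,\boldsymbol\nu)$.

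\emph{Mixed case.} It remains to compare $(\mu,\nu)\in P^a(\alpha,\beta,<)$ with $(\boldsymbol\mu,\boldsymbol\nu)\in P^b_{A,B;s}(\alpha,\beta,<)$ (the mirror case is symmetric). Suppose $(1,0)<(1,1)$. Then procedure~(a) has $b_1=1$ and procedure~(b) picks a first $\alpha$-index $\ge2$; tracing the two recursions shows that from that step on they follow the same staircase, so $\mu_1=\alpha_1+\nu_1$ and $(\alpha',\beta',<')$ is $(\alpha'',\beta'',<'')$ with the largest part of $\alpha''$ (namely $\alpha_1$) deleted. Condition \ref{b:2} gives $\alpha_1+A\le B$, hence $\mu_1+A=(\nu_1+B)+(\alpha_1+A-B)\le\nu_1+B$, the largest entry of $\Lambda_{A,B;s}(\boldsymbol\mu,\boldsymbol\nu)$; so $S_1(\Lambda_{A,B;s}(\mu,\nu))\le S_1(\Lambda_{A,B;s}(\boldsymbol\mu,\boldsymbol\nu))$, strictly unless $\alpha_1+A=B$. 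For the higher partial sums one peels $\nu_1+B$ off the right-hand side, uses the relation between $(\alpha',\beta',<')$ and $(\alpha'',\beta'',<'')$ and \Cref{prop:PABdiff} to reconcile the offsets $(A-s,B)$ and $(A,B-s)$, and invokes the inductive hypothesis on the smaller datum; this yields $\Lambda_{A,B;s}(\mu,\nu)\le\Lambda_{A,B;s}(\boldsymbol\mu,\boldsymbol\nu)$, and shows equality can hold only when $\alpha_1+A=B$ and $(\mu',\nu')$ is a maximising element, in which case \ref{a:2} holds and $(\mu,\nu)\in P^a_{A,B;s}(\alpha,\beta,<)$. The case $(1,1)<(1,0)$ is identical with the coordinates exchanged, via $\nu_1=\beta_1+\mu_1$ and \ref{b:2} in the form $\beta_1+B\ge A$. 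Finally the converse in (b) — $(\mu,\nu)\in P_{A,B;s}(\alpha,\beta,<)$ implies $\Lambda_{A,B;s}(\mu,\nu)=\Lambda_{A,B;s}(\boldsymbol\mu,\boldsymbol\nu)$ — follows by applying part~(a) in both directions. The one step I expect to be genuinely delicate is the offset-reconciliation after peeling in the mixed case: the residual data differ by deletion of a part and the two normalisations differ by more than a global shift, so the reduction to the inductive hypothesis needs care — this is precisely the combinatorics carried out in \cite[\S2]{waldspurger}, which transfers here unchanged since $P(\alpha,\beta,<)$, $P_{A,B;s}(\alpha,\beta,<)$ and procedures (a), (b) are defined as in \emph{loc. cit.}
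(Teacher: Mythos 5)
The paper itself states this lemma without proof: it is imported, together with the sets $P(\alpha,\beta,<)$, $P_{A,B;s}(\alpha,\beta,<)$ and procedures (a), (b), from Waldspurger's \S1--\S2, so there is no in-paper argument to compare against and your proposal has to stand on its own. Its frame — induction on $m_0^<+m_1^<$, a peeling identity, and an aligned/mixed case split — is reasonable, and the aligned case is essentially correct. Note, though, that the peeling identity $\Lambda_{A,B;s}((\mu_1)\sqcup\mu',\nu')=\{\mu_1+A\}\sqcup\Lambda_{A-s,B;s}(\mu',\nu')$ is not formal: it needs the fact that $\mu_1$ dominates every part of $\mu'$ for \emph{every} $(\mu',\nu')\in P(\alpha',\beta',<')$ (and dually for $\nu_1$), and the same domination facts underlie your ``largest entry'' claims. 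This is true (any alternating chain among the unused indices beginning with an $\alpha$-part is dominated term by term by a tail of the chain defining $\mu_1$, using the minimality in the choice of the $a_i$ and $b_i$), but in your write-up it is only asserted with a pointer to Waldspurger, and it is load-bearing.

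The genuine gap is the mixed case, which is exactly where the inequalities in conditions \ref{a:2} and \ref{b:2} do their work. After peeling, your left-hand side is $\{\mu_1+A\}\sqcup\Lambda_{A-s,B;s}(\mu',\nu')$ with $(\mu',\nu')\in P(\alpha',\beta',<')$, while the right-hand side is $\{\nu_1+B\}\sqcup\Lambda_{A,B-s;s}(\boldsymbol\mu'',\boldsymbol\nu'')$ with $(\boldsymbol\mu'',\boldsymbol\nu'')\in P_{A,B-s;s}(\alpha'',\beta'',<'')$. The two residuals live over \emph{different} bipartitions ($\alpha''$ is $\alpha'$ with the extra part $\alpha_1$ inserted) and carry \emph{different} offsets, and the prepended singletons $\mu_1+A$ and $\nu_1+B$ agree only when $\alpha_1+A=B$. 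The inductive hypothesis compares an element of $P(\gamma,\delta,<)$ with an element of $P_{A',B';s}(\gamma,\delta,<)$ for one and the same datum and one and the same pair of offsets, so it is not applicable in the form you invoke it; and \Cref{prop:PABdiff} cannot repair the mismatch, since it only permits a simultaneous translation $(A,B)\mapsto(A+C,B+C)$, never the anti-diagonal change from $(A-s,B)$ to $(A,B-s)$. Beyond the $S_1$-comparison (which is fine once the domination facts are granted), no argument for the higher partial sums is actually given, and the equality analysis needed for the ``only if'' half of (b) in this case is likewise only asserted. You acknowledge this and defer it to ``the combinatorics carried out in [Waldspurger \S2]'' — but that deferred step \emph{is} the substance of the lemma; as a standalone proof, the crucial cross-comparison between a procedure-(a) element and a procedure-(b) maximiser is missing.
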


\begin{definition}\label{def:pABs}
Let $(\mu,\nu) \in P_{A,B;s}(\alpha,\beta,<)$. By Lemma \ref{lem:max} we can define
\begin{align}
p_{A,B;s}(\alpha,\beta,<) := \Lambda_{A,B;s}(\mu, \nu).
\end{align}
\end{definition}


\subsection{Some results for ${P_{k,-k;2}(\alpha,\beta,<_{\alpha,\beta,k})}$}\label{subsec:P}
Let $n \in \N$.
A pair $(\alpha,\beta) \in \mathcal P \times \mathcal P$ is called a \emph{bipartition} of $n$ if $S(\alpha) + S(\beta) = n$.
Denote by $\mathcal P_2(n)$ the set of bipartitions of $n$.
Let $(\alpha,\beta) \in \mathcal P_2(n)$, $k \in \Z$ and write $\Lambda = \Lambda_{k,-k;2}(\alpha,\beta)$.
Let $m_0 \in \N$ be the smallest integer such that $m_0 \geq t(\alpha)$ and $m_0 - k \geq t(\beta)$ and let $m_1 = m_0 - k$. Let $r = m_0+m_1$. 
Consider the condition
\begin{enumerate}
\item\label{hypoth:1}
$\Lambda^r := (\Lambda_1,\dots,\Lambda_r) \in \cR_f$
has `no multiplicities', i.e. each term of $\Lambda^r$ has multiplicity $1$.
\end{enumerate}
Let $H(n,k)$ be the set of $(\alpha,\beta) \in \mathcal P_2(n)$ such that $\Lambda_{k,-k;2}(\alpha,\beta)$ satisfies condition \ref{hypoth:1} above and let $H_S(n,k) = \{ \Lambda_{k,-k;2}(\alpha,\beta) \in \mathcal P_2(n) \colon (\alpha,\beta) \in H(n,k)\}$.
\begin{remark}\label{rem:ineq}
Note that $m_0 = \frac{r+k}{2}$ and $m_1 = \frac{r-k}{2}$ and 
\begin{align}
\alpha_{m_0} + k + 2 - 2m_0 = \alpha_{\frac{r+k}{2}} + 2 - r &> -r,
\\
\beta_{m_1} - k + 2 - 2m_1 = \beta_{\frac{r-k}{2}}  + 2 - r &> -r,
\\
\alpha_{m_0+1} + k + 2 - 2(m_0+1) &= -r,
\\
\beta_{m_1+1} - k + 2 - 2(m_1+1) &= -r,
\end{align}
so $\Lambda^r$ consists of the first $m_0$ terms of $\alpha + [k,-\infty[_2$ and the first $m_1$ terms of $\beta + [-k,-\infty[_2$.
\end{remark}
If $(\alpha,\beta) \in H(n,k)$, there exists a unique order $<_{\alpha,\beta,k}$ on the indices of $(\alpha,\beta)$ such that 
for $i < m_0$, $j < m_1$, we have $(i,0) <_{\alpha,\beta,k} (j,1)$ if and only if $\alpha_i + k + 2 -2i > \beta_j  - k + 2 -2j$. 

\begin{lemma}\label{lemma:unique}
Let $k \in \Z$, $(\alpha,\beta) \in H(n,k)$ and consider the order $<_{\alpha,\beta,k}$.
Let $(\alpha',\beta')$ be a bipartition obtained from $(\alpha,\beta)$ via procedure (a) or (b) and let $<'$ be the induced ordering on the indices of $(\alpha',\beta')$. Then
$P_{k,-k;2}(\alpha,\beta,<_{\alpha,\beta,k})$ has a unique element $(\mu,\nu)$,
\end{lemma}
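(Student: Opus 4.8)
The plan is to prove the statement by induction on $m_0^{<_{\alpha,\beta,k}} + m_1^{<_{\alpha,\beta,k}}$, mirroring the recursive definitions of $P(\alpha,\beta,<)$ and $P_{A,B;s}(\alpha,\beta,<)$. Since $P_{k,-k;2}(\alpha,\beta,<_{\alpha,\beta,k}) = P^a_{k,-k;2} \cup P^b_{k,-k;2}$, the key point is to show that exactly one of the two defining conditions --- the one for $P^a$, involving $m_0^< > 0$ together with the comparison of $\alpha_1 + k$ against $-k$, and the one for $P^b$, involving $m_1^< > 0$ together with the comparison of $\beta_1 - k$ against $k$ --- can be satisfied at each step of the recursion, and that the recursion preserves membership in $H(\cdot,\cdot)$ so that the induction hypothesis applies to $(\alpha',\beta')$ with its induced order $<'$. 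The statement about $(\alpha',\beta')$ asked for in the lemma is then immediate: once we know $P_{k,-k;2}(\alpha,\beta,<_{\alpha,\beta,k})$ is a singleton, and once we know each recursive step strictly decreases $m_0^< + m_1^<$ while keeping us inside the world where the recursion is defined, the induced orders are forced.

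The first step is to unwind what the order $<_{\alpha,\beta,k}$ says: by definition $(i,0) <_{\alpha,\beta,k} (j,1)$ iff $\alpha_i + k + 2 - 2i > \beta_j - k + 2 - 2j$, i.e. iff the corresponding entry of $\alpha + [k,-\infty[_2$ exceeds the corresponding entry of $\beta + [-k,-\infty[_2$. By Remark \ref{rem:ineq}, $\Lambda^r$ is precisely the merge (in decreasing order) of the first $m_0$ entries of $\alpha + [k,-\infty[_2$ and the first $m_1$ entries of $\beta + [-k,-\infty[_2$, and condition \ref{hypoth:1} says these $r$ values are pairwise distinct. Hence the order $<_{\alpha,\beta,k}$ is exactly the order in which these entries appear in $\Lambda^r$; in particular $(1,0) <_{\alpha,\beta,k} (1,1)$ iff $\alpha_1 + k > \beta_1 - k$, and the two are never equal. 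This dichotomy is what will drive the uniqueness: whichever of $(1,0)$, $(1,1)$ comes first determines which of procedure (a) or (b) the recursion must follow, because the inequalities $\alpha_1 + A \geq B$ (with $A = k$, $B = -k$) and $\beta_1 + B \leq A$ are, after substitution, exactly the same dichotomy, and likewise for the (b)-conditions with reversed sense.

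The second, and main, step is to verify that the recursion descends within $H$. Concretely, if $(1,0) <_{\alpha,\beta,k} (1,1)$ (the case for procedure (a)), I would show that $(\alpha',\beta') \in H(n - \mu_1, k')$ for the appropriate shifted parameter --- the passage from $P_{A,B;s}$ to $P_{A-s,B;s}$ with $s=2$ corresponds to replacing $k$ by $k - 2$ on the sub-bipartition obtained by deleting the indices $a_1,\dots,a_p,b_1,\dots,b_q$ --- and that the induced order $<'$ on the indices of $(\alpha',\beta')$ agrees with $<_{\alpha',\beta',k'}$. The cleanest way to see this is to track the effect on $\Lambda$: deleting the alternating chain $a_1,b_1,a_2,b_2,\dots$ and shifting $A$ by $-s$ removes exactly the largest entry and re-indexes, so the "no multiplicities" condition is inherited, and the characterisation of $<_{\alpha',\beta',k'}$ via the merged sequence is preserved because removing entries from a totally ordered multiplicity-free sequence leaves a totally ordered multiplicity-free sequence. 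The main obstacle is precisely this bookkeeping: one must check that procedures (a)/(b) interact correctly with the maps $\phi_X, \phi_Y$ (already handled in \cite[\S1]{waldspurger}) and that the arithmetic of the shifts $A \mapsto A - s$, $B \mapsto B - s$ lines up with $k \mapsto k \mp s$ so that Remark \ref{rem:ineq} applies verbatim to $(\alpha',\beta')$. Once that is in place, the inductive step is: $P_{k,-k;2}(\alpha,\beta,<_{\alpha,\beta,k})$ equals either $\{((\mu_1)\sqcup\mu',\nu')\}$ or $\{(\mu'',(\nu_1)\sqcup\nu'')\}$ --- never a union of two non-empty sets, by the dichotomy --- where the inner pair is the unique element of the corresponding $P$-set for $(\alpha',\beta')$ supplied by the induction hypothesis. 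The base case $m_0^< + m_1^< = 0$ is $P_{k,-k;2}(\varnothing,\varnothing,<) = \{(\varnothing,\varnothing)\}$, which is a singleton by definition, completing the induction.
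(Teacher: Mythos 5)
Your skeleton is the same as the paper's: induction on $m_0^<+m_1^<$, the observation that for $(\alpha,\beta)\in H(n,k)$ at most one of the branches $P^a_{k,-k;2}$, $P^b_{k,-k;2}$ is non-empty (so the recursion follows a forced branch), and reduction to the pair $(\alpha',\beta')$ produced by procedure (a) or (b) with its induced order. Two points of the reduction are off, one minor and one fatal. The minor one: when $(1,0)<(1,1)$, killing the $P^b$-branch requires $\alpha_1+k>-k$, which follows from $\alpha_1+k>\beta_1-k\geq -k$, not from the dichotomy alone; and the shifted parameter is wrong. After procedure (a) the set in play is $P_{k-2,-k;2}(\alpha',\beta',<')$, which by \Cref{prop:PABdiff} equals $P_{k-1,-k+1;2}(\alpha',\beta',<')$, so the induction hypothesis has to be applied with parameter $k-1$: what must be proved is $(\alpha',\beta')\in H(n',k-1)$ and that $<'$ is equivalent to $<_{\alpha',\beta',k-1}$, not ``$k$ replaced by $k-2$'' as you write.

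The fatal point is your justification of exactly this step, which you correctly identify as the main one. You argue that the ``no multiplicities'' condition and the order are inherited because passing to $(\alpha',\beta')$ merely deletes entries of $\Lambda^r$, and deleting entries from a multiplicity-free ordered sequence leaves one. That misdescribes the operation: besides deleting the chain $a_1,b_1,a_2,\dots$, one re-indexes, and the parameter changes. The surviving index $(i,0)$ corresponds to the old index $(\phi(i),0)$, and its new symbol entry $\alpha'_i+(k-1)+2-2i=\alpha_{\phi(i)}+k+1-2i$ differs from the old entry $\alpha_{\phi(i)}+k+2-2\phi(i)$ by $2(\phi(i)-i)-1$, an amount depending on how many deleted $a$'s precede it; on the $\beta$-side the shift is $2(\psi(j)-j)+1$. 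So the two rows slide against each other by different, index-dependent even-versus-odd amounts, cross-row differences change by $2(h(i)-g(j)-1)$, and a priori comparisons between an $\alpha'$-entry and a $\beta'$-entry can flip or collide. Ruling this out is the actual content of the inductive step: one needs the inequalities $\alpha'_i+k'+2-2i>\beta'_j-k'+2-2j$ whenever $(i,0)<'(j,1)$ and the reverse inequality in the other case, proved using the structure of procedure (a) (e.g.\ $\beta_{b_{h(i)}}\geq\beta_{\psi(j)}$ and the weak monotonicity of $q\mapsto q-b_q$ and $p\mapsto p-a_p$), together with boundary estimates for indices beyond $t(\alpha'),t(\beta')$ and a case analysis on whether the chain ends with an $a$ or a $b$, in order both to verify $(\alpha',\beta')\in H(n',k-1)$ (with the minimal $m_0,m_1$ in its definition) and to match $<'$ with $<_{\alpha',\beta',k-1}$. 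Without these estimates the inductive step, and hence the uniqueness claim, is unsupported.
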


\begin{proof}
Let $m_0$ and $m_1 = m_0 - k$ be as above and consider the representative $(m_0,m_1,<)$ of the order $<_{\alpha,\beta,k}$. 
We will prove the result by induction on $m_0^< + m_1^<$.
Clearly, the result holds if $m_0^< = 0$ or $m_1^< = 0$. Suppose $m_0^< > 0$ and $m_1^< > 0$. We may assume that $(1,0) < (1,1)$, otherwise we consider $(\beta,\alpha,-k)$. 
From procedure (a) we obtain a bipartition $(\alpha'$, $\beta') \in \mathcal P_2(n')$ for some $n' \in \N$. Since $(\alpha,\beta) \in H(n,k)$, we have $\alpha_1 + k > \beta_1 - k > -k$, so $P_{k,-k;2}(\alpha,\beta,<) = P_{k,-k;2}^a(\alpha,\beta,<)$ is in bijection with $P_{k-2,-k;2}(\alpha',\beta',<') = P_{k-1,-k+1;2}(\alpha',\beta',<')$. 
Hence 
it suffices to show that $(\alpha',\beta') \in H(n',k-1)$ 
and that $<'$ is equivalent to $<_{\alpha',\beta',k-1}$. By the induction hypothesis, $P_{k-1,-k+1;2}(\alpha',\beta',<')$ has a unique element $(\mu,\nu)$. For Lemma \ref{lemma:PP}, we will also prove that $<'$ is equivalent to $<_{\alpha',\beta',k}$.

Let $k' \in \{k-1,k\}$. 
Procedure (a) applied to $(\alpha,\beta)$ also creates $\mu_1,a_1,\dots, a_T,b_1,\dots,b_t \in \N$ where 
$T \in \{t,t+1\}$. 
Let $m_0' = m_0 - T$ and $m_1' = m_1 - t$ and let $\phi \colon \{1,\dots,m_0'\} \to \{1,\dots,m_0\} \setminus \{a_1,\dots,a_T\}$ and $\psi \colon \{1,\dots,m_1'\} \to \{1,\dots,m_1\} \setminus \{b_1,\dots,b_t\}$ be the unique increasing bijections. For each $i \in \{1,\dots,m_0'\}$, resp. $j \in \{1,\dots,m_1'\}$, let $h(i) \in \{1,\dots,T\}$, resp. $g(j) \in \{1,\dots,t\}$ be the unique element for which $a_{h(i)} < \phi(i) < a_{h(i)+1}$, resp. $b_{g(j)} < \psi(j) < b_{g(j)+1}$. We have $\phi(i) = h(i) + i$, $\psi(j) = g(j)+j$ and the maps $i\mapsto h(i)$, $j \mapsto g(j)$ are weakly increasing.

Let $i,j \in \N$. 
To show that $(\alpha',\beta') \in H(n',k')$, we prove the following:
\begin{enumerate}[(a)]
\item\label{ineq:1} if $i \leq m_0'$, $j \leq m_1'$ and $(i,0) <' (j,1)$, then
\begin{align}
\alpha_i' + k' + 2 - 2i > \beta_j'- k' + 2 - 2j,
\end{align}
\item\label{ineq:2} if $i \leq m_0'$, $j \leq m_1'$ and $(j,1) <' (i,0)$, then
\begin{align}
\beta_j' - k' + 2 - 2j > \alpha_i' + k' + 2 -2i,
\end{align}
\item\label{ineq:3} if $i \leq m_0'$ and $j > m_1'$, then
\begin{align}
\alpha_i' + k' + 2 - 2i \geq - k' + 2 - 2j,
\end{align}
\item\label{ineq:4} if $i > m_0'$ and $j \leq m_1'$, then
\begin{align}
\beta_j' - k' + 2 - 2j \geq k' + 2 -2i,
\end{align}
\item\label{ineq:5} The inequality in \ref{ineq:4} is strict if $\beta_j' \neq 0$.
\end{enumerate}
Assume for now that these inequalities hold.
Let $\Lambda' = \Lambda_{k'-k';2}(\alpha',\beta')$ and for $m \in \N$, let $\Lambda'^m = (\Lambda'_1,\dots,\Lambda'_m)$.
Let $\bm m_0' \in \N$ be the smallest integer such that $\bm m_0' \geq t(\alpha')$ and $\bm m_1' := \bm m_0' - k' \geq t(\beta')$. 
Note that $\Lambda'^{m_0'+m_1'}$ has no multiplicities by \ref{ineq:1} -- \ref{ineq:4}, so if we have $m_0' + m_1' \geq \bm m_0' + \bm m_1'$, then $\Lambda'^{\bm m_0'+ \bm m_1'}$ has no multiplicities and so $(\alpha',\beta') \in H(n',k')$.

If $T = t$, then $m_0' - m_1' = k \geq k'$. Hence $m_0' \geq t(\alpha')$ and $m_0' - k' \geq m_1' \geq t(\beta')$, so we have $m_0' \geq \bm m_0'$ and $m_1' \geq \bm m_1$, so $(\alpha',\beta') \in H(n',k')$.

Suppose $T = t+1$.
Then $m_0' - m_1' = k-1$. 
If $k' = k -1$, then similarly as above, we have $m_0' \geq \bm m_0'$ and $m_1' \geq \bm m_1'$ and so $(\alpha',\beta') \in H(n',k')$. 

Suppose $T = t+1$ and $k' = k$. 
If $\beta_{m_1'} = 0$, then $m_0' \geq t(\alpha')$ and $m_0' - k' = m_1' - 1 \geq t(\beta')$, so $m_0 \geq \bm m_0'$ and $m_1' > \bm m_1'$, so $(\alpha',\beta') \in H(n',k')$. 

Suppose that $T = t+1$, $k' = k$, and $\beta_{m_1'} \neq 0$. Then $m_0' + 1 \geq t(\alpha')$ and $(m_0' + 1) - k' = m_1' \geq t(\beta')$, so $m_0' + 1 \geq \bm m_0'$ and $m_1' \geq \bm m_1'$. 
Now by \ref{ineq:1} -- \ref{ineq:5}, it follows that $\Lambda'^{m_0'+m_1'+1}$ has no multiplicities and since $\bm m_0' + \bm m_1' \leq m_0' + m_1' + 1$, we then have $(\alpha',\beta') \in H(n',k')$.

We conclude that $(\alpha',\beta') \in H(n',k')$, and so we can define $<_{\alpha',\beta',k'}$.
We show that $(m_0',m_1',<')$ and $(\bm m_0',\bm m_1',<_{\alpha,\beta,k'})$ are equivalent. Suppose $i \in \N$ such that $\alpha_i' > 0$. Suppose $j \leq m_1'$ such that $(j,1) <' (i,0)$. Then by \ref{ineq:2}, we have $(j,1) <_{\alpha',\beta',k'} (i,0)$, and by Remark \ref{rem:ineq}, we have $j \leq \bm m_1'$. Suppose $j \leq \bm m_1'$ and $(j,1) <_{\alpha',\beta',k'} (i,0)$. Then $\beta_j' - k' + 2 - 2j > \alpha_i' + k' + 2 - 2i$. By \ref{ineq:3}, we have $j \leq m_1'$ and by \ref{ineq:1}, we have $(j,1) <' (i,0)$. 
Similarly, we can show that for $j \in \N$ such that $\beta_j' > 0$, we have $i  \leq m_0'$ and $(i,0) <' (j,1)$ if and only if $i \leq \bm m_0'$ and $(i,0) <_{\alpha',\beta',k'} (j,1)$. 
Thus $<'$ and $<_{\alpha',\beta',k'}$ are equivalent.

It remains to prove \ref{ineq:1} - \ref{ineq:5}
Let $i \in \{1,\dots,m_0'\}$, $j \in \{1,\dots,m_1'\}$.
Suppose that $(i,0) <' (j,1)$, i.e. $(\phi(i),0) < (\psi(j),1)$. 
Note that $(\phi(i),0) < (b_{h(i)},1)$, and since $(\alpha,\beta) \in H(n,k)$, we have
\begin{align}\label{eq:alphabeta}
a_{\phi(i)} + k + 2 - 2\phi(i) > \beta_{b_{h(i)}} - k + 2 - 2b_{h(i)}.
\end{align}
Since $(\phi(i),0) < (\psi(j),1)$, we have $b_{h(i)} \leq \psi(j)$,
so $\beta_{b_{h(i)}} \geq \beta_{\psi(j)} = \beta_j'$. We have $\alpha_{\phi(i)} = \alpha_i'$, so 
\begin{align}
\alpha_i' + k' + 2 - 2i > \beta_j'- k' + 2 - 2j + 2X,
\end{align}
where
\begin{align}
X = k' - k + j - b_{h(i)} + \phi(i) - i.
\end{align}
We have $\phi(i) - i = h(i)$. Since $b_{h(i)} \leq \psi(j)$, we have $h(i) \leq g(j)$. Furthermore, $q\mapsto b_q$ is strictly increasing, so $q \mapsto q - b_q$ is weakly decreasing, so $g(j) - b_{g(j)} \leq h(i) - b_{h(i)}$. Hence we have
\begin{align}
X 
&= k' - k + j + h(i) - b_{h(i)}
\geq -1 + j + g(j) - b_{g(j)}
= -1 + \psi(j) - b_{g(j)} 
\geq 0,
\end{align}
where the last inequality follows from the fact that $\psi(j) > b_{g(j)}$. Thus we have proved \ref{ineq:1}.

Suppose that $(j,1) <' (i,0)$, i.e. $(\psi(j),1) < (\phi(i),0)$. 
We have $(b_{g(j)},1) < (a_{g(j)+1},0)$, and since $(\alpha,\beta) \in H(n,k)$, we have 
\begin{align}
\beta_{\psi(j)} - k + 2 - 2\psi(j) > \alpha_{a_{g(j)+1}} + k + 2 - 2a_{g(j)+1}.
\end{align}
Since $(\psi(j),1) < (\phi(i),0)$, we have $a_{g(j)+1} < \phi(i)$, so $h(i) \geq g(j)+1$ and so
\begin{align}
\beta_j' - k + 2 - 2j > \alpha_i' + k' + 2 -2i + 2Y,
\end{align}
where
\begin{align}
Y = 
k - k' + i - a_{g(j)+1} + \psi(j) - j
= k - k' + i - a_{g(j)+1} + g(j).
\end{align}
Note that $k-k' \geq 0$.
Similarly as before, the map $p \mapsto p - a_p$ is weakly decreasing, so $(g(j)+1) - a_{g(j)+1} \geq h(i) - a_{h(i)}$, and so
\begin{align}
g(j) - a_{g(j)+1} = (g(j)+1) - a_{g(j)+1} - 1 \geq h(i)-a_{h(i)} - 1 = \phi(i) - i - a_{h(i)} - 1 \geq -i,
\end{align}
where the last inequality holds since $\phi(i) > a_{h(i)}$.
Thus we have $Y \geq 0 + i - i \geq 0$ and \ref{ineq:2} follows.

Let $i \leq m_0'$ and $j > m_1'$. 
Note that $m_0' - m_1' = k$ if $T=t$ and $m_0' - m_1' = k-1$ if $T = t+1$, so $m_0' - m_1' \leq k'+1$. Thus
\begin{align}
2(k' + j - i) \geq 2(k' + m_1' + 1 - m_0') \geq 2(k' - k') \geq 0.
\end{align}
Thus $k'+ 2 - 2i \geq -k' + 2 - 2j$, and \ref{ineq:3} follows.
Similarly, we can show that $k' + 2 - 2j \geq k' + 2 -2i$, and so \ref{ineq:4} and \ref{ineq:5} follow.
\end{proof}

We state a consequence of \cite[(7), p.387]{waldspurger} without proof.
\begin{proposition}\label{prop:symbolterm}
Let $k \in \Z$ and let $(\alpha,\beta) \in H(n,k)$ and let $(\mu,\nu) \in P_{k,-k;2}(\alpha,\beta,<_{\alpha,\beta,k})$. Then the largest term of $\Lambda_{k,-k;2}(\mu,\nu)$ is strictly larger than the other terms of $\Lambda_{k,-k;2}(\mu,\nu)$.
\end{proposition}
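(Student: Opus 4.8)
The proposition is stated as a consequence of \cite[(7), p.\,387]{waldspurger}, so the plan is to make that reduction explicit and then recall how the cited statement is proved. By \Cref{lemma:unique} the pair $(\mu,\nu)$ is the unique element of $P_{k,-k;2}(\alpha,\beta,<_{\alpha,\beta,k})$, so by \Cref{def:pABs} we have $\Lambda_{k,-k;2}(\mu,\nu) = p_{k,-k;2}(\alpha,\beta,<_{\alpha,\beta,k})$; since the objects $P_{A,B;s}(\alpha,\beta,<)$, $\Lambda_{A,B;s}$, $<_{\alpha,\beta,k}$ and $H(n,k)$ used here are literally the ones of \cite[\S1,\,\S2]{waldspurger}, the proposition is a special case of \emph{loc.\ cit.} Concretely, set $\Lambda = \Lambda_{k,-k;2}(\mu,\nu)$, the superposition (with respect to $\sqcup$ on $\cR$) of $\mu + [k,-\infty[_{2}$ and $\nu + [-k,-\infty[_{2}$. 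Consecutive terms of $\mu + [k,-\infty[_{2}$ differ by $\mu_i - \mu_{i+1} + 2 \geq 2$, so that sequence is strictly decreasing with unique largest term $\mu_1 + k$ (reading $\mu_1 = 0$ if $\mu = \varnothing$), and likewise $\nu + [-k,-\infty[_{2}$ has unique largest term $\nu_1 - k$. Hence the largest term of $\Lambda$ equals $\max(\mu_1+k,\ \nu_1-k)$, and counting its occurrences in the two strictly decreasing subsequences shows it has multiplicity $1$ in $\Lambda$ exactly when $\mu_1 + k \neq \nu_1 - k$. So the proposition is equivalent to $\mu_1 - \nu_1 \neq -2k$, which is \cite[(7), p.\,387]{waldspurger}.

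For completeness I would reproduce the argument by induction on $m_0^{<} + m_1^{<}$, paralleling the proof of \Cref{lemma:unique}. Replacing $(\alpha,\beta,k)$ by $(\beta,\alpha,-k)$ if necessary — this exchanges the two arms of $\Lambda$, replaces $(\mu,\nu)$ by $(\nu,\mu)$, leaves $\Lambda_{k,-k;2}(\mu,\nu)$ unchanged since it equals $\Lambda_{-k,k;2}(\nu,\mu)$, and sends $(\alpha,\beta) \in H(n,k)$ to $(\beta,\alpha) \in H(n,-k)$ — one may assume $(1,0) <_{\alpha,\beta,k}(1,1)$ or $\beta = \varnothing$. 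In the base cases $\beta = \varnothing$ and $\alpha = \varnothing$ one has $(\mu,\nu) = (\alpha,\varnothing)$, resp.\ $(\varnothing,\beta)$, and $\mu_1 - \nu_1 \neq -2k$ is immediate from condition \ref{hypoth:1} (no repetition among the first $r$ entries of $\Lambda_{k,-k;2}(\alpha,\beta)$) together with the non-negativity of the parts. Otherwise, as in the proof of \Cref{lemma:unique}: conditions \ref{a:1}, \ref{a:2} hold because $(\alpha,\beta) \in H(n,k)$ with $(1,0) <_{\alpha,\beta,k}(1,1)$ forces $\alpha_1 + k > \beta_1 - k > -k$, so $P_{k,-k;2}(\alpha,\beta,<_{\alpha,\beta,k}) = P^a_{k,-k;2}(\alpha,\beta,<_{\alpha,\beta,k})$; procedure (a) yields $\mu_1^{\mathrm{new}} = \alpha_{a_1} + \beta_{b_1} + \cdots$ with $a_1 = b_1 = 1$ (so $\mu_1^{\mathrm{new}} \geq \alpha_1 + \beta_1$), a pair $(\alpha',\beta') \in H(n',k-1)$ whose canonical order is equivalent to $<'$, and $\mu = (\mu_1^{\mathrm{new}}) \sqcup \mu'$, $\nu = \nu'$, with $(\mu',\nu')$ the unique element of $P_{k-1,-k+1;2}(\alpha',\beta',<_{\alpha',\beta',k-1}) = P_{k-2,-k;2}(\alpha',\beta',<')$ (via \Cref{prop:PABdiff}). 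Two elementary facts then feed the induction: the shift identity $\Lambda_{A+C,B+C;s}(\xi,\eta) = \Lambda_{A,B;s}(\xi,\eta) + (C,C,\dots)$, which converts the inductive hypothesis for $\Lambda_{k-1,-k+1;2}(\mu',\nu')$ into the same statement for $\Lambda_{k-2,-k;2}(\mu',\nu')$; and the observation that, provided $\mu_1^{\mathrm{new}} \geq \mu'_1$, the terms of $\Lambda_{k,-k;2}\bigl((\mu_1^{\mathrm{new}})\sqcup\mu',\nu'\bigr)$ are those of $\Lambda_{k-2,-k;2}(\mu',\nu')$ together with the single extra term $\mu_1^{\mathrm{new}} + k$. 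Granting $\mu_1^{\mathrm{new}} \geq \mu'_1$, the largest term of $\Lambda$ is $\max(\mu_1^{\mathrm{new}}+k,\ M)$, where $M = \max(\mu'_1 + k - 2,\ \nu'_1 - k)$ is the largest term of $\Lambda_{k-2,-k;2}(\mu',\nu')$ and occurs there with multiplicity $1$ by induction; since $\mu_1^{\mathrm{new}} + k > \mu'_1 + k - 2$, the multiplicity-$1$ property for $\Lambda$ is clear unless $M = \nu'_1 - k$, in which case it reduces to $\mu_1^{\mathrm{new}} \neq \nu'_1 - 2k$.

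The main obstacle, and essentially the entire content of \cite[(7), p.\,387]{waldspurger}, is thus the combinatorics of the chains extracted by procedures (a) and (b): that the first chain is at least as heavy as any later one on the same side (yielding $\mu_1^{\mathrm{new}} \geq \mu'_1$, and its counterpart on the $\nu$-side), together with the non-coincidence $\mu_1^{\mathrm{new}} \neq \nu'_1 - 2k$ of the first $\mu$-chain with the heaviest part of $\nu$. This is the bookkeeping done in \cite[\S1]{waldspurger}; I would either invoke it directly or carry it out here by tracking, across each application of procedure (a) or (b), the comparison between the top entries of the two arms and how the removed chain affects it.
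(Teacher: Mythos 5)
The paper gives no proof of \Cref{prop:symbolterm}: it is stated as a consequence of Waldspurger's (7), p.~387, and your proposal follows the same route, adding the correct observation that, since both arms of $\Lambda_{k,-k;2}(\mu,\nu)$ are strictly decreasing with largest terms $\mu_1+k$ and $\nu_1-k$, the claim is exactly the non-equality $\mu_1 + k \neq \nu_1 - k$. Your inductive skeleton is consistent with how \Cref{lemma:unique} is proved, but the two facts you leave open ($\mu_1^{\mathrm{new}} \geq \mu_1'$ and $\mu_1^{\mathrm{new}} \neq \nu_1' - 2k$) are precisely the content of the cited result, so your argument is complete only in the same sense as the paper's, namely by appeal to \emph{loc.\ cit.}
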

We can repeatedly use Proposition \ref{prop:symbolterm} to show that $(\mu,\nu) \in H(n,k)$. However, this is rather heavy on the notation, and it will later follow that $(\mu,\nu) \in H(n,k)$ from Theorem \ref{thm:algorithm}, so it is not necessary to prove this now.

\begin{lemma}\label{lemma:PP}
Let $n \in \N$, $k \in \Z$ and $(\alpha,\beta) \in H(n,k)$. Write $<$ for $<_{\alpha,\beta,k}$. We have
\begin{align}
P_{k/2,-k/2;1/2}(\alpha,\beta,<) = P_{k,-k;2}(\alpha,\beta,<).
\end{align}
\end{lemma}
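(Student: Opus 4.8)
The plan is to prove the equality by induction on $m_0^< + m_1^<$, following the same recursive structure used to define the sets $P_{A,B;s}(\alpha,\beta,<)$, and exploiting \Cref{prop:PABdiff} to reconcile the different shift-parameters $(k/2,-k/2;1/2)$ and $(k,-k;2)$. The base case $m_0^< = 0$ or $m_1^< = 0$ is immediate, since then both sides are $\{(\varnothing,\varnothing)\}$ or reduce to a single branch that is handled by a direct (shorter) induction. For the inductive step, assume $m_0^< > 0$ and $m_1^< > 0$; by symmetry (swapping $(\alpha,\beta,k) \leftrightarrow (\beta,\alpha,-k)$, which swaps procedures (a) and (b) and the roles of the two shift parameters) we may assume $(1,0) < (1,1)$.

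The key observation is that because $(\alpha,\beta) \in H(n,k)$ with the order $<_{\alpha,\beta,k}$, the inequalities recorded in \Cref{rem:ineq} force $\alpha_1 + k > \beta_1 - k > -k$. I would check that this makes the branch conditions for $P^a$ at parameters $(k/2,-k/2;1/2)$ and at $(k,-k;2)$ both hold, and the branch conditions for $P^b$ both fail, so that on each side only the $(a)$-branch survives: $P_{k/2,-k/2;1/2}(\alpha,\beta,<) = P^a_{k/2,-k/2;1/2}(\alpha,\beta,<)$ and likewise for $(k,-k;2)$. Concretely, the $(a)$-branch recursion gives
\begin{align}
P_{k/2,-k/2;1/2}(\alpha,\beta,<) &= \{((\mu_1)\sqcup\mu',\nu') : (\mu',\nu') \in P_{k/2 - 1/2,\,-k/2;\,1/2}(\alpha',\beta',<')\},\\
P_{k,-k;2}(\alpha,\beta,<) &= \{((\mu_1)\sqcup\mu',\nu') : (\mu',\nu') \in P_{k-2,\,-k;\,2}(\alpha',\beta',<')\},
\end{align}
with the \emph{same} $\mu_1$, $(\alpha',\beta')$, $<'$ coming from procedure (a) (these are intrinsic to $(\alpha,\beta,<)$ and independent of the shift data). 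Now I apply \Cref{prop:PABdiff} with $C = k/2$ to rewrite $P_{k/2-1/2,-k/2;1/2}(\alpha',\beta',<') = P_{-1/2,-k;1/2}(\alpha',\beta',<')$ — wait, more carefully: $P_{k-2,-k;2} = P_{k-2 - (k),\, -k - (k);\,2}$ shifted, I want to line up $(k-2,-k;2)$ with twice $(k/2-1/2,-k/2;1/2) = ((k-1)/2, -k/2; 1/2)$, i.e. with $(k-1,-k;1)$ after doubling. So the two recursive sets are $P_{k-2,-k;2}(\alpha',\beta',<')$ and (doubling everything in) $P_{k-1,-k;1}(\alpha',\beta',<')$; by \Cref{prop:PABdiff} applied to the first, $P_{k-2,-k;2} = P_{(k-1)-1,\,(-k+1)-1;\,2}$, i.e. the first equals $P_{k-1,-k+1;2}(\alpha',\beta',<')$ after the shift by $C=1$. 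By the proof of \Cref{lemma:unique}, $(\alpha',\beta') \in H(n',k-1)$ and $<'$ is equivalent to $<_{\alpha',\beta',k-1}$, so the induction hypothesis applies at $(n',k-1)$ and yields $P_{(k-1)/2,-(k-1)/2;1/2}(\alpha',\beta',<') = P_{k-1,-(k-1);2}(\alpha',\beta',<')$; one more application of \Cref{prop:PABdiff} (shift by $C = -1/2$ on the left, $C=-1$ on the right) identifies these with the two sets appearing in the $(a)$-branch recursions above, completing the step.

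The main obstacle I anticipate is purely bookkeeping: making sure the chains of applications of \Cref{prop:PABdiff} line up the first arguments $A$ correctly on both sides (the $s$-scaling means a shift of $C$ on the $s=2$ side corresponds to a shift of $C/?$ — actually no rescaling of $C$ is needed since \Cref{prop:PABdiff} holds verbatim for each fixed $s$, so the subtlety is only that the $s=1/2$ and $s=2$ recursions decrement $A$ by $1/2$ versus $2$, and one must track that $P_{A,B;1/2}$ "is" $P_{2A,2B;1}$ only after a genuine change of variables that \Cref{prop:PABdiff} alone does not provide). To handle this cleanly I would either (i) first prove a scaling lemma $P_{cA,cB;cs}(\alpha,\beta,<) = P_{A,B;s}(\alpha,\beta,<)$ for $c > 0$ — which is immediate from the definitions since all defining inequalities are homogeneous under simultaneous scaling of $A,B,\alpha_1,\beta_1$... but $\alpha_1$ is not scaled, so this is false — or, more safely, (ii) just carry both recursions down in lockstep and observe that at every stage the branch conditions agree (using the $H(n,k)$-inequalities propagated via \Cref{lemma:unique}'s proof) and the surviving branch performs the identical $\sqcup$-operation, so the two sets are built up identically. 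Approach (ii) avoids any false scaling claim and is the one I would write up; the real content is exactly the compatibility of branch conditions, which is where the hypothesis $(\alpha,\beta) \in H(n,k)$ is used, precisely as in \Cref{lemma:unique}.
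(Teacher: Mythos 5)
There is a genuine gap at the point where you try to close the induction. After the $(a)$-branch reduction, the $s=\tfrac12$ side produces $P_{(k-1)/2,\,-k/2;\,1/2}(\alpha',\beta',<')$, while your induction hypothesis at $(n',k-1)$ only gives $P_{(k-1)/2,\,-(k-1)/2;\,1/2}(\alpha',\beta',<') = P_{k-1,\,-k+1;\,2}(\alpha',\beta',<')$. The final step, ``one more application of \Cref{prop:PABdiff} (shift by $C=-1/2$ on the left)'', cannot bridge these: \Cref{prop:PABdiff} shifts $A$ and $B$ by the \emph{same} constant, hence preserves $A-B$, and the set your recursion actually produces has $A-B = k-\tfrac12$ whereas the set covered by the hypothesis has $A-B = k-1$. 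So the single-parameter statement does not reproduce itself under procedure (a). Your fallback (ii) (``carry both recursions down in lockstep and observe that at every stage the branch conditions agree'') is precisely the unproved content: after $d_a$ steps of type (a) and $d_b$ of type (b), the branch test compares the current first part with $B-A$, and the two sides' values of $B-A$ evolve differently, namely $-2k+2(d_a-d_b)$ versus $-k+\tfrac12(d_a-d_b)$; that the comparisons nevertheless give the same answer is not automatic and is not supplied by quoting the $H(n,k)$-inequalities or \Cref{lemma:unique} once at the top level.

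The paper's proof fixes exactly this by strengthening the induction statement to a two-parameter, three-set equality: whenever $(\alpha,\beta) \in H(n,h)\cap H(n,k)$ and $<_{\alpha,\beta,h}$, $<_{\alpha,\beta,k}$ are equivalent, one has $P_{h,-h;2}(\alpha,\beta,<) = P_{h/2,-k/2;1/2}(\alpha,\beta,<) = P_{k,-k;2}(\alpha,\beta,<)$. The mixed middle set is the device that makes the recursion close: after procedure (a) one applies the strengthened hypothesis at $(h-1,k)$ for the first equality and at $(h,k-1)$ for the second (followed by a shift by $-\tfrac12$ via \Cref{prop:PABdiff}), using that $(\alpha',\beta') \in H(n',j)$ with $<'$ equivalent to $<_{\alpha',\beta',j}$ for $j \in \{h-1,h,k-1,k\}$ --- facts established inside the proof of \Cref{lemma:unique}, which deliberately also treats $k'=k$ for this purpose. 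The lemma is then the case $h=k$. Your top-level observations are fine (only the $(a)$-branch survives on both sides, and $\mu_1,(\alpha',\beta'),<'$ are intrinsic to $(\alpha,\beta,<)$, independent of $(A,B,s)$), and they coincide with the paper's; what is missing is the reformulated, two-parameter induction hypothesis (or an equivalent device) without which the argument does not go through.
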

\begin{proof}
Suppose there exist $h,k \in \Z$ such that $(\alpha,\beta) \in H(n,k)$ and $(\alpha,\beta) \in H(n,h)$
 and such that $<_{\alpha,\beta,h}$ and $<_{\alpha,\beta,k}$ are equivalent. Denote $<_{\alpha,\beta,k}$ by $<$. We shall show that
\begin{align}\label{eq:assertion}
P_{h,-h;2}(\alpha,\beta,<) = P_{h/2,-k/2;1/2}(\alpha,\beta,<) = P_{k,-k;2}(\alpha,\beta,<).
\end{align}
This obviously holds if $m_0^< = 0$ or $m_1^< = 0$, so we assume that $m_0^< > 0$ and $m_1^< > 0$.
We may assume that $(1,0) < (1,1)$, otherwise we consider $(\beta,\alpha,-k)$. From applying procedure (a) to $(\alpha,\beta)$, we obtain $\mu_1$ and $(\alpha',\beta') \in \mathcal P_2(n')$ for some $n' \in \N$. For $j \in \{k,h\}$, we have $(\alpha,\beta) \in H(n,j)$ and so $\alpha_1 + j > \beta_1 - j > -j$. Thus we have 
\begin{align}\label{eq:Paj}
P_{j,-j;2}(\alpha,\beta,<) = P_{j,-j;2}^a(\alpha,\beta,<)
\end{align}
We show that
\begin{align}\label{eq:Pa}
P_{h/2,-k/2;1/2}(\alpha,\beta,<) = P^a_{h/2,-k/2;1/2}(\alpha,\beta,<).
\end{align}
Since $(1,0) < (1,1)$, we have to show that $\alpha_1 + h/2 > -k/2$, i.e. $\alpha_1 > -(h+k)/2$. If $h+k \geq 0$, this holds. Suppose $h + k < 0$. We have $\alpha_1 + k > -h$, so $\alpha_1 > -h-k > -(h+k)/2$ since $h + k < 0$. Thus we have \eqref{eq:Pa} and so $P_{h/2,-k/2;1/2}(\alpha,\beta,<)$ is in bijection with $P_{(h-1)/2,-k/2;1/2}(\alpha',\beta',<')$.

We showed in the proof of Lemma \ref{lemma:unique} that $(\alpha',\beta') \in H(n',j)$ and that $<'$, $<_{\alpha',\beta',j}$ are equivalent for $j=h,h-1,k,k-1$. By the induction hypothesis, \eqref{eq:assertion} holds $(\alpha',\beta')$ for $(h-1,k)$ and $(h,k-1)$. Applying the first equality of \eqref{eq:assertion} for $(h-1,k)$ (resp. the second equality of \eqref{eq:assertion} for $(h,k-1)$) gives the first (resp. second) equality in the following:
\begin{align}\label{eq:assertionprime}
P_{h-1,-h+1;2}(\alpha',\beta',<') = P_{(h-1)/2,-k/2;1/2}(\alpha',\beta',<') = P_{k-1,-k+1;2}(\alpha',\beta',<') =: P'.
\end{align}
By \eqref{eq:Paj} and \eqref{eq:Pa}, any element of the three sets in \eqref{eq:assertion} is of the form $({\mu_1} \sqcup \mu, \nu)$ for some $(\mu,\nu) \in P'$, and so \eqref{eq:assertion} follows. The lemma then follows from \eqref{eq:assertion} for $h = k$.
\end{proof}

\section{The generalised Springer correspondence for $\SO(N)$} \label{sec:gsc}

We briefly describe the generalised Springer correspondence for $G = \SO(N)$ in terms of symbols. The notation will be the same as in \cite[\S 4]{waldspurger} and is slightly different than the notation originally used in \cite{lusztig2}. For convenience, we repeat some definitions and results from the introduction.

Let $N \in \Z_{\geq0}$.
A partition $\lambda$ of $N$ is called \emph{orthogonal} if each even part of $\lambda$ occurs with even multiplicity. We write $\Port(N)$ for the set of orthogonal partitions of $N$. Let $\jordbp(\lambda) = \{i \in \N \colon i \text{ is odd, } \mult_\lambda(i) \neq 0\}$. 
Let $F_2$ be the field with two elements.
Let $F_2[\Delta(\lambda)] = \{\pm1\}^{\Delta(\lambda)}$ be the set of maps $\eps \colon \jordbp(\lambda) \to \{\pm1\} \colon \lambda_i \mapsto \eps_{\lambda_i}$, considered as an $F_2$-vector space. 
Let $F_2[\Delta(\lambda)]'$ be the quotient of $F_2[\Delta(\lambda)]$ by the line spanned by the sum of the canonical basis of this $F_2$-vector space.
Denote by $\PPort(N)$ the set of pairs $(\lambda,[\eps])$ where $\lambda \in \Port(N)$ and $[\eps] \in F_2[\jordbp(\lambda)]'$ is the image of $\eps \in F_2[\Delta(\lambda)]$ under the quotient map.
We define $\PPPort(N)$ to be the set of pairs $(\lambda,\eps)$ with $\lambda \in \Port(N)$ and $\eps \in F_2[\Delta(\lambda)]$.

Let $\cU$ be the set of unipotent classes of $G$.
It is well-known that $\Port(N)$ is in $1$--$1$ correspondence with $\cU$, except the orthogonal partitions of $N$ that only have even parts correspond to precisely two (degenerate) unipotent classes.
This correspondence has the following property.
Suppose $C, C' \in \cU$ are parametrised by $\lambda,\lambda' \in \Port(N)$ respectively. Then $C \subseteq \bar C'$ (i.e. $C \preceq C'$ where $\preceq$ is the closure ordering), if and only if $\lambda \leq \lambda'$.
We say that $\lambda$ is \emph{degenerate} if $\lambda$ only has even parts and we call $\lambda$ \emph{non-degenerate} otherwise.
If $C \in \cU$ is parametrised by $\lambda \in \Port(N)$, then $F_2[\Delta(\lambda)]'$ is in $1$--$1$ correspondence with
the set of irreducible $G$-equivariant local systems on $C$. 
Note that if $C$ is degenerate, then $F_2[\Delta(\lambda)]'$ only contains the empty map, and note that there exist no non-trivial $G$-equivariant local systems on $C$.
Let $\mathcal N_G$ be the set of pairs $(C,\mathcal E)$ where $C$ is a unipotent class in $G$ and $\mathcal E$ is an irreducible $G$-equivariant local system on $C$. 
We obtain a surjective map $\mathcal N_G \to \PPort(N)$ such that the preimage of $(\lambda,\eps)$ has one element if $\lambda$ is non-degenerate, and two elements if $\lambda$ is non-degenerate. We denote the preimage of $(\lambda,[\eps])$ by $\{(C_\lambda^+,\E_{[\eps]}^+),(C_\lambda^-,\E_{[\eps]}^-)\}$. If $\lambda$ is non-degenerate, we write $(C_\lambda,\E_{[\eps]}) = (C_\lambda^+,\E_{[\eps]}^+) = (C_\lambda^-,\E_{[\eps]}^-)$, i.e. we may drop the $+$ and $-$ from the notation. 

Suppose $C \in \cU$ is parametrised by $\lambda \in \Port(N)$ and let $u \in C$. Then $F_2[\Delta(\lambda)]'$ is also in $1$--$1$ correspondence with the set of irreducible representations of the component group $A(u) = Z_G(u) / Z_G^\circ(u)$ of $u$ in $G$.

Let $(\lambda,[\eps]) \in \PPort(N)$. 
Let $\lambda' = \lambda + [0,-\infty[_{1} = (\lambda_1,\lambda_2-1,\lambda_3-2,\dots)$. There exist sequences $z,z' \in \cR$ with integer terms such that $\lambda' = (2z_1,2z_2,\dots) \sqcup (2z_1'-1,2z_2'-1,\dots)$. 
Let $A^\# = z' + [0,-\infty[_1$ and $B^\# = z + [0,-\infty[_1$. Then $A_1^\# \geq B_1^\# \geq A_2^\# \geq B_3^\# \geq \dots$.
A finite subset of $\Z$ is called an interval if it is of the form $\{i,i+1,i+2,\dots,j\}$ for some $i,j \in \Z$.
Let $C$ be the collection of intervals $I$ of $(A \cup B) \setminus (A \cap B)$, with the property that for any $i \in (A \cup B) \setminus ((A \cap B)$ such that $i \notin I$, we have that $I \cup \{i\}$ is not an interval. 
There is an obvious ordering on $C$: for $C', C'' \in C$, $C'$ is larger than $C''$ if any element of $C'$ is larger than any element of $C''$.
Then $C$ is in bijection with $\jordbp(\lambda)$. Let $\jordbp(\lambda) \to C \colon i \mapsto C_i$ be the unique increasing bijection.
Let $t = t(\lambda)$. 
For $i \in \{1,\dots,t\}$, write $\eps(i) = \eps_{\lambda_i}$. For $u \in \{\pm1\}$, define
\begin{align}
J^u &= \{ i \in \{1,\dots,t\} \colon \eps(i)(-1)^{i} = u\}.
\end{align}
Let $M(\lambda,\eps) = M = |J^1| - |J^{-1}|$ and let $w(\lambda,\eps) = w = \sgn(M+1/2)$ so that $w= 1$ if $M = 0$ and $w = \sgn M$ if $M \neq 0$. 
Define
\begin{align}
A_{\lambda,\eps} &=
(A^\# \setminus \bigcup_{i \in \jordbp(\lambda); \eps_i = -w} (A^\# \cap C_i)) \cup ( \bigcup_{i \in \jordbp(\lambda); \eps_i = -w} (B^\# \cap C_i)),
\\
B_{\lambda,\eps} &=
(B^\# \setminus \bigcup_{i \in \jordbp(\lambda); \eps_i = -w} (B^\# \cap C_i)) \cup ( \bigcup_{i \in \jordbp(\lambda); \eps_i = -w} (A^\# \cap C_i)).
\end{align}
If $M \neq 0$ and $-\eps$ is the other representative of $[\eps]$, we have $A_{\lambda,\eps} = A_{\lambda,-\eps}$ and $B_{\lambda,\eps} = B_{\lambda,-\eps}$, so we define the \emph{symbol} of $(\lambda,[\eps])$ to be the ordered pair $S_{\lambda,[\eps]} = (A_{\lambda,\eps},B_{\lambda,\eps}) =  (A_{\lambda,-\eps},B_{\lambda,-\eps})$.
If $M = 0$, $A_{\lambda,\eps} = B_{\lambda,-\eps}$ and $B_{\lambda,\eps} = A_{\lambda,-\eps}$, we define the \emph{symbol} of $(\lambda,[\eps)]$ to be the unordered pair $S_{\lambda,[\eps]} = \{A_{\lambda,\eps},B_{\lambda,\eps}\}$.
Let $X$ be a set, $d \in \Z_{\geq 0}$ and $(x,y) \in X \times X$. Then let
\begin{align}\label{eq:k}
(x,y)_k
=
\begin{cases}
(x,y) \in X \times X &\text{if } k > 0,
\\
\{x,y\} \subseteq X &\text{if } k = 0.
\end{cases}
\end{align}
Following this notation, we see that $S_{\lambda,[\eps]} = (A_{\lambda,\eps},B_{\lambda,\eps})_k$ where $k = |M|$.

We define $p_{\lambda,[\eps]} = p_{\lambda,\eps} = A_{\lambda,\eps} \sqcup B_{\lambda,\eps} = A_{\lambda,\eps}^\# \sqcup B_{\lambda,\eps}^\#$.
Two pairs $(A,B), (A',B') \in \mathcal R \times \mathcal R$ are \emph{similar} if $A\sqcup B = A'\sqcup B'$. 
For any $(\lambda,[\eps]),(\lambda',[\eps']) \in \PPort(N)$, their symbols are similar if and only if $\lambda = \lambda'$.

\begin{remark}\label{rem:symbol}
Let $t = t(\lambda)$, $s = \lfloor t/2 \rfloor$ and $k = |M|$.
Let $A' = (A_{\frac{t+d}{2}}+s,A_{\frac{t+d}{2}-1}+s,\dots,A_1+s)$, $B'=(B_{\frac{t-d}{2}}+s,B_{\frac{t-d}{2}-1}+s,\dots,B_1+s)$ and note that these are increasing sequences.
Then $(A',B')_k$ is the usual symbol in the literature.
The \emph{defect} of $(A,B)$ is defined to be $|\#A'-\#B'|$ and this is equal to $k = |M|$. 
\end{remark}

For $n \in \N$, let $W(B_n) = W(C_n) = S_n \ltimes (\mathbb Z / 2 \mathbb Z)^n$ and $W(D_n) = S_n \ltimes (\mathbb Z / 2 \mathbb Z)^{n-1}$. 
The set $W(B_n)^\vee$ of representations of $W(B_n)$ is in bijection with $\mathcal P_2(n)$.
For $(\alpha,\beta) \in \mathcal P_2(n)$, we write $\rho_{(\alpha,\beta)}$ for the corresponding representation of $W_n$.

Define $\theta\colon \mathcal P_2(n) \to \mathcal P_2(n)$ by $\theta(\alpha,\beta) = (\beta,\alpha)$ for $(\alpha,\beta) \in \mathcal P_2(n)$. Let $\mathcal P_2(n) / \theta$ be the set of $\theta$-orbits
and let $c_{(\alpha,\beta)}$ be the size of the $\theta$-orbit of $(\alpha,\beta)$.
We identify $\mathcal P_2(n)$ with the set of unordered pairs $\{\alpha,\beta\}$ where $(\alpha,\beta) \in \mathcal P_2(n)$.
Then $W(D_n)^\vee$ is parametrised as
\begin{align}
W(D_n)^\vee = \{\rho_{\{\alpha,\beta\},i} \colon \{\alpha,\beta\} \in \mathcal P_2(n) / \theta, i \in \{1,2/c_{(\alpha,\beta)}\}\}.
\end{align}
We sometimes write $\rho_{(\alpha,\beta),i}$ or $\rho_{(\beta,\alpha),i}$ if it is clear from the context that we are talking about $\rho_{\{\alpha,\beta\},i}$.
When $c_{(\alpha,\beta)} = 1$, we also write $\rho_{\{\alpha,\beta\}} = \rho_{\{\alpha,\beta\},1}$.

Let $k \in \N$ such that $k \equiv N \mod 2$ and $k^2\leq N$. Let $(\alpha,\beta) \in \mathcal P_2((N - k^2)/2)$. We define $A_{\alpha,\beta;k} = \alpha + [k,-\infty[_2$ and $B_{\alpha,\beta;k} = \beta + [-k,-\infty[_2$ and define the \emph{symbol of} $(\alpha,\beta)$ to be $S_{\alpha,\beta;k} = (A_{\alpha,\beta;k},B_{\alpha,\beta;k})_k$.

Let $W$ be the Weyl group of $G$. Then $W = W(B_{(N-1)/2})$ (resp. $W = W(D_{N/2})$) if $N$ is odd (resp. even). Let 
\begin{align}
\mathcal W = W^\vee \sqcup \bigsqcup_{k \in \Z_{\geq2}, k^2\leq N, k \equiv N \mod 2} W(B_{(N-k^2)/2})^\vee.
\end{align} 
The Weyl groups $W$ and $W(B_{(N-k^2)/2})$ with $k \in \Z_{\geq 2}$, $k \equiv N \mod 2$ and $k^2 \leq N$ are called the \emph{relative Weyl groups} of $G$. Each relative Weyl group is realised as the the quotient $N_G(L)/L$ where $L$ is a Levi subgroup of a standard parabolic $G$ such that $L$ has a cuspidal pair and $N_G(L)$ is the normaliser of $L$ in $G$.

The generalised Springer correspondence gives a bijection 
\begin{align}
\GSpr \colon \mathcal N_G \to \mathcal W.
\end{align}
Using the parametrisations of $\mathcal N_G$ and $\mathcal W$ described above, we rephrase the generalised Springer correspondence as follows.

\begin{theorem}[Generalised Springer correspondence for $\SO(N)$]\label{thm:gscSO}
Let $N \in \N$.
\begin{enumerate}
\item Suppose $N = 2n+1$ is odd. For each $(\lambda,[\eps]) \in \PPort(N)$, there exists a unique odd integer $k(\lambda,[\eps]) = k \in \N$ and a unique pair $(\alpha,\beta) \in \mathcal P_2((N-k^2)/2)$ such that $(A_{\lambda,\eps},B_{\lambda,\eps}) = (A_{\alpha,\beta;k},B_{\alpha,\beta;k})$. Conversely, for each odd $k \in \N$ such that $k^2 \leq N$, and for each $(\alpha,\beta) \in \mathcal P_2((N-k^2)/2)$, there exists a unique $(\lambda,[\eps]) \in \PPort(N)$ such that $(A_{\lambda,[\eps]},B_{\lambda,[\eps]}) = (A_{\alpha,\beta;k},B_{\alpha,\beta;k})$. Thus we have a bijection
\begin{align}
\Phi_N \colon \PPort(N) \to \bigsqcup_{k \in \N \mathrm{odd},\, k^2 \leq N} \mathcal P_2((N-k^2)/2).
\end{align}
\item Suppose $N = 2n$ is even. Let $(\lambda,[\eps]) \in \PPort(N)$. Then one of the following is true:
\begin{itemize}
\item 
There exists a unique $\{\alpha,\beta\} \in {\mathcal P}_2(N/2) / \theta$ with $\alpha \neq \beta$ such that $\{A_{\lambda,[\eps]},B_{\lambda,[\eps]}\} = \{A_{\alpha,\beta;0},B_{\alpha,\beta;0}\}$,
\item There exists a unique even integer $k(\lambda,[\eps]) = k \in \N$ and a unique pair $(\alpha,\beta) \in \mathcal P_2((N-k^2)/2)$ such that $(A_{\lambda,[\eps]},B_{\lambda,[\eps]}) = (A_{\alpha,\beta;k},B_{\alpha,\beta;k})$. 
\end{itemize}
Conversely, we have
\begin{itemize}
\item For each $\{\alpha,\beta\} \in \mathcal P_2(N/2)/\theta$, there exists a unique $(\lambda,[\eps]) \in \PPort(N)$ such that $\{A_{\lambda,[\eps]},B_{\lambda,[\eps]}\} = \{A_{\alpha,\beta;0},B_{\alpha,\beta;0}\}$,
\item For each even $k \in \N$ such that $k^2 \leq N$ and for each $(\alpha,\beta) \in \mathcal P_2((N-k^2)/2)$, there exists a unique $(\lambda,[\eps]) \in \PPort(N)$ such that $(A_{\lambda,[\eps]},B_{\lambda,[\eps]}) = (A_{\alpha,\beta;k},B_{\alpha,\beta;k})$.

\end{itemize}
Thus we have a bijection
\begin{align}
\Phi_N \colon \PPort(N) \to (\mathcal P_2(N/2) / \theta) \sqcup \bigsqcup_{k \in \N \mathrm{even},\,k^2\leq N} \mathcal P_2((N-k^2)/2).
\end{align}
\end{enumerate}
\end{theorem}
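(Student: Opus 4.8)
The plan is to deduce \Cref{thm:gscSO} from Lusztig's determination of the generalised Springer correspondence for $\SO(N)$ in \cite[\S13]{lusztig2}, by rephrasing his parametrisation in the symbol conventions fixed above. I would organise the argument into two essentially independent parts. Part (I) is purely combinatorial: show that the conditions in the statement --- equality of the symbol $S_{\lambda,[\eps]} = (A_{\lambda,\eps},B_{\lambda,\eps})_k$, where $k = |M(\lambda,\eps)|$, with a symbol $S_{\alpha,\beta;k} = (A_{\alpha,\beta;k},B_{\alpha,\beta;k})_k$ --- determine a well-defined bijection $\Phi_N$ from $\PPort(N)$ onto the stated disjoint union of bipartition sets (together with the $\theta$-orbit set when $N$ is even). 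Part (II) is the identification of $\Phi_N$ with the map on equivalence classes induced by $\GSpr$: here one invokes \cite[\S13]{lusztig2}, using \Cref{rem:symbol} to translate between the pairs $(A,B)$ of this paper and Lusztig's increasing-sequence symbols, and the standard dictionary between bipartitions and symbols of type $B$ (resp. of type $D$, when $k=0$ and $N$ is even) on the relative Weyl group side.

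For the forward direction of Part (I): given $(\lambda,[\eps]) \in \PPort(N)$, set $k = |M(\lambda,\eps)|$; by \Cref{rem:symbol} this is the defect of $S_{\lambda,[\eps]}$, hence intrinsic to the ordered (or, for $k=0$, unordered) pair $(A_{\lambda,\eps},B_{\lambda,\eps})$, and one has $S_{\lambda,[\eps]} = (A_{\lambda,\eps},B_{\lambda,\eps})_k$. The first point to verify is that $A_{\lambda,\eps}$ and $B_{\lambda,\eps}$ have the shape required to be written as $\alpha + [k,-\infty[_2$ and $\beta + [-k,-\infty[_2$ for necessarily unique $\alpha,\beta \in \mathcal P$; concretely, that $\alpha := A_{\lambda,\eps} - [k,-\infty[_2$ and $\beta := B_{\lambda,\eps} - [-k,-\infty[_2$ lie in $\mathcal P$. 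This is read off from the construction of $A_{\lambda,\eps}, B_{\lambda,\eps}$ out of $A^\#, B^\#$: the interleaving $A_1^\# \ge B_1^\# \ge A_2^\# \ge B_2^\# \ge \cdots$, the specific tail of $A^\#$ and $B^\#$ coming from $[0,-\infty[_1$, and the fact that the interval swap along the $C_i$ is a finite modification, together pin down the tails of $A_{\lambda,\eps}$ and $B_{\lambda,\eps}$. Summing the entries of $p_{\lambda,[\eps]} = A_{\lambda,\eps} \sqcup B_{\lambda,\eps}$ and comparing with $N = S(\lambda)$ via $\lambda' = \lambda + [0,-\infty[_1$ and the even/odd splitting gives the rank identity $S(\alpha) + S(\beta) = (N - k^2)/2$; integrality of the right-hand side then forces $k \equiv N \bmod 2$ and $k^2 \le N$. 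Uniqueness of $k$ is its identification with the defect, and uniqueness of $(\alpha,\beta)$ once $k$ is fixed is immediate.

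For the converse direction: given an admissible $k$ (with $k \equiv N \bmod 2$, $k^2 \le N$) and $(\alpha,\beta) \in \mathcal P_2((N-k^2)/2)$ --- or an unordered pair $\{\alpha,\beta\} \in \mathcal P_2(N/2)/\theta$ in the case $k=0$, $N$ even, including the size-one orbits $\{\alpha,\alpha\}$ --- I would recover $\lambda$ from the multiset $A_{\alpha,\beta;k} \sqcup B_{\alpha,\beta;k}$ by inverting the recipe $\lambda \mapsto p_{\lambda,[\eps]}$ (subtract the staircase, split off the two halves corresponding to even and odd parts), check that the resulting $\lambda$ lies in $\Port(N)$ --- it is here that the parity of $k$ is used --- and then recover $[\eps]$ from which intervals of $(A_{\alpha,\beta;k} \cup B_{\alpha,\beta;k}) \setminus (A_{\alpha,\beta;k} \cap B_{\alpha,\beta;k})$ lie in $A_{\alpha,\beta;k}$ rather than in their canonical position in $A^\#$. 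One checks that $\lambda$ is degenerate precisely when $N$ is even, $k = 0$ and $\alpha = \beta$. Since $(\lambda,[\eps]) \mapsto S_{\lambda,[\eps]}$ is injective (similar symbols have equal $\lambda$, and the interval data recovers $[\eps]$) with image exactly $\{S_{\alpha,\beta;k}\}$, this establishes that $\Phi_N$ is a well-defined bijection; I would run the $N$ odd and $N$ even cases in parallel, the extra point in the even case being only the treatment of the unordered $k=0$ symbols, which are indexed by $\theta$-orbits and match the type $D$ group $W(D_{N/2})$.

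Finally, for Part (II): by \cite[\S13]{lusztig2}, $\GSpr$ carries $(C_\lambda,\cE_{[\eps]})$, resp. $(C_\lambda^\pm,\cE_{[\eps]}^\pm)$, into $W(B_{(N-k^2)/2})^\wedge$ for $k \ge 1$ and into $W(D_{N/2})^\wedge$ when $k = 0$ (which forces $N$ even), and Lusztig's description of this bijection, once the normalisations are matched through \Cref{rem:symbol} and the bipartition-to-symbol dictionary, is exactly the symbol equality $S_{\lambda,[\eps]} = S_{\alpha,\beta;k}$; for degenerate $\lambda$ the two classes $C_\lambda^\pm$ go to the two representations $\rho_{\{\alpha,\alpha\},1}$ and $\rho_{\{\alpha,\alpha\},2}$ sharing the common symbol, which is the single piece of information that $\Phi_N$ does not retain. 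I expect the main obstacle to be the bookkeeping in Part (I): confirming that the interval-swap construction genuinely outputs sequences of the form $\alpha + [k,-\infty[_2$ and $\beta + [-k,-\infty[_2$ with $k = |M(\lambda,\eps)|$ of the correct parity, and keeping the degenerate and the $k=0$ (unordered symbol) cases straight throughout. Part (II) is then a routine, if slightly tedious, matching of conventions.
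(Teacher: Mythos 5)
Your proposal follows essentially the same route as the paper, which offers no independent proof of \Cref{thm:gscSO} but presents it as Lusztig's generalised Springer correspondence from \cite[\S13]{lusztig2} rephrased in the symbol conventions of \S\ref{sec:gsc} (via \Cref{rem:symbol} and the defect $k = |M|$). Your Part (I)/(II) split merely spells out the notational translation and the standard symbol bookkeeping that the paper leaves implicit, so the approach is correct and not genuinely different.
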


\begin{remark}\label{rem:parity}
Note that in \Cref{thm:gscSO}, we have $N \equiv t(\lambda) \equiv k(\lambda,[\eps]) \mod 2$. Furthermore, it is well-known that $k(\lambda,[\eps])$ is equal to the defect of $(\lambda,[\eps])$ (see \cite[\S13]{lusztig2}), hence $k(\lambda,[\eps]) = |M| = |J^1| - |J^{-1}|$ by \Cref{rem:symbol}.
\end{remark}

\begin{remark}\label{rem:symbolodd}
Let $(\lambda,[\eps]) \in \PPort(N)$ such that $\lambda$ only has odd parts. 
Let $t = t(\lambda)$, $t^+ = \lceil t/2 \rceil$ and $t^- = \lfloor t/2 \rfloor$.
Then
\begin{align}\label{eq:gscodd0}
A^\# &= \left( \frac{\lambda_{2i-1} + 2i-1 }{2} \colon i = 1,\dots, t^+ \right) \sqcup (-t - 2i \colon i \in \Z_{\geq0}),
\\
B^\# &= \left( \frac{\lambda_{2i} + 2i-1 }{2} \colon i = 1,\dots, t^- \right) \sqcup (-t - 2i \colon i \in \Z_{\geq0}),
\end{align}
and $A_1^\# > B_1^\# > A_2^\# > B_2^\# > \dots > A_{t^+}^\# > B_{t^+}^\#$. Hence the largest $t$ terms of $p_{\lambda,\eps} = A_{\lambda,\eps} \sqcup B_{\lambda,\eps} = A^\# \sqcup B^\#$ are all distinct.
Suppose $(\alpha,\beta)_k = \Phi_N(\lambda,[\eps])$ with $k = k(\lambda,[\eps])$. Note that $t \equiv k \mod 2$ and let $m_0 = \frac{t+k}{2}$ and $m_1 = \frac{t-k}{2}$. 
It is easy to check that $m_0$ and $m_1$ are the same as in \S\ref{subsec:P}, i.e.
$m_0$ is the smallest integer such that $m_0 \geq t(\alpha)$ and $m_0 - k \geq t(\beta)$, so 
$p_{\lambda,\eps} = \Lambda_{k,-k;2}(\alpha,\beta) \in H_S((N-k^2)/2,k)$, i.e.  $(\alpha,\beta) \in H((N-k^2)/2,k)$.
\end{remark}

\section{Green functions and multiplicities}\label{sec:green}

\subsection{Green functions and the Springer correspondence}
Recall that for $N \in \N$ odd (resp. even), the Weyl group of $\SO(N)$ is $W(B_{(N-1)/2})$ (resp. $W(D_{N/2})$) and the other relative Weyl groups are of the form $W(B_{(N-k^2)/2})$ for some odd (resp. even) integer $k \in \Z_{\geq0}$ such that $k^2 \leq N$.
In other words, for each $n \in \Z_{\geq0}$, we can interpret $W(B_n)$ as a relative Weyl group of $\SO(2n+k^2)$ for any $k \in \N$, and $W(D_n)$ as the Weyl group of $\SO(2n)$.
Let $n \in \Z_{\geq0}$ and let $W_{n}$ be $W(B_{n})$ or $W(D_{n})$.
Let $t$ be an indeterminate and let $d = 1$ if $W_n = W(D_n)$ and $d = 2$ if $W_n = W(B_n)$.  The Poincaré polynomial of $W_{n}$ is given by
\begin{align}
P_{W_{n}}(t) 
= 
\frac{t^{dn}-1}{t-1}\cdot\prod_{i=1}^{n-1} \frac{t^{2i}-1}{t-1}.
\end{align}
Let $V$ be the reflection representation of $W$. For a class function $f$ of $W_{n}$, define $R(f) \in \Z[t]$ by
\begin{align}
R(f) = \frac{(t-1)^{\dim V}P_W(t)}{|W|}\sum_{w \in W_{n}} \frac{\det_V(w)f(w)}{\det_V(t\cdot\text{id}_V-w)}.
\end{align}
If $f$ is an irreducible character of $W_n$, then $R(f)$ is called the fake degree of $f$. For $\rho \in W^\wedge$, let $\chi(\rho)$ denote its character.

Let $N^*$ be the number of reflections of $W_{n}$. 
Let $\Omega = (\omega_{\rho,\rho'})_{\rho,\rho'\in W_n^\wedge}$ be the $|W_{n}| \times |W_{n}|$ matrix over $\Z[t]$ defined by
\begin{align}
\omega_{\rho,\rho'} = t^{N^*} R(\chi(\rho) \otimes \chi(\rho') \otimes \overline{\det}_V).
\end{align}

Let $(\alpha,\beta) \in \cP_2(n)$.
If $W_n = W(B_n)$, let $\rho = \rho_{(\alpha,\beta)}$, and if $W_n = W(D_n)$, let $i \in \{1,2/c_{(\alpha,\beta)}\}$ and let $\rho = \rho_{\{\alpha,\beta\},i}$.
Let $k \in \Z_{\geq0}$, $N = 2n + k^2$, and $r \in \N$ such that $r \geq t(\alpha) + t(\beta) + k$ (so $r$ is as in Remark \ref{rem:ineq}).
If $W_n = W(D_n)$, we assume that $k = 0$.
Let $\Lambda := \Lambda_{\rho;k} := \Lambda_{k,-k;2}(\alpha,\beta)$ and $\Lambda^{(0)} := [k,-\infty[_2 \sqcup [-k,-\infty[_2$.
Let $(C,\E) = \GSpr^{-1}(\rho)$ and suppose $C$ is parametrised by an orthogonal partition $\lambda$ of $N$. We define
\begin{align}
a_k(C) := a_k(\lambda) := a_k(\rho) := a_k(\alpha,\beta)
:=
\sum_{1\leq i< j \leq r} \min(\Lambda_i,\Lambda_j) - \sum_{1\leq i< j \leq r} \min(\Lambda^{(0)}_i,\Lambda^{(0)}_j).
\end{align}
We will usually drop $k$ from the notation. 
Note in particular that $a_k(\rho)$ is well-defined when $W_n = W(D_n)$, since in this case, we assumed that $k=0$.
Also note that $a_k(\rho)$ does not depend on $r$. 
We write $\rho \sim \rho'$ if the symbols $\Lambda_{\rho;k}$ and $\Lambda_{\rho';k}$ are similar.
Then $a_k(\rho) = a_k(\rho')$ if $\rho \sim \rho'$, so $a_k(C)$ and $a_k(\lambda)$ are well-defined.
Note that $a_k$ is the same as in \cite[(1.2.2)]{shoji2001green} and if $k = 1$, then $a_k$ is the same as the function $b$ 
in \cite[4.4]{LUSZTIG1986146}. 

Define a total order $\prec_k$ on $W_{n}^\wedge$ such that for $\rho,\rho' \in W_{n}^\wedge$ we have $a_k(\rho) \geq a_k(\rho')$ if $\rho \prec_k \rho'$, and such that each similarity class forms an interval. 
Again, we will usually drop $k$ from the notation.
The order $\prec_k$ uniquely defines a total order $\prec_k$ on $\mathcal P_2(n)$.
In \cite{shoji2001green} (for type $B$) and in \cite{SHOJI2002563} (for type $D$) the following theorem is proved:

\begin{theorem}\label{theorem:shoji}
Let $n \in \N$, $k \in \Z_{\geq 0}$, let $W_{n} = W(B_n)$ or $W_n = W(D_n)$, and write $\prec$ for $\prec_k$.
If $W_{n}$ is of type $D_{n}$, we assume that $k = 0$.
There exist unique $|W_{n}|\times|W_{n}|$ matrices $P = P^{(k)}$ and $\Lambda = \Lambda^{(k)}$ over $\Q(t)$ such that
\begin{align}
P\lambda P^t &= \Omega, &&
\\
\lambda_{\rho,\rho'} &= 0 && \text{if } \rho \not\sim \rho',
\\
p_{\rho,\rho'} &= 0 && \text{unless $\rho' \prec \rho$ and $\rho \not\sim \rho'$, or $\rho = \rho'$,}
\\
p_{\rho,\rho} &= t^{a(\rho)}.
\end{align}
Furthermore, the entries of $P$ and $\Lambda$ lie in $\Z[t]$. The polynomials $p_{\rho,\rho'}$ are called \emph{Green functions of $W_n$ for $k$}. 
\end{theorem}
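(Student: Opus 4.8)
The plan is to prove this by the Lusztig--Shoji algorithm, viewing the relation $P\Lambda P^t = \Omega$ as a block Gram--Schmidt orthogonalisation of the symmetric matrix $\Omega$. First I would fix the total order $\prec = \prec_k$ and group $W_n^\wedge$ into its similarity classes $\mathcal C_1, \dots, \mathcal C_s$, ordered so that $\rho \in \mathcal C_i$, $\rho' \in \mathcal C_j$ and $i < j$ force $\rho \prec \rho'$; this is possible because each similarity class is a $\prec$-interval. Since $a(\rho)$ is constant on each $\mathcal C_i$, say equal to $a_i$, the conditions on $P$ and $\Lambda$ say precisely that, in $s \times s$ block form with respect to $\mathcal C_1, \dots, \mathcal C_s$, the matrix $P$ is block lower-triangular with scalar diagonal blocks $t^{a_i}\cdot\mathrm{id}$, while $\Lambda$ is block-diagonal. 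The equation $P\Lambda P^t = \Omega$ then becomes, block by block, the relations $\Omega_{ji} = \sum_{m \leq \min(i,j)} P_{jm}\Lambda_{mm}(P_{im})^t$.

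Next I would run the resulting recursion on $j = 1, \dots, s$. At stage $j$ the unknowns are the off-diagonal blocks $P_{j1}, \dots, P_{j,j-1}$ and the diagonal block $\Lambda_{jj}$, the block $P_{jj}$ being the prescribed $t^{a_j}\cdot\mathrm{id}$. The equations $\Omega_{ji} = \sum_{m \leq i} P_{jm}\Lambda_{mm}(P_{im})^t$ for $i = 1, \dots, j-1$ form an inhomogeneous linear system in $P_{j1}, \dots, P_{j,j-1}$ whose coefficient matrix is block lower-triangular with diagonal blocks $t^{a_m}\Lambda_{mm}$; solving it and then setting $\Lambda_{jj} = t^{-2a_j}\bigl(\Omega_{jj} - \sum_{m < j} P_{jm}\Lambda_{mm}(P_{jm})^t\bigr)$ produces the $j$-th block row of $P$ and the $j$-th diagonal block of $\Lambda$ uniquely. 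This simultaneously yields existence and uniqueness over $\mathbb Q(t)$, provided each $\Lambda_{mm}$ encountered is invertible over $\mathbb Q(t)$ --- equivalently, that every leading principal submatrix $\Omega_{\leq j}$ of $\Omega$ (on $\mathcal C_1 \cup \dots \cup \mathcal C_j$) is nonsingular. I would deduce this nonsingularity from Lusztig's geometric construction of generalised Green functions \cite{lusztig1986character, lusztig1995cuspidal}: there one already has a factorisation $\Omega = P\Lambda P^t$ with $\Lambda$ block-diagonal and each diagonal block invertible (governed by the cuspidal data), and since both $P$ and $\Lambda$ are then block-triangular with invertible diagonal blocks, so is every leading submatrix of each, whence $\Omega_{\leq j}$ is nonsingular.

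The remaining, and in my view the genuinely hard, point is the ``furthermore'': that the entries of $P$ and $\Lambda$ lie in $\mathbb Z[t]$ rather than merely $\mathbb Q(t)$. The recursion above divides by the monic $t^{a_m}$ and by $\det \Lambda_{mm}$, and there is no purely formal reason these divisions stay inside $\mathbb Z[t]$. Here I would follow Shoji and bypass the recursion: realise the prospective $p_{\rho,\rho'}$ directly via ``type $B/D$'' symmetric functions, namely as Kostka--Foulkes-type polynomials built from Hall--Littlewood symmetric functions, using the results of \cite{shoji2001green} for $W(B_n)$ and reducing $W(D_n)$ to the type $B$ case as in \cite[Proposition 4.9]{SHOJI2002563}. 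These identities exhibit the $p_{\rho,\rho'}$ as elements of $\mathbb Z[t]$ (indeed with a positivity property); substituting back into $P\Lambda P^t = \Omega$ shows $\Lambda$ also has entries in $\mathbb Z[t]$; and the uniqueness established above identifies this explicit pair $(P,\Lambda)$ with the matrices of the statement. An alternative route to integrality is to use the geometric meaning of $p_{\rho,\rho'}$ as a normalisation of the Poincaré polynomial of a stalk of an intersection cohomology complex on a unipotent variety, which is automatically in $\mathbb Z_{\geq 0}[t]$.
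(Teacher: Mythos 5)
The paper does not prove this statement at all: \Cref{theorem:shoji} is quoted as a theorem of Shoji, proved in \cite{shoji2001green} for $W(B_n)$ and in \cite{SHOJI2002563} for $W(D_n)$, so there is no internal argument to compare yours against. Your outline is the standard Lusztig--Shoji mechanism (block-triangular factorisation of $\Omega$, solved recursively along $\prec$), and as a \emph{reduction} to those references it is reasonable; you also correctly isolate the two real issues, namely the invertibility of the diagonal blocks $\Lambda_{mm}$ needed for uniqueness, and the integrality of the entries.

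The genuine gap is where you propose to settle those two issues. Both your source of nonsingularity (Lusztig's geometric factorisation from \cite{lusztig1986character}) and your ``alternative route'' to integrality (Poincar\'e polynomials of IC stalks on unipotent varieties) are unavailable in exactly one of the cases the theorem covers: $W_n = W(B_n)$ with $k = 0$. As the paper points out immediately after the statement, in that case $W(B_n)$ is \emph{not} a relative Weyl group of any $\SO(N)$, there is no cuspidal pair and no perverse-sheaf interpretation, so \Cref{theorem:lusztig} (Lusztig's Theorem 24.8) simply does not apply there; yet this case is genuinely needed, since \Cref{prop:shojigreenD} expresses the type $D$, $k=0$ Green functions through the type $B$, $k=0$ ones. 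For that case both existence and the nondegeneracy that drives your uniqueness recursion must come from Shoji's purely combinatorial construction via Hall--Littlewood functions for defect-$0$ symbols --- i.e., from the very results of \cite{shoji2001green,SHOJI2002563} you invoke for integrality --- at which point your argument collapses into the citation the paper itself makes rather than an independent proof. A smaller omission in the cases where geometry does apply: to import Lusztig's theorem you must check that the order $\prec_k$ is admissible for it (compatibility of the similarity classes and the $a$-function with the closure order), which the paper verifies via \Cref{prop:a-function}; your sketch assumes this silently.
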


Except when $W_n = W(B_n)$ and $k = 0$, we can view $W_{n}$ as a relative Weyl group of $\SO(N) = \SO(2n+k^2)$. 
Regardless, we considered the Green functions of $W_n = W(B_n)$ for $k = 0$ in \Cref{theorem:shoji}, as we will later see from \Cref{prop:shojigreenD} that they are related to the Green functions of $W(D_n)$ for $k = 0$.

The matrix $P^{(k)}$ is closely related to the solutions of a similar matrix equation considered in \cite[\S24]{lusztig1986character}. We will briefly discuss this relation. 
Let $\rho \in W_n^\wedge$ and let $(C,\E) = \GSpr^{-1}(\rho) \in \cN_{\SO(N)}$. Define the \emph{Springer support} of $\rho$ to be $\supp \rho := C$.
Fix a maximal torus $T$ of $G$.
Recall that $W_{n} \cong N_G(L)/L$ for some Levi subgroup $L$ of $G$ with a cuspidal pair.
Let $D$ be the diagonal $|W_{n}|\times|W_{n}|$ matrix over $\Q(t)$ such that for all $\rho \in W^\wedge$, $D_{\rho,\rho} = t^{a(\rho)}$. 
Let $\tilde \Omega = D^{-1} \Omega D^{-1}$.
Let $\rho,\rho' \in W^\wedge$, $C =\supp \rho$ and $C' = \supp \rho'$. 
The following result can be verified using \cite[Cor. 6.1.4.]{collingwood}.

\begin{proposition}\label{prop:a-function}
We have $2a(\rho) = \dim G - \dim Z_L^\circ - \dim C$, where $Z_L^\circ$ is the connected component of the centre of $L$.
\end{proposition}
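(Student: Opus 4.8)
The plan is to verify the identity $2a(\rho) = \dim G - \dim Z_L^\circ - \dim C$ by translating both sides into the partition/symbol combinatorics already set up, and then invoking the dimension formulas for unipotent classes in classical groups from \cite{collingwood}. First I would recall that for $G = \SO(N)$ and $C$ the unipotent class parametrised by the orthogonal partition $\lambda$, one has $\dim C = \dim G - \dim Z_G(u)$ for $u \in C$, and \cite[Cor. 6.1.4]{collingwood} gives $\dim Z_G(u)$ explicitly in terms of $\lambda$: writing $\lambda^t$ for the transpose partition, $\dim Z_G(u) = \frac{1}{2}\bigl(\sum_i (\lambda^t_i)^2 - \#\{i : \lambda_i \text{ odd}\}\bigr)$ (up to the standard correction terms for the orthogonal group). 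Hence $\dim G - \dim C = \dim Z_G(u)$ has a closed form in $\lambda$. On the other side, $\dim Z_L^\circ$ is simply the rank of the central torus of the Levi $L$ supporting the cuspidal pair attached to the block containing $\rho$; by the description of the relative Weyl groups, if $\rho$ lies in the block indexed by $k = k(\lambda,[\eps])$ then $W_n = W(B_{(N-k^2)/2})$ (or $W(D_{N/2})$ when $k=0$ even), and $\dim Z_L^\circ$ equals $(k^2 - \delta)/\text{(something)}$ — more precisely it is the number of central parameters, which for the cuspidal datum on $\SO$ or $\Sp$ of the relevant small rank is a fixed quadratic expression in $k$. The key point is that $\dim G - \dim Z_L^\circ$ then depends only on $N$ and $k$.

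Next I would compute $2a(\rho)$ directly from its definition. By the formula in the excerpt, $a_k(\rho) = \sum_{1\le i<j\le r}\min(\Lambda_i,\Lambda_j) - \sum_{1\le i<j\le r}\min(\Lambda^{(0)}_i,\Lambda^{(0)}_j)$ where $\Lambda = \Lambda_{k,-k;2}(\alpha,\beta)$ is the symbol. There is a standard reorganisation of $\sum_{i<j}\min(\Lambda_i,\Lambda_j)$: if one sorts the entries of $\Lambda^r$ into a decreasing sequence $\Lambda_{(1)} \ge \Lambda_{(2)} \ge \cdots$, then $\sum_{i<j}\min = \sum_\ell (\ell-1)\Lambda_{(\ell)}$, so $a_k(\rho)$ is a linear functional of the sorted symbol minus the same functional on the "ground state" symbol $\Lambda^{(0)}$. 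Using Remark \ref{rem:symbolodd} (or the general symbol description in \S\ref{sec:gsc}) to express the entries of $\Lambda$ in terms of the parts of $\lambda$, this becomes an explicit quadratic expression in $\lambda$ (essentially $\sum_i \binom{\lambda^t_i}{2}$-type terms), plus a correction depending only on $N$ and $k$ coming from the $\Lambda^{(0)}$ subtraction and the choice of $r$. Matching this against $\dim Z_G(u)$ computed in the previous step, and checking that the two $k$-dependent correction terms agree with $\dim Z_L^\circ$, completes the identity.

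The main obstacle I anticipate is bookkeeping the various normalisation constants correctly: the symbol $\Lambda$ carries shifts ($s = \lfloor t/2\rfloor$, the $[k,-\infty[_2$ tails, the distinction between $A^\#$ and $A_{\lambda,\eps}$), the group $\SO(N)$ (versus $O(N)$) introduces a $\pm$ correction in the centraliser dimension and the degenerate-class subtlety, and $\dim Z_L^\circ$ must be pinned down from the precise cuspidal datum rather than guessed. A clean way to sidestep some of this is to note that both sides are additive/compatible under passing between blocks: the quantity $2a(\rho) + \dim C$ should be independent of $\rho$ within reasonable families, or one can check the formula on a distinguished element (e.g. the regular class, where $a(\rho)=N^*$ and everything is classical) and then propagate. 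Alternatively — and this is probably the cleanest route — one observes that the very same identity for $\Sp(2n)$ and for the ordinary Springer case ($k=1$ or $k=0$) is already in the literature (it underlies \cite{shoji2001green}, \cite{LUSZTIG1986146}), and the general-$k$ statement follows by the same computation since $a_k$, $\dim C$, and $\dim Z_L^\circ$ all vary in $k$ through the single shared parameter (the symbol defect), so one reduces to checking the constant term. I would present the argument as: (i) closed form for $\dim Z_G(u)$ via \cite[Cor. 6.1.4]{collingwood}; (ii) closed form for $2a(\rho)$ via the sorted-symbol reorganisation and the symbol formulas of \S\ref{sec:gsc}; (iii) identification of the difference with $\dim Z_L^\circ$ using the structure of the cuspidal Levi; (iv) a sanity check on the regular class.
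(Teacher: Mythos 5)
Your route is essentially the paper's: the paper offers no written argument beyond noting that the identity can be verified using \cite[Cor. 6.1.4]{collingwood}, which is precisely your steps (i)--(iii) of expressing $\dim C$ through the transpose-partition formula for centralisers, evaluating $2a_k(\rho)$ from the sorted symbol entries, and matching the resulting block-constant with $\dim Z_L^\circ = (N-k^2)/2$ coming from the cuspidal Levi $L \cong \GL(1)^{(N-k^2)/2}\times \SO(k^2)$. One minor correction to your proposed sanity check: the regular class corresponds to $a(\rho)=0$, while it is the trivial class that has $a(\rho)=N^*$; this slip does not affect the main argument.
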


Denote by $\tilde \omega_{\rho,\rho'}$ the polynomial $\Omega_{i',i}$ in \cite[\S24.7]{lusztig1986character}, where $i = \GSpr^{-1}(\rho)$ and $i' = \GSpr^{-1}(\rho')$.
By \Cref{prop:a-function}, there exists a $c \in \Z$ such that for all $\rho,\rho' \in W_n^\wedge$, we have
\begin{align}
\omega_{\rho,\rho'} = t^{a(\rho) + a(\rho') + c} \tilde \omega_{\rho,\rho'}.
\end{align}
Note that $\prec$ is compatible with the closure order on $\cU$: if $C$ is strictly smaller than $C'$ in the closure order, then $a(\rho) > a(\rho')$ by \Cref{prop:a-function}, hence $\rho \prec \rho'$ and $\rho \not\sim \rho'$, and if $C = C'$, then $\Lambda_{\rho,k} = \Lambda_{\rho',k}$ by the generalised Springer correspondence, and so $\rho \sim \rho'$.
Thus we can apply \cite[Theorem 24.8(b)]{lusztig1986character} with this order $\prec$, and by taking the transpose of the matrix equation in \cite[Theorem 24.8(b)]{lusztig1986character}, this theorem becomes:

\begin{theorem}\label{theorem:lusztig}
Let $n,k,W_n,\prec$ be as in \Cref{theorem:shoji}, except we assume that $k \neq 0$ if $W_n = W(B_n)$.
Then there exist unique $|W_{n}|\times|W_{n}|$ matrices $\tilde P = \tilde P^{(k)}$ and $\tilde \Lambda = \tilde \Lambda^{(k)}$ over $\Q(t)$ such that
\begin{align}
\tilde P \tilde\lambda \tilde P^t &= \tilde \Omega, &&
\\
\tilde\lambda_{\rho,\rho'} &= 0 && \text{if } \rho \not\sim \rho', \label{LScondition1}
\\
\tilde p_{\rho,\rho'} &= 0 && \text{unless $\rho' \prec \rho$ and $\rho \not\sim \rho'$, or $\rho = \rho'$,} \label{LScondition2}
\\
\tilde p_{\rho,\rho} &= 1.
\end{align}
Furthermore, the entries of $\tilde P, \tilde \Lambda$ lie in $\Z[t]$.
\end{theorem}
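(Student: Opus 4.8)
The plan is to obtain \Cref{theorem:lusztig} as a direct transcription of \cite[Theorem 24.8(b)]{lusztig1986character} into the symbol / relative-Weyl-group language set up above. Lusztig's theorem is formulated for a fixed cuspidal datum — equivalently (since his matrix is block-diagonal across distinct cuspidal data), already inside the block indexed, via $\GSpr$, by $W_n^\wedge \cong (N_G(L)/L)^\wedge$ — so its coefficient matrix is exactly $(\tilde\omega_{\rho,\rho'})_{\rho,\rho'\in W_n^\wedge}$, where $\tilde\omega_{\rho,\rho'} = \Omega_{i',i}$ with $i = \GSpr^{-1}(\rho)$, $i' = \GSpr^{-1}(\rho')$. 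Combining the displayed relation $\omega_{\rho,\rho'} = t^{a(\rho)+a(\rho')+c}\tilde\omega_{\rho,\rho'}$ — valid with a single constant $c = c(G,L) \in \Z$ by \Cref{prop:a-function} — with $\tilde\Omega = D^{-1}\Omega D^{-1}$ gives $\tilde\Omega_{\rho,\rho'} = t^{c}\tilde\omega_{\rho,\rho'} = t^{c}\Omega_{i',i}$; that is, $\tilde\Omega$ is $t^{c}$ times the transpose of Lusztig's block. Hence, given Lusztig's solution $(\hat P,\hat\Lambda)$ of his equation for this block, the pair $(\hat P^{t},\, t^{c}\hat\Lambda)$ solves $\tilde P\tilde\Lambda\tilde P^{t} = \tilde\Omega$, the transpose interchanging the two triangularity conventions so that ``$\hat p$ vanishes unless its column index $\preceq$ its row index'' becomes ``$\tilde p_{\rho,\rho'} = 0$ unless $\rho' \prec \rho$ and $\rho\not\sim\rho'$, or $\rho=\rho'$'', while the normalisation $\tilde p_{\rho,\rho} = 1$ and the block condition $\tilde\Lambda_{\rho,\rho'} = 0$ for $\rho\not\sim\rho'$ are preserved.

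The one genuine hypothesis to verify is that the order $\prec = \prec_k$ is admissible for \cite[Theorem 24.8(b)]{lusztig1986character}, i.e. that it refines the preorder on $\NNG$ coming from the closure order on Springer supports and makes each similarity class an interval. The second part holds by the construction of $\prec_k$. For the first, use \Cref{prop:a-function}: $2a(\rho) = \dim G - \dim Z_L^\circ - \dim\supp\rho$ for every $\rho\in W_n^\wedge$. So if $\supp\rho \subsetneq \overline{\supp\rho'}$ then $\dim\supp\rho < \dim\supp\rho'$, hence $a(\rho) > a(\rho')$ and $\rho \prec \rho'$; moreover $\supp\rho$ and $\supp\rho'$ are parametrised by distinct orthogonal partitions, so $\Lambda_{\rho;k}$ and $\Lambda_{\rho';k}$ are not similar and $\rho\not\sim\rho'$. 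If instead $\supp\rho = \supp\rho'$, the generalised Springer correspondence forces $\Lambda_{\rho;k}$ and $\Lambda_{\rho';k}$ to be similar, so $\rho\sim\rho'$. Thus \cite[Theorem 24.8(b)]{lusztig1986character} applies, delivering existence, uniqueness over $\Q(t)$, and $\Z[t]$-integrality of $(\hat P,\hat\Lambda)$, and these transport along the transpose-and-rescale dictionary of the previous paragraph, the only point needing a line of care being that the global factor $t^{c}$ does not spoil integrality of $\tilde\Lambda^{(k)}$.

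I expect that last point, and the bookkeeping of normalisations in general, to be the only real obstacle — there is no new mathematics beyond \Cref{prop:a-function}. A clean way to dispose of the integrality issue, which doubles as a consistency check, is to compare with \Cref{theorem:shoji}. Since $D$ is diagonal, $(D^{-1}P^{(k)})_{\rho,\rho} = t^{-a(\rho)}p^{(k)}_{\rho,\rho} = 1$, the off-diagonal vanishing conditions on $P^{(k)}$ are inherited, and $D^{-1}P^{(k)}\,\Lambda^{(k)}\,(D^{-1}P^{(k)})^{t} = D^{-1}\Omega D^{-1} = \tilde\Omega$; so $(D^{-1}P^{(k)},\Lambda^{(k)})$ satisfies every defining property of $(\tilde P^{(k)},\tilde\Lambda^{(k)})$ over $\Q(t)$, and by uniqueness it equals it. In particular $\tilde\Lambda^{(k)} = \Lambda^{(k)} \in \Z[t]$ already from \Cref{theorem:shoji}, while the a priori nontrivial integrality of $\tilde P^{(k)} = D^{-1}P^{(k)}$ — equivalently the divisibility $t^{a(\rho)} \mid p^{(k)}_{\rho,\rho'}$ — is exactly what \cite[Theorem 24.8(b)]{lusztig1986character} supplies. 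If one prefers not to import uniqueness, it follows from the standard rigidity argument for $P\tilde\Lambda P^{t}$-decompositions: two solutions differ by $\tilde P' = \tilde P U$ with $U$ lower-unitriangular for $\prec$ and equal to the identity on each similarity block (since $\tilde p_{\rho,\rho'} = 0$ for $\rho\sim\rho'$, $\rho\neq\rho'$); feeding this into $\tilde\Lambda = U\tilde\Lambda' U^{t}$ and comparing entries from the $\prec$-top downwards forces $U$ to be block-diagonal, hence $U = I$, exactly as in \cite[\S24]{lusztig1986character} and \cite{shoji2001green}.
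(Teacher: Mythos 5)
Your proposal is correct and follows essentially the same route as the paper: verify via \Cref{prop:a-function} that $\prec_k$ refines the closure order on Springer supports with similarity classes as intervals, then transcribe \cite[Theorem 24.8(b)]{lusztig1986character} by transposing and rescaling by $t^{c}$, and finally identify the solution with $(D^{-1}P^{(k)},\Lambda^{(k)})$ from \Cref{theorem:shoji}. Your consistency check is in fact slightly more careful on the normalisation than the paper's remark following the theorem, but the mathematical content is the same.
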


Consider $P = P^{(k)}$ and $\Lambda$ from \Cref{theorem:shoji}.
Then it is easy to see that $\tilde P = D^{-1}P$ and $\tilde \Lambda = t^{-c}\Lambda$ are the solutions to the matrix equation in \Cref{theorem:lusztig}.
Thus we have now related $P$ in \Cref{theorem:shoji}, which has a combinatorial interpretation as shown in \cite{shoji2001green}, to $\tilde P$ in \Cref{theorem:lusztig}, which has a geometric interpretation as shown in \cite{lusztig1986character}. We shall discuss the geometric interpretation later in \S\ref{subsec:mult}.

\begin{remark}\label{rem:greenindex}
If $W_n = W(B_n)$ and $\rho = \rho_{\bm\alpha}$ and $\rho' = \rho_{\bm\alpha'}$ for $\bm\alpha,\bm\alpha' \in \cP_2(n)$, we will also write $p_{\bm\alpha,\bm\alpha'} = p_{\rho,\rho'}$ and $\tilde p_{\bm\alpha,\bm\alpha'} = \tilde p_{\rho,\rho'}$.
If $W_n = W(D_n)$ and $\rho = \rho_{\bm\alpha,i}$ and $\rho' = \rho_{\bm\alpha',i'}$ for $\bm\alpha,\bm\alpha'\in\cP_2(n)/\theta$ and $i \in \{1,1/c_{\bm\alpha}\}$, $i' \in \{1,1/c_{\bm\alpha'}\}$,
we will similarly write $p_{\bm\alpha^i,\bm\alpha'^{i'}} = p_{\rho,\rho'}$ and $\tilde p_{\bm\alpha^i,\bm\alpha'^{i'}} = \tilde p_{\rho,\rho'}$. We will furthermore drop the $i$ (or $i'$) from the notation if $c_{\bm\alpha} = 1$ (or $c_{\bm\alpha'} = 1$).
\end{remark}

\subsection{Symmetric polynomials}\label{subsec:sympol}
The matrix of Green functions $P^{A_{n-1}}$ for $S_n$ has an interpretation in terms of symmetric functions, see for instance \cite[Ch. III \S7]{macdonald1998symmetric}, where it is shown that $P^{A_{n-1}}$ is equal to the Kostka matrix, which is the transition matrix between Schur functions and Hall-Littlewood functions. 
Shoji extended these results to complex reflection groups in \cite{shoji2001green} and \cite{SHOJI2002563}. In particular, the connection between the Green functions of type $B$/$C$ and $D$ and symmetric functions were described. The result \cite[p.685]{shoji2001green} allows one to compute the Green functions for Weyl groups of type $B$ under some conditions. Waldspurger proved a certain generalisation \cite[Proposition 4.2]{waldspurger} of this result for the relative Weyl groups of $\Sp(2n)$. \Cref{prop:multshoji} and \Cref{lemma:QR} together are a direct analogue of \cite[Proposition 4.2]{waldspurger} for $\SO(N)$.

We briefly describe the symmetric functions used by Shoji, but we shall not include their definitions here. 
The main purpose of this subsection is to highlight the main ideas in \cite{shoji2001green} are relevant for Waldspurger's proof of \cite[Proposition 4.2]{waldspurger} and the proof of \Cref{prop:multshoji}.

Let $(\alpha,\beta) \in \mathcal P \times \mathcal P$ and let $\bm m = (m_0,m_1) \in \N \times \N$ such that $m_0 \geq t(\alpha)$ and $m_1 \geq t(\beta)$ and let $k = m_0 - m_1$.
For $d=1,2$ and $j=1,\dots,m_d$ define indeterminates $x_j^{(d)}$ and write $x^{(d)} = (x_j^{(d)})_j$ and $x = (x_j^{(d)})_{d,j}$,
Consider the following symmetric polynomials in $\Z[x]$ and $\Z[x,t]$:
\begin{enumerate}[(I)]
\item\label{sym1} (Schur functions)
 $s_{(\alpha,\beta)}(x) = s_{\alpha}(x^{(0)})s_{\beta}(x^{(1)}) \in \Z[x]$ as in \cite[\S2.1]{shoji2001green}, where $s_{\alpha}(x^{(0)}) \in \Z[x^{(0)}]$ and $s_{\beta}(x^{(1)}) \in \Z[x^{(1)}]$ are the usual Schur functions attached to the partitions $\alpha$ and $\beta$,
\item\label{sym2} (Monomial symmetric functions) $m_{(\alpha,\beta)}(x) \in \Z[x]$ as in \cite[\S2.1]{shoji2001green},
\item\label{sym3} (Complete symmetric functions) $h_{(\alpha,\beta)}(x) \in \Z[x]$ as in \cite[\S6.7]{shoji2001green},
\item\label{sym4} $q_{(\alpha,\beta)}(x,t) \in \Z[x,t]$ (\cite[\S2.4]{shoji2001green}),
\item\label{sym5} $R_{(\alpha,\beta)}(x,t) = R_{(\alpha,\beta),<}(x,t) \in \Z[x,t]$ as in \cite[(3.2.1)]{shoji2001green}, where $<$ stands for the order $(m_0,m_1,<)$ on the set of indices of $(\alpha,\beta)$, 
\item\label{sym6} (Hall-Littlewood functions) $P_{(\alpha,\beta)}'(x,t) = P_{(\alpha,\beta),k}'(x,t) \in \Z[x,t]$ as in \cite[Theorem 4.4]{shoji2001green}. We write $P'$ instead of $P$ to the notation to avoid confusion with $P$ in Theorem \ref{theorem:shoji}). Note that in \emph{loc. cit.}, the Hall-Littlewood functions are indexed by certain symbols that are equivalent to $\Lambda_{k,-k;2}(\alpha,\beta)$, so we can also index them by $(\alpha,\beta),k$,
\item\label{sym7} 
$Q_{(\alpha,\beta)}(x) = Q_{(\alpha,\beta),k}(x) \in \Z[x,t]$ as in \cite[Theorem 4.4 (ii)]{shoji2001green}. 
\end{enumerate}

The polynomials above all satisfy a `stability property'. The polynomials \ref{sym1} -- \ref{sym5}, can be defined replacing $m_0$ with $m_0+1$ (resp. $m_1$ with $m_1+1$). Evaluating $x_{m_0+1}^{(0)} = 0$ (resp. $x_{m_1+1}^{(1)} = 0$) yields the same polynomial as in the definition for $(m_0,m_1)$. 
The polynomials $P'_{(\alpha,\beta),k}$ and $Q_{(\alpha,\beta),k}$ depend on $k = m_0 - m_1$, and they can be defined when we replace $(m_0,m_1)$ by $(m_0+1,m_1+1)$, and evaluating $x_{m_0+1}^{(0)}=0$ and $x_{m_1+1}^{(1)}=0$ yields the same polynomial as in the definition for $(m_0,m_1)$.
As such, the polynomials above can be viewed as functions in infinitely many variables.
These functions form bases for certain $\Z$ and $\Q(t)$-modules as follows. 
Denote by
\begin{align}
\Xi_{(m_0,m_1)} = 
\Z[x_i^{(0)} \colon i = 1,\dots,m_0]^{S_{m_0}}
\otimes 
\Z[x_i^{(1)} \colon i = 1,\dots,m_1]^{S_{m_1}}
\end{align}
the ring of symmetric polynomials with variables $x$ with respect to $S_{m_0} \times S_{m_1}$. Then $\Xi_{(m_0,m_1)}$ has a graded ring structure $\Xi_{(m_0,m_1)} = \bigoplus_{i \geq 0} \Xi_{(m_0,m_1)}^i$, where each $\Xi_{(m_0,m_1)}^i$ consists of homogeneous symmetric polynomials of degree $i$ and the zero polynomial. Consider the inverse limit 
\begin{align}
\Xi^i = \varprojlim_{(m_0,m_1)} \Xi_{(m_0,m_1)}^i
\end{align}
with respect to the obvious restrctions $\Xi_{(m_0+\ell,m_1+\ell)}^i \to \Xi_{(m_0,m_1)}^i$ where $\ell \in \N$. Define 
\begin{align}
\Xi = \bigoplus_{i\geq0} \Xi^i.
\end{align}
For $n, m_0, m_1 \in \N$ with $m_0, m_1 \geq n$, we have that $(s_{(\alpha,\beta)}(x))_{(\alpha,\beta) \in \mathcal P_2(n)}$ is a basis for the $\Z$-module $\Xi_{(m_0,m_1)}^n$, as well as the $\Q(t)$-vector space $\Xi_{\Q,(m_0,m_1)}^n[t] := \Q(t) \otimes_{\Z} \Xi_{m_0,m_1}^n$. 
By the stability property, $(s_{(\alpha,\beta)}(x))_{(\alpha,\beta) \in \mathcal P_2(n)}$ is also a $\Z$-basis (resp. $\Q(t)$-basis) of $\Xi^n$ (resp. $\Xi_{\Q}^n[t] := \Q(t) \otimes_{\Z} \Xi^n$).
As polynomials (resp. functions in infinitely many variables), the functions in \ref{sym2} -- \ref{sym7} are shown in \cite{shoji2001green} to be bases of the $\Q(t)$-vector space $\Xi_{\Q,(m_0,m_1)}^n[t]$ (resp. $\Xi_{\Q}^n[t]$), where $(\alpha,\beta)$ runs through $\mathcal P_2(n)$.  
We want to study the transition matrices between some of these bases.
By the stability properties, we see that the transition matrices do not depend on $m_0$ and $m_1$ as long as $m_0\geq n$, $m_1 \geq n$ and $m_0 - m_1 = k$, and that the transition matrix between any two bases in $\Xi_{\Q,(m_0,m_1)}^n[t]$ is the same as the transition matrix between between these two bases in $\Xi_{\Q}^n[t]$.
Given any two bases $X$ and $Y$ of a vector space, denote by $M(X,Y)$ the transition matrix from $Y$ to $X$. 

\begin{theorem}[{\cite[Theorem 5.4]{shoji2001green}}]\label{thm:kostka}
Let $n \in \N$, $k \in \Z_{\geq0}$ 
and let $P = P^{(k)}$ be as in Theorem \ref{theorem:shoji} for the Weyl group $W(B_n)$. Then $P = K^{B_n}(t^{-1})T_k$, where $T_k$ is the diagonal $|W(B_n)| \times |W(B_n)|$ matrix with entries $(T_k)_{\rho,\rho} = t^{a_k(\rho)}$ and $K^{B_n}(t) = M(s,P')$ is the transition matrix between the Hall-Littlewood functions $P'_{(\alpha,\beta),k}$ and the Schur functions $s_{(\alpha,\beta)}$, where $(\alpha,\beta) \in \mathcal P_2(n)$. We call $K^{B_n}(t)$ a \emph{Kostka matrix} of type $B$, and the entries of $K^{B_n}(t)$ are called \emph{Kostka polynomials}.
\end{theorem}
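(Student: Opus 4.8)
The statement is Theorem~5.4 of \cite{shoji2001green}, so the quickest route is to cite it; but here is the argument I would reconstruct. The plan is to produce an explicit pair of matrices satisfying the characterisation of $P = P^{(k)}$ in \Cref{theorem:shoji} and then invoke the uniqueness asserted there. Concretely, set $\hat P := K^{B_n}(t^{-1})T_k$; I would (i) exhibit a companion matrix $\hat\Lambda$, (ii) prove the identity $\hat P\,\hat\Lambda\,\hat P^t = \Omega$, and (iii) check the triangularity and diagonal normalisation of $\hat P$. Granting (i)--(iii), $\hat P$ satisfies the same four conditions as $P$ in \Cref{theorem:shoji}, whence $P = \hat P$.

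For (i) and (ii) the input is the symmetric-function machinery of \cite{shoji2001green}. Recall that $\Xi_{\Q}^n[t]$ carries Shoji's scalar product $\langle\cdot,\cdot\rangle$ (the type $B$ analogue of the $t$-deformed Hall inner product) with respect to which the Hall--Littlewood functions $P'_{(\alpha,\beta),k}$ are pairwise orthogonal and $Q_{(\gamma,\delta),k}$ is the dual basis; write $b_{(\alpha,\beta)}(t)$ for the resulting norms, so $\langle P'_{(\alpha,\beta),k},P'_{(\gamma,\delta),k}\rangle = \delta_{(\alpha,\beta),(\gamma,\delta)}\,b_{(\alpha,\beta)}(t)^{-1}$. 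Expanding $s_{(\alpha,\beta)} = \sum_{(\gamma,\delta)} K^{B_n}(t)_{(\alpha,\beta),(\gamma,\delta)}\,P'_{(\gamma,\delta),k}$ — this being exactly $K^{B_n}(t) = M(s,P')$ — and using orthogonality, the Gram matrix $\bigl(\langle s_{(\alpha,\beta)},s_{(\gamma,\delta)}\rangle\bigr)_{(\alpha,\beta),(\gamma,\delta)\in\mathcal{P}_2(n)}$ factors as $K^{B_n}(t)\cdot\mathrm{diag}(b^{-1})\cdot K^{B_n}(t)^t$. Thus $\hat\Lambda$ will be the diagonal matrix recording the norms $b_{(\alpha,\beta)}$, suitably twisted by $t\mapsto t^{-1}$ and by $T_k$; it vanishes off similarity classes a fortiori, since it is diagonal. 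The crux is then identifying, after exactly this normalisation, the Gram matrix of Schur functions with $\Omega$: one must show that $\omega_{\rho,\rho'} = t^{N^*}R\bigl(\chi(\rho)\otimes\chi(\rho')\otimes\overline{\det}_V\bigr)$ equals $t^{a_k(\rho)+a_k(\rho')}$ times $\langle s_{(\alpha,\beta)},s_{(\gamma,\delta)}\rangle$ with $t$ replaced by $t^{-1}$. This rests on the explicit symmetric-function description of the fake-degree data of $W(B_n)$ via the auxiliary functions $q_{(\alpha,\beta)}$ and $h_{(\alpha,\beta)}$, i.e. on a symmetric-function formula for the Poincaré-type pairing coming from the coinvariant algebra. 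I expect this bridge identity to be the main obstacle: in a self-contained account it absorbs most of the work, precisely because of the bookkeeping of the $t^{N^*}$ factor, the $\overline{\det}_V$-twist and the $a_k$-shifts; here one cites \cite[\S2--5]{shoji2001green}.

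For (iii): the triangularity of Hall--Littlewood functions, $P'_{(\alpha,\beta),k} = s_{(\alpha,\beta)} + \sum (\ast)\,s_{(\gamma,\delta)}$ summed over $(\gamma,\delta)$ strictly below $(\alpha,\beta)$ in the relevant dominance-type order, makes $K^{B_n}(t)$ unitriangular for that order; combining this with the compatibility between that order and $\prec_k$ through the $a_k$-function (as in the discussion preceding \Cref{theorem:lusztig}), the matrix $\hat P = K^{B_n}(t^{-1})T_k$ has $\hat p_{\rho,\rho'}=0$ unless $\rho'\prec_k\rho$ and $\rho\not\sim\rho'$, or $\rho=\rho'$, with $\hat p_{\rho,\rho} = 1\cdot t^{a_k(\rho)}$. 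Together with (i) and (ii) these are the defining conditions of \Cref{theorem:shoji}, so its uniqueness clause yields $P = K^{B_n}(t^{-1})T_k$, as claimed.
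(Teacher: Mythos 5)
The paper offers no proof of this statement: it is imported verbatim from \cite[Theorem 5.4]{shoji2001green}, which is also the route you identify as quickest. Your reconstruction—producing $\hat P = K^{B_n}(t^{-1})T_k$ together with a diagonal companion matrix from the Hall--Littlewood norms, identifying $\Omega$ with the suitably normalised Schur Gram matrix for Shoji's scalar product, checking triangularity and the diagonal entries $t^{a_k(\rho)}$, and concluding by the uniqueness clause of \Cref{theorem:shoji}—is a faithful outline of Shoji's own argument, so your approach and the paper's (citation) agree.
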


Note that the Kostka polynomials in \Cref{thm:kostka} depend on $k$, but we drop it from the notation for simplicity.

The Green functions of $W(D_n)$ for $k = 0$ are described in \cite{SHOJI2002563}.
Let $T_0'$ be the diagonal $|W(D_n)|\times|W(D_n)|$ matrix with diagonal entries $(T_0')_{\rho,\rho} = t^{a_0(\rho)}$.
Similar to the type $B$ case, it is shown in \emph{loc. cit.} that the matrix of $P = P^{(0)}$ of Green functions of $W(D_n)$ for $k=0$ is equal to the transition matrix $K^{D_n}(t^{-1})T_0' $, where $K^{D_n}(t)$ is a transition matrix between certain Schur functions and Hall-Littlewood functions `of type $D$', defined slightly different than in \ref{sym1} and \ref{sym6}.
Using \cite[Proposition 4.9]{SHOJI2002563}, we find the following result for $K^{D_n}(t)$.
If $n$ is even, let $K^{A_{n/2-1}}(t)$ be the Kostka matrix of $S_{n/2}$ as in \cite[III.6]{macdonald1998symmetric}.
From \Cref{thm:kostka}, we see that $K^{B_n}$ depends on $k$.
For the following, we assume that $k=0$.  
We will index the Kostka polynomials of type $A_{n/2-1}$ and $B_n$ by partitions of $n/2$ and bipartitions of $n$, respectively, and we will index the Kostka polynomials of type $D_n$ by $\bm \alpha^i$ where $\bm \alpha \in \cP_2(n)/\theta$ and $i \in \{1,2/c_{\bm\alpha}\}$, and we drop the $i$ if $c_{\bm\alpha} = 2$ (cf. \Cref{rem:greenindex}).

\begin{proposition}\label{prop:shojigreenD}
Let $\bm\alpha,\bm\beta \in \mathcal P_2(n)$.
\begin{enumerate}[wide,ref={\theproposition(\arabic*)}]
\crefalias{enumi}{proposition}
\item\label{prop:shojigreenD1} Suppose $\theta(\bm\alpha) \neq \bm\alpha$. 
Then for $j=1,2$, we have
\begin{align}
K_{\bm\beta^j,\bm\alpha}^{D_n}(t) = \frac{c_{\bm\beta}}{2} (K_{\bm\beta,\bm\alpha}^{B_n}(t) + K_{\bm\beta,\theta\bm\alpha}^{B_n}(t)).
\end{align}
\item\label{prop:shojigreenD2} Suppose $\theta(\bm\alpha) = \bm\alpha$ and $\theta(\bm\beta) \neq \bm\beta$. Then for $i = 1,2$ we have
\begin{align}
K_{\bm\beta,\bm\alpha^i}^{D_n}(t) = K_{\bm\beta,\bm\alpha}^{B_n}(t) = K_{\theta\bm\beta,\bm\alpha}^{B_n}(t).
\end{align}
\item\label{prop:shojigreenD3} Suppose $\theta(\bm\alpha) = \bm\alpha = (\alpha,\alpha)$ and $\theta(\bm\beta) = \bm\beta = (\beta,\beta)$. Then $n$ is even and for $i=1,2$ and $j=1,2$, we have
\begin{align}\label{eq:shojigreenD3}
K_{\bm\beta^j,\bm\alpha^i}^{D_n}(t) = (-1)^{i+j}K_{\beta,\alpha}^{A_{n/2-1}}(t^2) + K_{\bm\beta,\bm\alpha}^{B_n}(t).
\end{align}
\end{enumerate}
\end{proposition}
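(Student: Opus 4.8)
The plan is to deduce the three identities from \cite[Proposition 4.9]{SHOJI2002563} by bookkeeping with the $\theta$-orbit structure on bipartitions; once that result is in hand the argument is purely formal and uses no new combinatorics. First I would set up the comparison between the type $B$ and type $D$ pictures. By \Cref{thm:kostka} with $k=0$ we have $P^{(0)}=K^{B_n}(t^{-1})T_0$ for $W(B_n)$, and, as recalled just before the proposition, $P^{(0)}=K^{D_n}(t^{-1})T_0'$ for $W(D_n)$, where $K^{D_n}(t)$ is the transition matrix between the type $D$ Schur and Hall--Littlewood functions. Since $a_0$ is read off from the symbol $\Lambda_{0,0;2}(\alpha,\beta)$, which depends only on the bipartition $(\alpha,\beta)$ and is $\theta$-invariant, the diagonal matrices $T_0$ and $T_0'$ coincide under the natural identification of indices; hence comparing $K^{D_n}$ with $K^{B_n}$ amounts to comparing the relevant blocks of the two Green-function matrices, and the twist $t^{-1}$ is immaterial. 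I would also record the elementary $\theta$-symmetry at $k=0$, namely $K^{B_n}_{\theta\bm\beta,\theta\bm\alpha}(t)=K^{B_n}_{\bm\beta,\bm\alpha}(t)$ (swapping the two families of variables interchanges $s_{(\alpha,\beta)}$ with $s_{(\beta,\alpha)}$ and, for $k=0$, $P'_{(\alpha,\beta),0}$ with $P'_{(\beta,\alpha),0}$); in particular the second equality in \Cref{prop:shojigreenD2} is automatic once $\theta\bm\alpha=\bm\alpha$.

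Next I would make precise the input from \cite[Proposition 4.9]{SHOJI2002563}: it expresses the type $D$ Schur and Hall--Littlewood functions of degree $n$ through the type $B$ ones, the transfer being governed by the involution $\theta$ that swaps the two families of variables. On a free orbit $\{\bm\alpha,\theta\bm\alpha\}$ (so $c_{\bm\alpha}=2$) the type $D$ object attached to $\bm\alpha$ corresponds to the $\theta$-symmetrisation $s_{\bm\alpha}+s_{\theta\bm\alpha}$, resp.\ $P'_{\bm\alpha}+P'_{\theta\bm\alpha}$; on a fixed point $\bm\alpha=(\alpha,\alpha)$ the single type $B$ object splits into two type $D$ objects, indexed $\bm\alpha^{1},\bm\alpha^{2}$, whose $\theta$-symmetric part recovers the type $B$ one and whose antisymmetric part is, after the substitution $t\mapsto t^{2}$, the type $A$ Schur function $s_\alpha$ (resp.\ Hall--Littlewood function) of $S_{n/2}$ transported along the diagonal embedding of variables. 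The substitution $t\mapsto t^2$ reflects that restricting the grading to the diagonal variables doubles degrees.

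Then I would run the three cases by passing to transition matrices of these relations. For \Cref{prop:shojigreenD1}: the type $D$ Hall--Littlewood function indexed by the orbit of $\bm\alpha$ pulls back to $P'_{\bm\alpha}+P'_{\theta\bm\alpha}$, so, expanding in type $B$ Schur functions and using the $\theta$-symmetry above, the expansion of $s_{\bm\beta}$ contributes $K^{B_n}_{\bm\beta,\bm\alpha}+K^{B_n}_{\bm\beta,\theta\bm\alpha}$; the factor $c_{\bm\beta}/2$ then accounts for the splitting of the type $B$ row $\bm\beta$, being $1$ when $\bm\beta$ is not $\theta$-fixed and $1/2$ when $\bm\beta=(\beta,\beta)$ splits into $\bm\beta^{1},\bm\beta^{2}$ (and independent of $j$ precisely because $\bm\alpha$ is $\theta$-free, so the antisymmetric correction does not contribute). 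For \Cref{prop:shojigreenD2}: $\bm\alpha=(\alpha,\alpha)$ splits into two type $D$ columns, but the row $\bm\beta$ is $\theta$-symmetric and so pairs trivially with the antisymmetric part, leaving $K^{B_n}_{\bm\beta,\bm\alpha}$ in both columns. For \Cref{prop:shojigreenD3}: both indices split, $n$ is then even, and the $2\times2$ block of type $D$ entries decomposes as a $\theta$-symmetric part, reproducing $K^{B_n}_{\bm\beta,\bm\alpha}$, plus an antisymmetric part which, by the last clause of \cite[Proposition 4.9]{SHOJI2002563}, equals $K^{A_{n/2-1}}_{\beta,\alpha}(t^2)$ weighted by the sign $(-1)^{i+j}$ recording which of the two summands one is on.

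The step I expect to be the main obstacle is keeping all normalisations straight: matching the two type $D$ constituents of a split type $B$ object with the labels $\bm\alpha^{1},\bm\alpha^{2}$ so that the sign $(-1)^{i+j}$ emerges as stated and not with the opposite sign, checking that $T_0$, $T_0'$ and the $t^{-1}$-twists cancel cleanly from the comparison, and confirming that the argument of the type $A$ Kostka polynomial is $t^{2}$ rather than $t$. Each of these is settled by a direct reading of \cite[Proposition 4.9]{SHOJI2002563} together with the definitions of the type $D$ symmetric functions, so the proof is essentially bookkeeping once that proposition is quoted.
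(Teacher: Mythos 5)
Your proposal is correct and follows essentially the same route as the paper: the proposition is obtained by quoting \cite[Proposition 4.9]{SHOJI2002563} and performing the $\theta$-orbit bookkeeping, with the only subtle point being the evaluation of the type $A$ Kostka matrix at $t^{2}$ (coming from $h_j=2$ in Shoji's construction of the Hall--Littlewood functions on the diagonal variables), which the paper handles in the remark immediately following the proposition. Your treatment of the normalisations ($T_0$ versus $T_0'$, the $\theta$-symmetry $K^{B_n}_{\theta\bm\beta,\theta\bm\alpha}=K^{B_n}_{\bm\beta,\bm\alpha}$ at $k=0$, and the factor $c_{\bm\beta}/2$) matches what the paper uses.
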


\begin{remark}
It may not seem obvious from \cite[Proposition 4.9]{SHOJI2002563} that we have to evaluate $K_{\bm\beta,\bm\alpha}^{A_{n/2-1}}$ in $t^2$ in \Cref{prop:shojigreenD3}, as it comes from a subtlety involved in defining the Hall-Littlewood functions.
In the notation of \emph{loc. cit.}, when $j = 1$, then the $P_{z}^j$ are the usual Hall-Littlewood functions of $S_{n/2}$.
Since $h_j = h_1 = 2$, the term $t^{h_j}$ in \cite[(3.3.7)]{SHOJI2002563} is $t^2$ and so the $P_z^j$ are the Hall-Littlewood functions of $S_{n/2}$ evaluated in $t^2$ rather than $t$. 
As such, the Kostka matrix of $S_{n/2}$ that appears in \cite[Proposition 4.9]{SHOJI2002563} should also be evaluated in $t^2$. 
Similarly, when $j = 0$, we have $h_j = 1$, so the Kostka matrices of type $B$ appearing in \cite[Proposition 4.9]{SHOJI2002563} are simply evaluated in $t$.

Furthermore, we will only need \Cref{prop:shojigreenD1} later, but we include the other two statements as well for completeness. In this case, we thus have a description of the Green functions of $W(D_n)$ for $k=0$ in terms of Green functions of $W(B_n)$ for $k = 0$, which are described by \Cref{theorem:shoji} for $W_n = W(B_n)$ and $k = 0$.
\end{remark}

The following is a direct consequence of \cite[(3.3.2), \S3.9]{SHOJI2002563}.
Suppose $\bm\alpha,\bm\beta \in \mathcal P_2(n)/\theta$ and $i \in \{1,2/c_{\bm\alpha}\}$, $j \in \{1,2/c_{\bm\beta}\}$.
Then 
\begin{align}\label{eq:thetadegenerate}
K_{\theta\bm\beta,\theta\bm\alpha}^{B_n}(t) = K_{\bm\beta,\bm\alpha}^{B_n}(t).
\end{align}
Note that the condition $k=0$ is important here.

\subsection{Multiplicities}\label{subsec:mult}
We recall some notions regarding certain multiplicites from \cite{lusztig1986character}.
Let $N \in \N$, $G = \SO(N)$, and $(C,\E), (C',\mathcal E') \in \mathcal N_G$. 
Consider the intersection cohomology complex $\IC(\bar C',\E')$ on $\bar C'$ associated to $(C',\E')$. 
For each $m \in \N$, the restriction of $H^{2m}(\IC(\bar C',\mathcal E'))$ to $C$ is a 
direct sum of irreducible $G$-equivariant local systems on $C$.
Let $\mult(C,\mathcal E;C',\mathcal E')$ be the multiplicity of $\mathcal E$ in $\bigoplus_{m\in\Z}H^{2m}(\IC(\bar C',\mathcal E'))|_C$.
Let $\rho = \GSpr(C,\mathcal E)$ and $\rho' = \GSpr(C',\mathcal E')$. Suppose $(C,\mathcal E)$ and $(C',\mathcal E')$ are parametrised by $(\lambda,[\eps]), (\lambda',[\eps']) \in \PPort(N)$, respectively. If $k(\lambda,[\eps]) \neq k(\lambda',[\eps'])$, then $\mult(C,\mathcal E;C',\mathcal E') = 0$. If $k(\lambda,[\eps]) = k(\lambda',[\eps'])$, then $\rho = \GSpr(C,\mathcal E)$ and $\rho' = \GSpr(C',\mathcal E')$ are representations of the same relative Weyl group $W_{\text{rel}}$. In this case, let $\tilde P = \tilde P^{(k(\lambda,[\eps]))}$ be as in Theorem \ref{theorem:lusztig} for $W_{\text{rel}}$. For each $m \in \N$, the coefficient of $t^{m}$ in $\tilde p_{\rho',\rho}(t)$ is equal to the multiplicity of $\mathcal E$ in $H^{2m}(\IC(\bar C',\mathcal E'))|_C$ by \cite[(24.8.2)]{lusztig1986character} (recall that $\tilde P$ in \Cref{theorem:lusztig} is the transpose of the matrix $\Pi$ in \cite[(24.8.2)]{lusztig1986character}). 
Let $\mult(C,\mathcal E;C',\mathcal E')$ be the multiplicity of $\mathcal E$ in $\bigoplus_{m\in\Z}H^{2m}(\IC(\bar C',\mathcal E'))|_C$. Then we have $\mult(C,\mathcal E;C',\mathcal E') = \tilde p_{\rho',\rho}(1)$. 

We want to give a combinatorial description of $\mult(C,\mathcal E;C',\mathcal E')$.
Let $(\alpha,\beta) \in \mathcal P \times \mathcal P$ and let $(m_0,m_1,<)$ be any order on the set of indices $I = I_{m_0,m_1}$ of $(\alpha,\beta)$. Let $J = \{((i,e),(j,f)) \in I^2 \colon (i,e) < (j,f), e\neq f\}$. Let $X = \N^J$ and write its elements as $x = (x_{(i,e),(j,f)})_{((i,e),(j,f))\in J}$ with $x_{(i,e),(j,f)} \in \N$. For $x \in X$, we define $(\alpha[x],\beta[x])\in \Z^{m_0} \times \Z^{m_1}$ by
\begin{align}
\alpha[x]_i
&= \alpha_i + \sum_{j \in \{1,\dots,m_1\},(i,0)<(j,1)} x_{(i,0),(j,1)} - \sum_{j\in\{1,\dots,m_1\},(j,1)<(i,0)} x_{(j,1),(i,0)},
\\
\beta[x]_j
&= \beta_i + \sum_{i \in \{1,\dots,m_0\},(j,1)<(i,0)} x_{(j,1),(i,0)} - \sum_{i\in\{1,\dots,m_0\},(i,0)<(j,1)} x_{(i,0),(j,1)},
\end{align}
for $i=1,\dots,m_0$, $j=1,\dots,m_1$. For each $(\mu,\nu) \in \Z^{m_0} \times \Z^{m_1}$, let $X(\alpha,\beta,<;\mu,\nu) = \{x \in X \colon \alpha[x] = \mu, \beta[x] = \nu\}$.

\begin{remark}
\begin{enumerate}
\item For all $x \in X$, we have $S(\alpha[x]) + S(\beta[x]) = S(\alpha) + S(\beta)$. Hence $X(\alpha,\beta,<;\mu,\nu)$ is empty if $S(\mu) + S(\nu) \neq S(\alpha) + S(\beta)$.
\item For all $(\mu,\nu) \in \Z^{m_0} \times \Z^{m_1}$, $X(\alpha,\beta,<;\mu,\nu)$ is finite. 
\end{enumerate}
\end{remark}

For each $\mu \in \Z^{m_0}$ and $w \in S_{m_0}$, we define $\mu[w] \in \Z^{m_0}$ by $\mu[w]_i = \mu_{wi} + i -wi$ for $i = 1,\dots,m_0$. 
We similary define $\nu[v]$ for $\nu \in \Z^{m_1}$ and $v \in S_{m_1}$.
Let $(\mu,\nu) \in \mathcal P \times \mathcal P$. If $t(\mu) > m_0$ or $t(\nu) > m_1$, we define $\mult(\alpha,\beta,<;\mu,\nu) = 0$. Otherwise, we can consider $(\mu,\nu)$ as an element of $\Z^{m_0} \times \Z^{m_1}$ and we define
\begin{align}
\mult(\alpha,\beta,<;\mu,\nu)
:= \sum_{w \in S_{m_0}, v \in S_{m_1}} \sgn(w) \sgn(v) | X(\alpha,\beta,<;\mu[w],\nu[v])|.
\end{align}
It is shown on \cite[\S3.1, p. 412]{waldspurger} that $\mult(\alpha,\beta,<;\mu,\nu)$ is independent of the choice of representative for $<$.

Recall $p_{A,B;s}(\alpha,\beta,<)$ from \Cref{def:pABs}.
Let $Q_s(\alpha,\beta,<)$ be the set of pairs $(\mu,\nu) \in \mathcal P \times \mathcal P$ such that for all $A,B,s \in \R$ with $s>0$, we have
\begin{align}
\Lambda_{A,B;s}(\mu,\nu) \leq p_{A,B;s}(\alpha,\beta,<).
\end{align} 
We have the following result \cite[Prop. 3.1]{waldspurger}:

\begin{proposition}\label{prop:mult}
Let $(\mu,\nu) \in \mathcal P \times \mathcal P$ and $A,B,s \in \R$ with $s>0$.
\begin{enumerate}[ref={\theproposition(\arabic*)}]
\crefalias{enumi}{proposition}
\item Suppose that $\mult(\alpha,\beta,<;\mu,\nu) \neq 0$. Then $(\mu,\nu) \in Q_s(\alpha,\beta,<)$.
\item\label{prop:mult2} Suppose that $\Lambda_{A,B;s}(\mu,\nu) = p_{A,B;s}(\alpha,\beta,<)$. Then $\mult(\alpha,\beta,<;\mu,\nu) \neq 0$ if and only if $(\mu,\nu) \in P_{A,B;s}(\alpha,\beta,<)$.
\item \label{prop:mult3} Suppose that $(\mu,\nu) \in P_{A,B;s}(\alpha,\beta,<)$. Then $\mult(\alpha,\beta,<;\mu,\nu) = 1$.
\end{enumerate}
\end{proposition}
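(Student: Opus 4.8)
I would prove this --- it is \cite[Prop.~3.1]{waldspurger} --- by reconstructing the same purely combinatorial induction on $m_0^{<}+m_1^{<}$, the size of the minimal representative of $<$, treating the three assertions simultaneously. The base case $(\alpha,\beta)=(\varnothing,\varnothing)$ is immediate: $\mult(\varnothing,\varnothing,<;\mu,\nu)$ equals $1$ if $(\mu,\nu)=(\varnothing,\varnothing)$ and $0$ otherwise, while $P_{A,B;s}(\varnothing,\varnothing,<)=Q_s(\varnothing,\varnothing,<)=\{(\varnothing,\varnothing)\}$ and $p_{A,B;s}(\varnothing,\varnothing,<)=\Lambda_{A,B;s}(\varnothing,\varnothing)$, so all three claims hold. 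For the inductive step one reduces, via the symmetry sending $(\alpha,\beta,A,B,<)$ to $(\beta,\alpha,B,A,\transp{<})$ (i.e.\ interchanging the two parts), under which procedures (a) and (b) are swapped, $P^{a}_{A,B;s}\leftrightarrow P^{b}_{B,A;s}$, $\mult(\alpha,\beta,<;\mu,\nu)=\mult(\beta,\alpha,\transp{<};\nu,\mu)$ and $\Lambda_{A,B;s}(\mu,\nu)=\Lambda_{B,A;s}(\nu,\mu)$, to the case $m_0^{<}>0$ with either $m_1^{<}=0$ or $(1,0)<(1,1)$.

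The tool is a peeling identity relating $\mult(\alpha,\beta,<;-)$ to the invariants of the reduced pair $(\alpha',\beta',<')$ from procedure (a), whose chain sum I write $\mu_1^{(a)}=\alpha_{a_1}+\beta_{b_1}+\alpha_{a_2}+\cdots$. For assertions (2) and (3), assume $\Lambda_{A,B;s}(\mu,\nu)=p_{A,B;s}(\alpha,\beta,<)$; running the recursion for $p_{A,B;s}$ one entry at a time and using the active-branch condition ($\alpha_1+A\ge B$ when $(1,0)<(1,1)$, or $m_1^{<}=0$) together with \Cref{prop:symbolterm}, the largest entry of the symbol is forced to sit in position $1$ of $\mu$ with $\mu_1=\mu_1^{(a)}$, so $\mu=(\mu_1)\sqcup\mu''$ and $\Lambda_{A-s,B;s}(\mu'',\nu)=p_{A-s,B;s}(\alpha',\beta',<')$. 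One then proves $\mult(\alpha,\beta,<;\mu,\nu)=\mult(\alpha',\beta',<';\mu'',\nu)$, and (2), (3) follow from the induction hypothesis applied to $(\alpha',\beta',<')$ with parameters $(A-s,B;s)$, via the identity $P^{a}_{A,B;s}(\alpha,\beta,<)=\{((\mu_1^{(a)})\sqcup\mu'',\nu):(\mu'',\nu)\in P_{A-s,B;s}(\alpha',\beta',<')\}$; in the boundary case $\alpha_1+A=B$, where the (b)-branch is also active, a parallel analysis on the $\beta$-side shows the two branches single out the same extremal pair. Assertion (1) is the associated dominance statement, proved by the same peeling but tracking the full order $\le$ on symbols rather than only the leading term: one shows that $\mult(\alpha,\beta,<;\mu,\nu)\ne 0$ forces, for every $A,B,s$, the inequality $\Lambda_{A,B;s}(\mu,\nu)\le p_{A,B;s}(\alpha,\beta,<)$, reducing at each step to $(\alpha',\beta',<')$ and invoking the induction hypothesis.

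The main obstacle is precisely the peeling identity $\mult(\alpha,\beta,<;\mu,\nu)=\mult(\alpha',\beta',<';\mu'',\nu)$ in the extremal case. Proving it means understanding the alternating sum $\sum_{w\in S_{m_0},\,v\in S_{m_1}}\sgn(w)\sgn(v)\,|X(\alpha,\beta,<;\mu[w],\nu[v])|$: one must show that, because $\mu_1$ strictly dominates $\mu_2,\mu_3,\dots$ (\Cref{prop:symbolterm}), the terms with $w1\ne 1$ either have empty $X$-sets or cancel in a sign-reversing involution, so only $w$ fixing the index $1$ survive; and that once $1$ is fixed the boxes on the first chain are rigidly determined (each $\alpha_{a_i}$ and $\beta_{b_i}$ flows entirely into position $(1,0)$), so that deleting the chain yields a sign-compatible bijection between the surviving configurations and $X(\alpha',\beta',<';\mu''[w'],\nu[v])$. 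This is the combinatorial heart of \cite[\S3]{waldspurger}; since $\mult$, $P_{A,B;s}$ and $Q_s$ are defined here verbatim as in \emph{loc.\ cit.}, the argument carries over unchanged, and one may simply cite it --- which is the route the paper takes.
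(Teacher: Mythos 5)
The paper gives no proof of this proposition at all — it is quoted directly as \cite[Prop.~3.1]{waldspurger} — which is precisely the route you end on, so your proposal matches the paper's approach. (Your reconstruction sketch is broadly in the spirit of Waldspurger's inductive peeling argument, though note that \Cref{prop:symbolterm} is only available under the $H(n,k)$ hypothesis with the order $<_{\alpha,\beta,k}$, so it cannot be invoked for a general order and general $A,B,s$; nothing in the citation route depends on it.)
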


\begin{proposition}[cf. {\cite[Proposition 4.2]{waldspurger}}]\label{prop:multshoji}
Let $N \in \N$, let $(\lambda,[\eps]), (\lambda',[\eps']) \in \Port(N)$ such that $\lambda$ only has odd parts and such that $k := k(\lambda,[\eps]) = k(\lambda',[\eps'])$.
Let $(\alpha,\beta)_k = \Phi_N(\lambda,[\eps])$, $(\alpha',\beta')_k = \Phi_N(\lambda',[\eps'])$ and write $<$ for $<_{\alpha,\beta,k}$.
\begin{enumerate}[wide,ref={\theproposition(\arabic*)}]
\crefalias{enumi}{proposition}
\item\label{shojiB}
If $k > 0$, then
\begin{align}
\mult(C_\lambda,{\mathcal E}_{[\eps]};C_{\lambda'},\mathcal E_{[\eps']}) = \mult(\alpha,\beta,<;\alpha',\beta').
\end{align}
\item\label{shojiD} 
If $k = 0$ (note that $N$ is even), then
\begin{align}
\mult(C_\lambda,{\mathcal E}_{[\eps]};C_{\lambda'}^\pm,\mathcal E_{[\eps']}^\pm) 
&= 
\frac{c_{(\alpha',\beta')}}{2}(\mult(\alpha,\beta,<;\alpha',\beta') +  \mult(\alpha,\beta,<;\beta',\alpha'))
\\
&=
\begin{cases}
\mult(\alpha,\beta,<;\alpha',\beta') &\text{if } \alpha' = \beta', 
\\
\mult(\alpha,\beta,<;\alpha',\beta') +  \mult(\alpha,\beta,<;\beta',\alpha') &\text{if } \alpha' \neq \beta'. 
\end{cases}
\end{align}
\end{enumerate}
\end{proposition}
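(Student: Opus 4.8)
The approach is to translate the geometric multiplicities into Green functions, then into Kostka polynomials by way of Shoji's results, and finally into the combinatorial quantities $\mult(\alpha,\beta,<;\cdot,\cdot)$, dealing with the type-$B$ case ($k>0$) and the type-$D$ case ($k=0$) in turn. Since $k(\lambda,[\eps])=k(\lambda',[\eps'])=k$, the representations $\rho:=\GSpr(C_\lambda,\E_{[\eps]})$ and $\rho':=\GSpr(C',\E')$ lie in the same relative Weyl group $W_n$ (with $n=(N-k^2)/2$), and by \S\ref{subsec:mult} we have $\mult(C_\lambda,\E_{[\eps]};C',\E')=\tilde p_{\rho',\rho}(1)$. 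As recorded after \Cref{theorem:lusztig}, $\tilde P=D^{-1}P^{(k)}$, and since $D=T_k$ (both are diagonal with entries $t^{a_k(\rho)}$), \Cref{thm:kostka} yields $\tilde P=T_k^{-1}K^{B_n}(t^{-1})T_k$ when $W_n=W(B_n)$, so that $\tilde p_{\rho',\rho}(t)=t^{a(\rho)-a(\rho')}K^{B_n}_{\rho',\rho}(t^{-1})$ and hence $\mult(C_\lambda,\E_{[\eps]};C',\E')=K^{B_n}_{\rho',\rho}(1)$; similarly $\mult(C_\lambda,\E_{[\eps]};C',\E')=K^{D_n}_{\rho',\rho}(1)$ when $W_n=W(D_n)$ and $k=0$, with $K^{D_n}$ the type-$D$ Kostka matrix of \Cref{prop:shojigreenD}. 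By \Cref{rem:symbolodd} the hypothesis that $\lambda$ has only odd parts forces $(\alpha,\beta)\in H((N-k^2)/2,k)$, so the order $<_{\alpha,\beta,k}$ is defined; write $<$ for it. It therefore suffices to prove
\begin{align*}
(\star)\qquad K^{B_n}_{(\alpha',\beta'),(\alpha,\beta)}(1)=\mult\bigl(\alpha,\beta,<_{\alpha,\beta,k};\alpha',\beta'\bigr)\qquad\text{for all }(\alpha,\beta)\in H(n,k),
\end{align*}
and then to do the additional bookkeeping required in type $D$.

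For $(\star)$ I would follow Waldspurger's proof of \cite[Proposition~4.2]{waldspurger}. By \Cref{thm:kostka}, $K^{B_n}=M(s,P')$ is the transition matrix between the Hall--Littlewood functions $P'_{(\alpha,\beta),k}$ of item~\ref{sym6} and the Schur functions $s_{(\alpha,\beta)}$. The membership $(\alpha,\beta)\in H(n,k)$ is precisely condition~\ref{hypoth:1}, and it is under this condition that Shoji's computation of Kostka polynomials on \cite[p.~685]{shoji2001green} applies: it expresses $P'_{(\alpha,\beta),k}$ in terms of the function $R_{(\alpha,\beta),<}$ of item~\ref{sym5}, so the Kostka entry $K^{B_n}_{(\alpha',\beta'),(\alpha,\beta)}$ is read off from the transition between $R_{(\alpha,\beta),<}$ and the Schur functions, and at $t=1$ this coefficient coincides with the signed lattice-point count defining $\mult(\alpha,\beta,<;\cdot,\cdot)$ (a Jacobi--Trudi type identity for type-$B$ symbols, available from \cite[\S3]{waldspurger}). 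The one substantive point is to check that this chain of symmetric-function identities remains valid under the weaker hypothesis $(\alpha,\beta)\in H(n,k)$ --- i.e.\ that $\lambda$ has only odd parts --- in place of Shoji's original, more restrictive one; this is exactly the content of \Cref{lemma:QR}, the $\SO(N)$ analogue of the relevant part of \cite[Proposition~4.2]{waldspurger}, and it is where the real difficulty lies. Since $k>0$, the target $\lambda'$ has nonzero defect, hence is non-degenerate, so $C_{\lambda'}$ is a single class and \ref{shojiB} follows.

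For \ref{shojiD} we have $k=0$, $W_n=W(D_{N/2})$. As $\lambda$ has only odd parts it is non-degenerate, so its symbol satisfies $\alpha\neq\beta$, that is $\theta(\alpha,\beta)\neq(\alpha,\beta)$; hence \Cref{prop:shojigreenD1} applies with the source bipartition $(\alpha,\beta)$ in the second slot and the target bipartition $(\alpha',\beta')$ in the first, giving for $j\in\{1,2\}$
\begin{align*}
K^{D_n}_{(\alpha',\beta')^j,(\alpha,\beta)}(t)=\tfrac{c_{(\alpha',\beta')}}{2}\bigl(K^{B_n}_{(\alpha',\beta'),(\alpha,\beta)}(t)+K^{B_n}_{(\alpha',\beta'),(\beta,\alpha)}(t)\bigr),
\end{align*}
an expression not depending on $j$ --- which is precisely why the two classes $C_{\lambda'}^{\pm}$ yield the same multiplicity. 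Setting $t=1$ and applying $(\star)$ with $k=0$ to each type-$B$ term, the first equals $\mult(\alpha,\beta,<;\alpha',\beta')$; for the second, $(\beta,\alpha)\in H(n,0)$ because $\Lambda_{0,0;2}(\beta,\alpha)$ is a rearrangement of $\Lambda_{0,0;2}(\alpha,\beta)$, and the evident symmetry of the combinatorial multiplicities under exchanging the two index families (legitimate here because $-k=k=0$, so $<_{\beta,\alpha,0}$ is $<$ with the two families of indices interchanged) gives $K^{B_n}_{(\alpha',\beta'),(\beta,\alpha)}(1)=\mult(\beta,\alpha,<_{\beta,\alpha,0};\alpha',\beta')=\mult(\alpha,\beta,<;\beta',\alpha')$. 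Adding the two contributions gives the first displayed equality of \ref{shojiD}, and the second follows from the split $c_{(\alpha',\beta')}\in\{1,2\}$ according as $\alpha'=\beta'$ (so $\lambda'$ is degenerate and $C_{\lambda'}^{\pm}$ are the two distinct classes mapping to $\rho_{(\alpha',\alpha'),1}$ and $\rho_{(\alpha',\alpha'),2}$) or $\alpha'\neq\beta'$. Besides \Cref{lemma:QR}, the points that still demand care are this index-exchange symmetry of $\mult(\alpha,\beta,<;\cdot,\cdot)$ at $k=0$, the dictionary $\alpha=\beta\Leftrightarrow\lambda$ degenerate, and the identification of the degenerate classes $C_{\lambda'}^{\pm}$ with $\rho_{(\alpha',\alpha'),1},\rho_{(\alpha',\alpha'),2}$.
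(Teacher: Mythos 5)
Your reduction scheme coincides with the paper's: convert $\mult(C_\lambda,\E_{[\eps]};C',\E')$ into Green/Kostka polynomials evaluated at $t=1$, invoke the identity $Q_{(\alpha,\beta),k}=R_{(\alpha,\beta),<}$ under the hypothesis $(\alpha,\beta)\in H(n,k)$ supplied by \Cref{rem:symbolodd}, and for $k=0$ pass from $W(D_n)$ to $W(B_n)$ via \Cref{prop:shojigreenD1}, the non-degeneracy of $\lambda$ giving $\theta(\alpha,\beta)\neq(\alpha,\beta)$. Your handling of the swapped term $K^{B_n}_{(\alpha',\beta'),(\beta,\alpha)}$ is a mild variant: the paper invokes the $\theta$-symmetry \eqref{eq:thetadegenerate} (a cited fact from Shoji, valid only at $k=0$) and then applies its type-$B$ identity with source $(\alpha,\beta)$, whereas you apply the type-$B$ identity with source $(\beta,\alpha)$ (correctly noting $(\beta,\alpha)\in H(n,0)$) and use the family-swap symmetry of $\mult$; since $<_{\beta,\alpha,0}$ is indeed the swap of $<_{\alpha,\beta,0}$ when $k=0$, this step is sound and is an acceptable substitute.

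The genuine gap is the step you label $(\star)$, together with the deferral of \Cref{lemma:QR}. Granting $Q_{(\alpha,\beta),k}=R_{(\alpha,\beta),<}$, the equality $K^{B_n}_{(\alpha',\beta'),(\alpha,\beta)}(1)=\mult(\alpha,\beta,<;\alpha',\beta')$ is not something that can be ``read off'' from \cite[p.~685]{shoji2001green}, and it is not contained in \cite[\S3]{waldspurger} (that section only defines $\mult(\alpha,\beta,<;\cdot,\cdot)$ and proves what is quoted here as \Cref{prop:mult}); it is precisely the content of \Cref{lemma:multshoji}, and it needs an argument. The obstruction is that $Q=R$ yields an expansion of $Q_{(\alpha,\beta)}$ by raising operators acting on the $q$-functions as in \eqref{eq:Qq}, while the Kostka entries are coefficients in the Schur basis, and raising operators do not act compatibly on the two bases (\Cref{rem:raising}); one cannot simply transport the operator product $\prod(1-tR_{a,b})^{-1}$ from $q_{(\alpha,\beta)}$ to $s_{(\alpha,\beta)}$. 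The paper bridges this by introducing the auxiliary basis $H$ with $M(H,h)=M(Q,q)$ and proving \eqref{eq:Hs} via a Cauchy-kernel generating-function computation in the spirit of Garsia--Procesi, after which the lattice-point count produces $\mult(\alpha,\beta,<;\alpha',\beta')$. Likewise, you explicitly set aside \Cref{lemma:QR} as ``where the real difficulty lies''; that diagnosis is correct, but as written your argument establishes the proposition only modulo these two unproved ingredients, which together constitute essentially the whole of the paper's proof.
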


\subsection{Proof of Proposition \ref{prop:multshoji}}
We first prove some preliminary results.

Let $m = m_0 + m _1$ and let $e_1,\dots,e_m$ be the unit vectors in $\Z^m$. For $i \neq j$ define an operator $R_{ij}$ on $\Z^m$ by $R_{ij}\lambda = \lambda + e_i - e_j$. A \emph{raising operator} (resp. \emph{lowering operator}) $R$ on $\Z^m$ is a product of $R_{ij}$ with $i < j$ (resp. $i > j$). Given an identification of $\Z^m$ with $\Z^{m_0} \times \Z^{m_1}$, we write $R_{ij}$ as $R_{\bm\mu,\bm\nu}$ for $\bm\mu,\bm\nu \in \Z^{m_0} \times \Z^{m_1}$ correspond to $i,j \in \Z^m$, respectively.

We define $s_{(\mu,\nu)}$, $h_{(\mu,\nu)}$ and $q_{(\mu,\nu)}$ for $(\mu,\nu) \in \Z^{m_0} \times \Z^{m_1}$ as follows (so far, they were only defined for $(\mu,\nu) \in \cP \times \cP$). 
If any of the $\mu_i$ or $\nu_i$ is negative, we set $q_{(\mu,\nu)} = 0$ and $h_{(\mu,\nu)} = 0$. 
Otherwise, pick  $w \in S_{m_0}$, $v \in S_{m_1}$ such that $w(\mu), v(\nu) \in \mathcal P$ and we set  $q_{(\mu,\nu)} = q_{(w(\mu),v(\nu))}$ and $h_{(\mu,\nu)} =  h_{(w(\mu),v(\nu))}$. 
We set $s_{(\mu,\nu)} = 0$ if $\mu_i - i$ for $i=1,\dots,m_0$ are not all distinct, or if $\nu_i - i$ for $i=1,\dots,m_1$ are not all distinct. Otherwise, there exist $(\alpha,\beta) \in \cP \times \cP$ and $w \in S_{m_0}$, $v \in S_{m_1}$ such that $\mu = \alpha[w]$ and $\nu = \beta[v]$ are partitions and we set
\begin{align}
s_{(\mu,\nu)} = \sgn(w)\sgn(v) s_{(\alpha,\beta)}.
\end{align}
For a raising or lowering operator $R$ on $\Z^m$, we define
\begin{align}
R q_{(\mu,\nu)} = q_{R(\mu,\nu)},
\\
R h_{(\mu,\nu)} = h_{R(\mu,\nu)},
\\
R s_{(\mu,\nu)} = s_{R(\mu,\nu)}.
\end{align}
Note that there may exist raising or lowering operators $R, R'$ such that $R'q_{(\mu,\nu)} = 0$ but $(RR')q_{(\mu,\nu)} \neq 0$. The same problem can occur for $h_{(\mu,\nu)}$ and $s_{(\mu,\nu)}$.
\begin{remark}\label{rem:raising}
For any raising or lowering operator $R$, the matrix of the linear map $q_{(\alpha,\beta)} \mapsto Rq_{(\alpha,\beta)}$ in the basis $(q_{(\alpha,\beta)})$ is the same as the matrix of the linear map $h_{(\alpha,\beta)} \mapsto Rh_{(\alpha,\beta)}$ in the basis $(h_{(\alpha,\beta)})$, but it is not generally the same as the matrix of $s_{(\alpha,\beta)} \mapsto Rs_{(\alpha,\beta)}$ in the basis $(s_{(\alpha,\beta)})$.
\end{remark}

The following result follows from the proof of \cite[Corollary 6.8]{shoji2001green}. However, one of the steps in the proof that we present is different than what was done in \emph{loc. cit.}, where we use ideas from \cite[\S2]{garsia1992orthogonality}.

\begin{lemma}\label{lemma:multshoji}
Let $k \in \Z_{\geq 0}$, $n = \frac{N-k^2}{2}$. Let $\bm\alpha = (\alpha,\beta) \in \mathcal P_2(n)$ 
Let $m_0,m_1 \in \N$ such that $m_1 = m_0 - k \geq n$ and let $(m_0,m_1,<)$ be any order on the set of indices of $(\alpha,\beta)$ such that $\alpha_i + k + 2 -2i \geq \beta_j - k + 2 - 2j$ if $(i,0) < (j,1)$.
Suppose that $Q_{\bm\alpha,k} = R_{\bm\alpha,<}$ in $\Xi_{\Q}^n[t]$. 
Suppose $P = P^{(k)}$ is as in Theorem \ref{theorem:shoji} for $W_n := W(B_n)$.
Then
\begin{align}
p_{(\alpha',\beta'),(\alpha,\beta)}(1) = \mult(\alpha,\beta,<;\alpha',\beta').
\end{align}
\end{lemma}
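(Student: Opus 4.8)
The plan is to reduce, via \Cref{thm:kostka}, the quantity $p_{(\alpha',\beta'),(\alpha,\beta)}(1)$ to the value at $t=1$ of a type $B$ Kostka polynomial, and then to compute that value combinatorially by expanding the raising operator expression supplied by the hypothesis $Q_{\bm\alpha,k}=R_{\bm\alpha,<}$ into Schur functions. First I would apply \Cref{thm:kostka}: since $P=P^{(k)}=K^{B_n}(t^{-1})T_k$ with $T_k$ diagonal, $(T_k)_{\rho,\rho}=t^{a_k(\rho)}$, taking $\rho=\rho_{(\alpha,\beta)}$ and $\rho'=\rho_{(\alpha',\beta')}$ gives $p_{(\alpha',\beta'),(\alpha,\beta)}(t)=K^{B_n}_{(\alpha',\beta'),(\alpha,\beta)}(t^{-1})\,t^{a_k(\alpha,\beta)}$, hence
\begin{align}
p_{(\alpha',\beta'),(\alpha,\beta)}(1)=K^{B_n}_{(\alpha',\beta'),(\alpha,\beta)}(1).
\end{align}
Since $K^{B_n}(t)=M(s,P')$ relates the Hall--Littlewood functions $P'_{(\alpha,\beta),k}$ and the Schur functions $s_{(\alpha',\beta')}$ in $\Xi_{\Q}^n[t]$, the Kostka polynomial $K^{B_n}_{(\alpha',\beta'),(\alpha,\beta)}(t)$ is the coefficient of $s_{(\alpha',\beta')}$ in the Schur expansion of $P'_{(\alpha,\beta),k}$; I would rephrase this in terms of $Q_{(\alpha,\beta),k}$ using the normalisation $Q_{(\alpha,\beta),k}=b_{(\alpha,\beta)}(t)\,P'_{(\alpha,\beta),k}$, carrying $b_{(\alpha,\beta)}(t)$ along. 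It then remains to show this coefficient, evaluated at $t=1$, equals $\mult(\alpha,\beta,<;\alpha',\beta')$.

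Next I would invoke the hypothesis $Q_{\bm\alpha,k}=R_{\bm\alpha,<}$ and expand the raising operator expression of \cite[(3.2.1)]{shoji2001green}, which applies to $q_{(\alpha,\beta)}$ a product of raising operators on $\Z^{m_0}\times\Z^{m_1}$ indexed by the order $(m_0,m_1,<)$, with suitable powers of $t$. The key point is that the cross-colour raising operators $R_{(i,0),(j,1)}$ (for $(i,0)<(j,1)$) and $R_{(j,1),(i,0)}$ (for $(j,1)<(i,0)$) act on $(\alpha,\beta)$ exactly as the maps $x\mapsto(\alpha[x],\beta[x])$ of \S\ref{subsec:mult}: expanding the product, each monomial $R^{x}=\prod_{((i,e),(j,f))\in J}R_{(i,e),(j,f)}^{\,x_{(i,e),(j,f)}}$ contributes $q_{(\alpha[x],\beta[x])}$ with a monomial coefficient in $t$. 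I would then straighten each $q_{(\mu,\nu)}$ (with $(\mu,\nu)\in\Z^{m_0}\times\Z^{m_1}$, possibly with negative or repeated shifted parts) into the Schur basis; since the $q$'s are symmetric under permuting parts within each colour while the Schur functions are alternating, this straightening produces precisely the antisymmetrisation $\mu\mapsto\mu[w]$, $\nu\mapsto\nu[v]$ weighted by $\sgn(w)\sgn(v)$. Collecting the coefficient of a fixed $s_{(\alpha',\beta')}$, dividing out $b_{(\alpha,\beta)}(t)$ and setting $t=1$ should then give
\begin{align}
\sum_{w\in S_{m_0},\,v\in S_{m_1}}\sgn(w)\sgn(v)\,\bigl|X(\alpha,\beta,<;\alpha'[w],\beta'[v])\bigr|=\mult(\alpha,\beta,<;\alpha',\beta'),
\end{align}
which is the claim. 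The order hypothesis $\alpha_i+k+2-2i\geq\beta_j-k+2-2j$ for $(i,0)<(j,1)$ plays a double role here: together with $Q_{\bm\alpha,k}=R_{\bm\alpha,<}$ it validates the raising operator formula for this particular order, and it is exactly what ties the straightening to the combinatorics of the sets $X(\alpha,\beta,<;\cdot,\cdot)$.

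The part I expect to be the main obstacle is the passage from the $q$-expansion to the Schur basis, together with the bookkeeping of the $t$-weights and of the normalisation $b_{(\alpha,\beta)}(t)$: one must verify that straightening the $q_{(\mu,\nu)}$ yields the signed sum over $S_{m_0}\times S_{m_1}$ with no spurious contributions, and that after dividing by $b_{(\alpha,\beta)}(t)$ the resulting polynomial specialises at $t=1$ to the unweighted count. This is the step of the proof of \cite[Corollary 6.8]{shoji2001green} that I would rework: instead of the determinantal manipulations used there, I plan to read each Schur coefficient as an inner product with respect to the standard bilinear form on $\Xi_{\Q}^n[t]$ under which the Schur functions are (essentially) self-dual and $(q_{(\alpha,\beta)})$, $(h_{(\alpha,\beta)})$ are a dual pair, following the orthogonality arguments of \cite[\S2]{garsia1992orthogonality}. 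The remaining verifications --- that the cross-colour raising operators reproduce the formulas for $\alpha[x]$ and $\beta[x]$, and the degree accounting --- are routine unravellings of the definitions of \S\ref{subsec:mult} and \S\ref{subsec:sympol}.
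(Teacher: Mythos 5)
Your overall blueprint (reduce to the Kostka matrix via \Cref{thm:kostka}, then expand a raising-operator expression and count monomials against the sets $X(\alpha,\beta,<;\cdot,\cdot)$) is the same as the paper's, and your first and last steps are fine, but there is a genuine gap in the middle: a transpose/duality error. With the paper's (Macdonald's) convention, $K^{B_n}=M(s,P')$ means $s_{(\alpha',\beta')}=\sum_{(\alpha,\beta)}K^{B_n}_{(\alpha',\beta'),(\alpha,\beta)}(t)\,P'_{(\alpha,\beta),k}$, so $K^{B_n}_{(\alpha',\beta'),(\alpha,\beta)}$ is the coefficient of $P'_{(\alpha,\beta),k}$ in $s_{(\alpha',\beta')}$, \emph{not} the coefficient of $s_{(\alpha',\beta')}$ in $P'_{(\alpha,\beta),k}$. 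Consequently, extracting Schur coefficients of $Q_{\bm\alpha,k}=R_{\bm\alpha,<}$ (even after dividing by a putative normalisation $b_{\bm\alpha}(t)$, a proportionality $Q=bP'$ the paper never uses) computes rows of $M(Q,s)$, i.e.\ essentially of $K^{-1}$, whose specialisation at $t=1$ is not the multiplicity. The paper's crucial move, which your sketch lacks, is to pass from $Q_{\bm\alpha}$ to the auxiliary function $H_{\bm\alpha}$ defined by $M(H,h)=M(Q,q)$ (the analogue of modified Hall--Littlewood functions): combining Shoji's duality $M(Q,q)=(M(P',m)^t)^{-1}$ with the classical duality $M(s,m)^t=M(s,h)^{-1}$ gives $K(t)^t=M(H,s)$, so the Kostka entry is the coefficient of $s_{(\alpha',\beta')}$ in $H_{(\alpha,\beta)}$. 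Your appeal to a ``standard bilinear form under which $(q)$, $(h)$ are a dual pair'' is not a correct duality (under the standard form $h$ is dual to $m$, and $q$ is dual to $m$ only for the $t$-deformed form), so it does not supply this flip.

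A second, related problem is your straightening claim: $q_{(\mu,\nu)}$ for a non-partition index is defined by sorting \emph{without} signs, and its Schur expansion has nontrivial $t$-dependent coefficients, so straightening $q$'s does not produce the signed antisymmetrisation $\sum_{w,v}\sgn(w)\sgn(v)$. The signed sum only appears once the $t$-weighted raising operators act on Schur functions with the alternating convention for $s_{(\mu,\nu)}$, $(\mu,\nu)\in\Z^{m_0}\times\Z^{m_1}$, i.e.\ once one has established the identity $H_{(\alpha,\beta)}=\prod_{(a,b)\in J}(1-tR_{a,b})^{-1}s_{(\alpha,\beta)}$. That identity is exactly the delicate point: by Remark \ref{rem:raising}, operator identities transfer from the $q$-basis to the $h$-basis but \emph{not} from the $h$-basis to the $s$-basis, which is why the paper runs the Garsia--Procesi-style generating-function/Cauchy-kernel computation (the manipulations around \eqref{eq:schurgenerating}--\eqref{eq:Hs2}) rather than a formal straightening. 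So while you cite the right source, the mechanism you describe would not go through as stated; once both points are repaired, the final count $\sum_{w,v}\sgn(w)\sgn(v)\lvert X(\alpha,\beta,<;\alpha'[w],\beta'[v])\rvert=\mult(\alpha,\beta,<;\alpha',\beta')$ is correct and coincides with the paper's conclusion.
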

\begin{proof}
Recall that the transition matrices between the bases of symmetric functions in \ref{sym1} - \ref{sym7} in the space $\Xi_{\Q}[t]$ are the same as in the space $\Xi_{\Q,(m_0,m_1)}^n[t]$, as $m_0 = m_1 + k$ and $m_0, m_1 \geq n$.
Thus we can consider the symmetric functions in \ref{sym1} - \ref{sym7} as elements of $\Xi_{\Q,(m_0,m_1)}^n[t]$.
Write $K(t)$ for $K^{B_n}(t) = M(s,P')$. 
By Theorem \ref{thm:kostka}, we have $K(t^{-1})T_k = P(t)$. 
By \cite[(5.6.1)]{shoji2001green}, we have
\begin{align}
M(Q,q) = (M(P',m)^t)^{-1} = K(t)^t (M(s,m)^t)^{-1}.
\end{align}
Let $I = I_{m_0,m_1}$ be the set of indices of $(\alpha,\beta)$ and let $J = \{((i,e),(j,f)) \in I^2 \colon (i,e) < (j,f), e\neq f\}$ as in \S\ref{subsec:mult} and define $J' = \{ ((i,e),(j,e)) \in I^2 \colon (i,e) < (j,e), e=0,1\}$. 
We can identify $I$ with $\{1,\dots,m_0+m_1\}$, respecting $<$ and the usual order on $\{1,\dots,m_0+m_1\}$. We identify $\Z^{m_0+m_1}$ with $\Z^{m_0} \times \Z^{m_1}$ accordingly, and we define raising and lowering operators on $\Z^{m_0} \times \Z^{m_1}$ as before.
Then 
\begin{align}\label{eq:sh}
s_{(\alpha,\beta)}(x) = \prod_{(a,b) \in J'} (1 - R_{a,b}) h_{(\alpha,\beta)}(x)
\end{align}
by \cite[(6.7.3)]{shoji2001green}.
Using $Q_{(\alpha,\beta)} = R_{(\alpha,\beta)}$ and \text{\cite[(3.7.1)]{shoji2001green}}, we find
\begin{align}\label{eq:Qq}
Q_{(\alpha,\beta)}(x,t)
&= 
\left(\prod_{(a,b) \in J'} (1 - R_{a,b}) \prod_{(a,b) \in J} (1 - tR_{a,b})^{-1}\right) q_{(\alpha,\beta)}(x,t).
\end{align}
Define a basis $(H_{(\alpha,\beta)})_{(\alpha,\beta) \in \mathcal P_2(n)}$ of $\Xi_{\Q,(m_0,m_1)}^n[t]$ by requiring that $M(H,h) = M(Q,q)$.
By Remark \ref{rem:raising}, we have
\begin{align}\label{eq:Hh}
H_{(\alpha,\beta)}(x)
&= 
\left(\prod_{(a,b) \in J'} (1 - R_{(i,e),(j,f)}) \prod_{(a,b) \in J} (1 - tR_{a,b})^{-1}\right) h_{(\alpha,\beta)}(x).
\end{align}
Note that $K(t)^t = M(Q,q)M(s,m)^t = M(H,h) M(s,h)^{-1} = M(H,s)$.
We want to show that $K_{(\alpha',\beta'),(\alpha,\beta)}$ is the coefficient of $s_{(\alpha',\beta')}$ in 
\begin{align}\label{eq:Hs}
H_{(\alpha,\beta)}(x)
=
\prod_{(a,b) \in J} (1 - tR_{a,b})^{-1} s_{(\alpha,\beta)}
=
\prod_{(a,b) \in J} (\sum_{\ell=0}^\infty (tR_{a,b})^\ell) s_{(\alpha,\beta)}.
\end{align} 
Note that \eqref{eq:Qq} involves raising operators acting on $q_{(\alpha,\beta)}$, whereas in \eqref{eq:Hs}, the raising operators act on $s_{(\alpha,\beta)}$, and we noted that the matrices of these actions are not generally the same in Remark \ref{rem:raising}.
We shall prove \eqref{eq:Hs} using ideas from \cite[\S2]{garsia1992orthogonality}, where a similar result is proved for classical Schur functions, avoiding the use of raising operators.

Let $\bm\alpha = (\alpha^{(0)},\alpha^{(1)}) \in \mathcal P_2(n)$. The complete symmetric functions $h_{(\alpha^{(0)},\alpha^{(1)})}$ 
are defined to be a product 
\begin{align}
h_{(\alpha^{(0)},\alpha^{(1)})}(x) = \prod_{d=0}^1 \prod_{i=1}^{m_d} h_{\alpha_i^{(d)}}^{(d)}(x^{(d)}),
\end{align}
where $h_{\alpha_i^{(d)}}^{(d)}(x^{(d)})$ are the ordinary complete symmetric functions in the variables $x^{(d)} = (x_1^{(d)},\dots,x_{m_d}^{(d)})$ of degree $a_i^{(d)}$.
For $d=0,1$, consider the generating function of $\{h_m^{(d)}(x)\}_{m \in \Z_{\geq0}}$:
\begin{align}
\Omega^{(d)}(x^{(d)},z) = \sum_{m = 0}^\infty h^{(d)}_m(x^{(d)})z^m.
\end{align}
For $d=0,1$ and $i=1,\dots,m_d$, introduce the variables $y_i^{(d)}$.
Then \eqref{eq:sh} is equivalent to the statement that $s_{\bm\alpha}(x) = s_{\alpha^{(0)}}(x^{(0)}) s_{\alpha^{(1)}}(x^{(1)})$ where for $d=0,1$, we have
\begin{align}\label{eq:schurgenerating}
s_{\alpha^{(d)}}(x^{(d)}) 
= 
\left.
\left( \prod_{i=1}^{m_d} \Omega^{(d)}(x^{(d)},y_i^{(d)}) \right)
\left( \prod_{1\leq a < b \leq m_d}  1 - \frac{y_b^{(d)}}{y_a^{(d)}} \right)
\right|_{\prod_{i=1}^{m_d} (y_i^{(d)})^{\alpha_i^{(d)}}},
\end{align}
where the $|_{\prod_{i=1}^{m_d} y_i^{(d)}}$ stands for taking the coefficient of the $(\prod_{i=1}^{m_d} y_i^{(d)})$ term. Let $\Delta_{m_d}(y^{(d)}) = \prod_{1\leq a < b \leq m_d} ( y_a^{(d)} - y_b^{(d)})$, $\delta_{m_d} = (m_d-1, m_d-2, \dots, 0) \in \mathcal P$, and $\Omega_d(x^{(d)},y^{(d)}) =  \prod_{i=1}^{m_d} \Omega^{(d)}(x^{(d)},y_i^{(d)})$. Then we can rewrite \eqref{eq:schurgenerating} to obtain
\begin{align}
s_{\bm\alpha}(x) 
&=
s_{\alpha^{(0)}}(x^{(0)})s_{\alpha^{(1)}}(x^{(1)})
\\
&= 
\left. \Omega_0(x^{(0)},y^{(0)}) \Delta_{m_0}(y^{(0)})  \right|_{(y^{(0)})^{\alpha^{(0)} + \delta_{m_0}}} \cdot
\left. \Omega_1(x^{(1)},y^{(1)}) \Delta_{m_1}(y^{(1)})  \right|_{(y^{(1)})^{\alpha^{(1)} + \delta_{m_1}}}.
\end{align}
For $d=0,1$, since $\Omega_k(x^{(d)},y^{(d)}) \Delta_{m_d}(y^{(d)})$ is an alternating function in $y^{(d)}$, taking the coefficient of $(y^{(d)})^{\alpha^{(d)} + \delta_{m_d}}$ is the same as taking the coefficient of 
\begin{align}\label{eq:defdelta}
\Delta_{\alpha^{(d)}}(y^{(d)}) := \sum_{\sigma \in S_{m_d}} \sgn(\sigma) (\sigma y^{(m_d)})^{\alpha^{(d)}+\delta_{m_d}}
\end{align}
in $\Omega_k(x,y) \Delta_{m_d}(y)$, so we see that
\begin{align}\label{eq:cauchy}
\Omega_0(x^{(0)},y^{(0)})\Omega_1(x^{(1)},y^{(1)}) 
&= 
\sum_{\bm \alpha \in \mathcal P \times \mathcal P}
s_{\alpha^{(0)}}(x^{(0)}) \frac{\Delta_{\alpha^{(0)}}(y^{(0)})}{\Delta_{m_0}(y^{(0)})}
s_{\alpha^{(1)}}(x^{(1)}) \frac{\Delta_{\alpha^{(1)}}(y^{(1)})}{\Delta_{m_1}(y^{(1)})}.
\end{align}
Note that \eqref{eq:Hh} is equivalent to
\begin{align}
H_{\bm \alpha}(x) 
&=
\left.
\left( \prod_{d=0}^1 \Omega_d(x^{(d)},y^{(d)}) \right)
\left( \prod_{((a,e),(b,e)) \in J'} 1 - \frac{y_b^{(e)}}{y_a^{(e)}}\right)
\left(\prod_{((a,e),(b,1-e)) \in J}  1 -t \frac{y_b^{(1-e)}}{y_a^{(e)}}\right)\right|_{y^{\bm\alpha}}
\end{align}
and by \eqref{eq:cauchy}, this becomes
\begin{align}
H_{\bm \alpha}(x) 
&=
\left.
\sum_{\bm \alpha \in \mathcal P \times \mathcal P}
s_{\bm\alpha}(x)
\left( \prod_{d=0}^1 \Delta_{\alpha^{(d)}}(y^{(d)}) \right)
\left( \prod_{((a,e),(b,1-e)) \in J} 1 -t \frac{y_b^{(1-e)}}{y_a^{(e)}}\right)\right|_{y^{\bm\alpha + (\delta_{m_0},\delta_{m_1})}}. 
\label{eq:Hs2}
\end{align}
By \eqref{eq:defdelta} and the definition of the Schur functions $s_{(\mu,\nu)}$ for $(\mu,\nu) \in \Z \times \Z$, we see that \eqref{eq:Hs2} is equivalent to \eqref{eq:Hs}, as desired. 

Thus \eqref{eq:Hs} gives a way to compute $M(H,s)= K(t)^t$ in $\Xi^n$, hence a way to compute $p_{(\alpha',\beta'),(\alpha,\beta)}(1) = K_{(\alpha',\beta'),(\alpha,\beta)}(1)$.
For each $w \in S_{m_0}$, $v \in S_{m_1}$, we want to count the number of ways we can create products $R$ of raising operators $R_{\bm j}$ with $\bm j \in J$ such that $(\alpha'[w],\beta'[v]) = R(\alpha,\beta)$, i.e. we want to determine the size of the set
\begin{align}
R(w,v) := \{(y_{\bm j})_{\bm j \in J} \in X \colon (\alpha'[w],\beta'[v]) = \prod_{\bm j \in J}R_{\bm j}^{y_{\bm j}}(\alpha,\beta)\}.
\end{align}
It holds that
\begin{align}
p_{(\alpha',\beta'),(\alpha,\beta)}(1)
& = \sum_{w \in S_{m_0}, v \in S_{m_1}} \sgn(w)\sgn(v) |R(w,v)|.
\end{align}
Note that $R(w,v) = X(\alpha,\beta,<;\alpha'[w],\beta'[v])$, so
\begin{align}
p_{(\alpha',\beta'),(\alpha,\beta)}(1)
= \mult(\alpha,\beta,<;\alpha',\beta'). 
&\qedhere
\end{align}
\end{proof}

The following lemma and \cite[Proposition 4.2]{waldspurger} are a generalisation of \cite[Prop. 6.2]{shoji2001green}, where it was shown that $Q_{\bm\alpha} = R_{\bm\alpha}$ for symbols of type $B$ and $C$, but with strictly stronger conditions on $\bm\alpha$ than the conditions in \Cref{lemma:QR} and \cite[Proposition 4.2]{waldspurger}.

\begin{lemma}\label{lemma:QR}
Let $k \in \Z_{\geq 0}$, $n = \frac{N-k^2}{2}$ and let $\bm\alpha = (\alpha,\beta) \in \mathcal P_2(n)$ such that $\bm\alpha \in H(n,k)$.
Let $m_0,m_1 \in \N$ such that $m_1 = m_0 - k \geq n$ and let $(m_0,m_1,<)$ be an order on the set of indices of $(\alpha,\beta)$ equivalent to $<_{\alpha,\beta,k}$.
Then $Q_{\bm\alpha,k} = R_{\bm\alpha,<_{\alpha,\beta,k}}$ in $\Xi_{\Q,(m_0,m_1)}^n[t]$.
\end{lemma}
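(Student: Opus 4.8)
The plan is to follow Waldspurger's proof of \cite[Proposition 4.2]{waldspurger}, transplanting the combinatorics from symplectic symbols to the orthogonal symbols of \S\ref{sec:gsc} and feeding in the type $B$ results of \cite{shoji2001green}. Recall that $R_{\bm\alpha,<}$ is defined by \cite[(3.2.1)]{shoji2001green} for an arbitrary order $<$ on the set of indices of $(\alpha,\beta)$, that it is unchanged when $<$ is replaced by an equivalent order, and that \cite[Proposition 6.2]{shoji2001green} already proves $Q_{\bm\alpha,k} = R_{\bm\alpha,<}$ under a hypothesis on $\bm\alpha$ strictly stronger than $\bm\alpha \in H(n,k)$ (essentially, that the symbol $\Lambda_{k,-k;2}(\alpha,\beta)$ is sufficiently separated, not merely multiplicity-free). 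So the task is to relax that hypothesis to the ``no multiplicities'' condition $\bm\alpha \in H(n,k)$, for which the relevant order is the canonical one $<_{\alpha,\beta,k}$ of \S\ref{subsec:P}.

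First I would pass to the minimal representative $(m_0^<,m_1^<,<)$ of $<_{\alpha,\beta,k}$ and induct on $m_0^< + m_1^<$, mirroring the proof of \Cref{lemma:unique}. The base case is $m_0^< = 0$ or $m_1^< = 0$: there $(\alpha,\beta)$ is, up to the identification of partitions, the datum of a single partition, so $R_{\bm\alpha,<}$ collapses to a classical (one variable set) Hall--Littlewood--type expression and $Q_{\bm\alpha,k}$ to the corresponding modified Hall--Littlewood function, and the identity is the classical one recalled in \cite[\S2]{shoji2001green}. For the inductive step I would assume $(1,0) < (1,1)$ without loss of generality --- otherwise replace $(\alpha,\beta,k)$ by $(\beta,\alpha,-k)$, a substitution compatible with the definitions of both $Q$ and $R$ --- and apply procedure (a), producing $\mu_1$, a bipartition $(\alpha',\beta') \in \mathcal P_2(n')$ with $n' < n$, and the induced order $<'$. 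The proof of \Cref{lemma:unique} already shows that $(\alpha',\beta') \in H(n',k')$ for $k' \in \{k-1,k\}$ and that $<'$ is equivalent to $<_{\alpha',\beta',k'}$; this is exactly where the hypothesis $\bm\alpha \in H(n,k)$ is consumed, and where the dichotomy $T = t$ versus $T = t+1$ (hence $k' = k$ versus $k' = k-1$) appears.

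The core of the argument is then a peeling identity: both $Q_{\bm\alpha,k}$ and $R_{\bm\alpha,<}$ should be obtained from $Q_{(\alpha',\beta'),k'}$, resp. $R_{(\alpha',\beta'),<'}$, by multiplication with a Pieri-type operator that inserts the part $\mu_1$ --- the manipulation of the generating functions $\Omega^{(d)}(x^{(d)},z)$ and the Cauchy-type identity already appearing in the proof of \Cref{lemma:multshoji}, combined with \cite[(3.7.1)]{shoji2001green}. Once both sides satisfy the same recursion the induction closes. I expect the main obstacle to be the $Q$-side (equivalently, the Hall--Littlewood side) of this recursion: one must check that peeling the first hook $\mu_1$ off the orthogonal symbol corresponds, under the parametrisation of \cite[Theorem 4.4]{shoji2001green}, to the expected operation on modified Hall--Littlewood functions, and that the shift $k \rightsquigarrow k'$ is tracked correctly in both cases $k' = k$ and $k' = k-1$ --- bookkeeping done in \cite{waldspurger} for symplectic symbols that must be redone for the conventions $A^\#, B^\#$ of \S\ref{sec:gsc}. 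A subsidiary point is that, throughout, the inequalities governing which raising operators contribute are strict precisely because $\bm\alpha \in H(n,k)$, so that no terms are lost or doubled; this is the orthogonal analogue of the corresponding step in \cite{waldspurger}.
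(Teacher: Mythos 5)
There is a genuine gap, and it sits exactly where you flag it yourself: the ``peeling identity'' that is supposed to drive your induction is never established, and there is no reason to expect it to hold. Procedure (a) removes the parts $\alpha_{a_1},\beta_{b_1},\alpha_{a_2},\dots$ and fuses them into the single number $\mu_1$, which is not a part of the original bipartition; the Hall--Littlewood functions $Q_{\bm\alpha,k}$ of \cite[Theorem 4.4]{shoji2001green} are indexed by the bipartition (equivalently the symbol), and nothing in \cite{shoji2001green} or \cite{waldspurger} provides a Pieri-type operator realising $Q_{(\alpha,\beta),k}$ as ``insert $\mu_1$'' applied to $Q_{(\alpha',\beta'),k'}$; the same is unproved for $R_{\bm\alpha,<}$ as defined in \cite[(3.2.1)]{shoji2001green}. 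The generating-function and Cauchy-type manipulations you invoke from the proof of \Cref{lemma:multshoji} concern transition matrices between the bases $s,h,q,Q,H$, not a recursion on symbols, so they do not supply the missing step. Note also that this is not in fact Waldspurger's route: the induction via procedure (a) is the mechanism behind \Cref{lemma:unique} and \Cref{lemma:PP} (statements about the sets $P_{A,B;s}$), whereas \cite[Proposition 4.2]{waldspurger} is proved quite differently.

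The paper's proof (following Waldspurger) uses the uniqueness characterisation of $Q_{\bm\alpha}$ in \cite[Theorem 4.4(ii)]{shoji2001green}: condition (a) holds for $R_{\bm\alpha}$ by \cite[Proposition 3.12(i)]{shoji2001green}, and the whole work is to verify condition (b). One expands $R_{\bm\alpha}$ in Schur functions via the raising-operator formula \cite[(3.13.1)]{shoji2001green}, writes the coefficients $r_{\bm\alpha}(\alpha',\beta')$ as in \cite[\S4.3]{waldspurger}, and shows that $r_{\bm\alpha}(\alpha',\beta')\neq 0$ forces $\Lambda_{k,-k;2}(\alpha',\beta')\leq\Lambda_{k,-k;2}(\alpha,\beta)$, whence $a(\bm\alpha')\geq a(\bm\alpha)$ and condition (b) follows. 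The dominance inequality is proved inside the root system $A_{r-1}$, comparing $\underline\Lambda^+$ with $\Lambda$ using $\delta$, $\delta^M$ and elements of the set $E$; the hypothesis $\bm\alpha\in H(n,k)$ enters through the strict decrease of the first $t(\lambda)$ terms of the symbol and through the compatibility of $<_{\alpha,\beta,k}$ with \cite[\S3.8]{shoji2001green}, not through a peeling step. The genuinely new point relative to the symplectic case, which your outline does not address at all, is that here $\Lambda-\delta$ need not lie in the fundamental chamber; the paper introduces the auxiliary set $\bar X$ of positive roots (see \eqref{eq:barX}--\eqref{eq:Lambdadelta}) to correct for this before running the chamber argument. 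If you want to salvage your plan you would have to prove the $Q$- and $R$-recursions from scratch, which is a harder problem than the lemma itself; the characterisation-plus-dominance argument is the workable path.
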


\begin{proof}
We show that $R_{\bm\alpha}$ satisfies the conditions (a) and (b) in \cite[Theorem 4.4 (ii)]{shoji2001green} that uniquely characterise $Q_{\bm\alpha}$. It was shown that $R_{\bm\alpha}$ satisfies condition (a) in \cite[Proposition 3.12 (i)]{shoji2001green}. 
Let $\mathscr A$ be the subring of $\Q(t)$ consisting of functions that have no pole at $t=0$ and let $\mathscr A^*$ be the set of units of $\mathscr A$.
Condition (b) can be formulated as
\begin{enumerate}[(a)]
\setcounter{enumi}{1}
\item\label{Rcondb}
$R_{\bm\alpha}$ can be expressed as
\begin{align}
R_{\bm \alpha}(x,t) &= \sum_{\bm\alpha' \in \mathcal P_2(n)} u_{\bm\alpha,\bm\alpha'}(t)s_{\bm\alpha'}(x),
\end{align}
where $u_{\bm\alpha,\bm\alpha'}(t) \in \mathscr A$ such that $u_{\bm\alpha,\bm\alpha}(t) \in \mathscr A^*$, and such that $u_{\bm\alpha,\bm\alpha'}(t) = 0$ if $\bm\alpha \prec \bm\alpha'$ and $\bm\alpha \not\sim \bm\alpha'$.
\end{enumerate}

By \cite[Proposition 3.14]{shoji2001green}, $R_{\bm\alpha}$ can be expressed as a linear combination of Schur functions with coefficients in $\Z[t]$, i.e.
\begin{align}\label{eq:Rexpression}
R_{\bm\alpha}(x,t) = \sum_{(\alpha',\beta')\in\mathcal P_2(n)} r_{\bm\alpha}(\alpha',\beta')(t) s_{(\alpha',\beta')}(x), \quad  r_{\bm\alpha}(\alpha',\beta')(t) \in \Z[t].
\end{align}
We show that for any $\bm\alpha' = (\alpha',\beta') \in \mathcal P_2(n)$ such that $r_{\bm\alpha}(\alpha',\beta') \neq 0$, we have 
\begin{align}\label{eq:LambdaLambdap}
\Lambda_{k,-k;2}(\alpha',\beta') \leq \Lambda_{k,-k;2}(\alpha,\beta),
\end{align}
which implies that $a(\bm\alpha') \geq a(\bm\alpha)$ and hence $\bm\alpha' \preceq \bm\alpha$ or $\bm \alpha \sim \bm\alpha'$, which in turn implies that $R_{\bm\alpha}$ satisfies condition \ref{Rcondb} as above.
An expression for $R_{\bm\alpha}$ is given in \cite[(3.13.1)]{shoji2001green} in terms of raising operators. Using this, a more explicit expression for $r_{\bm\alpha}(\alpha',\beta')$ is given in \cite[\S4.3]{waldspurger} as follows. 
Recall that we assume that $m_0 = m_1 -k \geq n$.
Let $I = I_{m_0,m_1}$ be the set of indices of $(\alpha,\beta)$.
For $e=0,1$, let $(v_i^e)_{i=1,\dots,m_e}$ denote the canonical basis for $\Z^{m_e}$. 
Let $\nu_0 = \max((t(\alpha),0),(t(\beta),1))$. 
Let $J = \{v_i^e - v_j^f \in \Z^{m_0} \times \Z^{m_1} \colon ((i,e),(j,f)) \in I^2, e \neq f, (i,e) < (j,f), (i,e) \leq \nu_0\}$ and
let $K = \{v_i^e - v_j^f \in \Z^{m_0} \times \Z^{m_1} \colon ((i,e),(j,f)) \in I^2, e = f, (i,e) < (j,f), \nu_0 < (i,e)\}$. 
Let $\Delta_{m_e} = (m_e -1, m_e - 2, \dots, 0)$ for $e=1,2$ and consider $(\alpha + \Delta_{m_0},\beta + \Delta_{m_1}) \in \Z^{m_0} \times \Z^{m_1}$. Consider the usual action of the symmetric groups $S_{m_0}$ and $S_{m_1}$ on $\Z^{m_0}$ and $\Z^{m_1}$, respectiveley. For $e\in\{0,1\}$, $\sigma \in S_{m_e}$, and $x \in \Z^{m_e}$, write $x^\sigma = (x_{\sigma1},\dots,x_{\sigma m_e})$. For $(\alpha',\beta') \in \mathcal P_{m_0} \times \mathcal P_{m_1}$ and $d \in \Z_{\geq0}$, define
\begin{align}\label{eq:rd}
&r_{\bm\alpha}^d(\alpha',\beta') = \sum_{\sigma \in S_{m_0}, \tau \in S_{m_1}} (\sgn \sigma \sgn \tau)\cdot
\\
&|\{ X \subseteq J \cup K \colon |X| = d, (\alpha+\Delta_{m_0},\beta+\Delta_{m_1}) - \sum_{x \in X} x = ((\alpha'+\Delta_{m_0})^\sigma,(\beta'+\Delta_{m_1})^\tau)\}|.
\end{align}
Then 
\begin{align}\label{eq:r}
r_{\bm\alpha}(\alpha',\beta') = v_{\alpha,\beta}(t)^{-1} \sum_{d \in \Z_{\geq0}} (-1)^d r_{\bm\alpha}^d(\alpha',\beta')t^d,
\end{align}
for some non-zero $v_{\alpha,\beta}(t) \in \Q[t]$. We note that we can derive \eqref{eq:r} by keeping track of the raising operators appearing in \cite[(3.13.1)]{shoji2001green}. 
Note in particular that \cite[(3.13.1)]{shoji2001green} requires that $m_0,m_1 \geq n$, which we indeed assume.
Alternatively, we can write \eqref{eq:rd} as
\begin{align}
r_{\bm\alpha}^d(\alpha',\beta') = \sum_{\sigma \in S_{m_0}, \tau \in S_{m_1}} \sgn \sigma \sgn \tau \sum_{X_J \subseteq J \colon |X_J| \leq d } |\mathscr X_K (X_J,\sigma,\tau)|,
\end{align}
where
\begin{align}
\mathscr X_K(X_J,\sigma,\tau) = \{&X_K \subseteq K \colon |X_K| = d - |X_J|, 
\\
&(\alpha+\Delta_{m_0},\beta+\Delta_{m_1}) - \sum_{x \in X_K} x =  ((\alpha'+\Delta_{m_0})^\sigma,(\beta'+\Delta_{m_1})^\tau) + \sum_{x \in X_J} x\}.
\end{align}
Let $(\alpha',\beta') \in \cP_2(n)$ such that $r_{\bm\alpha}(\alpha',\beta') \neq 0$. Then there exist $X_J \subseteq J$, $\sigma \in S_{m_0}$ and $\tau \in S_{m_1}$ such that $\mathscr X_K(X_J,\sigma,\tau) \neq \varnothing$.
Similar to \cite[\S4.3 (11)]{waldspurger}, we can show that if $\mathscr X_K(X_J,\sigma,\tau)$ is non-empty, then we must have
\begin{align}\label{eq:rcondition}
(\alpha + \Delta_{m_0},\beta+\Delta_{m_1}) = ((\alpha'+\Delta_n)^\sigma,(\beta'+\Delta_m)^\tau)) + \sum_{x \in X_J} x.
\end{align}
The result \cite[\S4.3 (11)]{waldspurger} is a priori specific for $\Sp(2n)$, as the order on the indices of $(\alpha,\beta)$ comes from the ordering of the terms of the symbols of $\Sp(2n)$. However, the arguments used in \cite[\S4.3 (11)]{waldspurger} work in the $\SO(N)$ setting as well. In particular, one of the arguments requires the use of \cite[(3.13.1)]{shoji2001green}, and this result is applicable to our situation since $(m_0,m_1,<)$ satisfies the condition on \cite[\S3.8 p.666]{shoji2001green}. 
In this paper, we shall not include the proof of \eqref{eq:rcondition}. 
We shall mainly focus on the part of the proof of this proposition analogous to \cite[\S4.4]{waldspurger}, as this part has some sublte differences compared to the proof in \emph{loc. cit.}. 

Let $r = m_0 + m_1$. We can view $\R^r$ as a root space of type $A_{r-1}$ and we shall use the theory of the root system $A_{r-1}$ to prove that $\Lambda(\alpha',\beta') \leq \Lambda(\alpha,\beta)$.
Recall that $I$ is the set of indices of $(\alpha,\beta)$.
Similar as before, we identify $I$ with $\{1,\dots,r\}$, respecting the order $(m_0,m_1,<)$. This identification defines an isomorphism $\iota \colon \R^{m_0} \times \R^{m_1} \to \R^r$. For $h \in \N$, let $\mathcal P_h$ be the subset of $\mathcal P$ consisting of all the partitions of length at most $h$. Thus $\mathcal P_{m_0} \times \mathcal P_{m_1}$ is also identified with a subset of $\R^r$ via $\iota$. For $i = 1,\dots,m_0$ and $j=1,\dots,m_1$, let $x_i$ and $y_j$ be the elements of $\{1,\dots,r'\}$ that are identified with the elements $(i,0)$ and $(j,1)$ in $I$, respectively. Let $I_a = \{x_1,\dots,x_{m_0}\}$, $I_b = \{y_1,\dots,y_{m_1}\}$, and let $\eps_1,\dots,\eps_r$ be the unit vectors in $\R^r$. Then $\Sigma = \{\eps_i - \eps_j\}_{i\neq j}$ in $\R^r$ forms the root system $A_{r-1}$ with positive roots $\Sigma^+ = \{\eps_i - \eps_j\}_{i<j}$. Denote by $W = S_r$ the Weyl group of $\Sigma$. Let $\Sigma^M = \{\eps_i - \eps_j \colon i\neq j; i,j \in I_a \text{ or } i,j \in I_b\}$ and $\Sigma^{M,+} = \Sigma^M \cap \Sigma^+$. For $e \in \N$, let $\delta_e = \{(e-1)/2, (e-3)/2, \dots, (1-e)/2\}$. Then $\delta := \delta_r$ is the half-sum of the positive roots and $\delta^M = \iota(\delta_{m_0},\delta_{m_1})$ is the half-sum of the roots in $\Sigma^{M,+}$. Let $\mathcal J = \iota(J) \subseteq \Sigma^+ \setminus \Sigma^{M,+}$. 
Let $\bar C^+ = \{x \in \R^r \colon x_1 \geq x_2 \geq \dots\}$ be the fundamental Weyl chamber. For all $x \in \R^r$, there exists a $w \in W$ such that $x^+ := wx \in \bar C^+$.
Define $z \in \R^r$ by
\begin{align}
z_i =
\begin{cases}
k+1 - m_0 &\text{if } i \in I_a,
\\
-k+1 - m_1 &\text{if } i \in I_b.
\end{cases}
\end{align}
Let $(A,B) = (A_{\alpha,\beta,k},B_{\alpha,\beta,k})$, $(A',B') = (A_{\alpha',\beta',k},B_{\alpha',\beta',k}) \in \cR \times \cR$.
Let $\bm\Lambda = ((A_1,\dots,A_{m_0}),(B_1,\dots,B_{m_1}))$, $\bm\Lambda' = ((A_1',\dots,A_{m_0}'),(B_1',\dots,B_{m_1}')) \in \Z^{m_0}\times\Z^{m_1}$. 
Let $\Lambda = \iota(\bm\Lambda)$ and $\underline\Lambda = \iota(\bm\Lambda')$.
Then $\Lambda = \Lambda^+$ by definition of $\iota$.
Note that $\underline\Lambda^+$ and $\Lambda$ each form the largest $r$ terms of $\Lambda_{k,-k;2}(\alpha',\beta')$ and $\Lambda_{k,-k;2}(\alpha,\beta)$ respectively, so it suffices to show that $\underline\Lambda^+ \leq \Lambda$, since we then immediately get \eqref{eq:LambdaLambdap}.

We first describe $\underline\Lambda$.
Note that $(\alpha',\beta')$ can be considered as an element of $\mathcal P_{m_0} \times \mathcal P_{m_1}$, and so we can consider $\iota(\alpha',\beta')$. 
Let $X_J \subseteq J$, $\sigma \in S_{m_0}$ and $\tau \in S_{m_1}$ such that \eqref{eq:rcondition} holds. Since $\sigma \times \tau$ fixes $(\Delta_{m_0} - \delta_{m_0}, \Delta_{m_1} - \delta_{m_1})$, we can replace $\Delta_{m_0}$ and $\Delta_{m_1}$ by $\delta_{m_0}$ and $\delta_{m_1}$ in \eqref{eq:rcondition}, respectively. 
Note that $W^M$ is identified with $S_{m_0} \times S_{m_1}$ via $\iota$.
Let $w^M$ be the element of $W^M$ corresponding to $(\sigma \times \tau)^{-1} \in S_{m_0} \times S_{m_1}$ and let $X = \iota(X_J)$.
Applying $\iota$ to both sides of \eqref{eq:rcondition} and rearranging, we find
\begin{align}
\iota(\alpha',\beta') + \delta^M = w^M(\iota(\alpha,\beta) + \delta^M - \sum_{x \in X} x).
\end{align}
Note that $\underline\Lambda = \iota(\alpha',\beta') + 2\delta^M + z$ and $w^M z = z$, so we have
\begin{align}
\underline\Lambda 
&=
w^M(\iota(\alpha,\beta) + \delta^M - \sum_{x \in X}) + \delta^M + z 
\\
&= w^M(\Lambda - \delta^M - z - \sum_{x \in X} x) + \delta^M + z
\\
&= w^M(\Lambda - \delta^M - \sum_{x \in X} x) + \delta^M.\label{eq:underlineLambda}
\end{align}

Next, we want to prove \eqref{eq:Lambdadelta}, which is a `substitute'  for a certain property in \cite[Proposition 4.2]{waldspurger} that does not hold in our setting. 
In the $\Sp(2n)$ setting of \cite[Proposition 4.2]{waldspurger}, it holds that $\Lambda-\delta \in C^+$, and this is a property that was used in the proof of the proposition.
In our setting, we generally do not have $\Lambda - \delta \in \bar C^+$. 
In fact, let $m_0' \in \N$ be the smallest integer such that $m_0' \geq t(\alpha)$ and $m_1' := m_0' - k \geq t(\beta)$. Let $\bar \Lambda = \Lambda - \delta$ and $r' = m_0' + m_1' = t(\lambda)$. 
Since $(\alpha,\beta) \in H(n,k)$, we have $\Lambda_1 > \Lambda_2 > \dots > \Lambda_{r'}$. 
Thus if $m_0 = m_0'$, and hence $m_1 = m_1'$, we have $\bar\Lambda  \in \bar C^+$. However, we very well may have $m_0 > m_0'$, and thus $m_1 > m_1'$, in which case we have $\Lambda_{r'+1} = \Lambda_{r'+2}$, hence $\bar \Lambda_{r'+1} < \bar \Lambda_{r'+2}$ and so $\bar \Lambda \notin \bar C^+$.

Let $d_1 = m_0 - m_0' = m_1 - m_1' = \frac{1}{2}(r - r')$.
Then
\begin{align}\label{eq:largeterms}
(\bar \Lambda_{r' + 1}, \bar \Lambda_{r' + 2}, \dots, \bar \Lambda_{r}) = (x,x+1,x,x+1,\dots,x,x+1)
\end{align}
where $x = \bar \Lambda_{r' + 1}$. 
Let
\begin{align}\label{eq:barX}
\bar X = \{ \eps_{\bar r + 2i-1} - \eps_{r - 2i+2} \colon i = 1,\dots, \lceil d_1/2\rceil\} \subseteq \Sigma^+.
\end{align}
For $i > \bar m_0$ and $j > \bar m_1$, we may assume that $(i,0) < (j,1) < (i + 1,0) < \dots$, since the equivalence class of the order $<$ on the set of indices of $(\alpha,\beta)$ only depends on the ordering of the indices smaller than $\nu_0 = \max((t(\alpha),0),(t(\beta),1))$.
Thus we have $\bar X \subseteq \Sigma^+ \setminus \Sigma^{M,+}$.
Furthermore, note that $\Lambda_{r'} > \Lambda_{r'+1} + 1$ since $\lambda_r' > 0 = \lambda_{r'+1}$, so $\bar \Lambda_{r'} \geq \bar\Lambda_{r'+1} + 1 = x + 1$.
Thus we have
\begin{align}\label{eq:Lambdadelta}
(\Lambda - \delta) + \sum_{x \in \bar X} x  = (\bar \Lambda_1 \geq \dots \geq \bar \Lambda_{r'} \geq x+1 = \dots = x+1 \geq x = \dots = x) \in \bar C^+,
\end{align}
where the $x+1$ terms appear $d_1$ times and the $x$ terms also appear $d_1$ times. 

Next, we will further describe $\underline \Lambda$.
Note that
\begin{align}
\delta - \delta^M 
=
\frac{1}{2} \sum_{x \in \Sigma^+ \setminus \Sigma^{M,+}} x
=
\frac{1}{2} \left( \sum_{x \in X} x + \sum_{x \in \Sigma^+ \setminus (\Sigma^{M,+} \cup X)} x \right).
\end{align}
Recall that $\bar X \subseteq \Sigma^+ \setminus \Sigma^{M,+}$ and let  $X' = X \cup \bar X \subseteq \Sigma^+ \setminus \Sigma^{M,+}$. Note that $X \cap \bar X = \varnothing$, since for $\eps_i - \eps_j \in \bar X$, we have $\max\{x_{t(\alpha)},y_{t(\beta)}\} \leq r' < i < j$, whereas for $\eps_i - \eps_j \in X$, we have $i < \max\{x_{t(\alpha)},y_{t(\beta)}\}$.
Thus we have
\begin{align}\label{eq:mu}
\mu
:=
\delta^M - \delta + \sum_{x \in X} x + \sum_{x \in \bar X} x
&=
\frac{1}{2} \left( \sum_{x \in X} x - \sum_{x \in \Sigma^+ \setminus (\Sigma^{M,+} \cup X)} x \right) + \sum_{x \in \bar X} x
\\
&=
\frac{1}{2} \left( \sum_{x \in X'} x - \sum_{x \in \Sigma^+ \setminus (\Sigma^{M,+} \cup X')} x \right).
\end{align}
Let $\mu' = -\mu + (w^M)^{-1}(\delta^M)$.
Now \eqref{eq:underlineLambda} and \eqref{eq:mu} together give
\begin{align}
\underline \Lambda 
= 
w^M(\Lambda - \delta - \mu + \sum_{x\in\bar X} x) + \delta^M 
= 
w^M(\Lambda - \delta + \mu' + \sum_{x \in \bar X} x).
\end{align}
There exists a $w \in W$ such that 
\begin{align}
\underline \Lambda^+ 
&= 
w(w^M)^{-1}\underline \Lambda
\\
&=
w(\Lambda - \delta + \sum_{x \in \bar X} x + \mu')
\\
&=
w(\Lambda - \delta + \sum_{x \in \bar X} x) + w(\mu') + (\Lambda - \delta + \sum_{x \in \bar X} x) - (\Lambda - \delta + \sum_{x \in \bar X} x)
\\
&=
\Lambda 
+ w(\Lambda - \delta + \sum_{x \in \bar X} x) - (\Lambda - \delta + \sum_{x \in \bar X} x)
+ w(\mu') - \delta + \sum_{x \in \bar X} x.
\end{align}
We will now show that $\underline\Lambda^+ \leq \Lambda$.
Since $(\Lambda - \delta + \sum_{x \in \bar X} x) \in \bar C^+$ by \eqref{eq:Lambdadelta}, we have
\begin{align}\label{eq:wminus}
w(\Lambda - \delta + \sum_{x \in \bar X} x) - (\Lambda - \delta + \sum_{x \in \bar X} x) \in -{}^+\bar C.
\end{align}
Next, we show that $w(\mu') - \delta \in -{}^+\bar C$. Let 
\begin{align}
E &:= \left\{\frac{1}{2}(\sum_{y \in Y} y - \sum_{y \in \Sigma^+ \setminus Y} y) \in \R^r \colon Y \subseteq \Sigma^+ \right\},
\\
E^M &:= \left\{\frac{1}{2}(\sum_{y \in Y} y - \sum_{y \in \Sigma^{M,+} \setminus Y} y) \in \R^r \colon Y \subseteq \Sigma^{M,+}\right\}.
\end{align}
Note that $E$ and $E^M$ are fixed by the action of $W$ and $W^M$ on $\R^r$, respectively.
For each $H^M \in E^M$, it is easy to see that $-\mu + H^M \in E$. Thus since $\delta^M \in E^M$, we have $\mu' \in E$, and so $w(\mu') \in E$. For any $H \in E$, we have $H - \delta \in -{}^+ \bar C$, so in particular, we have 
\begin{align}\label{eq:wminusmu}
w(\mu') - \delta \in -{}^+ \bar C.
\end{align}
We now have
\begin{align}
\underline \Lambda - \sum_{x \in \bar X} x \in \Lambda - {}^+ \bar C \quad \text{i.e.} \quad \underline \Lambda - \sum_{x \in \bar X} x \leq \Lambda.
\end{align}
As noted before, if $\eps_i - \eps_j \in \bar X$, then $\iota(\nu_0) < r' < i < j$. Thus for $\ell = 1,\dots, r'$, the $\ell^{\text{th}}$ term of $\underline \Lambda - \sum_{x \in \bar X}$ is equal to $\underline\Lambda_\ell$, so
\begin{align}
\sum_{i=1}^\ell \underline\Lambda_\ell \leq \sum_{i=1}^\ell \Lambda_\ell.
\end{align}
Now suppose $\ell \in \{r' + 1,\dots, r\}$. 
Let $\kappa = [k,-\infty[_2 \sqcup [-k,-\infty[_2$.
As noted in Remark \ref{rem:ineq}, the first $r' = m_0' + m_1'$ terms of $\Lambda$ are the first $m_0'$ terms of $\alpha + [k,-\infty[_2$ and the first $m_1'$ terms of $\beta + [-k,-\infty[_2$. Thus
\begin{align}
\sum_{i=1}^\ell \Lambda_i 
&= 
\sum_{i=1}^{r'} \Lambda_i + \sum_{i = \bar r +1}^\ell \Lambda_i
\\
&= 
\sum_{i=1}^{m_0'} (\alpha_i + k + 2 - 2i)
+ \sum_{i=1}^{m_1'} (\beta_i - k + 2 - 2i)
+ \sum_{i=r' + 1}^\ell \kappa_i
\\
&= 
\sum_{i=1}^{m_0'} \alpha_i
+
\sum_{i=1}^{m_1'} \beta_i
+ 
\sum_{i=1}^\ell \kappa_i
\\
&=
n + \sum_{i= 1}^\ell \kappa_i,
\end{align}
where the last equality follows since $m_0' \geq t(\alpha)$ and $m_1' \geq t(\beta)$.
The first $\ell$ terms of $\underline \Lambda^+$ are the first $\ell$ terms of $\Lambda_{k,-k;2}(\alpha',\beta') = (\alpha + [k,-\infty[_2) \sqcup (\beta + [-k,-\infty[_2)$. Thus we have
\begin{align}\label{eq:lambdapluslambda}
\sum_{i=1}^\ell \underline \Lambda^+_i 
\leq
\sum_{i=1}^\ell (\alpha \sqcup \beta)_i + \sum_{i=1}^\ell \kappa_i
\leq 
n + \sum_{i=1}^\ell \kappa_i
=
\sum_{i=1}^\ell \Lambda_i.
\end{align}
To conclude, we have now shown that $\sum_{i=1}^\ell \underline \Lambda_i^+ \leq \sum_{i=1}^\ell \Lambda_i$ for $\ell = 1,\dots,r$, i.e. $\underline \Lambda^+ \leq \Lambda$, as desired.
\end{proof}

\begin{proof}[Proof of Proposition {\ref{prop:multshoji}}]
Since $\lambda$ only has odd parts, we have $(\alpha,\beta) \in H((N-k^2)/2,k)$ by Remark \ref{rem:symbolodd}, and so by Lemma \ref{lemma:QR}, we have $Q_{(\alpha,\beta),k} = R_{(\alpha,\beta),<}$. Thus \Cref{shojiB} follows from Lemma \ref{lemma:multshoji}. 

Suppose that $k=0$. Then $N = 2n$ is even.
Let $P^B$ and $P^D$ be the matrix of Green functions as in Theorem \ref{theorem:shoji} for $k = 0$ and for $W(B_n)$ and $W(D_n)$, respectively.
Note that the Green functions evaluated in $t = 1$ are the same as the Kostka polynomials evaluated in $t=1$.
Also note that $c_{(\alpha,\beta)} = 2$ since $\lambda$ only has odd parts and hence non-degenerate. So for $i \in \{1,2/c_{(\alpha',\beta')}\}$, \Cref{prop:shojigreenD1} gives
\begin{align}
p_{(\alpha',\beta')^i,(\alpha,\beta)}^D(1) 
= 
\frac{c_{(\alpha',\beta')}}{2} (p_{(\alpha',\beta'),(\alpha,\beta)}^B(1) + p_{(\alpha',\beta'),(\beta,\alpha)}^B(1)).
\end{align}
Since $\lambda$ is non-degenerate, we have $\alpha \neq \beta$.
By \eqref{eq:thetadegenerate}, we have $p_{(\alpha',\beta'),(\beta,\alpha)}^B(1) = p_{(\beta',\alpha'),(\alpha,\beta)}^B(1)$, so by \Cref{prop:shojigreenD1} and \Cref{lemma:multshoji}, we have
\begin{align}
\mult(C_\lambda,\mathcal E_{[\eps]};C_{\lambda'}^\pm,\mathcal E_{[\eps']}^\pm) 
=
p_{(\alpha',\beta')^i,(\alpha,\beta)}^D(1)
&=
\frac{c_{(\alpha',\beta')}}{2} (p_{(\alpha',\beta'),(\alpha,\beta)}^B(1) + p_{(\beta',\alpha'),(\alpha,\beta)}^B(1))
\\
&=
\frac{c_{(\alpha',\beta')}}{2} (\mult(\alpha,\beta,<;\alpha',\beta') + \mult(\alpha,\beta,<;\beta',\alpha')). \qedhere
\end{align}
\end{proof}

\section{Maximality and minimality theorems}\label{sec:max}

\subsection{Maximality theorem}
\begin{lemma}\label{lemma:order}
Let $(\lambda,[\eps]),(\lambda',[\eps']) \in \PPort(2N)$.
Then $\lambda \leq \lambda'$ if and only if $p_{\lambda,[\eps]} \leq p_{\lambda',[\eps']}$.
\end{lemma}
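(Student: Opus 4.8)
I would first reduce to a purely combinatorial comparison of partial sums. Since $A^\#$ and $B^\#$ are built from $\lambda$ alone (through $\lambda'=\lambda+[0,-\infty[_1$ and its splitting into even and odd parts), the element $p_{\lambda,[\eps]}=A_{\lambda,\eps}\sqcup B_{\lambda,\eps}=A^\#\sqcup B^\#$ depends only on $\lambda$; write $p_\lambda$ for it, and write $\mu$ for the second partition, with $p_\mu$ and $\mu'=\mu+[0,-\infty[_1$ defined analogously. The interlacing $A_1^\#\ge B_1^\#\ge A_2^\#\ge B_2^\#\ge\cdots$ shows the sorted form of $p_\lambda$ is $(A_1^\#,B_1^\#,A_2^\#,B_2^\#,\dots)$, and $\lambda'$ is strictly decreasing since $\lambda'_i-\lambda'_{i+1}=\lambda_i-\lambda_{i+1}+1\ge1$. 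As $\le$ on both $\cR$ and $\mathcal P$ is the partial-sum (dominance) order and $S_c(\lambda')=S_c(\lambda)-\binom c2$, the lemma is equivalent to the assertion that $S_m(\lambda')\le S_m(\mu')$ for all $m$ if and only if $S_m(p_\lambda)\le S_m(p_\mu)$ for all $m$.

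The second step is to compute both families of partial sums. Writing $(\lambda')^{\mathrm{even}}=(2z_1,2z_2,\dots)$ and $(\lambda')^{\mathrm{odd}}=(2z_1'-1,2z_2'-1,\dots)$ for the subsequences of even, resp. odd, terms of $\lambda'$, and using $A_i^\#=z_i'-(i-1)$, $B_i^\#=z_i-(i-1)$, the interlacing gives
\[
2S_{2c}(p_\lambda)=S_c\big((\lambda')^{\mathrm{even}}\big)+S_c\big((\lambda')^{\mathrm{odd}}\big)+c-2c(c-1),
\]
\[
2S_{2c+1}(p_\lambda)=S_c\big((\lambda')^{\mathrm{even}}\big)+S_{c+1}\big((\lambda')^{\mathrm{odd}}\big)+1-c-2c(c-1),
\]
where the trailing terms depend only on the index. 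On the other side, since $\lambda'$ is strictly decreasing its $m$ largest terms are $\lambda'_1,\dots,\lambda'_m$, so, sorting these by parity, $S_m(\lambda')=\max_{a+b=m}g_\lambda(a,b)$ with $g_\lambda(a,b):=S_a((\lambda')^{\mathrm{even}})+S_b((\lambda')^{\mathrm{odd}})$. Thus the lemma reduces to proving, for orthogonal $\lambda,\mu$ of the same size,
\[
\Big[\forall c\colon g_\lambda(c,c)\le g_\mu(c,c)\ \text{and}\ g_\lambda(c,c{+}1)\le g_\mu(c,c{+}1)\Big]\iff\Big[\forall m\colon \max_{a+b=m}g_\lambda(a,b)\le\max_{a+b=m}g_\mu(a,b)\Big].
\]

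The third and main step is this equivalence, and it is where essentially all of the work lies. The map $a\mapsto g_\lambda(a,m-a)$ is concave, since its increments $g_\lambda(a{+}1,m{-}a{-}1)-g_\lambda(a,m{-}a)=(\lambda')^{\mathrm{even}}_{a+1}-(\lambda')^{\mathrm{odd}}_{m-a}$ decrease in $a$; hence the right-hand condition is governed by the parity distribution of the top $m$ terms of $\lambda'$, whereas the left-hand condition only sees the balanced and near-balanced splits $(c,c)$ and $(c,c{+}1)$. The obstacle is precisely that the maximising split need not be balanced, so these are not visibly the same data. Orthogonality is what closes the gap: because the even parts of $\lambda$ occur with even multiplicity, passing to $\lambda'$ turns each block of equal even parts into a run of consecutively valued terms of \emph{even} length (hence contributing equally many even and odd values), which is exactly the constraint on the parity distribution of the prefixes of $\lambda'$ needed to tie the two conditions together. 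I would carry out this parity bookkeeping by computing the exact discrepancy between $g_\lambda(c,c)$ and $S_{2c}(\lambda')$ (and the odd analogue) and showing it varies monotonically along the covering relations of the orthogonal dominance order, where everything is finite; this yields the equivalence in both directions. This is the $\SO(N)$ counterpart of the corresponding step in Waldspurger's treatment of $\Sp(2n)$, with the roles of symplectic and orthogonal partitions interchanged; once it is done, the rest of the proof is formal. (As a consistency check, combining the lemma with the known identification $\lambda\le\mu\iff C_\lambda\preceq C_\mu$ of the closure order with dominance turns it into a statement about symbols of the classical flavour.)
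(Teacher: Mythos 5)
Your first two steps are correct, and they are essentially the paper's own setup in different packaging: once you observe $S_m(\lambda')=\max_{a+b=m}g_\lambda(a,b)$ and that $2S_m(p_\lambda)$ equals $g_\lambda(\lfloor m/2\rfloor,\lceil m/2\rceil)$ plus a constant depending only on $m$, you have exactly the paper's identity $S_c(\lambda)=2S_c(p_{\lambda,\eps})+\delta_c(\lambda)+C_c$ up to naming, where $\delta_c(\lambda)$ is the discrepancy between the maximal split and the near-balanced one. The problem is that your third step, which you yourself identify as ``where essentially all of the work lies,'' is precisely the content of the lemma and is left as a plan rather than a proof, and the plan as stated would not close either implication. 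The combinatorial input you need is sharper than ``blocks of equal even parts contribute balanced parities'': odd parts occurring with odd multiplicity also skew the parity count, and what must be proved is that for orthogonal $\lambda$ the discrepancy is exactly $\delta_m(\lambda)\in\{0,1\}$, with $\delta_m(\lambda)$ determined by the parity of $S_m(\lambda)+m$, and that when $\delta_m(\lambda)=1$ the defect comes from swapping two boundary entries of $\lambda'$ differing by exactly $1$ (which in particular forces $\lambda_m=\lambda_{m+1}$ to be even). Without this exact $0/1$ control tied to the parity of $S_m(\lambda)+m$, neither direction follows.

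More seriously, your proposed mechanism --- computing the discrepancy and ``showing it varies monotonically along the covering relations of the orthogonal dominance order'' --- is not the right tool. The quantity $\delta_m$ depends on the parity of $S_m(\lambda)+m$, which is not monotone under dominance, so monotonicity along covers is false as stated; and for the implication $p_{\lambda,[\eps]}\le p_{\mu,[\eps']}\Rightarrow\lambda\le\mu$ you cannot induct along covering relations of the dominance order at all, since the hypothesis is a chain of inequalities on the $p$'s, not a dominance relation you may decompose. What actually closes the two directions (and is how the paper argues) is different: for $\lambda\le\mu\Rightarrow p_\lambda\le p_\mu$ one gets $S_m(p_\lambda)\le S_m(p_\mu)+\tfrac{1}{2}\bigl(\delta_m(\mu)-\delta_m(\lambda)\bigr)\le S_m(p_\mu)+\tfrac12$ and concludes by integrality of the $S_m(p)$'s; for the converse one gets $S_m(\lambda)\le S_m(\mu)+\delta_m(\lambda)$ and must rule out equality $S_m(\lambda)=S_m(\mu)+1$, which is done by using that $\delta_m(\lambda)=1$ forces $\lambda_m=\lambda_{m+1}$ even, feeding in the inequality at level $m-1$ to get $\lambda_{m+1}\ge\mu_{m+1}+1$, and deriving a contradiction at level $m+1$ where the parity constraint flips. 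Neither the integrality step nor this exclusion argument appears in your outline, so as it stands the proof has a genuine gap at its central step.
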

\begin{proof}
Let $c \in \N$. We have
\begin{align}
S_c(\lambda)
=
S_c(\lambda + [0,-\infty[_1) - S_c([0,-\infty[_1).
\end{align}
Recall $z$ and $z'$ from \S\ref{sec:gsc}.
Suppose $c$ is even. If $S_c(\lambda)$ is odd, then the largest $c$ terms of $\lambda + [0,-\infty[_1$ are $z_1,\dots,z_{c/2-1}$ and $z_1',\dots,z_{c/2+1}'$, and we have that $\lambda_{c} = \lambda_{c+1}$ is even and $z_{c/2+1}'=z_{c/2}$. If $S_c(\lambda)$ is even, then $z_1,\dots,z_{c/2}$ and $z_1',\dots,z_{c/2}'$. 
Suppose $c$ is odd. If $S_c(\lambda)$ is even, then the largest $c$ terms of $\lambda + [0,-\infty[_1$ are $z_1,\dots,z_{(c+1)/2}$ and $z_1',\dots,z_{(c-1)/2}'$, and we have that $\lambda_{c} = \lambda_{c+1}$ is even and $z_{(c+1)/2}'=z_{(c+1)/2}$. If $S_c(\lambda)$ is odd, then $z_1,\dots,z_{(c-1)/2}$ and $z_1',\dots,z_{(c+1)/2}'$. 
Let $c^+ = \lceil c/2 \rceil$, $c^- = \lfloor c/2 \rfloor$ and $\delta_c(\lambda) = 1$ if $S_c(\lambda) + c$ is odd and $\delta_c(\lambda) = 0$ otherwise. Then
\begin{align}
S_c(\lambda + [0,-\infty[_1) = 2S_{c^+}(z') + 2S_{c^-}(z) - c^+ + \delta_c(\lambda).
\end{align}
Recall $A^\#$ and $B^\#$ from \S\ref{sec:gsc}. By definition, we have
\begin{align}
S_{c^+}(z') &= S_{c^+}(A^\#) - S_{c^+}([0,-\infty[_1),
\\
S_{c^-}(z) &= S_{c^-}(B^\#) - S_{c^-}([0,-\infty[_1),
\end{align}
and since $A_1^\# \geq B_1^\# \geq A_2^\# \geq B_2^\# \geq \dots$, we have
\begin{align}
S_{c^+}(A^\#) + S_{c^-}(B^\#) = S_c(A^\# \sqcup B^\#) = S_c(p_{\lambda,\eps}).
\end{align}
Hence we have 
\begin{align}\label{eq:Sc}
S_c(\lambda) = 2S_c(p_{\lambda,\eps}) + \delta_c(\lambda) + C_c,
\end{align}
for some $C_c \in \Z$ independent of $\lambda$.

Suppose $\lambda < \lambda'$. Then for all $c \in \N$, we have $S_c(\lambda) \leq S_c(\lambda')$, and so by \eqref{eq:Sc}, we have
\begin{align}
S_c(p_{\lambda,\eps}) \leq S_c(p_{\lambda',\eps'}) + \frac{\delta_c(\lambda') - \delta_c(\lambda)}{2} \leq S_c(p_{\lambda',\eps'}) + \frac{1}{2},
\end{align}
hence $S_c(p_{\lambda,\eps}) \leq S_c(p_{\lambda',\eps'})$ since both are integers. Thus $p_{\lambda,[\eps]} \leq p_{\lambda',[\eps']}$. 

Conversely, suppose that $p_{\lambda,[\eps]} \leq p_{\lambda',[\eps']}$. 
Then for all $c \in \N$, we have $S_c(p_{\lambda,\eps}) \leq S_c(p_{\lambda',\eps'})$ and hence by \eqref{eq:Sc}, we have
\begin{align}\label{eq:ineqSc}
S_c(\lambda) \leq S_c(\lambda') + \delta_c(\lambda).
\end{align}
If $c = 1$, then obviously $S_c(\lambda) \leq S_c(\lambda')$, so assume $c > 1$.
If $c + S_c(\lambda)$ is even, then $S_c(\lambda) \leq S_c(\lambda')$.
Suppose $c + S_c(\lambda)$ is odd and suppose $S_c(\lambda) = S_c(\lambda')+1$. Then $\lambda_c = \lambda_{c+1}$ is even. Thus $c-1 + S_{c-1}(\lambda)$ is even and so $S_{c-1}(\lambda) \leq S_{c-1}(\lambda')$ by \eqref{eq:ineqSc}. Thus $\lambda_c \geq \lambda_c'+1$ and so 
\begin{align}
\lambda_{c+1} = \lambda_c \geq \lambda_c' + 1 \geq \lambda_{c+1}' + 1.
\end{align}
Now we have
\begin{align}\label{eq:Sc}
S_{c+1}(\lambda) 
=
S_c(\lambda) + \lambda_{c+1}
\geq
(S_c(\lambda') + 1) + (\lambda_{c+1}' + 1)
>
S_{c+1}(\lambda').
\end{align}
But $c+1 + S_{c+1}(\lambda) = (c+S_c(\lambda) + (1+\lambda_{c+1})$ is even since $\lambda_{c+1} = \lambda_c$ is even, and so $S_{c+1}(\lambda) \leq S_{c+1}(\lambda')$, which contradicts \eqref{eq:Sc}. 
Thus $S_C(\lambda) \leq S_c(\lambda')$. We conclude that $\lambda \leq \lambda'$.
\end{proof}

Recall that for $(\lambda,[\eps]),(\lambda',[\eps']) \in \PPort(N)$ with $\lambda$ non-degenerate, it follows from \Cref{prop:multshoji} that $\mult(C_\lambda,\E_{[\eps]};C_{\lambda'}^+,\mathcal E_{[\eps']}^+) = \mult(C_\lambda,\E_{[\eps]};C_{\lambda'}^-,\mathcal E_{[\eps']}^-)$. 

\begin{theorem}\label{thm:max}
Suppose $(\lambda,[\eps]) \in \PPort(N)$ such that $\lambda$ only has odd parts. Then there exists a unique $(\lambda^{\text{max}},[\epsmax]) \in \PPort(N)$ such that $\lambdamax$ is non-degenerate and 
\begin{enumerate}[(1)]
\item\label{thm:max1} $\mult(C_\lambda,\mathcal E_{[\eps]}; C_{\lambdamax},\mathcal E_{[\epsmax]}) = 1$,
\item\label{thm:max2} For all $(\lambda',[\eps']) \in \PPort(N)$ with $\mult(C_\lambda,\E_{[\eps]};C_{\lambda'}^+,\mathcal E_{[\eps']}^+)= \mult(C_\lambda,\E_{[\eps]};C_{\lambda'}^-,\mathcal E_{[\eps']}^-) \neq 0$, we have $\lambda' < \lambda^{\text{max}}$ or $(\lambda',[\eps']) = (\lambda^{\text{max}},[\epsmax])$.
\end{enumerate}
\end{theorem}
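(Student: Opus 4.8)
The plan is to follow Waldspurger's argument for \cite[Thm. 4.5]{waldspurger}, reducing everything to the combinatorics of \S\ref{sec:comb} through \Cref{prop:multshoji}. Put $k = k(\lambda,[\eps])$, $n = (N-k^2)/2$, and $(\alpha,\beta)_k = \Phi_N(\lambda,[\eps])$; by \Cref{rem:symbolodd} we have $(\alpha,\beta) \in H(n,k)$, so writing $<$ for $<_{\alpha,\beta,k}$ this order is defined and $p_{\lambda,\eps} = \Lambda_{k,-k;2}(\alpha,\beta)$. Since $\mult$ vanishes between pairs of different defect, in checking both conditions of the theorem we may restrict to $(\lambda',[\eps']) \in \PPort(N)$ with $k(\lambda',[\eps']) = k$; writing $(\alpha',\beta')_k = \Phi_N(\lambda',[\eps'])$, \Cref{prop:multshoji} rewrites $\mult(C_\lambda,\E_{[\eps]};C_{\lambda'}^\pm,\E_{[\eps']}^\pm)$ in terms of $\mult(\alpha,\beta,<;\alpha',\beta')$ (and, when $k=0$, also $\mult(\alpha,\beta,<;\beta',\alpha')$), while $p_{\lambda',\eps'} = \Lambda_{k,-k;2}(\alpha',\beta')$ by \Cref{thm:gscSO}. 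The candidate is then built from \Cref{lemma:unique}: the set $P_{k,-k;2}(\alpha,\beta,<)$ has a unique element $(\mu,\nu) \in \mathcal P_2(n)$, and we set $(\lambdamax,[\epsmax]) := \Phi_N^{-1}((\mu,\nu)_k)$, so that $p_{\lambdamax,\epsmax} = \Lambda_{k,-k;2}(\mu,\nu) = p_{k,-k;2}(\alpha,\beta,<)$ by \Cref{def:pABs}. That $\lambdamax$ is non-degenerate is automatic when $k>0$, since degenerate classes only occur with defect $0$; when $k=0$, note that $\mu = \nu$ would force every term of $\Lambda_{0,0;2}(\mu,\nu)$ to have even multiplicity, contradicting \Cref{prop:symbolterm}, so $\mu \neq \nu$ and again $\lambdamax$ is non-degenerate.

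To establish the first condition, $\mult(\alpha,\beta,<;\mu,\nu) = 1$ by \Cref{prop:mult3}. When $k > 0$, \Cref{shojiB} then gives $\mult(C_\lambda,\E_{[\eps]};C_{\lambdamax},\E_{[\epsmax]}) = 1$. When $k = 0$, \Cref{shojiD} gives $\mult(C_\lambda,\E_{[\eps]};C_{\lambdamax}^\pm,\E_{[\epsmax]}^\pm) = \mult(\alpha,\beta,<;\mu,\nu) + \mult(\alpha,\beta,<;\nu,\mu)$, so I must still show $\mult(\alpha,\beta,<;\nu,\mu) = 0$: if it were nonzero, then since $\Lambda_{0,0;2}(\nu,\mu) = \Lambda_{0,0;2}(\mu,\nu) = p_{0,0;2}(\alpha,\beta,<)$, \Cref{prop:mult2} would place $(\nu,\mu)$ in $P_{0,0;2}(\alpha,\beta,<) = \{(\mu,\nu)\}$, forcing $\mu = \nu$, a contradiction.

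For the second condition, let $(\lambda',[\eps'])$ satisfy $\mult(C_\lambda,\E_{[\eps]};C_{\lambda'}^\pm,\E_{[\eps']}^\pm) \neq 0$; then $k(\lambda',[\eps']) = k$, and \Cref{prop:multshoji} shows at least one of $\mult(\alpha,\beta,<;\alpha',\beta')$, $\mult(\alpha,\beta,<;\beta',\alpha')$ is nonzero — so, using $\Lambda_{k,-k;2}(\beta',\alpha') = \Lambda_{k,-k;2}(\alpha',\beta')$ when $k=0$, we may assume $\mult(\alpha,\beta,<;\alpha',\beta') \neq 0$. Then $(\alpha',\beta') \in Q_s(\alpha,\beta,<)$ by \Cref{prop:mult}, so $p_{\lambda',\eps'} = \Lambda_{k,-k;2}(\alpha',\beta') \leq p_{k,-k;2}(\alpha,\beta,<) = p_{\lambdamax,\epsmax}$, whence $\lambda' \leq \lambdamax$ by \Cref{lemma:order}. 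If $\lambda' = \lambdamax$, then $p_{\lambda',\eps'} = p_{\lambdamax,\epsmax}$ (pairs with equal $\lambda$ have similar symbols), so $\Lambda_{k,-k;2}(\alpha',\beta') = p_{k,-k;2}(\alpha,\beta,<)$, and \Cref{prop:mult2} gives $(\alpha',\beta') \in P_{k,-k;2}(\alpha,\beta,<) = \{(\mu,\nu)\}$; hence $\Phi_N(\lambda',[\eps']) = (\mu,\nu)_k = \Phi_N(\lambdamax,[\epsmax])$ and $(\lambda',[\eps']) = (\lambdamax,[\epsmax])$. Uniqueness is then formal: any other $(\tilde\lambda,[\tilde\eps])$ satisfying both conditions with $\tilde\lambda$ non-degenerate has nonzero multiplicity against $(\lambda,[\eps])$ by its own first condition, so applying the second condition of each of $(\lambdamax,[\epsmax])$ and $(\tilde\lambda,[\tilde\eps])$ to the other yields $\lambdamax \leq \tilde\lambda$ and $\tilde\lambda \leq \lambdamax$, hence $(\tilde\lambda,[\tilde\eps]) = (\lambdamax,[\epsmax])$.

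The difficult work underpinning this — that Shoji's combinatorial recipe computes the relevant Green functions once $\lambda$ has only odd parts (\Cref{prop:multshoji}), together with the fine structure of $P_{k,-k;2}(\alpha,\beta,<_{\alpha,\beta,k})$ (\Cref{lemma:unique}, \Cref{lemma:PP}, \Cref{prop:symbolterm}) — is already available, so the main remaining obstacle is the careful bookkeeping of the two degenerate classes in the $k=0$ case and the correct passage between symbols $p_{\lambda,\eps}$ and partitions $\lambda$ via \Cref{lemma:order}.
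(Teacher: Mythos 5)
Your proposal is correct and follows essentially the same route as the paper: reduce via \Cref{prop:multshoji} to the combinatorial multiplicities, take the unique element of $P_{k,-k;2}(\alpha,\beta,<_{\alpha,\beta,k})$ from \Cref{lemma:unique} as the candidate, and conclude with \Cref{prop:mult}, \Cref{prop:symbolterm} (for non-degeneracy, i.e. $\mu\neq\nu$) and \Cref{lemma:order}, including the same treatment of the two degenerate classes at $k=0$. The only cosmetic differences are your formal antisymmetry argument for uniqueness (the paper appeals directly to the uniqueness of the element of $P_{k,-k;2}$) and the observation that non-degeneracy is automatic for $k>0$.
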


\begin{proof}
Let $k = k(\lambda,[\eps])$ 
and $(\alpha,\beta)_k = \Phi_N(\lambda,[\eps])$.
We have $(\alpha,\beta) \in H((N-k^2)/2,k)$ by \Cref{rem:symbolodd}.
Denote $<_{\alpha,\beta,k}$ by $<$.
By \Cref{prop:mult} and \Cref{lemma:unique}, there exists a unique $(\alphamax,\betamax) \in \mathcal P_2((N-k^2)/2)$ such that 
\begin{enumerate}[(i)]
\item\label{maxmult1} $\mult(\alpha,\beta,<;\alphamax,\betamax) = 1$,
\item\label{maxmult2} For all $(\alpha',\beta') \in \mathcal P \times \mathcal P$ with $\mult(\alpha,\beta,<;\alpha',\beta') \neq 0$, we have $\Lambda_{k,-k;2}(\alpha',\beta') < \Lambda_{k,-k;2}(\alphamax,\betamax)$ or $(\alpha',\beta')_k = (\alphamax,\betamax)_k$ (note that is an equality of ordered (resp. unordered) pairs if $k>0$ (resp. $k = 0$)).
\end{enumerate}
Furthermore, $(\alphamax,\betamax)$ is the unique element of $P_{k,-k;2}(\alpha,\beta,<)$.
By \Cref{prop:symbolterm}, the largest two terms of $\Lambda_{k,-k;2}(\alphamax,\betamax)$ are distinct, so $\alphamax \neq \betamax$. 

Suppose $k > 0$. Then \Cref{shojiB} and \Cref{lemma:order} show that $\Phi_N^{-1}(\alphamax,\betamax)$ satisfies \ref{thm:max1} and \ref{thm:max2}. 

Suppose $k = 0$ and let $(\bar\lambda,\bar\eps) = \Phi_N^{-1}(\alphamax,\betamax)$.
By \Cref{shojiD} and \ref{maxmult1}, we have
\begin{align}
\mult(C_\lambda,\E_{[\eps]};C_{\bar\lambda},\E_{[\bar\eps]})
&=
\mult(\alpha,\beta,<;\alphamax,\betamax) + \mult(\alpha,\beta,<;\betamax,\alphamax)
\\
&= 
1 + \mult(\alpha,\beta,<;\betamax,\alphamax).
\end{align}
Suppose that $\mult(\alpha,\beta,<;\betamax,\alphamax) \neq 0$.
Since $\alphamax \neq \betamax$, we have $(\betamax,\alphamax) \notin P_{0,0;-2}(\alpha,\beta,<)$, but since $\Lambda_{0,0;2}(\betamax,\alphamax) = \Lambda_{0,0;2}(\alphamax,\betamax)$, we have $\mult(\alpha,\beta,<;\betamax,\alphamax) = 0$ by \Cref{prop:mult2}. Hence  $\mult(C_\lambda,\E_{[\eps]};C_{\lambdamax},\E_{[\epsmax]}) = 1$.
Next, let $(\lambda',[\eps']) \in \PPort(N)$ such that $\mult(C_\lambda,\E_{[\eps]};C_{\lambda'}^\pm,\E_{[\eps']}^\pm) \neq 0$ and let $\{\alpha',\beta'\} = \Phi_N(\lambda',[\eps'])$.
Since $\mult(C_\lambda,\E_{[\eps]};C_{\lambda'}^\pm,\E_{[\eps']}^\pm) \neq 0$, we have either $\mult(\alpha,\beta,<;\alpha',\beta') \neq 0$ or $\mult(\alpha,\beta,<;\beta',\alpha') \neq 0$ by \Cref{shojiD}, and so
$\Lambda_{0,0;2}(\alpha',\beta') = \Lambda_{0,0;2}(\beta',\alpha') < \Lambda_{0,-0;2}(\alphamax,\betamax)$ or $\{\alpha',\beta'\} = \{\alphamax,\betamax\}$.
Thus $(\bar\lambda,\bar\eps)$ satisfies \ref{thm:max2} by \Cref{lemma:order}.

Uniqueness follows easily from the fact that $P_{k,-k;2}(\alpha,\beta,<)$ has a unique element.
Hence we conclude that $(\lambdamax,[\epsmax]) = \Phi_N^{-1}(\alphamax,\betamax)$ is the unique element of $\PPort(N)$ that satisfies \ref{thm:max1} and \ref{thm:max2}, and that $\lambdamax$ is non-degenerate, since $\alphamax \neq \betamax$.
\end{proof}

\begin{remark}
We only showed that $\lambdamax$ is non-degenerate, but we shall later see from a corollary of \Cref{thm:algorithm} that $\lambdamax$ in fact only has odd parts.
\end{remark}

\subsection{Minimality theorem}\label{subsec:min}
We state a result from \cite[\S4.6]{waldspurger} without proof. For $\mu \in \mathcal P$, write $\transp\mu$ for the transpose of $\mu$ and let $R_\mu = \mu + [0,-\infty[_1$.
\begin{lemma}\label{lemma:mult}
For all $\mu \in \mathcal P$ and $x \in \Z$, we have
\begin{align}
\mult_{R_{\mu}}(x) + \mult_{R_{\transp{\mu}}}(1-x) = 1.
\end{align}
\end{lemma}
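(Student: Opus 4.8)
## Proof proposal for Lemma \ref{lemma:mult}

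The plan is to unwind the definitions of $R_\mu$ and $R_{\transp\mu}$ and reduce the claim to a standard combinatorial fact about partitions and their conjugates. Recall $R_\mu = \mu + [0,-\infty[_1 = (\mu_1, \mu_2 - 1, \mu_3 - 2, \dots)$, so $\mult_{R_\mu}(x)$ counts the number of indices $i \in \N$ with $\mu_i - (i-1) = x$, i.e. with $\mu_i - i = x - 1$. Since $\mu$ is weakly decreasing, the map $i \mapsto \mu_i - i$ is strictly decreasing, so this multiplicity is always $0$ or $1$; hence the statement is really a disjointness-and-covering assertion: for each $x \in \Z$, exactly one of the two sets $\{i : \mu_i - i = x - 1\}$ and $\{j : \transp\mu_j - j = -x\}$ is nonempty (note $1 - x$ plays the role of $x$ for $\transp\mu$, so $\transp\mu_j - (j-1) = 1-x$ means $\transp\mu_j - j = -x$).

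The key step is the classical description of conjugation via the ``beta-set'' or ``first-column hook lengths'' of a partition. Concretely, the set $\{\mu_i - i : i \in \N\} \subseteq \Z$ (which for $i$ large equals $\{-i : i \text{ large}\}$, i.e. is bounded above and cofinite in $\Z_{<0}$ downstairs) has complement in $\Z$ exactly $\{-1 - (\transp\mu_j - j) : j \in \N\} = \{j - 1 - \transp\mu_j : j \in \N\}$. This is the statement that the set $D_\mu := \{\mu_i - i : i \geq 1\}$ and the set $D'_\mu := \{ j - 1 - \transp\mu_j : j \geq 1\}$ partition $\Z$. I would either cite this as a well-known fact (it is, e.g., the Frobenius/beta-number correspondence, or Macdonald I.1, Ex.~8 type material) or prove it in one line: an integer $m$ lies in $D_\mu$ iff there is a box in row $i = $ (the appropriate index) that is ``on the boundary path'' of $\mu$ reading one way, and $m$ lies in $D'_\mu$ iff it is on the boundary path reading the other way; the lattice path along the boundary of the Young diagram of $\mu$ takes each of the two step directions exactly once at each integer height, which is precisely the partition of $\Z$.

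Granting that partition, the lemma follows by translating indices: $\mult_{R_\mu}(x) = 1$ iff $x - 1 \in D_\mu$, and $\mult_{R_{\transp\mu}}(1-x) = 1$ iff $(1-x) - 1 = -x \in D_{\transp\mu}$. But $D_{\transp\mu} = \{\transp\mu_j - j : j \geq 1\}$, and applying the partition fact with $\mu$ replaced by $\transp\mu$ (using $\transp{(\transp\mu)} = \mu$) gives that $D_{\transp\mu}$ and $\{i - 1 - \mu_i : i \geq 1\} = -1 - D_\mu$ partition $\Z$. Thus $-x \in D_{\transp\mu}$ iff $-x \notin -1 - D_\mu$ iff $x - 1 \notin D_\mu$, which is exactly the complement of the condition $\mult_{R_\mu}(x) = 1$. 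Adding the two multiplicities therefore always yields $1$.

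The main obstacle is purely expository rather than mathematical: one must be careful and consistent with the index shifts (the ``$i$ versus $i-1$'' and the ``$x$ versus $1-x$'' bookkeeping), since an off-by-one error anywhere breaks the identity. I expect no genuine difficulty beyond setting up the conjugate-partition beta-set bijection cleanly; since the paper attributes this lemma to \cite[\S4.6]{waldspurger} and states it without proof in the excerpt, a short self-contained argument along the above lines — or simply a reference — suffices.
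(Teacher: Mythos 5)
Your proposal is correct. Note that the paper offers no argument of its own here: Lemma \ref{lemma:mult} is quoted from \cite[\S4.6]{waldspurger} and explicitly stated without proof, so there is nothing internal to compare against. Your reduction is the natural one and the bookkeeping checks out: since $(R_\mu)_i=\mu_i-i+1$ and $i\mapsto\mu_i-i$ is strictly decreasing, $\mult_{R_\mu}(x)\in\{0,1\}$ with $\mult_{R_\mu}(x)=1$ iff $x-1\in D_\mu:=\{\mu_i-i\colon i\geq1\}$, and likewise $\mult_{R_{\transp\mu}}(1-x)=1$ iff $-x\in D_{\transp\mu}$; the classical complementarity $D_\mu\sqcup\{j-1-\transp\mu_j\colon j\geq1\}=\Z$ (the beta-set/Maya-diagram, equivalently boundary-lattice-path, description of conjugation), applied either directly or to $\transp\mu$, then yields that exactly one of the two multiplicities equals $1$, which is the lemma. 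Either citing that complementarity as standard or including your one-line boundary-path justification makes the argument a complete, self-contained substitute for the external reference.
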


For $\mu \in \mathcal R$, let $\mu^2 = \mu \sqcup \mu$. 
Let $n \in \N$ and let $(\alpha,\beta) \in \mathcal P_2(n)$. As representations of $W(B_n)$, we have $\sgn\otimes\rho_{(\alpha,\beta)} = \rho_{(\transp\beta,\transp\alpha)}$, where $\sgn$ is the sign representation of $W(B_n)$. 
Let $i\in\{1,2/c_{(\alpha,\beta)}\}$ and note that $c_{(\alpha,\beta)} = c_{(\transp\beta,\transp\alpha)}$. As representations of $W(D_n)$, we have $\sgn\otimes\rho_{(\alpha,\beta)_i} = \rho_{(\transp\beta,\transp\alpha)_i}$, where $\sgn$ is the sign representation of $W(D_n)$. 
Let $(\lambda,[\eps]) \in \PPort(N)$, $k = k(\lambda,[\eps])$. If $(\alpha,\beta))_k = \Phi_N(\lambda,[\eps])$, then let $(\transps\lambda,\transps\eps) = \Phi_N^{-1}((\transp\beta,\transp\alpha)_k)$.

\begin{lemma}\label{lemma:transp}
We have
\begin{align}
2\Lambda_{k/2,-k/2;2}(\alpha,\beta) = \transp(\transps\lambda) + ([0,-\infty[_1)^2.
\end{align}
\end{lemma}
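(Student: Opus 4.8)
The plan is to reduce the identity to a bookkeeping statement about $\beta$-sets and then to invoke the transpose--complement relation of \Cref{lemma:mult}. The starting point is the translation, implicit in \S\ref{sec:gsc}, between the two kinds of symbols: for any $(\mu,[\eta])\in\PPort(M)$ with $\Phi_M(\mu,[\eta])=(\gamma,\delta)_j$, unwinding the definition of $S_{\mu,[\eta]}$ together with \Cref{thm:gscSO} gives the equality of multisets $p_{\mu,\eta}=A_{\mu,\eta}\sqcup B_{\mu,\eta}=(\gamma+[j,-\infty[_2)\sqcup(\delta+[-j,-\infty[_2)=\Lambda_{j,-j;2}(\gamma,\delta)$; moreover, writing the $\beta$-set $R_\mu:=\mu+[0,-\infty[_1$ and splitting its terms by parity as $R_\mu=(2z_1,2z_2,\dots)\sqcup(2z_1'-1,2z_2'-1,\dots)$, one also has $p_{\mu,\eta}=(z'+[0,-\infty[_1)\sqcup(z+[0,-\infty[_1)$. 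Hence, for a fixed defect $j$, the data $p_{\mu,\eta}$, the bipartition $(\gamma,\delta)$, and the parity-split of $R_\mu$ determine one another. I would apply this both to $(\lambda,[\eps])$ and to $(\transps\lambda,[\transps\eps])$, the second instance describing $R_{\transps\lambda}$, hence $z$ and $z'$, in terms of $(\transp\beta,\transp\alpha)$.

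Next I bring in the transpose. By \Cref{lemma:mult} the $\beta$-sets of $\transps\lambda$ and of $\transp(\transps\lambda)$ are exchanged by the reflection $x\mapsto 1-x$ applied to complements, i.e.\ $\mult_{R_{\transps\lambda}}(x)+\mult_{R_{\transp(\transps\lambda)}}(1-x)=1$ for all $x$; pushing the description of $R_{\transps\lambda}$ from the previous step through this reflection produces $R_{\transp(\transps\lambda)}$, hence $\transp(\transps\lambda)$. For the right-hand side of the lemma I will also use the elementary fact that, for any partition $\nu$, the sequence $\nu+([0,-\infty[_1)^2$ is already decreasing and equals, as a multiset, $\big((\nu_1,\nu_3,\nu_5,\dots)+[0,-\infty[_1\big)\sqcup\big((\nu_2,\nu_4,\nu_6,\dots)+[0,-\infty[_1\big)$. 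Applied with $\nu=\transp(\transps\lambda)$, this rewrites the right-hand side as a union of two $\beta$-sets built directly from $\transp(\transps\lambda)$.

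Finally I expand the left-hand side $2\Lambda_{k/2,-k/2;2}(\alpha,\beta)$ directly from the definition of $\Lambda_{A,B;s}$ and compare. The transpose on $(\transp\alpha,\transp\beta)$ occurring in the definition of $\transps\lambda$ is undone by the reflection--complement move above, so that both sides should collapse to one and the same multiset, assembled from $R_\lambda$, the defect $k$, and the parity-split data; checking this common-form identity is the concluding step. The main obstacle --- and the point where the argument genuinely diverges from Waldspurger's $\Sp(2n)$ proof rather than transcribing it --- is the interplay of three operations that do not commute transparently: the parity-split of a $\beta$-set (which encodes the $A^\#/B^\#$ decomposition, hence the bipartition), the duplication $\mu\mapsto\mu^2=\mu\sqcup\mu$, and the reflection $x\mapsto 1-x$. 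The reflection does not preserve residues modulo $2$ uniformly, and the defect $k$ --- which for $\SO(N)$ may be any integer $\equiv N\bmod 2$, whereas its $\Sp(2n)$-analogue is forced to be even --- has to be threaded correctly through the interleaving. I would nail down this bookkeeping, including the precise normalisations inside $\Lambda$ and the dependence on the parity of $k$, by first checking the identity on small groups such as $\SO(3)$ and $\SO(4)$ before committing to the general argument.
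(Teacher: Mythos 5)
Your outline assembles the right ingredients, and they are essentially the ones the paper uses: the identification $p_{\transps\lambda,\transps\eps}=\Lambda_{k,-k;2}(\transp\beta,\transp\alpha)$, the transpose--complement relation of \Cref{lemma:mult}, and the splitting of $\nu+([0,-\infty[_1)^2$ into the two step-one $\beta$-sets built from the odd- and even-indexed parts of $\nu$ (this last fact is exactly the paper's concluding step, since those two subsequences of $\transp\mu$ are $\transp{\mu''}$ and $\transp{\mu'}$ for $\mu'=\lfloor\mu/2\rfloor$, $\mu''=\lceil\mu/2\rceil$, $\mu=\transps\lambda$). But the proposal stops short of a proof: the ``common-form identity'' you promise to verify at the end \emph{is} the lemma, and you explicitly defer the bookkeeping that carries all the mathematical content, to be sorted out later after experiments in $\SO(3)$ and $\SO(4)$. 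In the paper that content is: (i) applying \Cref{lemma:mult} not to $\transps\lambda$ as a whole but to the doubled partitions $2\alpha$ and $2\beta$ (via $\transp{(2\alpha)}=(\transp\alpha)^2$), which converts the left-hand side $(2\alpha+[k,-\infty[_1)\sqcup(2\beta+[-k,-\infty[_1)$ into the reflected complement of $((\transp\beta)^2+[k,-\infty[_1)\sqcup((\transp\alpha)^2+[-k,-\infty[_1)=p_{\mu,\tau}\sqcup(p_{\mu,\tau}-1)$; (ii) the case analysis over the odd parts and the pairs of equal even parts of $\mu$ (keeping track of the parity of their positions) proving $p_{\mu,\tau}\sqcup(p_{\mu,\tau}-1)=R_{\mu'}\sqcup R_{\mu''}$; and (iii) a second application of \Cref{lemma:mult}, now to $\mu'$ and $\mu''$, plus the interleaving check $\transp{\mu''_j}\geq\transp{\mu'_j}\geq\transp{\mu''_{j+1}}$, to reassemble the result as $\transp\mu+([0,-\infty[_1)^2$. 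None of (i)--(iii) is carried out, or even sketched, in your plan; it is precisely the ``interplay of parity-split, duplication and reflection'' that you name as the main obstacle.

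Moreover, the one reduction you do commit to --- applying \Cref{lemma:mult} once, to $\mu=\transps\lambda$ itself, to pass from $R_{\transps\lambda}$ to $R_{\transp{(\transps\lambda)}}$ --- does not by itself bridge the two sides: the left-hand side carries the factor $2$ (step-one staircases added to $2\alpha$, $2\beta$), while the right-hand side carries the doubled staircase $([0,-\infty[_1)^2$, so the transpose--complement relation must be invoked at the level of the doubled and halved partitions ($2\alpha,2\beta$ and $\mu',\mu''$), not of $\mu$; going from the single-staircase set $R_{\transp\mu}$ to $\transp\mu+([0,-\infty[_1)^2$ again requires exactly the halving/parity bookkeeping you postpone. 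A smaller imprecision: $p_{\mu,\eta}$ together with the defect does not determine the bipartition (distinct local systems on the same orbit can share the defect), though you only use the harmless direction, bipartition $\Rightarrow p \Rightarrow$ parity split of $R_\mu$. As it stands, the proposal is a sensible plan whose central step is missing, so it does not yet constitute a proof.
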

\begin{proof}
Let 
$U = 2\alpha + [k,-\infty[_1$, $V = 2\beta + [-k,-\infty[_1$, $X = (\transp\beta)^2 + [k,-\infty[_1$, $Y = (\transp\alpha)^2 + [-k,-\infty[_1$.
Then 
\begin{align}\label{eq:UV}
2\Lambda_{k/2,-k/2;2}(\alpha,\beta) = U \sqcup V.	
\end{align}
For $x \in \Z$, we have $\mult_U(x) = \mult_{R_{2\alpha}}(x-k)$ and $\mult_Y(x) = \mult_{R_{(\transp\alpha)^2}}(x+k)$. Hence by \Cref{lemma:mult}, we have
\begin{align}\label{eq:multUY}
\mult_U(x) + \mult_Y(1-x) = 1,
\end{align}
and similarly
\begin{align}\label{eq:multVX}
\mult_V(x) + \mult_X(1-x) = 1.
\end{align}
Write $(\mu,\tau) = (\transps\lambda,\transps\eps)$. For $\nu \in \mathcal R$, write $\nu - 1 = (\nu_1-1,\nu_2-1\dots)$. We have
\begin{align}
p_{\mu,\tau} = \Lambda_{k,-k;2}(\transp\beta,\transp\alpha) = (\transp\beta + [k,-\infty[_2) \sqcup (\transp\alpha + [-k,-\infty[_2).
\end{align}
Note that $X \sqcup Y = p_{\mu,\tau} \sqcup (p_{\mu,\tau}-1)$, so \eqref{eq:UV}, \eqref{eq:multUY} and, \eqref{eq:multVX} give
\begin{align}\label{eq:mult2}
\mult_{2\Lambda_{k/2,-k/2;2}(\alpha,\beta)}(x) + \mult_{p_{\mu,\tau} \sqcup (p_{\mu,\tau}-1)}(1-x) = 2.
\end{align}
Define $\mu',\mu'' \in \mathcal R$ by $\mu'_j = \lfloor \mu_j/2 \rfloor$ and $\mu''_j = \lceil \mu_j/2 \rceil$ for all $j \in \N$. Then $\mu = \mu' + \mu''$. We show that for all $x \in \Z$, we have
\begin{align}\label{eq:pR}
p_{\mu,\tau} \sqcup (p_{\mu,\tau}-1) = R_{\mu'} \sqcup R_{\mu''}.
\end{align}
Let $i \in \N$. Suppose $\mu_i$ is odd. If $i = 2j+1$ is odd, then
\begin{align}
 (A_\mu^\#)_{j+1} = \frac{\mu_i + (1 - i) + 1}{2} + 1 - \frac{i+1}{2} = \frac{\mu_i+1}{2} + 1 - i = (R_{\mu''})_i
\end{align}
and so $(A_\mu^\#)_{j} - 1 = (R_{\mu'})_i$. If $i = 2j$ is even, then 
\begin{align}
 (B_\mu^\#)_{j} = \frac{\mu_i + 1 - i}{2} + 1 - \frac{i}{2} = \frac{\mu_i+1}{2} + 1 - i = (R_{\mu''})_i
\end{align}
and so $(B_\mu^\#)_{j} - 1 = (R_{\mu'})_i$.
Since $\mu$ is orthogonal, we can partition the set consisting of the $i \in \N$ for which $\mu_i$ is even into pairs such that each pair contains consecutive integers $i,i+1$ and such that $\mu_i = \mu_{i+1}$. For pairs of the form $(2j-1,2j)$, $S_{2j}(\lambda)$ is even and for pairs of the form $(2j,2j+1)$, $S_{2j+1}(\lambda)$ is odd.

Consider a pair $(i,i+1)$ with $i = 2j-1$, $i + 1 = 2j$ and $\mu_i = \mu_{i+1}$ even. Then
\begin{align}
(A_\mu^\#)_j = (B_\mu^\#)_j = \frac{\mu_{i} + 1 - i}{2} + 1 - j = \frac{\mu_{i}}{2} + 1  - i = (R_{\mu'})_{i} &= (R_{\mu''})_{i},
\\
(A_\mu^\#)_j - 1 = (B_\mu^\#)_j - 1 = (R_{\mu'})_{i+1} &= (R_{\mu''})_{i+1}.
\end{align}
Consider a pair $(i,i+1)$ with $i = 2j$, $i + 1 = 2j+1$ and $\mu_i = \mu_{i+1}$ even. Then 
\begin{align}
(A_\mu^\#)_{j+1} = (B_\mu^\#)_j = \frac{\mu_{i} + (1-i) + 1}{2} + 1 - j  =  \frac{\mu_{i+1}}{2} + 2  - i = (R_{\mu'})_{i+1} &= (R_{\mu''})_{i+1},
\\
(A_\mu^\#)_{j+1} - 1 = (B_\mu^\#)_j - 1 = (R_{\mu'})_{i} &= (R_{\mu''})_{i}.
\end{align}
We conclude that \eqref{eq:pR} holds.

We apply \Cref{lemma:mult} to $\mu'$ and $\mu''$, which together with \eqref{eq:mult2} and \eqref{eq:pR} gives
$\mult_{2\Lambda_{k/2,-k/2;2}(\alpha,\beta)}(x) = \mult_{R_{\transp\mu'}}(x) + \mult_{R_{\transp\mu''}}(x)$ for all $x \in \Z$, hence
\begin{align}\label{eq:LambdaRR}
2\Lambda_{k/2,-k/2;2}(\alpha,\beta) = R_{\transp\mu'} \sqcup R_{\transp\mu''}.
\end{align}
We show that $\transp \mu''_j \geq \transp \mu'_j \geq \transp \mu''_{j+1}$ for all $j \in \N$.
Let $h = \transp \mu_j'$. Then $\mu_h' \geq j$, and since $\mu_h'' \geq \mu_h'$, we have $\transp\mu_j'' \geq h$. 
Now let $h = \transp \mu_{j+1}''$. Then $\mu_h'' \geq j + 1$, so $\mu_h' \geq \mu_h'' - 1 \geq j$ and so $\transp\mu_j' \geq h$, as desired.

Since $(\transp\mu'\sqcup\transp\mu'') = \transp(\mu'+\mu'') = \transp\mu$, we now have
\begin{align}
R_{\transp\mu'}\sqcup R_{\transp\mu''} 
= 
(\transp\mu_1'',\transp\mu_1',\transp\mu_2''-1,\transp\mu_2'-1,\dots)
=
(\transp\mu'\sqcup\transp\mu'') + ([0,-\infty[_1)^2
=
\transp\mu + ([0,-\infty[_1)^2,
\end{align}
which together with \eqref{eq:LambdaRR} finishes the proof.
\end{proof} 

For the following, note that for $(\lambda,[\eps]) \in \PPort(N)$, it holds that $\slambdamin$ is degenerate if and only if $\lambdamin$ is degenerate.

\begin{theorem}\label{thm:min}
Let $(\lambda,[\eps]) \in \PPort(N)$ with $\lambda$ only consisting of odd parts. Then there exists a unique $(\lambdamin,\epsmin) \in \PPort(N)$ such that $\slambdamin$ is non-degenerate and
\begin{enumerate}
\item $\mult(C_\lambda,\E_{[\eps]};C_{\slambdamin},\E_{[\sepsmin]}) = 1$,
\item For all $(\lambda',[\eps']) \in \PPort(N)$ with $\mult(C_\lambda,\mathcal E_{[\eps]};C_{\slambda'}^+,\E_{[\seps']}^+) = \mult(C_\lambda,\mathcal E_{[\eps]};C_{\slambda'}^-,\E_{[\seps']}^-) \neq 0$, we have $\lambdamin < \lambda'$ or $(\lambda',[\eps']) = (\lambdamin,[\epsmin])$.
\end{enumerate}
Furthermore, we have $(\lambda^{\text{max}},[\epsmax]) = (\slambdamin,[\sepsmin])$.
\end{theorem}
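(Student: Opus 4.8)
The plan is to derive \Cref{thm:min} from \Cref{thm:max} via the sign-twist involution $(\mu,[\tau])\mapsto(\transps{\mu},\transps{\tau})$ of $\PPort(N)$, which under $\Phi_N$ carries $(\gamma,\delta)_{k'}$ to $(\transp{\delta},\transp{\gamma})_{k'}$. First I would fix $k=k(\lambda,[\eps])$ and $(\alpha,\beta)_k=\Phi_N(\lambda,[\eps])$, write $<$ for $<_{\alpha,\beta,k}$, and recall from \Cref{rem:symbolodd} that $(\alpha,\beta)\in H((N-k^2)/2,k)$; as in the proof of \Cref{thm:max}, $P_{k,-k;2}(\alpha,\beta,<)=\{(\alphamax,\betamax)\}$, $\Phi_N^{-1}(\alphamax,\betamax)=(\lambdamax,[\epsmax])$, and $\alphamax\neq\betamax$. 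Then I would \emph{define} $(\lambdamin,[\epsmin])$ by $\Phi_N(\lambdamin,[\epsmin])=(\transp{\betamax},\transp{\alphamax})_k$. Since the sign-twist involution squares to the identity, applying it to $(\lambdamin,[\epsmin])$ recovers $(\alphamax,\betamax)_k$, so $(\slambdamin,[\sepsmin])=(\lambdamax,[\epsmax])$; this is the final (`furthermore') assertion, and it shows $\slambdamin$ is non-degenerate because $\lambdamax$ is. Property (1) is then immediate, since $\mult(C_\lambda,\E_{[\eps]};C_{\slambdamin},\E_{[\sepsmin]})=\mult(C_\lambda,\E_{[\eps]};C_{\lambdamax},\E_{[\epsmax]})=1$ by \Cref{thm:max}. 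Uniqueness is formal: if $(\lambda_1,[\eps_1])$ also satisfies (1)--(2) with $\transps{\lambda_1}$ non-degenerate, then applying property (2) of $(\lambda_1,[\eps_1])$ to the pair $(\lambdamin,[\epsmin])$ and property (2) of $(\lambdamin,[\epsmin])$ to the pair $(\lambda_1,[\eps_1])$ — the multiplicity required to be nonzero in each case being exactly property (1) of the other pair — forces $\lambda_1<\lambdamin$ and $\lambdamin<\lambda_1$ unless the two pairs coincide.

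The real content is property (2) for this $(\lambdamin,[\epsmin])$. Let $(\lambda',[\eps'])\in\PPort(N)$ with $\mult(C_\lambda,\E_{[\eps]};C_{\slambda'}^\pm,\E_{[\seps']}^\pm)\neq0$, and set $(\mu,[\tau])=(\slambda',[\seps'])$, so that $C_{\slambda'}=C_\mu$, $\E_{[\seps']}=\E_{[\tau]}$, $\transps{\mu}=\lambda'$, and the hypothesis becomes $\mult(C_\lambda,\E_{[\eps]};C_\mu^\pm,\E_{[\tau]}^\pm)\neq0$; in particular $k(\mu,[\tau])=k$. By \Cref{thm:max}, either $(\mu,[\tau])=(\lambdamax,[\epsmax])$, in which case $(\lambda',[\eps'])=(\lambdamin,[\epsmin])$, or $\mu<\lambdamax$, and in the latter case it remains to prove $\lambdamin=\slambdamax<\transps{\mu}=\lambda'$. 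So the theorem comes down to the combinatorial statement that the sign twist reverses the closure order: \emph{if $(\mu,[\tau]),(\mu',[\tau'])\in\PPort(N)$ satisfy $k(\mu,[\tau])=k(\mu',[\tau'])$ and $\mu<\mu'$, then $\transps{\mu'}<\transps{\mu}$.} Moreover, in our situation $\mu'=\lambdamax$ arises from the unique element of $P_{k,-k;2}(\alpha,\beta,<)$, and $\mu$ ranges only over classes with $\mult(C_\lambda,\E_{[\eps]};C_\mu^\pm,\E_{[\tau]}^\pm)\neq0$, so by \Cref{prop:multshoji} and \Cref{prop:mult} the pair $\Phi_N(\mu,[\tau])$ — or, when $k=0$, its swap — lies in $Q_s(\alpha,\beta,<)$, yielding $\Lambda_{A,B;s}(\Phi_N(\mu,[\tau]))\leq p_{A,B;s}(\alpha,\beta,<)$ for all $A,B$ and all $s>0$.

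To prove the order-reversal statement I would first use \Cref{lemma:order} to replace all partitions in sight by their $p$-sequences, and then apply \Cref{lemma:transp}, which gives $\transp(\transps{\mu})+([0,-\infty[_1)^2=2\Lambda_{k/2,-k/2;2}(\gamma,\delta)$ for $(\gamma,\delta)_k=\Phi_N(\mu,[\tau])$, and similarly for $\mu'$. Since transposition of partitions is an order-reversing involution for the dominance order — itself a consequence of \Cref{lemma:mult} — this reduces the claim to a comparison of the scaled symbols $\Lambda_{k/2,-k/2;2}$; what remains is to reconcile these with the symbols $p_\mu=\Lambda_{k,-k;2}(\gamma,\delta)$ recording $\mu<\mu'$, using \Cref{lemma:PP} to identify the relevant $P$-sets (hence the extremal values $p_{A,B;s}(\alpha,\beta,<)$) across the scales $2$, $1$ and $1/2$, together with the full family of $Q_s$-inequalities from the previous paragraph. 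I expect this last reconciliation — keeping the several symbol normalisations ($\Lambda_{k,-k;2}$ versus $\Lambda_{k/2,-k/2;2}$ and the scale-$1/2$ version) aligned through the extra $([0,-\infty[_1)^2$ shift in \Cref{lemma:transp}, and checking that the resulting comparisons of partial sums $S_c$ all point the right way — to be the main obstacle, and the point where the $\SO(N)$ argument diverges most from Waldspurger's treatment of $\Sp(2n)$ (\cite[\S4.6]{waldspurger}), the divergence stemming from the difference in symbol combinatorics. Granting the order-reversal statement, \Cref{thm:min} then follows from the reductions above together with the identity $(\lambdamax,[\epsmax])=(\slambdamin,[\sepsmin])$ already recorded.
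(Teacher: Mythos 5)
Your construction of $(\lambdamin,[\epsmin])$ as $\Phi_N^{-1}(\transp{\betamax},\transp{\alphamax})_k$, the ``furthermore'' identity $(\slambdamin,[\sepsmin])=(\lambdamax,[\epsmax])$, property (1), and the formal uniqueness argument are all correct and agree with the paper. The gap is in property (2), which you yourself leave open: you reduce it to the general claim that the sign twist reverses the closure order on \emph{all} pairs in $\PPort(N)$ with the same $k$, and then defer the proof of that claim as an expected ``reconciliation'' of the scales. That claim is both stronger than what is needed and not available from the tools you cite: after \Cref{lemma:order} and \Cref{lemma:transp} it amounts to the assertion that dominance of $\Lambda_{k,-k;2}$-symbols implies dominance of $\Lambda_{k/2,-k/2;1/2}$-symbols for two \emph{arbitrary} bipartitions, and nothing in the paper delivers this (\Cref{lemma:PP} is a statement about the fixed base pair $(\alpha,\beta)\in H(n,k)$, not about the two partitions being compared). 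Consequently, routing through \Cref{thm:max} to record only ``$\mu<\lambdamax$'' and then trying to reverse that single scale-$2$ inequality is the wrong decomposition; the scale-$2$ information is not what should be carried forward.

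The paper closes the step with no new combinatorial input, by running the extremality argument directly at $(A,B;s)=(k/2,-k/2;1/2)$ against the competitors themselves. If $\mult(C_\lambda,\E_{[\eps]};C_{\slambda'}^{\pm},\E_{[\seps']}^{\pm})\neq0$ and $(\gamma,\delta)_k=\Phi_N(\slambda',[\seps'])$, then \Cref{prop:multshoji} gives $\mult(\alpha,\beta,<;\gamma,\delta)\neq0$ (or the swapped pair when $k=0$), hence by \Cref{prop:mult} the pair lies in $Q_s(\alpha,\beta,<)$, so $\Lambda_{k/2,-k/2;1/2}(\gamma,\delta)\leq p_{k/2,-k/2;1/2}(\alpha,\beta,<)$, with equality only if it lies in $P_{k/2,-k/2;1/2}(\alpha,\beta,<)$ by \Cref{lem:max}(b); and \Cref{lemma:PP} identifies $P_{k/2,-k/2;1/2}(\alpha,\beta,<)=P_{k,-k;2}(\alpha,\beta,<)=\{(\alphamax,\betamax)\}$, so the extremal value is exactly $\Lambda_{k/2,-k/2;1/2}(\alphamax,\betamax)$. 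Applying \Cref{lemma:transp} to both $(\slambda',[\seps'])$ and $(\lambdamax,[\epsmax])$ (note its right-hand side is, per its proof, $2\Lambda_{k/2,-k/2;1/2}$), and using that ordinary transposition of partitions reverses dominance, this single inequality translates into $\lambdamin=\slambdamax<\lambda'$ for every competitor other than $(\lambdamin,[\epsmin])$, which is property (2). The point you are missing is that one only ever compares a competitor against the extremal element $p_{k/2,-k/2;1/2}(\alpha,\beta,<)$ — never two arbitrary comparable supports — so no general order-reversal statement for the sign twist is required, and the ingredients you already listed suffice once assembled this way.
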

\begin{proof}
Let $k = k(\lambda,[\eps])$. Let $(\lambda',[\eps']),(\lambda'',\eps'') \in \PPort(N)$ such that $k(\lambda',[\eps']) = k(\lambda'',\eps'') = k$. Let $(\alpha',\beta')_k = \Phi_N(\lambda',[\eps'])$ and $(\alpha'',\beta'')_k = \Phi_N(\lambda'',\eps'')$. 

The theorem is equivalent to the following statement:
there exists unique a $(\underline\lambda,\underline\eps) \in \PPort(N)$ such that $\bar\lambda$ is non-degenerate and 
\begin{enumerate}[(a)]
\item\label{reforma} $\mult(C_\lambda,\E_{[\eps]};C_{\underline\lambda},\E_{\underline\eps}) = 1$,
\item\label{reformb} For all $(\lambda',[\eps']) \in \PPort(N)$ with $\mult(C_\lambda,\E_{[\eps]};C_{\lambda'}^\pm,\E_{[\eps']}^\pm) \neq 0$, we have $\transps{\underline{\lambda}} < \transps\lambda'$ or $(\lambda',[\eps']) = (\underline\lambda,\underline\eps)$,
\end{enumerate}
and furthermore, we have $(\underline\lambda,\underline\eps) = (\lambda^{\text{max}},[\epsmax])$

Since transposition is an order reversing operation on $\mathcal P$, it follows from \Cref{lemma:transp} that
\begin{enumerate}[(I)]
\item\label{fact:order} $\transps\lambda' \leq \transps\lambda''$ if and only if $\Lambda_{k/2,-k/2;1/2}(\alpha'',\beta'') \leq \Lambda_{k/2,-k/2;1/2}(\alpha',\beta')$. 
\end{enumerate}
Denote $<_{\alpha,\beta,k}$ by $<$. By \Cref{lemma:PP}, we have
\begin{align}
P_{k/2,-k/2;1/2}(\alpha,\beta,<)
=
P_{k,-k;2}(\alpha,\beta,<) 
= 
\{(\alphamax,\betamax)\}
\end{align}
and we showed in the proof of \Cref{thm:max} that $(\alphamax,\betamax)_k = \Phi_N(\lambdamax,[\epsmax])$. 
Thus by \Cref{prop:mult} (with $A = -B = k/2$, $s=1/2$), $(\alphamax,\betamax)$ is the unique element $(\underline\alpha,\underline\beta)\in\mathcal P \times \mathcal P$ such that
\begin{enumerate}[(A)]
\item\label{conditiona} $\mult(\alpha,\beta,<;\underline\alpha,\underline\beta) = 1$;
\item\label{conditionb} For each $(\alpha',\beta') \in \mathcal P \times \mathcal P$ such that $\mult(\alpha,\beta,<;\alpha',\beta') \neq 0$, we either have $\Lambda_{k/2,-k/2;1/2}(\alpha',\beta') \leq \Lambda_{k/2,-k/2;1/2}(\underline\alpha,\underline\beta)$ or $(\alpha',\beta')_k = (\underline\alpha,\underline\beta)_k$ (this is an equality of ordered (resp. unordered) pairs if $k>0$ (resp. $k = 0$).
\end{enumerate}
Now $(\underline \lambda, \underline \eps) = \Phi_N^{-1}(\underline\alpha,\underline\beta) = (\lambdamax,[\epsmax])$ satisfies \ref{reforma} by \Cref{thm:max}, 
and \ref{reformb} by \ref{conditionb}, \Cref{prop:multshoji} and \ref{fact:order}. 
Uniqueness follows easily from the fact that $P_{k/2,-k/2;1/2}(\alpha,\beta,<)$ has a unique element.
Thus we have shown that $(\lambdamax,[\epsmax])$ is the unique element of $\PPort(N)$ that satisfies \ref{reforma} and \ref{reformb}.
\end{proof}

\subsection{Orthogonal partitions with even parts}
Let $(\lambda,[\eps]) \in \PPort(N)$, $k=k(\lambda,[\eps])$ and $(\alpha,\beta)_k) = \Phi_N(\lambda,[\eps])$
In \Cref{thm:max}, we assumed that $\lambda \in \PPort(N)$ only has odd parts. 
Note that $\lambda$ only has odd parts if and only if $p_{\lambda,\eps} \in H_S(n,k)$, where and $n = (N-k^2)/2$. 
If $(\alpha,\beta) \notin H(n,k)$ then 
we cannot define $<_{\alpha,\beta,k}$.
However, consider the following order with a weaker condition: for $m_0,m_1$ as above, define $(m_0,m_1,\tilde<)$ such that $(i,0) \tilde< (j,1)$ if $\alpha_i + k + 2 -2i > \beta_j - k + 2 - 2j$. 
If we mimic the proof of \Cref{lemma:QR} for $(\alpha,\beta)$, then \eqref{eq:wminus} is no longer necessarily true. As such, we cannot show that $Q_{(\alpha,\beta)} = R_{(\alpha,\beta)}$, which was a crucial part in the proof of \Cref{thm:max}, since it allowed the use of \Cref{lemma:multshoji}.

In fact, there exists an orthogonal partition $\lambda \in \mathcal P$ with even parts for which $Q_{(\alpha,\beta)} \neq R_{(\alpha,\beta)}$. Although a tedious exercise, one can show that for $\lambda = (4 \, 4 \, 1) \in \PPort(9)$ and any order $\tilde<$ defined above, we have $Q_{(\alpha,\beta)} \neq R_{(\alpha,\beta)}$.

For $(\lambda,[\eps]) \in \PPort(N)$ appearing in the usual Springer correspondence (i.e. $k(\lambda,[\eps]) \in \{0,1\}$) for which $\lambda$ has even parts, we shall study the Green functions via a different perspective using an induction theorem from Lusztig.

Let $G = \SO(N)$, $T$ a maximal torus and $B$ the Borel subgroup of $G$ associated to $T$. 
Let $u \subseteq G$ be a unipotent element and $\phi \in A(u)^\wedge$ an irreducible representation of the component group $A(u) = Z_G(u)/Z_G^\circ(u)$ of $u$ in $G$. Let $\mathcal B = G/B$ be the flag variety and $\mathcal B_u = \{B \in \mathcal B \colon u \in B\}$.
Let $W$ be the Weyl group of $G$.
We have the following classical result (cf. \eqref{eq:multgsc}).
\begin{proposition}
Let $\rho \in W^\wedge$ and suppose it corresponds to $(u,\phi)$ via the original Springer correspondence.
Let $H^\bullet(\mathcal B_u)^\phi$ be the $\phi$-isotypic component of the total cohomology $H^\bullet(\mathcal B_u)$ of $\mathcal B_u$. 
Then we have the following isomorphism as $W$-representations
\begin{align}
H^\bullet(\mathcal B_u)^\phi \cong \bigoplus_{\rho' \in W^\wedge} p_{\rho',\rho}(1)\rho',
\end{align}
where $p_{\rho',\rho}$ are the Green functions of $W$ as in \Cref{theorem:shoji} for $k=1$ if $N$ is odd, and $k = 0$ if $N$ is even. 
\end{proposition}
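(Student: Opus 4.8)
The plan is to obtain the statement as the specialisation of the general formalism of \eqref{eq:multgsc}, \Cref{theorem:shoji} and \Cref{theorem:lusztig} to the ordinary Springer correspondence, which is precisely the part of $\GSpr$ attached to the Levi subgroup $L = T$ carrying its (unique, trivial) cuspidal pair. First I would record that in the parametrisation of \Cref{thm:gscSO} this component is the one with $k = 1$ when $N = 2n+1$ is odd and $k = 0$ when $N = 2n$ is even, and that the associated relative Weyl group $N_G(T)/T$ is the full Weyl group $W$ of $G$ (of type $B_n$, resp. $D_n$). Writing $C$ for the class of $u$, $\mathcal E$ for the $G$-equivariant local system on $C$ matching $\phi$ under the bijection with $A(u)^\wedge$, and $(C',\mathcal E') = \GSpr^{-1}(\rho')$, we then have $\rho = \GSpr(C,\mathcal E)$. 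Since $\mathcal B_u$ admits a paving by affine spaces, $H^{\bullet}(\mathcal B_u) = \bigoplus_{m\geq 0}H^{2m}(\mathcal B_u)$, with its Springer action of $W$.

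Second, I would invoke the decomposition of the Springer sheaf (Borho--MacPherson; equivalently \cite[\S11]{lusztig1995cuspidal} and \cite[\S24]{lusztig1986character} applied to the trivial cuspidal datum): the `generalised flag manifold' appearing in \eqref{eq:multgsc} is here the ordinary flag variety $\mathcal B$, and passing to the stalk at $u$ and to the $\phi$-isotypic component turns the right-hand side of \eqref{eq:multgsc} into $H^{\bullet}(\mathcal B_u)^\phi$. Concretely,
\begin{align}
H^{\bullet}(\mathcal B_u)^\phi \;\cong\; \bar\rho(C,\mathcal E) \;=\; \bigoplus_{\rho' \in W^\wedge} \mult(C,\mathcal E;C',\mathcal E')\,\rho',
\end{align}
where the sum may be taken over all of $W^\wedge$ because, as recalled in \S\ref{subsec:mult}, $\mult(C,\mathcal E;C',\mathcal E')$ vanishes unless $(C',\mathcal E')$ also lies in the ordinary Springer correspondence, so no other pairs contribute.

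Third, I would translate the multiplicities into Green functions. As established in \S\ref{subsec:mult} via \cite[(24.8.2)]{lusztig1986character} and \Cref{theorem:lusztig} (with $k$ as above; for $N$ even this is exactly the admissible case $W_n = W(D_n)$, $k = 0$), one has $\mult(C,\mathcal E;C',\mathcal E') = \tilde p_{\rho',\rho}(1)$. Since $\tilde P = D^{-1}P$ with $D$ the diagonal matrix $D_{\rho',\rho'} = t^{a(\rho')}$, we get $\tilde p_{\rho',\rho}(t) = t^{-a(\rho')}p_{\rho',\rho}(t)$, hence $\tilde p_{\rho',\rho}(1) = p_{\rho',\rho}(1)$; feeding this into the displayed isomorphism yields the claim. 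The only delicate point is bookkeeping rather than mathematics: one must check that the normalisation of the Springer action and the indexing conventions line up so that the unipotent class of $u$ is matched with $\rho$ itself (and not $\rho\otimes\sgn$), and that the relative Weyl group of \Cref{theorem:shoji} is genuinely $W$ for the relevant value of $k$ --- both of which are already encoded in \Cref{thm:gscSO} and in the construction of $\GSpr$.
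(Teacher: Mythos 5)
Your proposal is correct, and it is essentially the argument the paper relies on: the paper states this proposition without proof as a classical result (cf.\ \eqref{eq:multgsc}), and your derivation — restricting the generalised Springer machinery to the cuspidal datum on $T$ (so $k=1$ for $N$ odd, $k=0$ with $W(D_{N/2})$ for $N$ even), identifying $H^\bullet(\mathcal B_u)^\phi$ with $\bar\rho(C,\mathcal E)$ via the Borho--MacPherson decomposition, using $\mult(C,\mathcal E;C',\mathcal E') = \tilde p_{\rho',\rho}(1)$ from \S\ref{subsec:mult}, and $\tilde p_{\rho',\rho}(1) = p_{\rho',\rho}(1)$ since $\tilde P = D^{-1}P$ — is exactly the chain of citations the paper has in mind. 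You also correctly handle the two points where one could slip: the vanishing of multiplicities for pairs with a different $k$, and the fact that the relevant cases ($W(B_n)$, $k=1$ and $W(D_n)$, $k=0$) are admissible in \Cref{theorem:lusztig}.
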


Let $L$ be a Levi subgroup attached to some standard parabolic of $G$ and let $W'$ be the Weyl group of $L$ (note that $W'$ is not a relative Weyl group of $G$). Then $W'$ can be canonically considered as a subgroup of $W$.
Suppose that $u \in L$ is unipotent and $\phi \in A^L(u)^\wedge$ where $A^L(u) = Z_L(u) / Z_L^\circ(u)$. Since $G= \SO(N)$, we can identify $A^L(u) = A(u)$. 
Let $\mathcal B' = L/B$ and $\mathcal B'_u = \{B' \in \mathcal B' \colon u \in B'\}$.
We have the following induction theorem.
It was first stated in \cite{alvis1982springer} without proof; a proof is given in \cite{lusztig2004induction} (see also \cite[Proposition 3.3.3]{reeder2001euler}. 
\begin{proposition}\label{prop:induction}
As $W$-representations, we have
\begin{align}
H^\bullet(\mathcal B_u)^\phi \cong \ind_{W'}^W H^\bullet(\mathcal B_u')^{\phi}.
\end{align}
\end{proposition}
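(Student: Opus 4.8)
The plan is to realise both sides of the isomorphism inside the Springer sheaf of $G$ and to compare them through the partial resolution attached to the parabolic $P = L U_P$, where $W' = W_L$. I would first recall the geometry of the Springer correspondence: writing $\widetilde{G} = \{(v,gB) : g^{-1}vg \in B\}$ and $\pi_G \colon \widetilde{G} \to \Guni$ for the first projection, a proper map with $\pi_G^{-1}(u) = \mathcal{B}_u$, the complex $K_G := R(\pi_G)_* \Ql$ carries the Springer action of $W$, and there is a canonical $W \times A(u)$-equivariant isomorphism $H^\bullet(\mathcal{B}_u) \cong \mathcal{H}^\bullet(K_G)_u$ for every $u \in \Guni$. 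The analogous construction for $L$ produces $\pi_L \colon \widetilde{L} \to L_{\mathrm{uni}}$, $K_L := R(\pi_L)_* \Ql$ with its $W'$-action, and $H^\bullet(\mathcal{B}'_u) \cong \mathcal{H}^\bullet(K_L)_u$ as $W' \times A^L(u)$-modules; since $G = \SO(N)$ we identify $A^L(u) = A(u)$.

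The heart of the argument is to factor $\pi_G$ as $\widetilde{G} \xrightarrow{q} \widehat{G}_P \xrightarrow{p} \Guni$, where $\widehat{G}_P := G \times^P P_{\mathrm{uni}}$, $p$ is the adjoint map $(g,v) \mapsto gvg^{-1}$, and $q \colon (v,gB) \mapsto (g, g^{-1}vg) \bmod P$ is, over a point lying above $v \in P_{\mathrm{uni}}$, the Springer map of $L$ for the image of $v$. Pushing forward, the identity $K_G \cong Rp_*\bigl(Rq_* \Ql\bigr)$ exhibits $K_G$ as the parabolic (Lusztig) induction of $K_L$ from $L$ to $G$, and the decisive point is that under this identification the Springer $W$-action on $K_G$ is the one built from the $W'$-action on $K_L$ by $\ind_{W'}^W$. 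This last assertion is precisely Lusztig's induction theorem \cite{lusztig2004induction} (see also \cite[Proposition 3.3.3]{reeder2001euler}); I would invoke it as a black box, and reproving it is the main obstacle, because the $W$-action on $K_G$ is defined not fibrewise but via monodromy over the regular semisimple locus of $G$ (equivalently via correspondences on the Steinberg variety), so identifying its restriction to the unipotent locus with an induced action is a genuinely global matter.

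Granting this, the conclusion follows by taking stalks at $u$. Since $u$ is unipotent in $L$, it lies in $P_{\mathrm{uni}}$, so proper base change gives $\mathcal{H}^\bullet(K_G)_u \cong H^\bullet\bigl(p^{-1}(u), (Rq_*\Ql)|_{p^{-1}(u)}\bigr)$ with $p^{-1}(u) \cong \mathcal{P}_u := \{gP : g^{-1}ug \in P\}$, whose distinguished point $eP$ contributes the stalk $\mathcal{H}^\bullet(K_L)_u \cong H^\bullet(\mathcal{B}'_u)$. The Leray spectral sequence of $p$ over $u$ degenerates for weight reasons, exactly as in the classical Springer situation, and combined with the matching of $W$- and $W'$-actions this yields an isomorphism of $W$-modules $\mathcal{H}^\bullet(K_G)_u \cong \ind_{W'}^W \mathcal{H}^\bullet(K_L)_u$, equivariant for $A(u) = A^L(u)$, where the component group acts on both sides only through the factor $\mathcal{H}^\bullet(K_L)_u$ of the induced module and commutes with $W$ and with $W'$. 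Taking $\phi$-isotypic components therefore commutes with $\ind_{W'}^W$, and so $H^\bullet(\mathcal{B}_u)^\phi \cong \ind_{W'}^W H^\bullet(\mathcal{B}'_u)^\phi$, as required. The remaining ingredients --- the factorisation $\pi_G = p \circ q$, proper base change, the degeneration of the Leray spectral sequence, and the behaviour of isotypic parts under induction --- are routine.
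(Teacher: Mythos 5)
Your argument ultimately rests on invoking Lusztig's induction theorem (\cite{lusztig2004induction}, cf. \cite[Proposition 3.3.3]{reeder2001euler}) as a black box, which is exactly what the paper does: it states the proposition without proof, noting it was first stated in \cite{alvis1982springer} and proved in \cite{lusztig2004induction}. So your proposal is correct and takes essentially the same route; the surrounding sheaf-theoretic sketch (factoring $\pi_G$ through $G \times^P P_{\mathrm{uni}}$, proper base change, isotypic components) is a reasonable outline of how that cited result is used, not an independent argument.
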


Let $(\lambda,[\eps]) \in \PPort(N)$ with $k:= k(\lambda,[\eps]) \in \{0,1\}$. 
Let $u \in G$ be a unipotent element parametrised by $\lambda$. Let $\epsodd = \eps$ and write $\lambda = \lambdaodd \sqcup \lambdaeven$ where $\lambdaodd$ (resp. $\lambdaeven$) consists of all the odd (resp. even) parts of $\lambda$. 
Write $\lambdaeven = (a_1,a_1,a_2,a_2,\dots,a_\ell,a_\ell)$ with $a_1 \geq a_2 \geq \dots \geq a_\ell$ and let $a = a_1 + \dots + a_\ell$.
Let $N' = t(\lambdaodd)$ and $n' = \lfloor N' / 2 \rfloor$.  
Let $L$ be a Levi subgroup of $G$ of a standard parabolic such that $u \in L$. 
Suppose $\phi \in A^L(u) = A(u)$ corresponds to $\eps$.
Then
\begin{align}
L \cong \SO(N') \oplus \GL(a_1) \oplus \GL(a_2) \oplus \dots \oplus \GL(a_\ell)
\end{align}
and $W' = W_{n'} \times \prod_{i=1}^\ell S_{a_i}$ where $W_{n'} = W(B_{n'})$ (resp. $W_{n'} = W(D_{n'})$) if $N'$ is odd (resp. even).
Note that $N' \equiv N \mod 2$, so $W_n'$ and $W$ are of the same type.
Let $u' \in L$ be the element parametrised by $\lambdaodd$.
By \Cref{prop:induction}, we have 
\begin{align}\label{eq:induction}
H^\bullet(\mathcal B_{u})^\phi \cong \ind_{W'}^W(H^\bullet(\mathcal B_{\uodd})^{\phi} \boxtimes \prod_{i=1}^\ell \triv_{S_{a_i}}),
\end{align}
where $\triv_{S_{a_i}}$ is the trivial representation of $S_{a_i}$. 

We can explicitly determine the subrepresentations of $H^\bullet(\mathcal B_{u})^\phi$
using the Littlewood-Richardson rule for type $B$ and $D$ Weyl groups, see \cite[\S6]{geck2000characters} for type $B$, \cite{taylor2015induced} for type $D$, or \cite{stembridgepractical} for an overview of type $B$ and $D$. We note that \cite{stembridgepractical} explicitly states the results that we will use.
Let $I_0$ be the set of $(\alpha,\beta) \in \mathcal P_2(n')$ such that $\rho_{(\alpha,\beta)}$ is a $W_{n'}$-subrepresentation of $H^\bullet(\mathcal B_{\uodd})^{\phi}$.
For $i=0,\dots,\ell$, let $n_i = n' + a_1 + \dots + a_i$.
We recursively define sets $I_i$ as follows. 
Suppose $i \in \{1,\dots,\ell\}$.
Let $I_i$ be the set of $(\gamma,\delta) \in \mathcal P_2(n_i)$ such that there exist a $(\tilde\gamma,\tilde\delta) \in I_{i-1}$ such that
\begin{align}\label{eq:pieri}
\tilde\gamma^t_j \leq \gamma^t_j \leq \tilde\gamma^t_j + 1
\quad \text{and} \quad
\tilde\delta^t_j \leq \delta^t_j \leq \tilde\delta^t_j + 1.
\end{align}
By Pieri's rule for type $B$ and $D$ (see \cite{stembridgepractical}), $I_i$ consists of precisely all $(\gamma,\delta)  \in \mathcal P_2(n_i)$ such that $\rho_{(\gamma,\delta)}$ is a $W_{n_i}$-subrepresentation of $\ind_{W_{n_{i-1}}\times S_{a_i}}^{W_{n_i}}(\rho_{(\tilde\gamma,\tilde\delta)} \boxtimes \triv_{S_{a_i}})$. 
Since induction and direct products are compatible, we see from \eqref{eq:induction} that $I_\ell$
consists of all irreducible constituents of $H^\bullet(\mathcal B_{u})^\phi$.

Let $(\lambda,[\eps]) \in \PPort(N)$ with $k:= k(\lambda,[\eps]) \in \{0,1\}$.
Let $((\lambdaodd)^{\text{max}},[(\epsodd)^{\text{max}}])$ be as in \Cref{thm:max} for $(\lambdaodd,[\epsodd])$.
Define $(\lambdamax,[\epsmax]) \in \PPort(N)$ such that for all $j \in \Z_{\geq2}$, we have
\begin{align}
\lambda^{\text{max}}_1 &= (\lambdaodd)^{\text{max}}_1 + \sum_{i\in\N} \lambda^{\text{even}}_i =  (\lambdaodd)^{\text{max}}_1 + 2a,
\\
\lambda^{\text{max}}_j &= (\lambdaodd)^{\text{max}}_j,
\end{align}
and such that $\epsmax = (\eps^{\text{odd}})^{\text{max}}$.
Note that $\lambdamax$ is degenerate.
\begin{theorem}\label{thm:maxeven}
Let $(\lambda,[\eps]) \in \PPort(N)$ with $k:= k(\lambda,[\eps]) \in \{0,1\}$. 
Then $(\lambdamax,[\epsmax])$ is the unique element of $\PPort(N)$ such that 
\begin{enumerate}
\item\label{thm:maxeven1} $\mult(C_\lambda,\E_{[\eps]};C_{\lambdamax},\E_{[\epsmax]}) = 1$,
\item\label{thm:maxeven2} For all $(\lambda',[\eps']) \in \PPort(N)$ with $\mult(C_\lambda,\E_{[\eps]}^\pm;C_{\lambda'},\E_{[\eps']}^\pm) \neq 0$, we have $\lambda' < \lambda^{\text{max}}$ or $(\lambda',[\eps']) = (\lambdamax,[\epsmax])$.
\end{enumerate}
\end{theorem}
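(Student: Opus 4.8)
The plan is to reduce the general case to the odd-part case handled in \Cref{thm:max} by means of the induction theorem \Cref{prop:induction} and the combinatorics of the sets $I_0, I_1, \dots, I_\ell$ constructed above. Write $n = (N - k^2)/2$ and $(\alpha, \beta)_k = \Phi_N(\lambda, [\eps])$; similarly for any $(\lambda', [\eps'])$ appearing we write $(\alpha', \beta')_k = \Phi_N(\lambda', [\eps'])$. The key translation step is: since the Green functions $p_{\rho', \rho}$ are the entries of the matrix $P$ in \Cref{theorem:shoji} (in the type $B$ case $k=1$) or via \Cref{prop:shojigreenD} (in the type $D$ case $k=0$), and since $p_{\rho', \rho}(1)$ is the multiplicity of $\rho'$ in $H^\bullet(\mathcal B_u)^\phi$ (when $k \in \{0,1\}$), the set of $(\lambda', [\eps'])$ with $\mult(C_\lambda, \E_{[\eps]}^\pm; C_{\lambda'}, \E_{[\eps']}^\pm) \neq 0$ is exactly the image under $\Phi_N^{-1}$ of $I_\ell$ (up to the $\theta$-identification in the type $D$ case). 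So statements \ref{thm:maxeven1} and \ref{thm:maxeven2} become purely combinatorial statements about $I_\ell$, namely: $(\alphamax, \betamax)$ (the bipartition of $n$ attached to $(\lambdamax, [\epsmax])$) lies in $I_\ell$ with ``multiplicity one'' in the appropriate sense, and every other element $(\gamma, \delta) \in I_\ell$ satisfies $\Phi_N^{-1}(\gamma,\delta)$-partition strictly below $\lambdamax$.

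The first main step is to identify the element of $I_\ell$ corresponding to $\lambdamax$. By \Cref{thm:max} applied to $(\lambdaodd, [\epsodd])$, the set $I_0$ (which consists of the constituents of $H^\bullet(\mathcal B_{\uodd})^\phi$) has a unique ``largest'' element $(\alphaodd{}^{\max}, \betaodd{}^{\max})$ in the sense of the partition order on $\Phi_N^{-1}$-images, and this largest element has multiplicity one. I would then track this element through the recursion defining $I_1, \dots, I_\ell$: at each stage $i$, adding a pair of boxes-row of length $a_i$ via Pieri's rule, the ``maximal'' way to add $a_i$ boxes to keep the partition as large as possible in dominance order is to add all $a_i$ boxes to the first row (equivalently, by \eqref{eq:pieri}, to increment $\tilde\gamma^t_1$ maximally). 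I expect that the element of $I_\ell$ obtained by always making this maximal choice is precisely $(\alphamax, \betamax)$, and that — because the trivial representation $\triv_{S_{a_i}}$ of $S_{a_i}$ corresponds to a single row — this maximal element occurs with multiplicity exactly one in each induction step, hence in $I_\ell$. This uses that $\lambda^{\max}_1 = (\lambdaodd)^{\max}_1 + 2a$ and $\lambda^{\max}_j = (\lambdaodd)^{\max}_j$ for $j \geq 2$, which is exactly what ``add everything to the top row'' gives after translating between the partition $\lambda$ and the symbol/bipartition data via the formulas in \S\ref{sec:gsc}.

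The second main step is the strict-maximality claim \ref{thm:maxeven2}. Here I would argue: any $(\gamma, \delta) \in I_\ell$ arises from some $(\tilde\gamma, \tilde\delta) \in I_0$ by $\ell$ successive Pieri additions. By \Cref{thm:max} for $(\lambdaodd, [\epsodd])$, the partition $\Phi_N^{-1}(\tilde\gamma,\tilde\delta)$ is $\leq (\lambdaodd)^{\max}$, with equality only for the distinguished element. Adding $a_i$ boxes via Pieri only increases each partial sum $S_c$ by a controlled amount, and the unique way to reach the maximal possible value of every partial sum $S_c$ simultaneously is the ``all boxes in the top row'' choice starting from $(\lambdaodd)^{\max}$. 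A monotonicity/telescoping argument on partial sums $S_c$ — using \Cref{lemma:order}-type comparisons between partitions and their symbols, plus the fact that the dominance order is preserved under the Pieri operations — should show $\Phi_N^{-1}(\gamma,\delta) \leq \lambdamax$ with equality iff $(\gamma,\delta) = (\alphamax,\betamax)$. Finally, uniqueness of $(\lambdamax,[\epsmax])$ satisfying \ref{thm:maxeven1}--\ref{thm:maxeven2} follows formally from this strict maximality together with the multiplicity-one statement, exactly as in the proof of \Cref{thm:max}.

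The main obstacle I anticipate is the bookkeeping in the second step: verifying that the dominance order on orthogonal partitions of $N$ interacts correctly with the Pieri additions on the bipartition/transpose side — in particular that ``maximal in dominance after adding $a$ boxes total, distributed over $\ell$ steps of sizes $a_1,\dots,a_\ell$'' is achieved only by concentrating all additions in the first row, and that this is compatible with the symbol combinatorics (the passage $\lambda \leftrightarrow p_{\lambda,\eps} \leftrightarrow (\alpha,\beta)$) used to define $\Phi_N$. One has to be careful that adding boxes to the partition $\lambda$ corresponds to adding boxes to the \emph{transposes} $\tilde\gamma^t, \tilde\delta^t$ in \eqref{eq:pieri}, so the ``maximal'' choice on one side is the ``add to distinct columns'' choice on the other; getting these dualities aligned, and confirming the resulting $\lambdamax$ is exactly the degenerate partition described before the theorem statement, is where the real care is needed. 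The multiplicity-one claim in step one is comparatively routine once the identification is set up, since the trivial representation contributes multiplicity at most one in each Pieri step along the extremal branch.
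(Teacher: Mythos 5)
Your proposal follows essentially the same route as the paper's proof: reduce via the induction theorem and the sets $I_0,\dots,I_\ell$, identify the extremal constituent by adding all $a=a_1+\dots+a_\ell$ boxes to the first row of the appropriate component of the \Cref{thm:max}-maximal bipartition for $(\lambdaodd,[\epsodd])$, deduce multiplicity one from the type $B/D$ Pieri rules, and obtain strict maximality and the equality case (hence uniqueness) by bounding partial sums and converting to the partition order via \Cref{lemma:order}. The transpose bookkeeping you flag as the main obstacle largely dissolves in the paper's argument, which works directly with bipartitions and symbols: \eqref{eq:pieri} is only used to write any element of $I_\ell$ as $(\alpha'+x,\beta'+y)$ with $\sum_j(x_j+y_j)=a$, the extremal element is $(\mu+(a,0,\dots),\nu)$ or $(\mu,\nu+(a,0,\dots))$ according to $<_{\alpha,\beta,k}$, and the comparison is the chain $S_i(\bm\Lambda')\leq S_i(\Lambda')+S_i(z)\leq S_i(\Lambda)+a=S_i(\bm\Lambda)$ on symbols (so your parenthetical about incrementing $\tilde\gamma^t_1$ is a slip, but not one that affects the strategy).
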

\begin{proof}
Let $(\alpha,\beta)_k = \Phi_N(\lambdaodd,\epsodd)$ and let $(\mu,\nu)$ be the unique element of $P_{k,-k;2}(\alpha,\beta,<_{\alpha,\beta,k})$. Then $(\mu,\nu)_k = \Phi_N((\lambdaodd)^{\text{max}},(\epsodd)^{\text{max}})$, as shown in the proof of \Cref{thm:max}. 
If $(1,0) <_{\alpha,\beta,k} (1,1)$, let $(\bm\alpha,\bm\beta) = (\mu + (a,0,0,\dots),\mu)$
and if $(1,1) <_{\alpha,\beta,k} (1,0)$, let $(\bm\alpha,\bm\beta) = (\mu,\nu + (a,0,0,\dots))$.
By definition of the $I_i$, we have $(\bm\alpha,\bm\beta) \in I_\ell$.
Let 
$\Lambda = \Lambda_{k,-k;2}(\mu,\nu)$ and
$\bm\Lambda = \Lambda_{k,-k;2}(\bm\alpha,\bm\beta)$.
Let $(\bm\alpha',\bm\beta') \in I_\ell$.
Since \eqref{eq:pieri} holds for $i=1,\dots,\ell$, there exists an $(\alpha',\beta') \in I_0$ and $x \in \Z_{\geq0}^{t(\alpha)},y \in \Z_{\geq0}^{t(\beta)}$ such that
\begin{align}
\bm\alpha' = \alpha' + x
\quad
\bm\beta' = \beta' + y,
\quad
\sum_j (x_j + y_j) &= a.
\label{eq:gammadelta}
\end{align}
Clearly, if $(\alpha',\beta') \neq (\mu,\nu)$, then $(\bm\alpha',\bm\beta') \neq (\bm\alpha,\bm\beta)$, and by \Cref{thm:max}, the multiplicity of $\rho_{(\mu,\nu)}$ in $H^\bullet(\mathcal B_u')^{\phi'}$ is $1$. 
Furthermore, the multiplicity of $\rho_{(\bm\alpha,\bm\beta)}$ in 
\begin{align}
\ind_{W_{n'}\times \prod_{i=1}^\ell S_{a_i}}^{W_{n}}(\rho_{(\alpha,\beta)} \boxtimes \prod_{i=1}^\ell \triv_{S_{a_i}})
\end{align}
can be computed using results  from \cite[\S2.A, \S2.B]{stembridgepractical} for type $B$ and \cite[\S3.A, \S3.C]{stembridgepractical} for type $D$, and it follows that this multiplicity is $1$.
Thus part \ref{thm:maxeven1} of the theorem follows. 
Let $z$ be the sequence in $\mathcal R$ obtained by rearranging the terms of $x \sqcup y$.
Let
$\Lambda' = \Lambda_{k,-k;2}(\alpha',\beta')$
and
$\bm\Lambda' = \Lambda_{k,-k;2}(\bm\alpha',\bm\beta')$. 
Let $i=1,\dots,\ell$.
By \eqref{eq:gammadelta}, we have $\Lambda'_i + S_i(z) \geq \bm\Lambda_i'$ and $a_1+\dots+a_\ell \geq S_i(z)$. 
Furthermore, $\Lambda_i \geq \Lambda_i'$ by \Cref{lem:max}, so 
\begin{align}
\bm\Lambda_i 
= \Lambda_i + a_1 + \dots + a_\ell 
\geq \Lambda_i' + S_i(z) \geq \bm\Lambda'_i.
\end{align}
Hence $\bm\Lambda \geq \Lambda_{k,-k;2}(\bm\alpha',\bm\beta')$ for all $(\bm\alpha',\bm\beta') \in I_\ell$, so part \ref{thm:maxeven2} of the theorem follows from \Cref{lemma:order}.
To show that $(\lambdamax,[\epsmax])$ is unique, suppose that $\bm\Lambda = \bm\Lambda'$.
Then $\bm\Lambda_1' = \bm\Lambda_1 = \Lambda_1+a$. Since $\Lambda_1 \geq \Lambda'_1$ and $\bm\Lambda_1' \leq \Lambda'_1+a$, we have $\Lambda_1 = \Lambda_1'$. From this and \eqref{eq:gammadelta}, it follows that $\Lambda_i = \bm\Lambda_i = \bm\Lambda_i' = \Lambda_i'$ for all $i \in \N$, and so $\Lambda = \Lambda'$, hence $(\alpha',\beta') \in P_{k,-k;2}(\alpha,\beta,<_{\alpha,\beta,k})$ by \Cref{lem:max}. Since $P_{k,-k;2}(\alpha,\beta,<_{\alpha,\beta,k})$ has a unique element $(\mu,\nu)$, we have $(\alpha',\beta') = (\mu,\nu)$, and using that $\bm\Lambda_1' = \Lambda_1' + a$, we find that $(\bm\alpha,\bm\beta) = (\bm\alpha',\bm\beta')$. Thus $(\lambdamax,[\epsmax])$ is unique. 
\end{proof}

\begin{theorem}\label{thm:mineven}
Let $(\lambda,[\eps]) \in \PPort(N)$ with $k:= k(\lambda,[\eps]) \in \{0,1\}$ and consider the notation as above.
Then there exists a unique $(\lambdamin,[\epsmin]) \in \PPort(N)$ such that $\slambdamin$ is non-degenerate and
\begin{enumerate}
\item $\mult(C_\lambda,\E_{[\eps]};C_{\slambdamin},\E_{[\sepsmin]}) = 1$,
\item For all $(\lambda',[\eps']) \in \PPort(N)$ with $\mult(C_\lambda,\E_{[\eps]};C_{\lambda'}^\pm,\E_{[\eps']}^\pm) \neq 0$, we have $\lambdamin < \lambda'$ or $(\lambda',[\eps']) = (\slambdamin,{}^s[\epsmin])$.
\end{enumerate}
Furthermore, we have $(\lambdamax,[\epsmax]) = (\slambdamin,[\slambdamin])$.
\end{theorem}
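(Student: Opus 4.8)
The plan is to mirror the derivation of \Cref{thm:min} from \Cref{thm:max}, but with \Cref{thm:maxeven} playing the role of \Cref{thm:max}. First I would, exactly as in the proof of \Cref{thm:min}, replace the statement by an equivalent one phrased through the $\sgn$-twist: by \Cref{lemma:transp}, if $(\alpha',\beta')_k = \Phi_N(\lambda',[\eps'])$ then $\transps{\lambda'}$ is order-reversing in $\Lambda_{k/2,-k/2;1/2}(\alpha',\beta')$, so the assertion to be proved becomes that there is a unique $(\underline\lambda,\underline\eps) \in \PPort(N)$ with $\mult(C_\lambda,\E_{[\eps]};C_{\underline\lambda},\E_{[\underline\eps]}) = 1$ and $\transps{\underline\lambda} \le \transps{\lambda'}$ for every $(\lambda',[\eps'])$ with $\mult(C_\lambda,\E_{[\eps]};C_{\lambda'}^\pm,\E_{[\eps']}^\pm) \neq 0$, together with the identification $(\underline\lambda,\underline\eps) = (\lambdamax,[\epsmax])$. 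In view of \Cref{lemma:transp} this is the same as showing that $\Phi_N(\lambdamax,[\epsmax])$ is the unique constituent of $H^\bullet(\mathcal B_u)^\phi$ whose symbol $\Lambda_{k/2,-k/2;1/2}$ is maximal.

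Next I would recycle the set-up of \Cref{thm:maxeven}: write $\lambda = \lambdaodd \sqcup \lambdaeven$, apply the induction theorem \Cref{prop:induction} to obtain \eqref{eq:induction}, and use Pieri's rule for types $B$ and $D$ (via \cite{stembridgepractical}) to describe the set $I_\ell$ of constituents of $H^\bullet(\mathcal B_u)^\phi$ as the bipartitions of the form $(\alpha'+x,\beta'+y)$ with $(\alpha',\beta') \in I_0$ and $\sum_j(x_j+y_j) = a$. With $(\alpha,\beta)_k = \Phi_N(\lambdaodd,\epsodd)$, the proof of \Cref{thm:min} together with \Cref{lemma:PP} shows $P_{k/2,-k/2;1/2}(\alpha,\beta,<_{\alpha,\beta,k}) = P_{k,-k;2}(\alpha,\beta,<_{\alpha,\beta,k}) = \{(\mu,\nu)\}$, so $(\mu,\nu)$ is the unique element of $I_0$ with maximal $\Lambda_{k/2,-k/2;1/2}$; by \Cref{lem:max} it is also the $\Lambda_{k,-k;2}$-maximal element, i.e. the bipartition used in \Cref{thm:maxeven}. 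The remaining point is to show that the $\Lambda_{k/2,-k/2;1/2}$-maximiser over all of $I_\ell$ is obtained by putting all $a$ boxes on the single largest available part of $(\mu,\nu)$, hence is exactly the bipartition $(\bm\alpha,\bm\beta)$ parametrising $\lambdamax$. The multiplicity-$1$ claim and the degeneracy claim then follow as in the proof of \Cref{thm:maxeven} using \cite[\S2, \S3]{stembridgepractical}, uniqueness follows from the uniqueness of $(\mu,\nu)$, and $(\lambdamax,[\epsmax]) = (\slambdamin,[\sepsmin])$ is immediate from $(\underline\lambda,\underline\eps) = (\lambdamax,[\epsmax])$.

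I expect the last step — controlling $\Lambda_{k/2,-k/2;1/2}$ (equivalently $\transps{\lambda'}$) along Pieri moves — to be the main obstacle. In \Cref{thm:maxeven} the corresponding estimate for $\Lambda_{k,-k;2}$ was the short chain $\bm\Lambda_i = \Lambda_i + a \ge \Lambda_i' + S_i(z) \ge \bm\Lambda_i'$; for the $1/2$-scaled symbol one must instead argue with the transposed partitions, where adding a box to a column of $\bm\alpha$ or $\bm\beta$ becomes adding a box to a row of its transpose, so the inequality has to be re-derived in the reversed order, verifying that distributing the $a$ boxes over several parts never raises $\Lambda_{k/2,-k/2;1/2}$ above the value it takes when all $a$ boxes sit on the top part. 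At that point I would also double-check the bookkeeping relating $\transps{(\lambdamax)}$ to the partition whose $\sgn$-twist is $\lambdamax$, so that the degeneracy assertion in the statement is correctly matched.
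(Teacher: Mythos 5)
Your proposal is correct and follows essentially the same route as the paper: reduce via \Cref{lemma:transp} (exactly as in the proof of \Cref{thm:min}) to showing that the bipartition $(\bm\alpha,\bm\beta)$ from \Cref{thm:maxeven} is the unique constituent of $H^\bullet(\mathcal B_u)^\phi$ maximising $\Lambda_{k/2,-k/2;1/2}$, reuse the Levi-induction/Pieri description of $I_\ell$, identify $(\mu,\nu)$ as the $1/2$-scaled maximiser in $I_0$ via \Cref{lemma:PP}, and conclude by the arguments of \Cref{thm:min}. The only point where you diverge is the step you flag as the main obstacle: in the paper this is disposed of by observing that the partial-sum chain of \Cref{thm:maxeven} carries over verbatim with $\Lambda_{k/2,-k/2;1/2}$ in place of $\Lambda_{k,-k;2}$, because none of its three inputs depends on the scaling $s$ — the Pieri-type bound $S_i(\Lambda_{k/2,-k/2;1/2}(\bm\alpha',\bm\beta')) \leq S_i(\Lambda_{k/2,-k/2;1/2}(\alpha',\beta')) + S_i(z)$, the bound $S_i(z) \leq a$, and the domination $\Lambda_{k/2,-k/2;1/2}(\alpha',\beta') \leq \Lambda_{k/2,-k/2;1/2}(\mu,\nu)$, which holds since \Cref{prop:mult} gives membership in $Q_s(\alpha,\beta,<)$ for \emph{every} $s$ and \Cref{lemma:PP} gives $P_{k/2,-k/2;1/2}(\alpha,\beta,<) = P_{k,-k;2}(\alpha,\beta,<) = \{(\mu,\nu)\}$; so the re-derivation ``in the reversed order'' via transposed partitions that you anticipate is unnecessary (the transpose bookkeeping is already absorbed by \Cref{lemma:transp}), and the equality analysis and uniqueness transfer unchanged, yielding $(\lambdamin,[\epsmin]) = (\slambdamax,[\sepsmax])$, i.e. $(\lambdamax,[\epsmax]) = (\slambdamin,[\sepsmin])$.
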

\begin{proof}
Let $(\bm\alpha,\bm\beta),(\bm\alpha',\bm\beta')$ as in the proof of \Cref{thm:maxeven}.
By the same arguments as in the proof of \Cref{thm:maxeven}, we can show that $\Lambda_{k/2,-k/2;2}(\bm\alpha,\bm\beta) \geq \Lambda_{k/2,-k/2;2}(\bm\alpha',\bm\beta')$, with equality if and only if $(\bm\alpha,\bm\beta) = (\bm\alpha',\bm\beta')$. Hence
\begin{align}
P_{k/2,-k/2;1/2}(\alpha,\beta,<_{\alpha,\beta,k})
=
P_{k,-k;2}(\alpha,\beta,<_{\alpha,\beta,k}) 
= 
\{(\bm\alpha,\bm\beta)\}.
\end{align}
By the arguments in \Cref{thm:min}, it then follows that $(\lambdamin,[\epsmin]) = (\slambdamax,[\sepsmax])$ is the unique element satisfying the two conditions.
\end{proof}

\begin{remark}
By similar arguments and using analogous results for $\Sp(2n)$ from \cite{waldspurger}, we similarly obtain the analogues of \Cref{thm:maxeven} and \ref{thm:mineven} for $\Sp(2n)$ for any $(\lambda,[\eps]) \in \PPsymp(2n)$ such that $k(\lambda,[\eps])) = 0$.
\end{remark}

\section{Algorithm for $(\lambdamax,\epsmax)$}\label{sec:alg}

Let $N\in \N$.
Recall $\PPPort(N)$ defined in \S\ref{sec:gsc} and let $(\lambda,\eps) \in \PPPort(N)$.
Suppose $\lambda$ only has odd parts. We give an algorithm that outputs an element $(\bar\lambda,\bar\eps) \in \PPPort(N)$
such that $(\bar\lambda,[\bar\eps]) = (\lambdamax,[\epsmax]) \in \PPort(N)$. 
The algorithm and its proof are inspired by the discussion of the algorithm for $\Sp(2n)$ in \cite[\S5]{waldspurger}, 
but similarly as we have seen before, there will be some differences between the proofs since the symbols are different in our setting. 
In particular, when $k(\lambda,[\eps]) = 0$, the symbols are unordered pairs, which we have to be especially careful about.

Let $t = t(\lambda)$. 
We view $\eps$ as a map $\{1,\dots,t\} \to \{\pm1\}$ by setting $\eps(i) = \eps_{\lambda_i}$ for all $i \in \N$. Let $u \in \{\pm1\}$ and consider the finite sets
\begin{align}
\mathfrak S &= \{1\} \cup \{i \in \{2,\dots,t\} \colon \eps(i) = \eps(i-1)\} = \{s_1 < s_2 < \dots < s_p\}, \text{ for some $p \in \N$, }
\\
J^u &= \{ i \in \{1,\dots,t\} \colon \eps(i)(-1)^{i+1} = u\},
\\
\tilde J^u &= J^u \setminus \mathfrak S.
\end{align}
Recall from \S\ref{sec:gsc} that $M(\lambda,\eps) = |J^1| - |J^{-1}|$ and that $k(\lambda,\eps) = |M|$. Note that $\mathfrak S, J^u, M$ depend on the choice of representative $\eps$ of $[\eps]$. In particular, we have $M(\lambda,\eps) = -M(\lambda,-\eps)$.

We define $(\bar\lambda,\bar\eps)$ by induction. For $N=0,1$. Let $(\bar\lambda,\bar\eps) = (\lambda,\eps)$. Suppose $N > 1$ and suppose that we have defined $(\bar\lambda',\bar\eps')$ for any $(\lambda',\eps') \in \PPPort(N')$ with $N'<N$.
Let 
\begin{align}
\bar\lambda_1 
&=
\sum_{i \in \mathfrak S} \lambda_{i} - 2|\tilde J^{-\eps(1)}| - \frac{1 + (-1)^{|\mathfrak S|}}{2}
=
\begin{cases}
\sum_{i \in \mathfrak S} \lambda_{i} - 2|\tilde J^{-\eps(1)}| &\text{if $|\mathfrak S|$ is odd,}
\\
\sum_{i \in \mathfrak S} \lambda_{i} - 2|\tilde J^{-\eps(1)}| - 1 &\text{if $|\mathfrak S|$ is even,}
\end{cases}
\\
\bar \eps(1) &= \eps(1).
\end{align}
Let $r' = |\tilde J^1| + |\tilde J^{-1}| = t(\lambda) - |\mathfrak S|$ and $\phi \colon \{1,\dots,r'\} \to \tilde J^1 \cup \tilde J^{-1}$ be the unique increasing bijection. 
Let 
$N' = N - \bar\lambda_1$. 
We define $\lambda' \in \Z^{r'+(1+(-1)^{|\mathfrak S|})/2}$ and $\eps' \colon \{1,\dots,r'+\frac{1+(-1)^{|\fS|}}{2}\} \to \{\pm1\}$ as follows.
\begin{align}\label{eq:lambdaprime}
(\lambda'_j,\eps'(j)) &= 
\begin{cases}
(\lambda_{\phi(j)},\eps(\phi(j))) &\text{ if } j\leq r', \phi(j) \in \tilde J^{\eps(1)},
\\
(\lambda_{\phi(j)}+2,-\eps(\phi(j))) &\text{ if } j\leq r', \phi(j) \in \tilde J^{-\eps(1)},
\\
(1,(-1)^{r'}\eps(1)) &\text{ if } j = r'+1 \text{ and $|\mathfrak S|$ is even},
\\
0 &\text{ else.}
\end{cases}
\end{align}
Note that we can similarly define $(\lambda',(-\eps)')$, in which case we have $(-\eps)' = -\eps'$, i.e. $[-\eps'] = [\eps']$.
We prove in \Cref{lem:lambdaprime2} that $N' < N$.
By induction, we have defined $(\bar\lambda', \bar \eps') \in \PPPort(N')$ by applying the algorithm to $(\lambda',\eps') \in \PPPort(N')$. 

Let $\ell = t(\bar\lambda')$. We define $\bar\lambda \in \Z^{\ell+1}$ and $\bar\eps \colon \{1,\dots,\ell+1\} \to \{\pm1\}$ as follows: for $i = 1,\dots,\ell$, let
\begin{align}\label{eq:lambdabar}
\bar\lambda_{i+1} &:= \bar\lambda'_{i-1},
\\
\bar \eps(i+1) &:= \bar\eps'(i).
\end{align}

\begin{theorem}\label{thm:algorithm}
Let $(\lambda,\eps) \in \PPPort(N)$ such that $\lambda$ only has odd parts and let $(\lambdamax,\epsmax) \in \PPPort(N)$ such that $(\lambdamax,[\epsmax])$ is as in Theorem \ref{thm:max}.
Let $k = k(\lambda,\eps)$ and let $(\alpha,\beta) \in \cP_2((N-k^2)/2)$ so that $(\alpha,\beta)_k = \Phi_N(\lambda,[\eps])$ and such that if $k=0$, then $\alpha_1 = (A_{\lambda,\eps})_1$ and $\beta_1 = (B_{\lambda,\eps})_1$.
Let $(\alphamax,\betamax) \in P_{k,-k;2}(\alpha,\beta,<_{\alpha,\beta,k})$.
Then it holds that $(\bar\lambda, [\bar \eps]) = (\lambdamax,[\epsmax])$.
Furthermore, we have $M(\bar\lambda,\bar\eps) = M(\lambda,\eps)$ and $(A_{\bar\lambda,\bar\eps},B_{\bar\lambda,\bar\eps}) = (\alphamax + [k,-\infty[_2,\betamax + [-k,-\infty[_2)$.
\end{theorem}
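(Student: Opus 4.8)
The plan is to prove the theorem by induction on $N$, following the scaffolding of \cite[\S5]{waldspurger} but keeping careful track of the symbol conventions for $\SO(N)$. The base cases $N=0,1$ are immediate since then $(\bar\lambda,\bar\eps)=(\lambda,\eps)$ and the set $P_{k,-k;2}$ is trivial. For the inductive step, the key observation is that the recursive construction of $(\bar\lambda,\bar\eps)$ from $(\bar\lambda',\bar\eps')$ in \eqref{eq:lambdabar} should mirror, on the symbol side, the recursion defining $P_{k,-k;2}(\alpha,\beta,<_{\alpha,\beta,k})$ via procedures (a) and (b). Concretely: first I would show that the pair $(\lambda',\eps') \in \PPPort(N')$ constructed in \eqref{eq:lambdaprime} again has only odd parts (so that the inductive hypothesis applies) and that $N' < N$ — this latter fact is exactly \Cref{lem:lambdaprime2}, which I am allowed to invoke. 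Then I would identify $k(\lambda',\eps')$ and the bipartition $(\alpha',\beta')_{k'} = \Phi_{N'}(\lambda',[\eps'])$ explicitly in terms of $(\alpha,\beta)$ and the combinatorics of $\mathfrak S$, $\tilde J^{\pm}$, showing that $(\alpha',\beta')$ is precisely the bipartition obtained from $(\alpha,\beta)$ by applying procedure (a) (if $(1,0) <_{\alpha,\beta,k} (1,1)$) or procedure (b) (otherwise), with the induced order $<'$ equal to (an order equivalent to) $<_{\alpha',\beta',k'}$. This last compatibility is where \Cref{lemma:unique} and its proof do the heavy lifting: there it is shown that $<'$ is equivalent to $<_{\alpha',\beta',k'}$, and that $P_{k,-k;2}(\alpha,\beta,<)$ is in bijection with $P_{k'-\text{shift}}(\alpha',\beta',<')$.

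Next I would verify the ``symbol transfer'' statement: that the first step of the algorithm, i.e. passing from $(\alpha,\beta)$ to $(\alpha',\beta')$, corresponds under $\Lambda_{k,-k;2}$ to extracting the largest term $\mu_1$ (resp. $\nu_1$) of the symbol, exactly as in the definition of $P^a$ (resp. $P^b$). Here \Cref{rem:symbolodd} is essential: it gives the explicit description of $A^\#,B^\#$ when $\lambda$ has only odd parts, so that $p_{\lambda,\eps} = \Lambda_{k,-k;2}(\alpha,\beta) \in H_S((N-k^2)/2,k)$, and one reads off that the top term of the symbol is $\tfrac{\bar\lambda_1 + 2\cdot(\text{something})}{\cdots}$ — matching $\bar\lambda_1$ in the algorithm against $\mu_1$ in procedure (a). I would then combine the inductive hypothesis $(\bar\lambda',[\bar\eps']) = ((\lambda')^{\max},[(\eps')^{\max}])$, the identity $P_{k,-k;2}(\alpha,\beta,<) = \{((\mu_1)\sqcup\mu',\nu')\}$ (resp. the (b)-version) from the recursion defining $P_{A,B;s}$, and \Cref{thm:max} applied to $(\lambda',[\eps'])$ to conclude $(\bar\lambda,[\bar\eps]) = (\lambda^{\max},[\epsmax])$ and $(A_{\bar\lambda,\bar\eps},B_{\bar\lambda,\bar\eps}) = (\alphamax + [k,-\infty[_2, \betamax + [-k,-\infty[_2)$. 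The claim $M(\bar\lambda,\bar\eps) = M(\lambda,\eps)$ follows by tracking the defect: by \Cref{rem:parity} the defect equals $k$, which is preserved because $(\alphamax,\betamax)_k$ and $(\alpha,\beta)_k$ have the same $k$, and the sign of $M$ is pinned down by the normalisation $\bar\eps(1) = \eps(1)$ together with the explicit formula for $A_{\lambda,\eps},B_{\lambda,\eps}$.

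I expect the main obstacle to be the bookkeeping around the case $k = 0$ (equivalently $|\mathfrak S|$ even, $N$ even), where the symbol $S_{\lambda,[\eps]}$ is an \emph{unordered} pair and one must break the symmetry consistently between the algorithm (via $\bar\eps(1) = \eps(1)$ and the ``$r'+1$'' entry of \eqref{eq:lambdaprime}) and the bipartition side (via the stipulation $\alpha_1 = (A_{\lambda,\eps})_1$, $\beta_1 = (B_{\lambda,\eps})_1$ in the theorem statement). Precisely here \Cref{prop:symbolterm} is needed to guarantee that the top term of $\Lambda_{k,-k;2}(\alphamax,\betamax)$ is strictly larger than the rest, so that $\alphamax \neq \betamax$ and the choice of which of $A,B$ receives the large term is forced — ensuring that the two constructions land on the \emph{same} representative, not merely the same unordered pair. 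A secondary technical point is confirming that the ``$+1$'' and ``$+2$'' shifts appearing in \eqref{eq:lambdaprime} and in the formula for $\bar\lambda_1$ are exactly the shifts dictated by $[k,-\infty[_2$ versus $[0,-\infty[_1$ when one converts between the partition $\lambda$ and its symbol via $\lambda + [0,-\infty[_1$; this is a finite check using \Cref{rem:symbolodd}, but it must be done with care for both parities of $|\mathfrak S|$.
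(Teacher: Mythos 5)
Your plan follows essentially the same route as the paper's proof: induction on $N$, identification of $\Phi_{N'}(\lambda',[\eps'])$ with the bipartition produced by procedure (a) applied to the suitably swapped pair, matching $\bar\lambda_1$ against the term $\mu_1$ extracted by the procedure, and then the inductive hypothesis combined with the recursive description of $P_{k,-k;2}(\alpha,\beta,<_{\alpha,\beta,k})$ and the ordered-pair strengthening of the symbol statement to control the $k=0$ case, exactly as in the paper. The one place your sketch is thinner than the paper is the signed claim $M(\bar\lambda,\bar\eps)=M(\lambda,\eps)$: the defect only gives $|M|$, and the paper pins down the sign through the recursion $M(\lambda',\eps')=-M(\lambda,\eps)+\eps(1)$ (\Cref{lem:lambdaprime4}) carried through the induction, rather than from the normalisation $\bar\eps(1)=\eps(1)$ alone.
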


\begin{remark}
For $k>0$, the statement $(A_{\bar\lambda,\bar\eps},B_{\bar\lambda,\bar\eps}) = (\alphamax + [k,-\infty[_2,\betamax + [-k,-\infty[_2)$ is equivalent to the statement $(\bar\lambda,[\bar\eps]) = \Phi_N^{-1}(\alphamax,\betamax) = (\lambdamax,[\epsmax])$.
However, for $k = 0$, the statement $(\bar\lambda,[\bar\eps]) = (\lambdamax,[\epsmax])$ is equivalent to  $\{A_{\bar\lambda,\bar\eps},B_{\bar\lambda,\bar\eps}\} = \{\alphamax + [k,-\infty[_2,\betamax + [-k,-\infty[_2\}$, which is weaker than an equality of ordered pairs. We include this equality of ordered pairs in the theorem, as the theorem will be proved by induction on $N$ and this equality of ordered pairs will become useful for that.
\end{remark}

\subsection{Properties of $(\lambda',\eps')$ and $(\bar\lambda,\bar\eps)$}
Recall that $\mathfrak S = \{s_1,\dots,s_p\}$ and note that
\begin{align}\label{eq:JS}
J^{\eps(1)} &= \bigcup_{i} \{s_{2i-1},\dots,s_{2i}-1\}, 
&J^{-\eps(1)} &= \bigcup_{i} \{s_{2i},\dots,s_{2i+1}-1\},
\\
\tilde J^{\eps(1)} &= \bigcup_{i} \{s_{2i-1}+1,\dots,s_{2i}-1\}, 
&\tilde J^{-\eps(1)} &= \bigcup_{i} \{s_{2i}+1,\dots,s_{2i+1}-1\}. \label{eq:J}
\end{align}
Throughout this subsection, we assume that $N > 1$ and so $\lambda \neq 0$.
\begin{lemma}\label{lem:J}
Suppose $r' \neq 0$ and let $i,j \in \{1,\dots,r'\}$ with $i < j$. Then $\lambda_{\phi(i)} \geq \lambda_{\phi(j)} + 2(j-i)$.
\end{lemma}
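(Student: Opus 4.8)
The plan is to exploit two elementary facts that are visible from the definitions: (i) since $\lambda$ consists of odd parts only and is decreasing, any two of its distinct values differ by at least $2$, i.e. $\lambda_a > \lambda_b$ forces $\lambda_a \geq \lambda_b + 2$; and (ii) $\eps(m) = \eps_{\lambda_m}$ depends only on the value $\lambda_m$, so $\eps$ is constant on every maximal block of equal parts of $\lambda$. No results beyond these definitions are needed.

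First I would record the contrapositive form of (ii) that is actually used: if $m \in \{2,\dots,t\}$ and $\lambda_{m-1} = \lambda_m$, then $\eps(m-1) = \eps(m)$ and hence $m \in \mathfrak S$. Equivalently, if $m \in \{2,\dots,t\} \setminus \mathfrak S$, then $\lambda_{m-1} \neq \lambda_m$, and by (i) this gives $\lambda_{m-1} \geq \lambda_m + 2$. I would also note that $\phi$ never takes the value $1$, because $1 \in \mathfrak S$ by definition and $\phi$ is a bijection onto $\{1,\dots,t\}\setminus\mathfrak S$; thus $\phi$ maps into $\{2,\dots,t\}\setminus\mathfrak S$, so the index $m-1$ appearing above is always legitimate.

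Then, for $i < j$ in $\{1,\dots,r'\}$, I would telescope. Set $a_\ell = \phi(i+\ell)$ for $\ell = 0,1,\dots,j-i$, so that $a_0 < a_1 < \dots < a_{j-i}$ all lie in $\{2,\dots,t\}\setminus\mathfrak S$. For each $\ell \in \{1,\dots,j-i\}$, applying the previous step with $m = a_\ell$ gives $\lambda_{a_\ell - 1} \geq \lambda_{a_\ell} + 2$; since $a_{\ell-1} < a_\ell$, i.e. $a_{\ell-1} \leq a_\ell - 1$, and $\lambda$ is decreasing, this yields $\lambda_{a_{\ell-1}} \geq \lambda_{a_\ell - 1} \geq \lambda_{a_\ell} + 2$. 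Chaining these $j-i$ inequalities gives $\lambda_{\phi(i)} = \lambda_{a_0} \geq \lambda_{a_{j-i}} + 2(j-i) = \lambda_{\phi(j)} + 2(j-i)$, as claimed.

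There is no real obstacle here; the only points requiring a moment of care are that $\phi$ avoids the index $1$ (so that $a_\ell - 1$ always makes sense), and that the hypothesis ``$\lambda$ has only odd parts'' is precisely what upgrades the strict inequality $\lambda_{a_\ell-1} > \lambda_{a_\ell}$ (which follows from $a_\ell \notin \mathfrak S$) into the gap $\lambda_{a_\ell-1} \geq \lambda_{a_\ell} + 2$ that produces the factor $2$ in the conclusion.
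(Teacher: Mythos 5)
Your proof is correct and follows essentially the same route as the paper: establish the single-step gap $\lambda_{\phi(\ell-1)} \geq \lambda_{\phi(\ell)} + 2$ (using that $\phi$ avoids $\mathfrak S$, that $\eps(i)=\eps_{\lambda_i}$ depends only on the value $\lambda_i$, and that distinct odd parts differ by at least $2$) and then telescope from $i$ to $j$. The only difference is cosmetic: the paper verifies the single step by a case split on whether $\mathfrak S$ meets the open interval between $\phi(\ell-1)$ and $\phi(\ell)$, whereas you get it uniformly from $\phi(\ell)\notin\mathfrak S$, which gives $\lambda_{\phi(\ell)-1}\geq\lambda_{\phi(\ell)}+2$, combined with monotonicity via $\phi(\ell-1)\leq\phi(\ell)-1$ — a slightly cleaner derivation of the same inequality.
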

\begin{proof}
Suppose first that there exists no $s \in \mathfrak S$ such that $\phi(j-1) < s < \phi(j)$. Then $\phi(j-1) = \phi(j) - 1$, and since $\phi(j) \notin \mathfrak S$, we have $\eps(\phi(j-1)) = - \eps(\phi(j))$. Thus $\lambda_{\phi(j-1)} > \lambda_{\phi(j)}$, and since these are odd, we have $\lambda_{\phi(j-1)} \geq \lambda_{\phi(j)} + 2$. 
Suppose there exists an $s \in \mathfrak S$ such that $\phi(j-1) < s < \phi(j)$.
Pick the largest such $s$ so that $\phi(j) \geq s+1 \notin \mathfrak S$. Then $\eps(s+1) = - \eps(s)$, hence $\lambda_{s} \geq \lambda_{s+1} + 2$. 
Thus we have
\begin{align}
\lambda_{\phi(j-1)} \geq \lambda_s \geq \lambda_{s+1} + 2 \geq \lambda_{\phi(j)}+2.
\end{align}
In both cases, we have $\lambda_{\phi(j-1)} \geq \lambda_{\phi(j)} + 2$. 
Thus we have
\begin{align}
\lambda_{\phi(i)} 
\geq 
\lambda_{\phi(i+1)} + 2
\geq
\lambda_{\phi(i+2)} + 4
\geq
\dots
\geq
\lambda_{\phi(j)} + 2(j-i). &\qedhere
\end{align}
\end{proof}
\begin{proposition}\label{lem:lambdaprime}
\begin{enumerate}[ref={\theproposition(\arabic*)}]
\crefalias{enumi}{proposition}
\item\label{lem:lambdaprime1} We have $(\lambda',\eps') \in \PPPort(N')$ and $\lambda'$ only has odd parts.
\item\label{lem:lambdaprime2} We have $N' < N$.
\item\label{lem:lambdaprime3} 
By the first part, we can define ${J'}^u$, $\tilde J{'}^u$, $\mathfrak S'$ for $(\lambda',\eps')$ as we defined $J^u$, $\tilde J^u$, $\mathfrak S$ for $(\lambda,\eps)$.
For $u \in \{\pm 1\}$, $\phi$ restricts to a bijection $\phi \colon J'^{-u} \cap \{1,\dots,r'\} \to \tilde J^u$.
\item\label{lem:lambdaprime4} 
Let $M = M(\lambda,\eps)$, $M' = M(\lambda',\eps')$, $k = k(\lambda,\eps)$, and $k' = k(\lambda',\eps')$.
Let $w = \sgn(M+1/2)$. 
Then $M' =  - M + \eps(1)$ and $k' = |k - \eps(1)w|$.
\end{enumerate}
\end{proposition}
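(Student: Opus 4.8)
## Proof plan for Proposition \ref{lem:lambdaprime} (the four-part statement on $(\lambda',\eps')$)

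\textbf{Overall strategy.} The four parts are logically intertwined: \ref{lem:lambdaprime1} is needed even to make sense of \ref{lem:lambdaprime3} and \ref{lem:lambdaprime4}, while \ref{lem:lambdaprime2} will be deduced by a direct estimate on $\bar\lambda_1$. The plan is to prove them roughly in the stated order, but to set up all the bookkeeping about $\mathfrak S$, $J^u$, $\tilde J^u$ at the start using \eqref{eq:JS}--\eqref{eq:J}, since every part relies on unwinding those definitions. Throughout I will fix the representative $\eps$ with $\eps(1)$ chosen as in the algorithm, and I will repeatedly use that $\lambda$ has only odd parts, so all $\lambda_i$ are odd and the parity arguments in the definition of $\lambda'_j$ (the $+2$ shift, the extra part $1$ when $|\mathfrak S|$ is even) are forced.

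\textbf{Part \ref{lem:lambdaprime1} (that $(\lambda',\eps') \in \PPPort(N')$ with only odd parts).} First I would check that $\lambda'$, as defined by \eqref{eq:lambdaprime}, is a genuine partition, i.e. weakly decreasing. The entries coming from $j \le r'$ with $\phi(j) \in \tilde J^{\eps(1)}$ are $\lambda_{\phi(j)}$ and those with $\phi(j) \in \tilde J^{-\eps(1)}$ are $\lambda_{\phi(j)}+2$; the key monotonicity input is \Cref{lem:J}, which gives $\lambda_{\phi(i)} \ge \lambda_{\phi(j)} + 2(j-i)$ for $i<j$, so even after adding $2$ to some terms and comparing a shifted term against a non-shifted one two slots later, the sequence stays weakly decreasing. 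For consecutive indices $i=j-1$ one checks the four cases (shift/no shift on each) using $\lambda_{\phi(j-1)} \ge \lambda_{\phi(j)}+2$ from the proof of \Cref{lem:J}: the only potentially bad case, a non-shifted term immediately above a shifted term, gives $\lambda_{\phi(j-1)} \ge \lambda_{\phi(j)}+2 = (\lambda_{\phi(j)}+2)$, so equality is allowed and monotonicity holds. The trailing entry $1 = \lambda'_{r'+1}$ when $|\mathfrak S|$ is even is $\le \lambda'_{r'}$ since every odd part is $\ge 1$. Oddness of every nonzero part is immediate: $\lambda_{\phi(j)}$ is odd, $\lambda_{\phi(j)}+2$ is odd, and the extra part is $1$. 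That $\lambda'$ is orthogonal is then automatic (a partition with only odd parts is orthogonal). Finally I must check $\eps'$ is well-defined as an element of $F_2[\Delta(\lambda')]$, i.e. that it is constant on the set of indices having a given part value; this uses that within each block of $\mathfrak S$ (cf. \eqref{eq:J}) the parts are distinct enough, plus the explicit sign rule in \eqref{eq:lambdaprime} — I expect this to reduce to the same case analysis as monotonicity, observing that equal parts of $\lambda'$ can only arise from a shifted/non-shifted collision, and then $\eps'$ assigns them the same sign by construction. The total $N' = S(\lambda')$ then equals $N - \bar\lambda_1$ by definition of $N'$, which is the defining relation, so there is nothing to prove for the "of $N'$" claim once $\lambda' \in \Port$.

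\textbf{Parts \ref{lem:lambdaprime3} and \ref{lem:lambdaprime4}, then \ref{lem:lambdaprime2}.} For \ref{lem:lambdaprime3}: by \eqref{eq:lambdaprime} the sign of $\lambda'_j$ for $j \le r'$ is $\eps(\phi(j))$ when $\phi(j)\in\tilde J^{\eps(1)}$ and $-\eps(\phi(j))$ when $\phi(j)\in\tilde J^{-\eps(1)}$; combining this with the description \eqref{eq:J} of $\tilde J^{\pm\eps(1)}$ as unions of intervals and the definition of $J'^u$ via the sign $\eps'(j)(-1)^{j+1}$, one tracks how the index $j$ (a "compressed" version of $\phi(j)$, having skipped the elements of $\mathfrak S$) has parity differing from $\phi(j)$ in a controlled way across each $\mathfrak S$-block. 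I expect this to come down to: removing an element of $\mathfrak S$ flips the parity of all subsequent compressed indices, and within $\tilde J^{-\eps(1)}$ the sign is additionally flipped, and these two flips interact to give exactly the claimed bijection $\phi\colon J'^{-u}\cap\{1,\dots,r'\} \to \tilde J^u$. This is the step I expect to be the \textbf{main obstacle} — it is the most delicate parity/block-combinatorics, and getting the $(-1)^{r'}$ in the trailing term of \eqref{eq:lambdaprime} and the role of $|\mathfrak S|$ mod $2$ exactly right requires care. For \ref{lem:lambdaprime4}: using \ref{lem:lambdaprime3}, $|J'^{-u}\cap\{1,\dots,r'\}| = |\tilde J^u|$, and one adds the contribution of the index $r'+1$ (present iff $|\mathfrak S|$ even) whose sign is $(-1)^{r'}\eps(1)$, giving $(-1)^{r'+1}\cdot(-1)^{r'}\eps(1) = \eps(1)$ as the value of $\eps'(r'+1)(-1)^{(r'+1)+1}$, i.e. it contributes to $J'^{\eps(1)}$; combining, $M' = |J'^1| - |J'^{-1}| = (|\tilde J^{-1}| - |\tilde J^{1}|) + \eps(1) = -(|\tilde J^1| - |\tilde J^{-1}|) + \eps(1)$. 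Then I relate $|\tilde J^u|$ to $|J^u|$: $J^u$ and $\tilde J^u = J^u \setminus \mathfrak S$ differ by $|\mathfrak S \cap J^u|$, and by \eqref{eq:JS}, $\mathfrak S \cap J^{\eps(1)} = \{s_1, s_3, \dots\}$ and $\mathfrak S \cap J^{-\eps(1)} = \{s_2, s_4, \dots\}$ (with $s_1 \in J^{\eps(1)}$ since $\eps(s_1)=\eps(1)$), so $|\tilde J^1| - |\tilde J^{-1}| = (|J^1|-|J^{-1}|) - \eps(1)\cdot(\text{sign-weighted count over }\mathfrak S)$; a short computation with $|\mathfrak S|=p$ gives the correction equal to $\eps(1)$ if $p$ odd and $0$ if $p$ even — wait, I should double-check: actually the cleanest route is to note $w = \sgn(M+1/2)$ and $k = |M|$, so $M = wk$, and then verify $M' = -M + \eps(1) = -wk + \eps(1)$ directly, whence $k' = |M'| = |-wk+\eps(1)| = |k - \eps(1)w|$ since $w^2=1$ lets us factor $|-wk+\eps(1)| = |w|\cdot|-k + \eps(1)w| = |k - \eps(1)w|$. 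So the real content is just $M' = -M + \eps(1)$, which follows from the $|\tilde J^u|$ accounting above. Finally, \ref{lem:lambdaprime2}: from the formula for $\bar\lambda_1$, it suffices to show $\bar\lambda_1 > 0$ (then $N' = N - \bar\lambda_1 < N$). We have $\bar\lambda_1 = \sum_{i\in\mathfrak S}\lambda_i - 2|\tilde J^{-\eps(1)}| - \tfrac{1+(-1)^{|\mathfrak S|}}{2}$; since $1 \in \mathfrak S$ with $\lambda_1 \ge 1$ odd, and since $\tilde J^{-\eps(1)}$ is contained in $\{1,\dots,t\}\setminus\mathfrak S$, a crude bound pairing each element of $\tilde J^{-\eps(1)}$ with a strictly larger element of $\mathfrak S$ (using \eqref{eq:J}: every element of $\tilde J^{-\eps(1)}$ lies in an interval $\{s_{2i}+1,\dots,s_{2i+1}-1\}$ whose right endpoint $s_{2i+1}\in\mathfrak S$ satisfies $\lambda_{s_{2i+1}} \ge 1$, and $\lambda$ decreasing forces the elements of $\mathfrak S$ to dominate) shows $\sum_{i\in\mathfrak S}\lambda_i \ge 2|\tilde J^{-\eps(1)}| + 1 \ge 2|\tilde J^{-\eps(1)}| + \tfrac{1+(-1)^{|\mathfrak S|}}{2} + \delta$ for a positive $\delta$; I will make this bound precise using that all parts are odd $\ge 1$ and $|\mathfrak S| \ge 1$. (If $N > 1$ then $\lambda \ne 0$ so $\mathfrak S \ne \varnothing$ and this goes through; the base cases $N=0,1$ are handled separately in the algorithm.)
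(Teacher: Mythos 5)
Your plan follows essentially the same route as the paper's proof: \Cref{lem:J} gives the monotonicity and the case analysis for well-definedness of $\eps'$ in part (1), the parity-through-$\mathfrak S$-blocks bookkeeping (using $\phi(j)-j=\#\{s\in\mathfrak S\colon s<\phi(j)\}$ together with the extra sign flip on $\tilde J^{-\eps(1)}$) is exactly how part (3) is proved, and part (4) is the same counting with the index $r'+1$ contributing to $J'^{\eps(1)}$ when $|\mathfrak S|$ is even. The one shaky detail is your sketched justification of $\sum_{i\in\mathfrak S}\lambda_i\geq 2|\tilde J^{-\eps(1)}|+1$ in part (2): pairing each element of $\tilde J^{-\eps(1)}$ with a strictly larger element of $\mathfrak S$ breaks down for the trailing block beyond $s_p$ (and only yields $1$, not $2$, per element); the bound should instead be extracted from \Cref{lem:J}, via $\lambda_1\geq\lambda_{\phi(1)}+2\geq\lambda_{\phi(r')}+2r'\geq 1+2|\tilde J^{-\eps(1)}|$, which is precisely the paper's argument.
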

\begin{proof}
\begin{enumerate}[wide]
\item 
By definition of $\lambda'$ and \Cref{lem:J}, we have
\begin{align}
\lambda_i' \geq \lambda_{\phi(i)} \geq \lambda_{\phi(i+1)}+2 \geq \lambda_{i+1}',
\end{align}
so $\lambda' \in \mathcal P$.
Clearly, $\lambda_i'$ only has odd parts and $S(\lambda') = N'$, and so $\lambda' \in \Port(N')$. Let  $i \in \{1,\dots,t(\lambda')\}$ such that $\lambda_i' = \lambda_{i+1}'$. We have to show that $\eps'(i) = \eps'(i+1)$. 
If $\phi(i), \phi(i+1) \in \tilde J^{u}$ for some $u \in \{\pm 1\}$, then $\lambda_{\phi(i)} = \lambda_{\phi(i+1)}$, which contradicts \Cref{lem:J}.
If $\phi(i) \in \tilde J^{-\eps(1)}$ and $\phi(i+1) \in \tilde J^{\eps(1)}$, then $\lambda_{\phi(i+1)} = \lambda_{i+1}' = \lambda_i' = \lambda_{\phi(i)} + 2$, which contradicts the fact that $\lambda_{\phi(i)} \geq \lambda_{\phi(i+1)}$.
Hence $\phi(i) \in \tilde J^{\eps(1)}$ and $\phi(i+1) \in \tilde J^{-\eps(1)}$. 
Then there exists an $s \in \fS$ such that $\phi(i) < s < \phi(i+1)$, and since $\phi$ is an inreasing bijection, we have $\varnothing \neq \{\phi(i)+1,\phi(i)+2,\dots, \phi(i+1)-1\} \subseteq \mathfrak S$. Thus
\begin{align}\label{eq:epsphi}
\eps(\phi(i)) = \eps(\phi(i) + 1) = \dots = \eps(\phi(i+1) - 1) = -\eps(\phi(i+1)), \quad
\end{align}
and since $\phi(i) \in J^{\eps(1)}$ and $\phi(i+1) \in J^{-\eps(1)}$, we have
$\eps'(i) = \eps(\phi(i)) = -\eps(\phi(i+1) = \eps'(i+1)$. 
\item 
Since $N>1$, we have $\lambda_1 \geq 1$.
If $r' = 0$, then $|\tilde J^{-\eps(1)}| = 0$ and so 
\begin{align}\label{eq:lemmaineq0}
\sum_{i \in \mathfrak S} \lambda_i \geq 1 + 2|\tilde J^{-\eps(1)}|.
\end{align}
Suppose that $r' \neq 0$. Then $\lambda_{\phi(r')} \geq 1$ and $r' \geq |\tilde J^{-\eps(1)}|$.
We also have $\lambda_1\geq\lambda_{\phi(1)}+2$.
Using \Cref{lem:J}, we find
\begin{align}\label{eq:lemmaineq}
\sum_{i \in \mathfrak S} \lambda_i \geq \lambda_1 \geq \lambda_{\phi(1)} + 2 \geq \lambda_{\phi(r')}  + 2(r'-1) + 2 \geq 1 + 2|\tilde J^{-\eps(1)}|.
\end{align}
Thus
\begin{align}
N' 
&= N - \sum_{i \in \mathfrak S} \lambda_{i} + 2|\tilde J^{-\eps(1)}| + \frac{1 + (-1)^{|\mathfrak S|}}{2}
\leq N - 1 + \frac{1 + (-1)^{|\mathfrak S|}}{2}
\leq N.
\end{align}
If $|\mathfrak S|$ is odd, then the last inequality is strict. If $|\mathfrak S|$ is even, then the first inequality is strict, since the first inequality of \eqref{eq:lemmaineq0} and \eqref{eq:lemmaineq} is strict. Thus we have $N' < N$.
\item Suppose $i \in J'^{u}$, i.e. $\eps'(i)(-1)^{i+1} = u$. If $\phi(i) \in \tilde J^{-\eps(1)}$, then $\eps'(i) = -\eps(\phi(i))$.
Let $h \in \N$ be the unique integer such that $s_h < \phi(i) < s_{h+1}$. Then $s_h \in J^{-\eps(1)}$, so $h$ is even by \eqref{eq:J}. Note that $\phi(i) = h + i$, so $i$ and $\phi(i)$ have the same parity. Thus $u = \eps'(i)(-1)^{i+1} = -\eps(\phi(i))(-1)^{\phi(i)+1} = \eps(1)$. If $\phi(i) \in \tilde J^{\eps(1)}$, we can similarly show that $u = -\eps(1)$. Hence we have $\phi(J'^u) \subseteq \tilde J^{-u}$. Since  $\phi \colon \{1,\dots,r'\} \to \tilde J^1 \cup \tilde J^{-1}$ is a bijection and $\{1,\dots,r'\} = J'^1\cup J'^{-1}$, the result follows.
\item 
Let $u = \eps(1)$.
Suppose $|\mathfrak S| = 2q+1$ is odd. Then $|J^u| - |\tilde J^u| = q+1$, $|J^{-u}| - |\tilde J^{-u}| = q$ and $t(\lambda') = r'$. By the previous part, we then have $|J'^{\pm u}| = |\tilde J^{\mp u}|$.
Thus
\begin{align}
uM' 
= |J'^u| - |J'^{-u}| 
= |\tilde J^{-u}| - |\tilde J^{u}|
= -(|J^u| - |J^{-u}|) + 1 
= -uM + 1.
\end{align}
Suppose $|\mathfrak S| = 2q$ is even. Then $|J^u| - |\tilde J^u| = |J^{-u}| - |\tilde J^{-u}| = q$. 
Now, $t(\lambda') = r'+1$ and $\eps'(r'+1) = (-1)^{r'}u$, i.e. $r'+1 \in J'^{u}$. So by the previous part, we have $|J'^{u}| = |\tilde J^{-u}| + 1$ and $|J'^{-u}| = |\tilde J^{u}|$.
Thus
\begin{align}
uM' 
= |J'^u| - |J'^{-u}| 
= |\tilde J^{-u}| - |\tilde J^{u}| + 1
= -(|J^u| - |J^{-u}|) + 1
= -uM + 1.
\end{align}
In both cases, we find $M' = -M + u$. Note that $k = |M| = wM$. 
Thus $k' = |M'| = |-M+u| = |-w||-M + u| = |wM -uw| = |k - uw|$. 
\qedhere
\end{enumerate}
\end{proof}

\begin{proposition}\label{prop:lambdabar}
\begin{enumerate}
\item $\bar\lambda_1$ is odd and $\bar\lambda_1 \geq \lambda_1$.
\item $(\bar\lambda,\bar\eps) \in \PPPort(N)$.
\end{enumerate}
\end{proposition}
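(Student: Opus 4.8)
The plan is to induct on $N$. For $N\in\{0,1\}$ one has $(\bar\lambda,\bar\eps)=(\lambda,\eps)$ and there is nothing to prove, so assume $N>1$ and that the proposition holds with $N$ replaced by any $N'<N$.

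First I would treat part~(1). Write $\mathfrak S=\{1=s_1<s_2<\dots<s_p\}$, so $p=|\mathfrak S|$. Each $\lambda_{s_j}$ is odd, hence $\sum_{i\in\mathfrak S}\lambda_i\equiv p\pmod 2$; since $2|\tilde J^{-\eps(1)}|$ is even and $\tfrac{1+(-1)^p}{2}$ is $0$ for $p$ odd and $1$ for $p$ even, a short case check shows $\bar\lambda_1$ is odd. For $\bar\lambda_1\geq\lambda_1$, put $s_{p+1}:=t(\lambda)+1$ and $g_j:=s_{j+1}-s_j-1$ (the number of indices strictly between $s_j$ and $s_{j+1}$), so that $|\tilde J^{-\eps(1)}|=\sum_{i\ge1}g_{2i}$ by \eqref{eq:J}. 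Any index $m$ with $s_j<m<s_{j+1}$ lies outside $\mathfrak S$, so $\eps(m)\neq\eps(m-1)$, so $\lambda_{m-1}>\lambda_m$, and since both are odd, $\lambda_{m-1}\geq\lambda_m+2$. Chaining this estimate along each gap — and using $\lambda_{t(\lambda)}\geq1$ for the final gap — gives, when $p\geq2$,
\begin{align*}
\lambda_{s_2}\;\geq\;\lambda_{s_p}+2(g_2+\dots+g_{p-1})\;\geq\;1+2(g_2+\dots+g_p)\;\geq\;1+2|\tilde J^{-\eps(1)}|,
\end{align*}
so that, since $1=s_1\in\mathfrak S$,
\begin{align*}
\bar\lambda_1-\lambda_1\;=\;\sum_{i\in\mathfrak S,\,i>1}\lambda_i-2|\tilde J^{-\eps(1)}|-\tfrac{1+(-1)^p}{2}\;\geq\;\lambda_{s_2}-2|\tilde J^{-\eps(1)}|-1\;\geq\;0;
\end{align*}
while for $p=1$ one has $\mathfrak S=\{1\}$ and $\tilde J^{-\eps(1)}=\varnothing$ by \eqref{eq:J}, so $\bar\lambda_1=\lambda_1$. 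In either case $\bar\lambda_1\geq\lambda_1\geq1$, so $\bar\lambda_1$ is a positive odd integer.

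For part~(2), \Cref{lem:lambdaprime1} shows $\lambda'$ has only odd parts and \Cref{lem:lambdaprime2} shows $N'<N$, so the inductive hypothesis applies to $(\lambda',\eps')$: $(\bar\lambda',\bar\eps')\in\PPPort(N')$, $\bar\lambda'_1$ is odd, and $\bar\lambda'_1\geq\lambda'_1$. Since by construction $\bar\lambda=(\bar\lambda_1,\bar\lambda'_1,\dots,\bar\lambda'_\ell)$ with $\bar\eps(1)=\eps(1)$ and $\bar\eps(i+1)=\bar\eps'(i)$, three of the four requirements for $(\bar\lambda,\bar\eps)\in\PPPort(N)$ follow at once: $\sum_i\bar\lambda_i=\bar\lambda_1+N'=N$ by the definition $N'=N-\bar\lambda_1$; the even parts of $\bar\lambda$ are precisely those of $\bar\lambda'$ (as $\bar\lambda_1$ is odd) and so still occur with even multiplicity, whence $\bar\lambda$ is orthogonal; and on the entries indexed by $2,\dots,\ell+1$ the map $\bar\eps$ is a well-defined function of the parts because $\bar\eps'$ is. It therefore remains to show that $\bar\lambda$ is weakly decreasing, i.e. $\bar\lambda_1\geq\bar\lambda'_1$, and that if $\bar\lambda_1$ equals a part of $\bar\lambda'$ then the value $\eps(1)=\bar\eps(1)$ agrees with the corresponding value of $\bar\eps'$.

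I expect this last pair of statements to be the only non-routine point, and the plan for it is to compare $\bar\lambda_1$ directly with $\bar\lambda'_1=\sum_{j\in\mathfrak S'}\lambda'_j-2|\tilde J'^{-\eps'(1)}|-\tfrac{1+(-1)^{|\mathfrak S'|}}{2}$. Using \eqref{eq:lambdaprime} one rewrites each $\lambda'_j$ ($j\leq r'$) as $\lambda_{\phi(j)}$ or $\lambda_{\phi(j)}+2$ according to whether $\phi(j)\in\tilde J^{\eps(1)}$ or $\tilde J^{-\eps(1)}$, while \Cref{lem:lambdaprime3} together with \eqref{eq:J} identifies $\mathfrak S'$ and $\tilde J'^{\pm}$ with explicit subsets of $\{1,\dots,r'\}$ and, via $\phi$, with subsets of the $\tilde J$-indices of $(\lambda,\eps)$; the "decrease by at least $2$" estimates used for part~(1), applied now to the gaps of $\mathfrak S'$, should bound the resulting error terms and yield $\bar\lambda_1\geq\bar\lambda'_1$. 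The junction statement for $\bar\eps$ should reduce, via $\bar\eps(1)=\eps(1)$ and $\bar\eps'(1)=\eps'(1)$, to tracking a single sign through \eqref{eq:lambdaprime}. Thus the real obstacle is purely the bookkeeping needed to transport the combinatorial data of $(\lambda,\eps)$ to $(\lambda',\eps')$ and back; everything else is immediate from the inductive hypothesis and part~(1).
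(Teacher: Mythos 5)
Part (1) of your argument is sound and is essentially the paper's: the oddness check is the same, and your bound $\lambda_{s_2}\geq 1+2|\tilde J^{-\eps(1)}|$ (for $|\mathfrak S|\geq 2$), obtained by chaining the ``drop by at least $2$'' estimate along the gaps of $\mathfrak S$ and counting $|\tilde J^{-\eps(1)}|=\sum_i g_{2i}$ via \eqref{eq:J}, is a perfectly good substitute for the paper's route through \Cref{lem:J} and \Cref{lem:lambdaprime3}. Likewise, your reduction of part (2) to the two junction statements --- $\bar\lambda_1\geq\bar\lambda_1'$, and $\bar\lambda_1>\bar\lambda_1'$ whenever $\eps(1)=-\eps'(1)$ --- together with the routine verifications (total sum, orthogonality, well-definedness of $\bar\eps$ on indices $\geq 2$ from the inductive hypothesis) matches the paper exactly.

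The problem is that you stop precisely where the proposition becomes nontrivial. The junction statements are not ``purely bookkeeping'': they are the bulk of the paper's proof. One has to compare $\bar\lambda_1$ with $\bar\lambda_1'=\sum_{s'\in\mathfrak S'}\lambda'_{s'}-2|\tilde J'^{-\eps'(1)}|-\tfrac{1+(-1)^{|\mathfrak S'|}}{2}$ by building the increasing injection $\psi\colon\tilde{\mathfrak S}'=\mathfrak S'\cap\{1,\dots,r'\}\to\mathfrak S$, $\psi(s')=\phi(s')-1$, proving $\lambda_{\psi(s')}\geq\lambda'_{s'}+2$ or $\geq\lambda'_{s'}$ according to whether $s'\in J'^{-\eps(1)}$ or $J'^{\eps(1)}$, and then bounding $\bar\lambda_1-\bar\lambda_1'$ below by an explicit quantity $X$; establishing $X\geq 0$ (via $X\geq-1$ plus oddness) when $\eps(1)=\eps'(1)$, and $X\geq 1$ when $\eps(1)=-\eps'(1)$, requires a counting identity of the form $|\tilde J'^{\eps(1)}|=|\tilde J^{-\eps(1)}|+|\tilde J'^{\eps(1)}\cap\{r'+1\}|-|\tilde{\mathfrak S}'\cap J'^{\eps(1)}|$ and a four-way case analysis on the parities of $|\mathfrak S|$ and $q=|\tilde{\mathfrak S}'|$, including tracking whether the extra part $\lambda'_{r'+1}=1$ (present exactly when $|\mathfrak S|$ is even) lands in $\mathfrak S'$ or in $\tilde J'^{-\eps'(1)}$. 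Your sketch (``the gap estimates applied to $\mathfrak S'$ should bound the error terms'', ``tracking a single sign'') gives no mechanism for producing the strict inequality in the sign-clash case, which is exactly where the delicate parity bookkeeping is needed, so as written the proof of part (2) has a genuine gap.
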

\begin{proof}
\begin{enumerate}[wide]
\item It is easy to see from the definition that $\bar\lambda_1$ is odd. We have
\begin{align}
\bar\lambda_1 - \lambda_1
=
\sum_{i \in \mathfrak S,i\geq2} \lambda_{i} - 2|\tilde J^{-\eps(1)}| - \frac{1 + (-1)^{|\mathfrak S|}}{2}.
\end{align}
If $\mathfrak S = \{1\}$, then $\tilde J^{-\eps(1)} = \varnothing$ and so $\bar\lambda_1 - \lambda_1 = 0$. Suppose $|\mathfrak S| \geq 2$. Then $s_2 < j$ for all $j \in \tilde J^{-\eps(1)}$. 
Let $i = \min J'^{\eps(1)}$ so that $r' - i + 1\geq |J'^{\eps(1)} \cap \{i,\dots,r'\}| = |J'^{\eps(1)} \cap \{1,\dots,r'\}| = |\tilde J^{-\eps(1)}|$, where the last equality holds by \Cref{lem:lambdaprime3}. 
By definition of $i$, we have $\phi(i) = \min \tilde J^{-\eps(1)}$, and so we have $\lambda_{s_2} > \lambda_{\phi(i)}$.
\Cref{lem:J} then gives $\lambda_{s_2} \geq \lambda_{\phi(i)} + 2 \geq \lambda_{\phi(r')} + 2(r' - i) + 2 \geq 1 + 2|\tilde J^{-\eps(1)}|$, hence
$\bar\lambda_1 - \lambda_1\geq0$.
\item 
The assertion is clear if $N = 0,1$. Suppose that $N > 1$. 
By the previous part, all the terms of $\bar\lambda$ are odd. 
It suffices to show that $\bar\lambda_1 \geq \bar\lambda_1'$, and that if $\bar \eps(1) = - \bar \eps(2)$, i.e. $\eps(1) = -\eps'(1)$, then $\bar\lambda_1 > \bar\lambda_1'$. 
It then follows that $(\bar\lambda,\bar\eps) \in \PPPort(N)$.

Write $\tilde{\mathfrak S}' = \{1,\dots,r'\} \cap \mathfrak S' = \{s_1'<s_2'<\dots<s_q'\}$. Note that
\begin{align}\label{eq:JSprime}
\bigcup_{i} \{s'_{2i-1},\dots,s'_{2i}-1\} \subseteq J'^{\eps'(1)},
\quad
\bigcup_{i} \{s'_{2i},\dots,s'_{2i+1}-1\} \subseteq J'^{-\eps'(1)} .
\end{align}
Note that $\lambda'_{r'+1} = \frac{1+(-1)^{|\mathfrak S|}}{2}$ and $\mathfrak S' \subseteq \{1,\dots,r'+1\}$, so
\begin{align}\label{eq:ineqS}
\sum_{s'\in\mathfrak S'} \lambda_{s'}'
\leq
\lambda_{r'+1}' + \sum_{h'=1}^q \lambda'_{s'_{h'}}
\leq
\frac{1+(-1)^{|\mathfrak S|}}{2} + \sum_{h'=1}^q \lambda'_{s'_{h'}}.
\end{align}
Define $\psi \colon \tilde{\mathfrak S}' \to \fS$ by 
$\psi(s') = \max\{s \in \mathfrak S \colon s < \phi(s')\} = \phi(s')-1$. 
Then $\psi$ is increasing and injective.
Let $u \in \{\pm1\}$ and let $s' \in \tilde{\mathfrak S}' \cap J'^{-u}$.
By \Cref{lem:lambdaprime3}, we have $\phi(s') \in \tilde J^{u}$. 
Since $\phi(s') \notin \mathfrak S$, we have $\eps(\psi(s')) = - \eps(\phi(s'))$ and so  $\lambda_{\psi(s')} \geq  \lambda_{\phi(s')} + 2$.
Thus if $u = \eps(1)$, then $\lambda_{\psi(s')}\geq\lambda_{s'}'+2$, and if  $u = -\eps(1)$ then $\lambda_{\psi(s')}\geq\lambda_{s'}'$.
Hence we can further bound the right-hand side of \eqref{eq:ineqS}:
\begin{align}
\sum_{s'\in\mathfrak S'} \lambda_{s'}'
\leq
\frac{1+(-1)^{|\mathfrak S|}}{2} + \sum_{h'=1}^q \lambda'_{s'_{h'}}
\leq
\frac{1+(-1)^{|\mathfrak S|}}{2} + \sum_{h \in \img(\psi)} \lambda_{s_h} - 2|\tilde{\mathfrak S}' \cap J'^{-\eps(1)}|.
\end{align}
From this, we find
\begin{align}
&\bar\lambda_1 - \bar\lambda_1' 
\\
\geq 
&\sum_{s \in \mathfrak S \setminus \img(\psi)} \lambda_{s} 
+ 2|\tilde{\mathfrak S}'\cap J'^{-\eps(1)}| 
+ 2|\tilde J'^{-\eps'(1)}| 
- 2|\tilde J^{-\eps(1)}| - (1+(-1)^{|\mathfrak S|}) + \frac{1+(-1)^{|\mathfrak S'|}}{2}
=: X.
\end{align}
Suppose $\eps(1) = \eps'(1)$. 
If $\psi(1) = 1$, then $\phi(1) = 2 \in J^{\eps(1)}$, so $1 \in J'^{-\eps(1)}$ by \Cref{lem:lambdaprime3}, hence $\eps'(1) = -\eps(1) = -\eps'(1)$, a contradiction.
Thus $\psi(1) > 1$ and since $\psi$ is increasing, we have $1 \notin \img(\psi)$, and so $|\mathfrak S| \geq 2$ and 
\begin{align}
X \geq \lambda_1 - 2|\tilde J^{-\eps(1)}| - (1+(-1)^{|\mathfrak S|}).
\end{align}
We showed in the previous part that $\lambda_{s_2} \geq 1 + 2|\tilde J^{-\eps(1)}|$ when $|\mathfrak S| \geq 2$. Thus we have
\begin{align}
X 
\geq 
\lambda_{s_2} - 2|\tilde J^{-\eps(1)}| - (1 + (-1)^{|\mathfrak S|}
\geq
(-1)^{|\mathfrak S|} \geq -1,
\end{align}
and so $\bar\lambda_1 - \bar\lambda_1' \geq -1$. By the previous part, $\bar\lambda_1$ and $\bar\lambda_1'$ are odd, so we have $\bar\lambda_1 - \bar\lambda_1' \geq 0$.

Suppose $\eps(1) = -\eps'(1)$. We show that $X \geq 1$ so that $\bar\lambda_1 > \bar\lambda_2$. 
By \Cref{lem:lambdaprime3}, we have
\begin{align}
\phi(\tilde J^{-\eps(1)})
= 
J'^{\eps(1)} \cap \{1,\dots,r'\}
&=
(\tilde{\mathfrak S}' \cap J'^{\eps(1)}) 
\sqcup 
(\tilde J'^{\eps(1)}\cap\{1,\dots,r'\})
\\
&= 
(\tilde{\mathfrak S}' \cap J'^{\eps(1)}) 
\sqcup 
(\tilde J'^{\eps(1)}) 
\setminus 
(\tilde J'^{\eps(1)} \cap\{r'+1\}),
\end{align}
hence
\begin{align}
|\tilde J'^{\eps(1)}| = |\tilde J^{-\eps(1)}| + |\tilde J'^{\eps(1)} \cap \{r'+1\}| - |\tilde{\mathfrak S}' \cap J'^{\eps(1)}|.
\end{align}
Using that $\eps(1) = - \eps'(1)$, we find
\begin{align}
X 
= 
&\sum_{s \in \mathfrak S \setminus \img(\psi)} \lambda_{s}
+ 2|\tilde{\mathfrak S}'\cap J'^{\eps'(1)}| 
- 2|\tilde{\mathfrak S}' \cap J'^{-\eps'(1)}| 
\\
&+ 2|\tilde J'^{-\eps'(1)} \cap\{r'+1\}|
- (1+(-1)^{|\mathfrak S|}) 
+ \frac{1+(-1)^{|\mathfrak S'|}}{2}.
\end{align}
By \eqref{eq:JSprime}, $|\tilde{\mathfrak S}'\cap J'^{\eps'(1)}|$ (resp. $|\tilde{\mathfrak S}'\cap J'^{-\eps'(1)}|$) is the number of odd (resp. even) integers in $\{1,\dots,q\}$. 

Suppose $q$ is odd. Then
\begin{align}\label{eq:diffodd}
2|\tilde{\mathfrak S}'\cap J'^{\eps'(1)}| 
- 2|\tilde{\mathfrak S}' \cap J'^{-\eps'(1)}| 
=
2.
\end{align}
Suppose $|\mathfrak S|$ is odd. Then $1+(-1)^{|\mathfrak S|} = 0$, so $X \geq 2$. 
Suppose $|\mathfrak S|$ is even. Then $1+(-1)^{|\mathfrak S|} = 2$, $\lambda'_{r'+1}=1$, and by definition of $\eps'(r'+1)$, we have $r'+1 \in J'^{\eps(1)} = J'^{-\eps'(1)}$. Then either $r'+1 \in \tilde J'^{-\eps'(1)}$, in which case we have $|\tilde J'^{-\eps'(1)} \cap\{r'+1\}| = 1$, and so $X \geq 2 + 2 - 2 = 2$, or $r'+1 \in \mathfrak S'$, in which case $|\mathfrak S'| = q + 1$ is even, and so $X \geq 2 - 2 + \frac{1+(-1)^{|\mathfrak S'|}}{2} = 1$.

Suppose $q$ is even. Then
\begin{align}\label{eq:diffeven}
2|\tilde{\mathfrak S}'\cap J'^{\eps'(1)}| 
- 2|\tilde{\mathfrak S}' \cap J'^{-\eps'(1)}| 
=
0.
\end{align}
Suppose $|\mathfrak S|$ is odd. Then $\lambda_{r'+1} = 0$, so $|\mathfrak S'| = |\tilde{\mathfrak S}'| = q$ is even.
Thus $1+(-1)^{|\mathfrak S|} = 0$ and $(1+(-1)^{|\mathfrak S'|})/2 = 1$ and so $X \geq 1$.
Suppose $|\mathfrak S|$ is even. Then $\lambda_{r'+1}=1$ and by definition of $\eps'(r'+1)$, we have $r'+1 \in J'^{\eps(1)} = J'^{-\eps'(1)}$. Since $q$ is even, we have $s_q' \in  J'^{-\eps'(1)}$ by \eqref{eq:JSprime}, and so $r'+1 \notin \mathfrak S'$, thus $r'+1 \in \tilde J'^{-\eps'(1)}$. Hence we have $|\mathfrak S'| = |\tilde{\mathfrak S}'| = q$ is even, so $X \geq 2|\tilde J'^{-\eps'(1)} \cap\{r'+1\}| - (1+(-1)^{|\mathfrak S|}) + \frac{1+(-1)^{|\mathfrak S'|}}{2} = 2 - 2 + 1 = 1$.

We conclude that $\bar\lambda_1 > \bar\lambda_2$ if $\eps(1) = -\eps'(1)$, as desired.
\qedhere
\end{enumerate}
\end{proof}

\subsection{Proof of \Cref{thm:algorithm}}
\begin{proof}
We first set up some notation.
Let $\eta \in \{\pm1\}$. For $(x,y) \in \cR \times \cR$ or $(x,y) \in \Z \times \Z$, let $(x^\eta,y^\eta) := (x,y)$ if $\eta=1$, and $(x^\eta,y^\eta) := (y,x)$ if $\eta=-1$. 
Let 
\begin{align}
\phi^{(\eta)} \colon \{1,\dots,|J^\eta|\} \to J^\eta
\end{align}
be the unique increasing bijection.
Let $(A,B) = (A_{\lambda,\eps},B_{\lambda,\eps})$. 
Let $(\alpha,\beta) \in \cP_2(n)$ so that $(\alpha,\beta)_k = \Phi_N(\lambda,[\eps])$ such that if $k=0$, then 
$\alpha_1 = A_1$ and $\beta_1 = B_1$, which uniquely defines $\alpha$ and $\beta$, since $\lambda$ only has odd parts, and so $A_1 \neq B_1$. 
Let $w = \sgn(M+1/2)$ so that $w = \sgn M$ if $M \neq 0$ and $w = 1$ if $M = 0$.
Using \eqref{eq:gscodd0}, we find
\begin{align}\label{eq:gscsymbols}
A &= ((\lambda_j+1) / 2 + 1 - j \colon j \in J^w) \cup (-t(\lambda) - 2i+2 \colon i \in \N) = (\alpha_i + k + 2 -2i \colon i \in \N),
\\
B &= ((\lambda_j+1)/2 + 1 - j \colon j \in J^{-w}) \cup (-t(\lambda) - 2j+2 \colon j \in \N) = (\beta_j - k + 2 -2j \colon j \in \N).
\end{align}
We note particularly that for $k = 0$, we defined $\alpha$, $\beta$ and $w$ in such a way so that this holds.
Thus for any $\eta \in \{\pm 1\}$ and for $i = 1,\dots,|J^{\eta w}|$, $j = 1,\dots,|J^{-\eta w}|$, and $h \in \N$, we have
\begin{align}\label{eq:gscodd}
A_i^\eta &= \frac{\lambda_{\phi^{(\eta w)}(i)}+1}{2} - \phi^{(\eta w)}(i) + 1 = \alpha_i^\eta + \eta k + 2 - 2i,
\\
B_j^\eta &= \frac{\lambda_{\phi^{(-\eta w)}(j)}+1}{2} - \phi^{(-\eta w)}(j) + 1 = \beta_j^\eta -\eta k + 2 - 2j,
\\
A_{|J^{\eta w}|+h}^\eta = B_{|J^{-\eta w}|+h}^\eta &= -t(\lambda) - 2(h-1).
\label{eq:gsceven}
\end{align}
Let $u = \eps(1)$ and $v = uw$. Then for $i=1,\dots,|J^u|$, $j=1,\dots,|J^{-u}|$, we have
\begin{align}\label{eq:gscod2}
A^v_i &= \frac{\lambda_{\phi^{(u)}(i)}+1}{2} - \phi^{(u)}(i) + 1 = \alpha_i^v + vk + 2 - 2i,
\\
B^v_j &= \frac{\lambda_{\phi^{(-u)}(j)}+1}{2} - \phi^{(-u)}(j) + 1 = \beta_j^v - vk + 2 -2j.
\end{align}
Let
$m_0 = \frac{t(\lambda)+k}{2}$
and
$m_1 = \frac{t(\lambda)-k}{2}$.
As noted in \Cref{rem:symbolodd}, $m_0$ is the smallest integer such that $m_0 \geq t(\alpha)$ and $m_0 - k \geq t(\beta)$. 
Thus we can consider the order $(m_0,m_1,<_{\alpha,\beta,k})$. We also consider the order $(m_0^v,m_1^v,<_{\alpha^v,\beta^v,vk})$ on the indices of $(\alpha^v,\beta^v)$.
We denote $<_{\alpha,\beta,k}$ by $<$ and $<_{\alpha^v,\beta^v,vk}$ by $<^v$.
Since $\lambda$ only has odd parts, $P_{k,-k;2}(\alpha,\beta,<)$ has a unique element $(\alphamax,\betamax)$ as shown in the proof of \Cref{thm:max}. We then have $\{(\alphamax^{,v},\betamax^{,v})\} = P_{vk,-vk;2}(\alpha^v,\beta^v,<^v)$. 
We have $(1,0) <^v (1,1)$, since the largest term of $A^v\sqcup B^v$ is $\frac{\lambda_1+1}{2} = \frac{\lambda_{\phi^{(u)}(1)} + 1}{2} = A_1^v$, so
applying procedure (a) to $(\alpha^v,\beta^v)$ gives us a bipartition $(\bm\alpha,\bm\beta)$ with an order on the set of its indices equivalent to $<_{\bm\alpha,\bm\beta,vk-1}$ by \Cref{lemma:unique}, and positive integers  $\bar\mu_1, a_1,a_2,\dots,a_x, b_1,b_2,\dots,b_y$ 
such that
\begin{align}
(\alphamax^{,v},\betamax^{,v}) &= ((\bar\mu_1)\sqcup \bm \mu,\bm\nu),
\end{align}
where $\{(\bm\mu,\bm\nu)\} = P_{vk-2,-vk;2}(\bm\alpha,\bm\beta,<_{\bm\alpha,\bm\beta,vk-1}) = P_{vk-1,-vk+1;2}(\bm\alpha,\bm\beta,<_{\bm\alpha,\bm\beta,vk-1})$.
From \eqref{eq:gscod2} and by definition of the $a_i$, we see that $\mathfrak S \cap J^u = \{\phi(a_1),\dots,\phi(a_x)\}$ and so $\phi^{(u)}(a_i) = s_{2i-1}$ for $i=1,\dots,x$. Similarly, $\phi^{(-u)}(b_j) = s_{2j}$ for $j=1,\dots,y$.

Having set up the notation, we now prove the theorem by induction on $N$. The theorem is obvious for $N=0,1$, so assume $N>1$.
\begin{enumerate}[wide]
\item We will show that $\bar\lambda_1 = (\lambdamax)_1$. 
Let $q = |\fS|$. Using \eqref {eq:gscod2}, we find that 
\begin{align}
\bar\mu_1 
&= \sum_{h=1}^{\lceil q/2\rceil} \alpha_{a_h}^v + \sum_{h=1}^{\lfloor q/2\rfloor} \beta_{b_h}^v
\\
&= \sum_{h=1}^{\lceil q/2\rceil} \frac{\lambda_{s_{2h-1}}+1}{2} - s_{2h-1} - 1 - vk + 2a_h + \sum_{h=1}^{\lfloor q/2\rfloor} \frac{\lambda_{s_{2h}}+1}{2} - s_{2h} - 1 + vk + 2b_h.
\end{align}
Suppose first that $q$ is odd. Then 
\begin{align}
\bar\mu_1 
&= 
-s_1 - 1 - vk + 2a_1 + \sum_{h=1}^q \frac{\lambda_{s_h}+1}{2} + \sum_{h=1}^{\frac{q-1}{2}} (2a_{h+1}-s_{2h+1}+2b_h-s_{2h}-2).
\end{align}
We have $2a_1 - s_1 - 1 = 2 - 1 - 1 = 0$. Since $a_{h+1} = |\{i \in J^u \colon i \leq s_{2h+1}\}|$ and $b_h = |\{i \in J^{-u} \colon i \leq s_{2h}\}|$, we also have $a_{h+1} + b_h = |\{1,\dots,s_{2h}\}\cup\{s_{2h+1}\}| = s_{2h}+1$.
Thus 
\begin{align}
\bar\mu_1
=
-vk + \sum_{h=1}^q \frac{\lambda_{s_h}+1}{2} + \sum_{h=1}^{\frac{q-1}{2}} s_{2h}-s_{2h+1}
&=
\frac{1}{2} -vk + \sum_{h=1}^q \frac{\lambda_{s_h}}{2} + \sum_{h=1}^{\frac{q-1}{2}} (s_{2h}-s_{2h+1}+1 )
\\
&=
\frac{1}{2} -vk + \sum_{h=1}^q \frac{\lambda_{s_h}}{2} -|\tilde J^{-u}|.
\end{align}
\Cref{prop:symbolterm} implies that $(\lambdamax)_1$ is odd, so we also have $\bar \mu_1 = \frac{(\lambdamax)_1+1}{2} - 2vk$, hence
\begin{align}
(\lambdamax)_1 
&= 2\bar\mu_1 + 2vk -1
= \sum_{h=1}^q \lambda_{s_h} - 2|\tilde J^{-u}| = \bar\lambda_1.
\end{align}
Now suppose that $q$ is even. Then
\begin{align}
\bar\mu_1
&=
\left(\sum_{h=1}^q \frac{\lambda_{s_h}+1}{2}\right) + \sum_{h=1}^{q/2} (2a_{h}-s_{2h-1}+2b_h-s_{2h}-2).
\end{align}
Similarly as above, we find $a_h + b_h = |\{1,\dots,s_{2h-1}\}\cup\{s_{2h}\}| = s_{2h-1}+1$.
Thus 
\begin{align}\label{eq:nu1}
\bar\mu_1
=
\left(\sum_{h=1}^q \frac{\lambda_{s_h}+1}{2}\right) + \sum_{h=1}^{q/2} (s_{2h-1}-s_{2h})
&=
\left(\sum_{h=1}^q \frac{\lambda_{s_h}}{2}\right) + \sum_{h=1}^{q/2} (s_{2h-1}-s_{2h}+1)
\\
&=
\left(\sum_{h=1}^q \frac{\lambda_{s_h}}{2}\right) - |\tilde J^{u}|.
\end{align}
Since $q$ is even, we have 
$|J^{\pm1}| = |\tilde J^{\pm1}| + q/2$, and so
$M = |J^1| - |J^{-1}| = |\tilde J^1| - |\tilde J^{-1}|$. 
Thus we have $vk = uwk = uM = |\tilde J^u| - |\tilde J^{-u}|$, so by \eqref{eq:nu1}, we have
\begin{align}
\bar\mu_1
&=
\left(\sum_{h=1}^q \frac{\lambda_{s_h}}{2}\right) - vk - |\tilde J^{-u}|,
\end{align}
hence
\begin{align}
(\lambdamax)_1 
&= 2\bar\mu_1 + 2vk - 1
= -1 + \sum_{h=1}^q \lambda_{s_h} - 2|\tilde J^{-u}| = \bar\lambda_1.
\end{align}
\item\label{proofpart3} 
We will show that $\Phi_{N'}(\lambda',[\eps']) = (\bm{\alpha}^{-uw'},\bm{\beta}^{-uw'})_{k'}$.
Let $m_u' := |\tilde J^{u}| = |J'^{-u} \cap \{1,\dots,r'\}|$ and
let $\iota\colon \{1,\dots,m_u'\} \to \N \setminus \{a_1,\dots,a_x\}$ be the unique increasing injection. 
Define
\begin{align}
\psi'^{(u)} = \phi^{-1} \circ \phi^{(u)} \circ \iota.
\end{align}
Then $\psi'^{(u)}$ is the unique increasing bijection from $\{1,\dots,m_u'\}$ to $J'^{-u}\cap\{1,\dots,r'\}$. 
Let $j \in \{1,\dots,m_u'\}$ and $q = \psi'^{(u)}(j)$. For $h := \iota(j) - j$ we have $a_h < \iota(j) < a_{h+1}$, and applying $\psi'^{(u)}$ to the three terms gives $s_{2h-1} <\phi(q) < s_{2h+1}$.
Thus we have $\phi(q) = q + 2h -1$. Using \eqref {eq:gscod2}, we find
\begin{align}
\bm\alpha_j
= 
\alpha_{\iota(j)}^v
&=
\frac{\lambda_{\phi(q)}+1}{2} - \phi(q) + 1 - (vk + 2 - 2\iota(j))
\\
&=
\frac{\lambda_{\phi(q)}+1}{2} - (q+2h-1) + 1 - (vk + 2 - 2j-2h)
\\
&=
\frac{\lambda_{\phi(q)}+1}{2} + 2j - vk -q.
\end{align}
Since $\phi(q) \in J^u$, we have $\lambda_{\psi'^{(u)}(j)}'  = \lambda_{\phi(q)}$, and so
\begin{align}\label{eg:alphapsymbol}
\bm \alpha_j + (vk-1) + 2 - 2j
&= \frac{\lambda_{\phi(q)}+1}{2} + 1 - q 
= \frac{\lambda_{\psi'^{(u)}(j)}'+1}{2} + 1 - \psi'^{(u)}(j).
\end{align}
Let $u' = \eps'(1)$, $w' = \sgn(M' + 1/2)$, $v' = u'w'$ and $(A',B') = (A_{\lambda',\eps'},B_{\lambda',\eps'})$. Let $\phi'^{(\pm 1)} \colon \{1,\dots,|J'^{\pm 1}|\} \to J'^{\pm 1}$ be the unique increasing bijections. We have $\phi'^{(-u)}|_{\{1,\dots,m_{u}'\}} = \psi'^{(u)}$, and by the analogue of \eqref{eq:gscodd} for $(\lambda',\eps')$, we have $A_j'^{-uw'} = \frac{\lambda'_{\phi'^{(-u)}(j)}+1}{2} + 1 - \phi'^{(-u)}(j)$ for $j=1,\dots,m_u'$, so by \eqref{eg:alphapsymbol}, we get
\begin{align}
A_j'^{-uw'} 
= \frac{\lambda'_{\phi'^{(-u)}(j)}+1}{2} + 1 - \phi'^{(-u)}(j)
= \frac{\lambda'_{\psi'^{(u)}(j)}+1}{2} + 1 - \psi'^{(u)}(j)
= \bm \alpha_j + (vk-1) + 2 - 2j.
\end{align}
Thus the first $m_u'$ terms of $\bm\alpha + [vk-1,-\infty[_2$ are equal to the first $m_u'$ terms of $A'^{-uw'}$. 
By similar arguments, it holds that the first $m_{-u}'$ terms of $\bm\beta + [-vk+1,-\infty[_2$ are equal to the first $m_{-u}'$ terms of $B'^{-uw'}$. 
Let $h \in \N$.
We consider two cases.
Suppose $|\mathfrak S|$ is even. Then $r'+1 \in J'^{u}$ and $t(\lambda') = r'+1$, so $|J'^{u}| = m_{-u}'+1$, and $|J'^{-u}| = m_u'$.
By the analogue of \eqref{eq:gsceven} for $(\lambda',\eps')$, we have
\begin{align}\label{eq:Auw}
&(A'^{-uw'})_{m_u' + h} 
= (A'^{-uw'})_{|J'^{-u}| + h}
= -t(\lambda') - 2(h-1)  
= -r' + 1 - 2h,
\\
&(B'^{-uw'})_{m_{-u}' + 1 + h} 
= (B'^{-uw'})_{|J'^{u}| + h}
= -t(\lambda') - 2(h-1)  
= -r' + 1 - 2h.
\end{align}
We have $r' = |\tilde J^u| + |\tilde J^{-u}| = m_u' + m_{-u}'$, and since $\mathfrak S$ is even, we have
$m_u' - m_{-u}' = |\tilde J^u| - |\tilde J^{-u}| = |J^u| - |J^{-u}| = vk$. Thus $r' = -vk + 2m_u' = vk + 2m_{-u}'$. Since $\bm\alpha_{m_u'+h} = \bm\beta_{m'_{-u}+1+h} = 0$, we find
\begin{align}
\bm\alpha_{m_u'+h} + (vk-1) + 2 - 2m_u' - 2h &= -r' + 1 -2h 
\\&\quad\quad= \bm\beta_{m_{-u}' + 1 + h} + (1-vk) - 2m_{-u}' - 2h.
\end{align}
It remains to consider the ($m'_{-u}+1$)-th term of $B'^{-uw'}$: we have $\bm\beta_{m'_{-u}+1} = 0$ and $\lambda'_{r'+1} = 1$, so 
\begin{align}
\bm\beta_{m'_{-u}+1} + (1-vk) - 2m_{-u}' = 1 - r' =  \frac{\lambda'_{r'+1}+1}{2}  - r' = (B'^{-uw'})_{m'_{-u}+1}.
\end{align}
Thus we have shown that if $|\mathfrak S|$ is even, then $(\bm\alpha + [vk-1,-\infty[_2,\bm\beta+[-vk+1,-\infty[_2) = (A'^{-uw'},B'^{-uw'})$.

Suppose $|\mathfrak S|$ is odd. Then $\lambda'_{r'+1} = 0$, so $|J'^u| = m'_{-u}$, and $|J'^{-u}| = m'_u$. By similar arguments as in the previous case, we have $t(\lambda') = r'$, $r' = -vk+1 + 2m'_u = vk - 1 + 2m_{-u}'$, so for all $h \in \N$, we have
\begin{align}
(A'^{-uw'})_{m_u' + h} = (B'^{-uw'})_{m_{-u}' + h} &= -r' + 2 - 2h,
\\
\bm\alpha_{m_u'+h} + (vk-1) + 2 - 2m_u' - 2h 
= \bm\beta_{m_{-u}' + h} + (1-vk) + 2 - 2m_{-u}' - 2h 
&= -r' + 2 -2h.
\end{align}
Thus we have shown that 
\begin{align}\label{eq:kprime}
(\bm\alpha + [vk-1,-\infty[_2,\bm\beta+[-vk+1,-\infty[_2)  = (A'^{-uw'},B'^{-uw'}).
\end{align}
Note that $-uw'(vk-1) = -ww'(k-v)$, so this gives
\begin{align}\label{eq:symbolprime}
(\bm\alpha^{-uw'} + [-ww'(k-v),-\infty[_2,\bm\beta^{-uw'}+[ww'(k-v),-\infty[_2) = (A',B').
\end{align}
We need to show that $k' = -ww'(k-v)$; it then follows from \eqref{eq:symbolprime} that $\Phi_{N'}(\lambda',[\eps']) = (\bm{\alpha}^{-uw'},\bm{\beta}^{-uw'})_{k'}$, as desired.
By \Cref{lem:lambdaprime4}, we have $k' = |k-v|$, so we need to show that $|k-v| = -ww'(k-v)$ 
This is obvious if $k-v = 0$.
Suppose $k - v \geq 1$. Then $M \neq 0$ and $M' \neq 0$. Since $M = -M' + u$ by \Cref{lem:lambdaprime4}, we have $\sgn M = - \sgn M'$, so $w = - w'$, and thus $-ww'(k-v) = k-v = |k-v|$.
Suppose $k-v = -1$. Then $k = 0$ and $v=1$. 
From $k = 0$, we get $M = 0$ and so $w = 1$.
Thus $u = v/w = 1$. 
Next, we get $M' = -M + u = 1$, so $w' = 1$.
Thus $-ww'(k-v) = |k-v|$.

\item\label{pf:alg5} 
We will show that $(A_{\bar\lambda,\bar\eps},B_{\bar\lambda,\bar\eps})_k = (A_{\lambdamax},B_{\epsmax})_k$,
hence $(\bar\lambda,[\bar\eps]) = (\lambdamax,[\epsmax])$. 
We have
\begin{align}\label{eq:maxsymbol}
(A_{\lambdamax,\epsmax}^v,B_{\lambdamax,\epsmax}^v)_k
= 
((\bar\mu_1 + vk) \sqcup (\bm\mu + [vk-2,-\infty[_2), \bm\nu + [-vk,-\infty[_2)_k.
\end{align}
Recall that $\{(\bm\mu,\bm\nu)\} = P_{vk-1,-vk+1;2}(\bm\alpha,\bm\beta,<_{\bm\alpha,\bm\beta,vk-1})$.
We just saw that $-uw'(vk-1) = k'$, so
$\{(\bm\mu^{-uw'},\bm\nu^{-uw'})\} = P_{k',-k';2}(\bm\alpha^{-uw'},\bm\beta^{-uw'},<_{\bm\alpha^{-uw'},\bm\beta^{-uw'},k'})$. So by the induction hypothesis, we have $(\bar\lambda', [\bar \eps']) = (\lambda'^{\text{max}},[\eps'^{\text{max}}])$, $M(\bar\lambda',\bar\eps') = M(\lambda',\eps')$, and
\begin{align}\label{eq:ABbarprime}
A_{\bar\lambda',\bar\eps'}
=
\bm\mu^{-uw'} + [k',-\infty[_2
\quad\text{and}\quad
B_{\bar\lambda',\bar\eps'}
=
\bm\nu^{-uw'} + [-k',-\infty[_2.
\end{align}
Let $\bar J^{\pm 1} = \{i \in \N \colon \bar\eps(i)(-1)^{i+1} = \pm 1\}$ and $\bar J'^{\pm 1} = \{i \in \N \colon \bar\eps'(i)(-1)^{i+1} = \pm 1\}$.
Note that $M(\bar\lambda',\bar\eps') = M(\lambda',\eps')$ implies that $w' = \sgn(M(\bar\lambda',\bar\eps')+1/2)$.
By \Cref{prop:lambdabar}, $\bar\lambda'$ and $\bar\lambda$ only have odd parts, so by the analogue of \eqref{eq:gscsymbols} for $(\bar\lambda',\bar\eps')$, it holds that the largest $|\bar J'^{-u}|$ terms of $\bm\mu + [vk-1,-\infty[_2 = A_{\bar\lambda',\bar\eps'}^{-uw'}$ are the largest $|\bar J'^{-u}|$ terms of
\begin{align}
&\left(\frac{\bar\lambda_j'+1}{2} + 1 -j \colon j \in \N, (-1)^{j+1} \bar\eps'(j) = -uw'w' = -u\right)
\\
=& 
\left(\frac{\bar\lambda_{j+1}+1}{2} + 1 - j \colon j \in \N, (-1)^{j+1}\bar\eps(j+1) =-u\right)
\\
=& 
\left(\frac{\bar\lambda_{j}+1}{2} + 2 - j \colon j \in \N_{\geq 2}, (-1)^{j} \bar\eps(j) = -u\right)
\\
=& 
\left(\frac{\bar\lambda_{j}+1}{2} + 2 - j \colon j \in \N_{\geq 2}, (-1)^{j+1} \bar\eps(j) = u\right).  
\end{align}
Thus the largest $|\bar J'^{-u}|+1$ terms of $(\bar\mu_1 + vk) \sqcup (\bm \mu + [vk - 2, -\infty[_2)$ are the largest $|\bar J'^{-u}|+1$ terms of
\begin{align}\label{eq:barA}
\left(\frac{\bar\lambda_1+1}{2}\right) 
\sqcup 
\left(\frac{\bar\lambda_{j}+1}{2} + 1 - j \colon j \in \N_{\geq 2}, (-1)^{j+1} \bar\eps(j) = u\right).
\end{align}
Note that $\bar\eps(1) = \eps(1) = u$ and recall that $M(\bar\lambda',\bar\eps') = M(\lambda',\eps')$. We also have $M(\lambda',\eps') = -M(\lambda,\eps) + u$ by \Cref{lem:lambdaprime4}. By definition of $(\bar\lambda,\bar \eps)$, we have $M(\bar\lambda,\bar\eps) = -M(\bar\lambda',\bar\eps') + u$, hence we have $M(\bar\lambda,\bar\eps) = M(\lambda,\eps)$ (and so also $k(\bar\lambda,\bar\eps) = k$).
Thus if we define $\bar u := \bar \eps(1)$, $\bar w = \sgn(M(\bar\lambda,\bar\eps)+1/2)$ 
and $\bar v := \bar u \bar w$, we find that $v = \bar v$, $u = \bar u$ and $w = \bar w$.
Thus by the analogue of \eqref{eq:gscod2} for $(\bar\lambda,\bar\eps)$, we see that the largest $|\bar J^u|$ terms of \eqref{eq:barA} are equal to the first $|\bar J^{u}|$ terms of $A_{\bar\lambda,\bar\eps}^v$. By definition of $(\bar\lambda,\bar\eps)$, we see that $|\bar J^u| = |\bar J'^{-u}| + 1$, so the largest $|\bar J^u|$ terms of $A_{\bar\lambda,\bar\eps}^v$ are equal to the largest $|\bar J^u| = |\bar J'^{-u}|+1$ terms of $(\bar\mu_1 + vk) \sqcup (\bm \mu + [vk - 2, -\infty[_2)$.

By similar arguments as above, we find that the largest $|\bar J^{-u}|= |\bar J'^{u}|$ terms of $(\bm \nu + [-vk,-\infty[_2)$ are equal to the largest $|\bar J'^{u}|$ terms of
\begin{align}\label{eq:barB}
\left(\frac{\bar\lambda_{j}+1}{2} + 1 - j \colon j \in \N, (-1)^{j+1} \bar\eps(j) = -u\right),
\end{align}
hence equal to the largest $|\bar J^{-u}|$ terms of $B_{\bar\lambda,\bar\eps}^v$.

Clearly, we have $t(\bar\lambda) = t(\bar\lambda') + 1$. 
Let $h \in \N$ and $i := |\bar J^u| + h = |\bar J'^{-u}| + h + 1$. Then
\begin{align}
(A_{\bar\lambda,\bar\eps}^v)_{i} = -t(\bar\lambda) - 2(h - 1).
\end{align}
Note that $\bm\mu_{i+h-1} \leq \bm \mu_{|\bar J'^{-u}|+1} = 0$, so the $i^{\text{th}}$ term of $(\bar\mu_1 + vk) \sqcup (\bm \mu + [vk - 2, -\infty[_2)$ is 
\begin{align}
\bm\mu_{i - 1} + (vk-2) + 2 - 2(i - 1) = vk - 2|\bar J^{u}| -2h + 2.
\end{align}
We have $t(\bar\lambda) = |\bar J^u| + |\bar J^{-u}|$.
We showed earlier that $\bar u = u$, $\bar v = v$,$\bar w = w$, and $k(\bar\lambda,\bar\eps) = k$, so we have $vk = |\bar J^u| - |\bar J^{-u}|$, hence $t(\bar\lambda) = 2|\bar J^u| - vk$. Thus $(A_{\bar\lambda,\bar\eps}^v)_{i}$ is the $i^{\text{th}}$ term of $(\bar\mu_1 + vk) \sqcup (\bm \mu + [vk - 2, -\infty[_2)$.
Hence we now have
\begin{align}\label{eq:Av}
A_{\bar\lambda,\bar\eps}^v = (\bar\mu_1 + vk) \sqcup (\bm \mu + [vk - 2, -\infty[_2). 
\end{align}
Similarly, for $j := |\bar J^{-u}| + h = |\bar J'^{u}| + h$, we find that $(B_{\bar\lambda,\bar\eps}^v)_{j}$ is the $j^{\text{th}}$ term of $(\bm \nu + [-vk,-\infty[_2)$, hence 
\begin{align}\label{eq:Bv}
B_{\bar\lambda,\bar\eps}^v = \bm \nu + [-vk,-\infty[_2.
\end{align} 
By \eqref{eq:maxsymbol}, we then have
\begin{align}
(A_{\bar\lambda,\bar\eps},B_{\bar\lambda,\bar\eps})_k = (A_{\lambdamax},B_{\epsmax})_k.
\end{align}
We conclude that $(\bar\lambda,[\bar\eps]) = (\lambdamax,[\epsmax])$.
Note that $((\bar\mu_1)\sqcup\bm\mu,\bm\nu) = ((\alphamax)^v,(\betamax)^v)$, so we also have
 $(A_{\bar\lambda,\bar\eps},B_{\bar\lambda,\bar\eps}) = (\alphamax + [k,-\infty[_2,\betamax + [-k,-\infty[_2)$ by \eqref{eq:Av} and \eqref{eq:Bv}.
We showed earlier that $M(\bar\lambda,\bar\eps) = M(\lambda,\eps)$, and so we completed the proof.
 \qedhere
\end{enumerate}
\end{proof}

\begin{corollary}
It holds that $\lambdamax = \bar\lambda$ only has odd parts.
\end{corollary}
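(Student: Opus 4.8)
The plan is to deduce the corollary directly from the structure of the algorithm, using the two structural propositions \Cref{lem:lambdaprime} and \Cref{prop:lambdabar} together with \Cref{thm:algorithm}. By \Cref{thm:algorithm}, $\lambdamax = \bar\lambda$, where $\bar\lambda$ is the output of the algorithm applied to $(\lambda,\eps)$. So it suffices to show that the algorithm always outputs a partition with only odd parts when the input $\lambda$ has only odd parts.

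First I would argue by induction on $N$. For $N = 0, 1$ the output is $(\bar\lambda,\bar\eps) = (\lambda,\eps)$ and the claim is trivial. For $N > 1$, recall from the construction that $\bar\lambda = (\bar\lambda_1, \bar\lambda'_1, \bar\lambda'_2, \dots)$, i.e. $\bar\lambda$ is obtained by prepending $\bar\lambda_1$ to the sequence $\bar\lambda'$, where $\bar\lambda'$ is the output of the algorithm applied to $(\lambda',\eps') \in \PPPort(N')$. By \Cref{lem:lambdaprime1}, $\lambda'$ only has odd parts, and by \Cref{lem:lambdaprime2} we have $N' < N$, so the induction hypothesis applies to $(\lambda',\eps')$ and gives that $\bar\lambda'$ only has odd parts. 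Finally, \Cref{prop:lambdabar}(1) states that $\bar\lambda_1$ is odd. Hence every part of $\bar\lambda$ is odd, completing the induction.

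I do not expect any real obstacle here: all the work has already been done in \Cref{lem:lambdaprime} and \Cref{prop:lambdabar}, and the corollary is essentially a bookkeeping consequence of \Cref{thm:algorithm} plus these two facts. The only point requiring a word of care is that \Cref{thm:algorithm} is stated with $(\lambda,\eps) \in \PPPort(N)$ rather than $(\lambda,[\eps]) \in \PPort(N)$, but since the conclusion $\lambdamax = \bar\lambda$ concerns only the underlying partition and the algorithm's output partition is insensitive to the choice of representative of $[\eps]$ (as noted after \eqref{eq:lambdaprime}), this causes no difficulty. One should also note that $(\bar\lambda,\bar\eps) \in \PPPort(N)$ by \Cref{prop:lambdabar}(2), so $\bar\lambda$ is genuinely an orthogonal partition of $N$, and combined with the fact that all its parts are odd, this is consistent with $\bar\lambda$ being non-degenerate, as already recorded in \Cref{thm:max}.

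\begin{proof}
By \Cref{thm:algorithm}, we have $\lambdamax = \bar\lambda$, where $(\bar\lambda,\bar\eps)$ is the output of the algorithm applied to $(\lambda,\eps) \in \PPPort(N)$. We show by induction on $N$ that the algorithm applied to any $(\lambda,\eps) \in \PPPort(N)$ with $\lambda$ having only odd parts produces $\bar\lambda$ with only odd parts. For $N = 0,1$, the output is $(\bar\lambda,\bar\eps) = (\lambda,\eps)$ and the claim holds. Suppose $N > 1$. By \eqref{eq:lambdabar}, $\bar\lambda = (\bar\lambda_1,\bar\lambda'_1,\bar\lambda'_2,\dots)$, where $\bar\lambda'$ is the output of the algorithm applied to $(\lambda',\eps') \in \PPPort(N')$. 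By \Cref{lem:lambdaprime1}, $\lambda'$ only has odd parts, and by \Cref{lem:lambdaprime2}, $N' < N$, so by the induction hypothesis, $\bar\lambda'$ only has odd parts. By \Cref{prop:lambdabar}, $\bar\lambda_1$ is odd. Hence all parts of $\bar\lambda$ are odd, and so $\lambdamax = \bar\lambda$ only has odd parts.
\end{proof}
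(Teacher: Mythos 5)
Your proof is correct and follows exactly the route the paper intends: the identification $\lambdamax=\bar\lambda$ from \Cref{thm:algorithm}, combined with the inductive observation (via \Cref{lem:lambdaprime} and \Cref{prop:lambdabar}) that the algorithm's output has only odd parts — a fact the paper already invokes inside the proofs of \Cref{prop:lambdabar} and \Cref{thm:algorithm}. Nothing is missing.
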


\subsection{Example}
\begin{example}
Let $(\lambda,\eps) \in \PPPort(2n+1)$ for some $n \in \Z_{\geq 0}$ with $\lambda_1 \geq \lambda_2 \geq \dots \geq \lambda_7 > \lambda_8 = 0$. Write $\eps = (\eps(1) \, \eps(2) \, \dots ) = (-++-+-+)$. 
We compute $(\lambda^1,\eps^1) := (\lambda',\eps'), (\lambda^2,\eps^2) := (\lambda'', \eps''), \dots$. For each $j \in \N$ for which $(\lambda^j,\eps^j)$ is defined, we define $\tilde \eps^j = (\eps^j(1),-\eps^j(2),\eps^j(3),\dots)$. Following the algorithm, we find
\begin{table}[h!]
\centering
\begin{tabular}{r l c c c c c c r r r r r r r}
$\lambda =$ & $\lambda_1$ & $\lambda_2$ & $\lambda_3$ & $\lambda_4$ & $\lambda_5$ & $\lambda_6$ & $\lambda_7$ &
\\
$\eps =$ & $-$ & $+$ & $+$ & $-$ & $+$ & $-$ & $+$ &
\\
$\tilde \eps =$ & $-$ & $-$ & $+$ & $+$ & $+$ & $+$ & $+$ &
\\
\\
$\lambda^1 =$ & $$ & $\lambda_2$ & & $\lambda_4+2$ & $\lambda_5+2$ & $\lambda_6+2$ & $\lambda_7+2$ & $1$
\\
$\eps^1 =$ & $$ & $+$ & & $+$ & $-$ & $+$ & $-$ & $+$
\\
$\tilde \eps^1 =$ & $$ & $+$ & & $-$ & $-$ & $-$ & $-$ & $-$
\\
\\
$\lambda^2 =$ & $$ & & & & $\lambda_5+4$ & $\lambda_6+4$ & $\lambda_7+4$ & $3$ & $1$
\\
$\eps^2 =$ & $$ & & & & $+$ & $-$ & $+$ & $-$ & $+$
\\
$\tilde \eps^2 =$ & $$ & & & & $+$ & $+$ & $+$ & $+$ & $+$
\end{tabular}
\end{table}
\begin{align}
&\bar\lambda_1 = \lambda_1 + \lambda_3 - 9, &&\bar \eps(1) = -1,
\\
&\bar\lambda_2 = \lambda_2 + (\lambda_4 + 2) - 9, &&\bar \eps(2) = 1.
\end{align}
Next, we have $(\bar\lambda^2,\bar\eps^2) = (\lambda^2,\eps^2)$, since for any $N \in \N$ and $(\bm{\lambda},\bm{\varepsilon}) \in \PPPort(N)$ such that $\bm{\lambda}$ only has odd parts and $\bm{\varepsilon}(i) = (-1)^{i+1}$ for all $i \in \{1,\dots,t(\bm{\lambda})\}$, it is easy to see from the algorithm that $(\bar{\bm{\lambda}},\bar{\bm{\varepsilon}}) = (\bm{\lambda},\bm{\varepsilon})$. Finally, we have $\bar\lambda = (\bar\lambda_1,\bar\lambda_2)\sqcup\lambda^2$ and $\bar\eps= (\bar\eps(1),\bar\eps(2),\eps^2(1),\eps^2(2),\eps^2(3),\eps^2(4),\eps^2(5)) = (-++-+-+)$.
\end{example}

\section{Some results for $\Sp(2n)$ and $\SO(N)$}\label{sec:extra}

A unipotent class $U \in \cU$ is called quasi-distinguished if for any $u \in U$, there exists a semisimple element $s \in G$ such that $su = us$ and the centraliser $Z_G(su)$ of $su$ does not contain any non-trivial torus. A unipotent class in $\SO(N)$ (resp. $\Sp(2n)$) is quasi-distinguished if and only if the orthogonal (resp. symplectic) partition that parametrises $U$ contains only odd (resp. even) parts, and each part occurs with multiplicity at most $2$.

\begin{proposition}\label{prop:ellipticorth}
Let $N \in \N$ and $(\lambda,[\eps]) \in \PPort(N)$ such that $\lambda$ only has odd parts and let $(\bar\lambda,\bar\eps) = (\lambdamax,\epsmax)$.
Then for each $i \in \N$, we have $\mult_{\bar\lambda}(i) \leq 2$, i.e. $\bar\lambda$ parametrises a quasi-distinguished unipotent class of $\SO(N)$.
\end{proposition}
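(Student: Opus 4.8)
The plan is to prove the multiplicity bound by induction on $N$, exploiting the recursive structure of the algorithm in \S\ref{sec:alg}, which by \Cref{thm:algorithm} computes $(\lambdamax,[\epsmax])$. The base cases $N=0,1$ are trivial since $\bar\lambda = \lambda$ there. For the inductive step, recall that $\bar\lambda$ is constructed by taking $\bar\lambda_1$ as defined in the algorithm and setting $\bar\lambda_{i+1} = \bar\lambda'_{i-1}$ for $i \geq 1$ (the indexing in \eqref{eq:lambdabar} should be read as $\bar\lambda_{i+1} = \bar\lambda'_i$), where $(\bar\lambda',\bar\eps') = ((\lambda')^{\max},(\eps')^{\max})$ for the strictly smaller datum $(\lambda',\eps') \in \PPPort(N')$ with $N' < N$ (by \Cref{lem:lambdaprime2}). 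By the induction hypothesis, $\mult_{\bar\lambda'}(i) \leq 2$ for all $i$. So the only thing to check is that adjoining the single new largest part $\bar\lambda_1$ does not create a part of multiplicity $3$; equivalently, that $\bar\lambda_1 > \bar\lambda'_2$ (so that at most $\bar\lambda'_1$ and $\bar\lambda'_2$ can equal $\bar\lambda_1$, and then only if $\bar\lambda_1 = \bar\lambda'_1 = \bar\lambda'_2$, which would require $\bar\lambda_1 = \bar\lambda_2 = \bar\lambda_3$).

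\textbf{Key steps.} First I would establish that $\bar\lambda_1 \geq \bar\lambda'_1 = \bar\lambda_2$, which is essentially \Cref{prop:lambdabar}(2) (the proof there shows $\bar\lambda_1 \geq \bar\lambda'_1$, with strict inequality when $\eps(1) = -\eps'(1)$). Then I must rule out $\bar\lambda_1 = \bar\lambda'_1 = \bar\lambda'_2$. The cleanest route is via symbols: by \Cref{thm:algorithm} we have $(A_{\bar\lambda,\bar\eps},B_{\bar\lambda,\bar\eps}) = (\alphamax + [k,-\infty[_2, \betamax + [-k,-\infty[_2)$, and by \Cref{prop:symbolterm} the largest term of $\Lambda_{k,-k;2}(\alphamax,\betamax)$ is \emph{strictly} larger than all other terms. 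Translating back through the dictionary \eqref{eq:gscodd0} between odd-part orthogonal partitions and their symbols: the largest term of $p_{\bar\lambda,\bar\eps}$ is $(\bar\lambda_1 + 2\cdot1 - 1)/2 = (\bar\lambda_1+1)/2$ and the second largest is $(\bar\lambda_2 + 2\cdot 1 - 1)/2 = (\bar\lambda_2+1)/2$ (using that $A^\#_1 > B^\#_1 > A^\#_2 > \dots$ from \Cref{rem:symbolodd}), so strict inequality of the top two symbol terms forces $\bar\lambda_1 > \bar\lambda_2$. Since $\bar\lambda_2 \geq \bar\lambda_3 \geq \dots$, this immediately gives $\mult_{\bar\lambda}(\bar\lambda_1) = 1$, hence combined with the induction hypothesis applied to the tail $(\bar\lambda_2,\bar\lambda_3,\dots) = \bar\lambda'$, every part of $\bar\lambda$ has multiplicity at most $2$.

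\textbf{Main obstacle.} The principal subtlety is making the symbol-to-partition translation rigorous in the case $k = 0$, where symbols are \emph{unordered} pairs and one must be careful which of $A_{\bar\lambda,\bar\eps}, B_{\bar\lambda,\bar\eps}$ contributes the genuinely largest term; however \Cref{rem:symbolodd} handles exactly this by giving the interleaving $A^\#_1 > B^\#_1 > A^\#_2 > B^\#_2 > \dots$, so $p_{\bar\lambda,\bar\eps}$ has its largest $t(\bar\lambda)$ terms all distinct and strictly decreasing, and \Cref{prop:symbolterm} pins down that the very top one is strict even against the second. A secondary point is that \Cref{prop:symbolterm} as stated controls only the gap between the first and second terms of the symbol, which is precisely what is needed: it gives $\bar\lambda_1 > \bar\lambda_2$ but says nothing directly about $\bar\lambda_2$ versus $\bar\lambda_3$ — and that is fine, because multiplicities among $\bar\lambda_2,\bar\lambda_3,\dots$ are governed by the induction hypothesis on $\bar\lambda'$. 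So the argument closes without needing any strengthening of \Cref{prop:symbolterm}. I would also remark that the statement for $\Sp(2n)$ is the analogous assertion from \cite{waldspurger} and follows by the identical argument.
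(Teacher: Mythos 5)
There is a genuine gap: your key claim that $\bar\lambda_1 > \bar\lambda_2$ always holds is false, and the proposition cannot be proved this way. The largest part of $\lambdamax$ can occur with multiplicity $2$: for instance $\lambda = (1,1) \in \Port(2)$ forces $\lambdamax = (1,1)$ (it is the only orthogonal partition of $2$), and more generally the paper's proof exhibits the family $\lambdamax = (2k+1,2k+1,2k-1,2k-3,\dots,1)$ arising from $(\alpha,\beta) = (\varnothing,(2k+1))$. The error is in the symbol-to-partition translation: the entries of $p_{\bar\lambda,\bar\eps}$ are not $(\bar\lambda_j+1)/2$ uniformly but carry index-dependent shifts, namely $(\bar\lambda_j+1)/2 + 1 - j$ (see \eqref{eq:gscsymbols}; the formula displayed in \Cref{rem:symbolodd} appears to contain a typo which may have misled you). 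Consequently the top two symbol entries are $(\bar\lambda_1+1)/2$ and $(\bar\lambda_2-1)/2$, and the strict inequality of \Cref{prop:symbolterm} only gives $\bar\lambda_1 + 1 > \bar\lambda_2 - 1$, i.e.\ $\bar\lambda_1 \geq \bar\lambda_2$, which is no information at all; strictly decreasing symbol entries are perfectly compatible with equal adjacent parts of the partition.

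Your reduction itself (induct via the algorithm, so that it suffices to control repetitions involving the top part, the tail $\bar\lambda'$ being handled by the inductive hypothesis) is sound and is essentially the reduction the paper uses, but the step you must then prove is the weaker and genuinely harder statement: \emph{if} $\bar\lambda_1 = \bar\lambda_2$, \emph{then} $\bar\lambda_2 \neq \bar\lambda_3$. The paper does this by a direct analysis of the unique element of $P_{k,-k;2}(\alpha,\beta,<_{\alpha,\beta,k})$: if $\bar\lambda_1 = \bar\lambda_2$ (both odd), the top two entries of $\Lambda_{k,-k;2}(\alphamax,\betamax)$ differ by exactly $1$, and a case analysis on whether $(1,0) <_{\alpha,\beta,k} (1,1)$ or $(1,1) <_{\alpha,\beta,k} (1,0)$, using $(\alpha,\beta) \in H(n,k)$ and the explicit construction of $\mu_1,\nu_1$ in procedures (a) and (b), pins down $(\alpha,\beta)$ (either a contradiction, or $(\alpha,\beta) = (\varnothing,(2k+1))$), from which $\bar\lambda = (2k+1,2k+1,2k-1,\dots,1)$ and hence $\bar\lambda_2 \neq \bar\lambda_3$. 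Some argument of this finer kind is unavoidable, so your proposal as written does not close.
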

\begin{proof}
Since $\bar \lambda = (\bar\lambda_1) \sqcup \bar \lambda' = (\bar\lambda_1,\bar\lambda_2) \sqcup \bar \lambda'' = \dots$, it suffices to show that if $\bar \lambda_1 = \bar \lambda_2$, then $\bar \lambda_2 \neq \bar \lambda_3$. Suppose $\bar \lambda_1 = \bar \lambda_2$.
Let $k = k(\lambda,\eps) = k(\bar\lambda,\bar\eps)$, $n = (N-k^2)/2$, and suppose $(\alpha,\beta)_k = \Phi_N(\lambda,[\eps])$ and $(\mu,\nu)_k = \Phi_N(\bar\lambda,[\bar\eps])$.
Write $\bar \Lambda = \Lambda_{k,-k;2}(\mu,\nu)$. Denote $<_{\alpha,\beta,k}$ by $<$.
Suppose that $(1,0) < (1,1)$. Then $\bar\Lambda_1 = \mu_1 + k$. 
Since $\bar \lambda_1 = \bar \lambda_2$ is odd, we have $\bar\Lambda_1 = \bar\Lambda_2 + 1$. Since $\mu_2 + k -2 \leq \bar\Lambda_1 - 2$, we cannot have $\bar\Lambda_2 = \mu_2 + k -2$, so $\bar\Lambda_2 = \nu_1 - k$, and so $\mu_1 + k = \nu_1 - k + 1$, i.e. $\mu_1 = \nu_1 - 2k + 1$. Note that $\mu_1 - \nu_1 \geq \alpha_1$ (this is easy to see from the definition of $\mu_1$ and $\nu_1$ via procedure (a) and (b)), so $\alpha_1 \leq -2k + 1$, so $\alpha_1 + 2k \leq 1$. 
Since $(1,0) < (1,1)$ and $(\alpha,\beta) \in H(n,k)$, we have $\alpha_1 + k > \beta_1 - k$,
hence $\beta_1 < \alpha_1 + 2k \leq 1$, and thus $k=0$, $\beta_1 = 0$, and we have $\alpha_1 = 0$ or $\alpha_1 = 1$. Since $(\alpha,\beta) \in \cP_2(n)$, we must have $(\alpha,\beta) = ((1^n),\varnothing)$ or $(\alpha,\beta) = (\varnothing,\varnothing)$, and so $(\mu,\nu) = ((n),\varnothing)$ or $(\mu,\nu) = (\varnothing,\varnothing)$, hence $\bar\lambda = (N)$ or $\bar\lambda = \varnothing$, which contradicts the assumption $\bar\lambda_1 = \bar\lambda_2$ and $N \geq 1$.

Hence we have $(1,1) < (1,0)$. By similar arguments as above, we have $\bar\Lambda_1 = \nu_1 - k = \bar\Lambda_2 + 1= \mu_2 + k + 1$, hence $\nu_1 = \mu_1 + 2k + 1$, and so $\beta_1 \leq \nu_1 - \mu_1 = 2k + 1$. On the other hand, since $(1,1) < (1,0)$ and $(\alpha,\beta) \in H(n,k)$, we have $\beta_1 - k > \alpha_1 + k$, so $\beta_1 > \alpha_1 + 2k \geq 2k$. Thus $\beta_1 = 2k+1$ and $\alpha_1 = 0$. 
Now since $\alpha = 0$, we have $A_{\lambda,\eps} = \alpha + [k,-\infty[_2 = (k,k-2,k-4,k-6,\dots)$.
Thus the consecutive terms of $A_{\lambda,\eps}$ decrease by $2$ each step. 
Since $(\alpha,\beta) \in H(n,k)$, it holds that $(B_{\lambda,\eps})_j = \beta_j + k + 2 - 2j$ is not equal to any term of $A_{\lambda,\eps}$, and so 
for each $i \in \N$, there exists at most one $j \in \N$ such that $\beta_j \neq 0$ and $(A_{\lambda,\eps})_i < (B_{\lambda,\eps})_j< (A_{\lambda,\eps})_{i+1}$, i.e. $(i,0) < (j,1) < (i+1,0)$. Thus we have $\nu_1 = \sum_j \beta_j$, $\mu_1 = 0$.
Recall that $\bar \Lambda_1 = \bar \Lambda_2 + 1$, i.e. $\nu_1 - k = \mu_1 + k + 1$, and so
$\sum_j \beta_j = \nu_1 = 2k + 1$. 
Thus we must have $\beta_2 = 0$, i.e. $(\alpha,\beta) = (\varnothing,(2k+1))$ and hence $(\nu,\mu) = (\varnothing,(2k+1))$.
The generalised Springer correspondence gives $\bar\lambda = (2k+1) \sqcup (2k+1, 2k-1, 2k-3, \dots, 1)$ and
$\bar\eps(1) = \bar\eps(2) = 1$ and $\bar\eps(i) = (-1)^{i}$ for $i \geq 3$.
We have $\bar\lambda_1 = \bar\lambda_2 \neq \bar\lambda_3$, as desired.
\end{proof}

By similar arguments, we find an analogous result for the $\Sp(2n)$. Note that the symbols appearing in the generalised Springer correspondence of $\Sp(2n)$ are of the form $(\alpha+[k,-\infty[_2,\beta+[-k-1,-\infty[_2)$ for $k \in \Z_{\geq0}$ and $(\alpha,\beta) \in \mathcal P_2(n - k(k+1)/2)$.
\begin{proposition}
Suppose $(\lambda,\eps) \in \PPsymp(2n)$ such that $\lambda$ only has even parts. Then for each $i \in \N$, we have $\mult_{\lambdamax}(i) \leq 2$, i.e. $\lambdamax$ parametrises a quasi-distinguished unipotent class of $\Sp(2n)$.
\end{proposition}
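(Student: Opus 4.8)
The plan is to transcribe the proof of \Cref{prop:ellipticorth} to the symplectic setting, keeping the same skeleton but replacing the orthogonal symbol conventions by the symplectic ones recalled just above — so the second row of a symbol now carries the shift $[-k-1,-\infty[_2$ instead of $[-k,-\infty[_2$ — and replacing the inputs \Cref{prop:symbolterm}, \Cref{lemma:unique}, \Cref{lemma:order} and \Cref{thm:max} by their $\Sp(2n)$ counterparts from \cite{waldspurger}. Since $\lambdamax = (\lambdamax_1)\sqcup(\lambdamax)' = (\lambdamax_1,\lambdamax_2)\sqcup(\lambdamax)'' = \cdots$, it suffices to show that $\lambdamax_1 = \lambdamax_2$ forces $\lambdamax_2 \neq \lambdamax_3$. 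So I would assume $\lambdamax_1 = \lambdamax_2$, put $k = k(\lambda,\eps)$ and $n' = n - k(k+1)/2$, and let $(\alpha,\beta),(\mu,\nu) \in \mathcal P_2(n')$ be the bipartitions for which $(\alpha+[k,-\infty[_2,\beta+[-k-1,-\infty[_2)$ and $(\mu+[k,-\infty[_2,\nu+[-k-1,-\infty[_2)$ are the symbols of $(\lambda,[\eps])$ and $(\lambdamax,[\epsmax])$; write $\bar\Lambda = (\mu+[k,-\infty[_2)\sqcup(\nu+[-k-1,-\infty[_2)$. Waldspurger's maximality theorem identifies $(\mu,\nu)$ with the unique element of the $\Sp$-analogue of $P_{k,-k-1;2}(\alpha,\beta,<)$, the $\Sp$-analogue of \Cref{prop:symbolterm} gives that $\bar\Lambda_1$ is strictly larger than every later term of $\bar\Lambda$, and since $\lambdamax$ has only even parts (the $\Sp$ analogue of the corollary to \Cref{thm:algorithm}, see \cite[\S5]{waldspurger}) the symbol dictionary turns $\lambdamax_1 = \lambdamax_2$ into $\bar\Lambda_1 = \bar\Lambda_2 + 1$, exactly as in \Cref{prop:ellipticorth}.

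Next I would run the same two-case analysis on the order on the indices of $(\alpha,\beta)$. If $(1,0)<(1,1)$, then $\bar\Lambda_1 = \mu_1 + k$, and as $\mu_2 + k - 2 \le \bar\Lambda_1 - 2 < \bar\Lambda_2$ we must have $\bar\Lambda_2 = \nu_1 - k - 1$, hence $\mu_1 = \nu_1 - 2k$; combining $\mu_1 - \nu_1 \ge \alpha_1$ (immediate from procedures (a) and (b)) with $\alpha_1 \ge 0$ forces $k = 0$ and $\alpha = \varnothing$, and then the genericity inequality $\alpha_1 + k > \beta_1 - k - 1$ gives $\beta_1 < 1$, so $(\alpha,\beta) = (\varnothing,\varnothing)$ and $\lambdamax = \varnothing$, contradicting $\lambdamax_1 = \lambdamax_2$ being a genuine repeated part. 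If instead $(1,1)<(1,0)$, then $\bar\Lambda_1 = \nu_1 - k - 1 = \bar\Lambda_2 + 1 = \mu_1 + k + 1$, so $\nu_1 = \mu_1 + 2k + 2$ and hence $\beta_1 \le \nu_1 - \mu_1 = 2k+2$; on the other hand $\beta_1 - k - 1 > \alpha_1 + k$ gives $\beta_1 > \alpha_1 + 2k + 1 \ge 2k+1$, so $\beta_1 = 2k+2$ and $\alpha_1 = 0$. With $\alpha = \varnothing$ the first row $\alpha+[k,-\infty[_2 = (k,k-2,k-4,\dots)$ decreases by $2$ at every step; since the second row has the opposite parity it meets each length-$2$ gap of the first row in at most one integer, so — exactly as in \Cref{prop:ellipticorth} — at most one nonzero $\beta_j$ is absorbed between two consecutive terms of the first row, which gives $\mu_1 = 0$ and $\nu_1 = \sum_j \beta_j = 2k+2$, whence $\beta_2 = 0$. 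Thus $(\alpha,\beta) = (\varnothing,(2k+2)) = (\mu,\nu)$ and $\lambdamax = \lambda$; applying the inverse symplectic generalised Springer correspondence to the symbol $([k,-\infty[_2,\,(2k+2)+[-k-1,-\infty[_2)$ yields $\lambdamax = (2k+2,2k+2,2k,2k-2,\dots,2)$, for which $\lambdamax_1 = \lambdamax_2 = 2k+2 \neq 2k = \lambdamax_3$ (the claim being vacuous when $k = 0$). This finishes the proof.

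The main obstacle is, as usual in these transcriptions, the asymmetric shift: for $\Sp(2n)$ the two rows of a symbol are shifted by $[k,-\infty[_2$ and $[-k-1,-\infty[_2$, of opposite parity, rather than by $[k,-\infty[_2$ and $[-k,-\infty[_2$ of equal parity as for $\SO(N)$. This moves every inequality by one (e.g.\ $\alpha_1 + k > \beta_1 - k - 1$ instead of $\alpha_1 + k > \beta_1 - k$, and the constant $2k+2$ instead of $2k+1$) and, because the two rows now never collide, it changes which $\Sp$-analogue of the condition $(\alpha,\beta) \in H(n,k)$ is the right genericity hypothesis to invoke; so I would first have to locate and cite the precise $\Sp(2n)$ statements (Waldspurger's analogues of \Cref{lemma:unique}, \Cref{prop:symbolterm} and \Cref{lemma:order}) and verify that the bounds above hold in that setting. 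Once that is settled, the remainder is a line-by-line copy of the argument for $\SO(N)$.
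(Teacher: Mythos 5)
Your proposal is correct and takes essentially the same route as the paper, whose entire proof of this proposition is the remark that it follows ``by similar arguments'' from \Cref{prop:ellipticorth}: your case analysis with the shifted symplectic constants ($\beta_1=2k+2$, $\alpha_1=0$, $(\mu,\nu)=(\varnothing,(2k+2))$, $\lambdamax=(2k+2,2k+2,2k,2k-2,\dots,2)$) is exactly that transcription, and the bounds you derive are the right analogues. One small caveat: the ``opposite parity'' justification for absorbing at most one nonzero $\beta_j$ between consecutive first-row entries is neither automatic (the $\beta_j$ need not all be even) nor needed — as in \Cref{prop:ellipticorth}, it suffices that consecutive entries of $\alpha+[k,-\infty[_2$ with $\alpha=\varnothing$ differ by exactly $2$ and that all symbol entries are distinct, the latter being the $\Sp(2n)$ genericity hypothesis valid because $\lambda$ has only even parts.
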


\begingroup\footnotesize
\linespread{0}\selectfont
\bibliographystyle{alpha}
\addcontentsline{toc}{section}{Bibliography}
\bibliography{main}
\endgroup

\end{document}